\newcommand{\arxiv}[1]{\href{http://arxiv.org/abs/#1}{\tt arXiv:\nolinkurl{#1}}}
\newcommand{\arXiv}[1]{\href{http://arxiv.org/abs/#1}{\tt arXiv:\nolinkurl{#1}}}
\newcommand{\googlebooks}[1]{(preview at \href{http://books.google.com/books?id=#1}{google books})}
\definecolor{dark-red}{rgb}{0.7,0.25,0.25}
\definecolor{dark-blue}{rgb}{0.15,0.15,0.55}
\definecolor{medium-blue}{rgb}{0,0,.8}
\definecolor{DarkGreen}{RGB}{0,150,0}
\definecolor{rho}{named}{red}
\theoremstyle{plain}
\newtheorem{thm}{Theorem}[section]
\newtheorem*{thm*}{Theorem}
\newtheorem{thmalpha}{Theorem}
\newtheorem{cor}[thm]{Corollary}
\newtheorem*{cor*}{Corollary}
\newtheorem*{conj*}{Conjecture}
\newtheorem{lem}[thm]{Lemma}
\newtheorem{prop}[thm]{Proposition}
\newtheorem*{quest*}{Question}
\newtheorem*{claim*}{Claim}
\theoremstyle{definition}
\newtheorem{defn}[thm]{Definition}
\newtheorem*{exs*}{Examples}
\newtheorem{ex}[thm]{Example}
\newtheorem{term}[thm]{Terminology}
\newtheorem{rem}[thm]{Remark}
\newtheorem*{siderem*}{Side remark}
\newtheorem*{warn*}{Warning}
\DeclareMathOperator{\End}{End}
\DeclareMathOperator{\Hom}{Hom}
\DeclareMathOperator{\op}{op}
\DeclareMathOperator{\id}{id}
\newcommand{\nc}[2]{\newcommand{#1}{#2}}
\nc{\B}{\mathrm{B}}
\nc{\C}{\mathbb{C}}
\nc{\R}{\mathbb{R}}
\nc{\Q}{\mathbb{Q}}
\nc{\Z}{\mathbb{Z}}
\nc{\N}{\mathbb{N}}
\nc{\cA}{\mathcal{A}}
\nc{\cB}{\mathcal{B}}
\nc{\cC}{\mathcal{C}}
\nc{\cD}{\mathcal{D}}
\nc{\bbD}{\mathbb{D}}
\nc{\cI}{\mathcal{I}}
\nc{\cM}{\mathcal{M}}
\nc{\g}{\mathfrak{g}}
\def\tworarrow{
    \hspace{.1cm}{
        \setlength{\unitlength}{.50mm}\linethickness{.09mm}
        \begin{picture}
            (8,8)(0,0)\qbezier(0,4)(4,7)(8,4)\qbezier(0,1)(4,-2)(8,1)\qbezier(3.5,4)(3.5,3)(3.5,1.5)
            \qbezier(4.5,4)(4.5,3)(4.5,1.5)\qbezier(4,0.8)(4.5,1.7)(5.5,2)\qbezier(4,0.8)(3.5,1.7)(2.5,2)
            \qbezier(8,1)(7.4,.2)(7.7,-.7)\qbezier(8,1)(7,1)(6.5,1.5)\qbezier(8,4)(7.4,4.8)(7.7,5.7)
            \qbezier(8,4)(7,4)(6.5,3.5)
        \end{picture}
        \hspace{.1cm}
    }
}
\newcommand{\Bim}{\mathrm{Bim}}
\newcommand\Hilb{\mathrm{Hilb}}
\newcommand{\Mod}{\mathrm{Mod}}
\newcommand{\Rep}{\mathrm{Rep}}
\def\semicolon{;}
\def\applytolist#1{
    \expandafter\def\csname multi#1\endcsname##1{
        \def\multiack{##1}\ifx\multiack\semicolon
            \def\next{\relax}
        \else
            \csname #1\endcsname{##1}
            \def\next{\csname multi#1\endcsname}
        \fi
        \next}
    \csname multi#1\endcsname}
\def\calc#1{\expandafter\def\csname c#1\endcsname{{\mathcal #1}}}
\def\bbc#1{\expandafter\def\csname bb#1\endcsname{{\mathbb #1}}}
\def\bfc#1{\expandafter\def\csname bf#1\endcsname{{\mathbf #1}}}
\def\sfc#1{\expandafter\def\csname s#1\endcsname{{\sf #1}}}
\def\rmc#1{\expandafter\def\csname rm#1\endcsname{{\rm #1}}}
\tikzset{
	super thick/.style={line width=3pt}
}
\tikzstyle{shaded}=[fill=red!10!blue!20!gray!30!white]
\tikzstyle{unshaded}=[fill=white]
\tikzstyle{empty box}=[circle, draw, thick, fill=white, opaque, inner sep=2mm]
\tikzstyle{annular}=[scale=.7, inner sep=1mm, baseline]
\tikzstyle{rectangular}=[scale=.75, inner sep=1mm, baseline=-.1cm]
\tikzstyle{mid>}=[decoration={markings, mark=at position 0.5 with {\arrow{>}}}, postaction={decorate}]
\tikzstyle{mid<}=[decoration={markings, mark=at position 0.5 with {\arrow{<}}}, postaction={decorate}]
\tikzstyle{over}=[double, draw=white, super thick, double=]
\tikzstyle{knot}=[preaction={super thick, white, draw}]
\begin{document}
\title{Complete W*-categories}
\author{Andr\'e Henriques, Nivedita, and David Penneys}
\date{}
\maketitle
\begin{abstract}
We study $\rmW^*$-categories, and explain the ways in which complete $\rmW^*$-categories behave like categorified Hilbert spaces. Every $\rmW^*$-category $C$ admits a canonical categorified inner product
$\langle\,\,,\,\rangle_\Hilb\,:\,\overline C\times C\,\to\, \Hilb$.
Moreover, if $C$ and $D$ are complete $\rmW^*$-categories there is an antilinear equivalence
$$\dagger:\mathrm{Func}(C,D) \leftrightarrow \mathrm{Func}(D,C)$$
characterised by 
$\langle c,F^\dagger(d)\rangle_\Hilb
\simeq
\langle F(c),d\rangle_\Hilb$, for $c\in C$ and $d \in D$.
\end{abstract}

\maketitle


\section{Introduction}

$\rmW^*$-categories are the `many object' versions of $\rmW^*$-algebras (a.k.a.~von Neumann algebras).
They were introduced by Ghez, Lima and Roberts in \cite{MR808930}, but have not been the subject of much in depth investigation since then.

Our main observation is that complete\footnote{We use the terms \emph{complete} and \emph{Cauchy complete} as synonyms.} $\rmW^*$-categories behave in many ways like Hilbert spaces. Most notably, every $\rmW^*$-category $C$ admits a canonical sesquilinear functor
\[
\langle\,\,,\,\rangle_\Hilb:\overline C\times C\to \Hilb,
\]
first studied in \cite{MR2325696}, which we rename the `$\Hilb$-valued inner product'
(Definition~\ref{def: Hilb valued inner product}).
Given complete $\rmW^*$-categories $C$ and $D$, there is an antilinear equivalence of categories\vspace{-2mm}
\[
\dagger:\mathrm{Func}(C,D)\stackrel{\cong}{\longrightarrow} \mathrm{Func}(D,C)
\]
called \emph{adjoint},
characterised by the existence of unitary natural isomorphisms
\[
\langle c,F^\dagger(d)\rangle_{\Hilb}
\simeq
\langle F(c),d\rangle_{\Hilb}
\]
(Definition~\ref{def: adjoint functors}).\footnote{This is genuinely distinct from the usual notion of adjoint functors. \label{footnote 2}}
In the table on page~\pageref{pageref Table of analogies}, we present an extensive list of analogies between Hilbert spaces and complete $\rmW^*$-categories.

We start our paper with a quick review of some background material on Hilbert spaces and von Neumann algebras. We then define and study various notions of completeness for $\rmW^*$-categories. 
One of our main results is:
\begin{thmalpha}
There is an equivalence of bi-involutive bicategories
$\mathrm{vN2}\stackrel{\simeq}\to\mathrm{W^*Cat}$
between the bicategory of von Neumann algebras, bimodules, and intertwiners, and the bicategory of complete $\rmW^*$-categories admitting a set of generators, functors, and natural transformations.
\end{thmalpha}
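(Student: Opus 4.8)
The plan is to construct an explicit bi-involutive pseudofunctor $\Phi:\mathrm{vN2}\to\mathrm{W^*Cat}$ and show it is an equivalence. On objects, send a von Neumann algebra $M$ to the W*-category $\Rep(M)$ of normal Hilbert-space representations (equivalently, left $M$-modules); this is complete and the standard form $L^2(M)$ is a generator, so $\Rep(M)$ does land in the target bicategory. On $1$-morphisms, send an $M$-$N$ bimodule ${}_M H_N$ (a morphism $N\to M$) to the Connes-fusion functor $H\boxtimes_N -:\Rep(N)\to\Rep(M)$, and on $2$-morphisms send a bimodule intertwiner to the induced natural transformation. The coherence isomorphisms of $\Phi$ come from associativity of Connes fusion, and one checks compatibility with the two involutions: the bar should intertwine the complex-conjugate bimodule with the conjugate category $\overline{(\,\cdot\,)}$, and $\dagger$ should intertwine the adjoint (contragredient) bimodule with the adjoint functor, the latter via its abstract characterization through $\langle\,,\rangle_\Hilb$ from Definition~\ref{def: adjoint functors}.

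Establishing that $\Phi$ is an equivalence reduces to two statements. The first is local fullness and faithfulness: for each pair $M,N$ the functor $\mathrm{vN2}(N,M)\to\mathrm{W^*Cat}(\Rep N,\Rep M)$ is an equivalence of categories. That bimodule maps correspond exactly to natural transformations is the easy direction; the content is that every functor $\Rep(N)\to\Rep(M)$ of the relevant (normal) kind is implemented by fusion with a bimodule, a von-Neumann-algebraic Eilenberg–Watts theorem. I would prove this by evaluating a functor $F$ on the generator to recover the candidate bimodule $H:=F(L^2 N)$ together with its residual $N$-action coming from $\End(L^2 N)\cong N^{\mathrm{op}}$, and then using normality/continuity to verify that the natural comparison $H\boxtimes_N -\to F(-)$ is a unitary isomorphism on all objects, not merely on the generator.

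The second statement is essential surjectivity: every complete W*-category $C$ with a set of generators is equivalent to $\Rep(M)$ for some $M$. Here completeness is first used to replace the generating set by a single generator $g$, namely the orthogonal direct sum of the set, which exists because $C$ admits (possibly infinite) orthogonal direct sums. Setting $M:=\End_C(g)$, one builds the comparison functor $C\to\Rep(M)$ by $c\mapsto\langle g,c\rangle_{\Hilb}$, where the left $M=\End(g)$-action is precomposition, and shows it is a fully faithful equivalence. This is the categorified Gabriel–Popescu / reconstruction step: fully faithfulness follows from $g$ being a generator together with the defining adjunction for $\langle\,,\rangle_\Hilb$, while essential surjectivity requires showing every object of $C$ arises as the image of a projection on a sum of copies of $g$, which uses the existence of subobjects and orthogonal direct sums in a complete W*-category.

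I expect this essential-surjectivity step — the reconstruction of $C$ from $\End(g)$ — to be the main obstacle, both because it demands the full strength of the completeness axioms (subobjects, orthogonal direct sums, and cokernels of projections) and because the passage from a \emph{set} of generators to a single generator must be carried out compatibly with the W*-structure and the $\Hilb$-valued inner product. A secondary but laborious point is supplying and verifying all the coherence and bi-involutivity data for $\Phi$, rather than just its underlying assignment; these checks are routine but numerous, and I would organize them so that the involution compatibilities reduce to the characterizing property of $\dagger$ already established in the excerpt.
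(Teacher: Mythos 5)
Your proposal is correct and follows essentially the same route as the paper: the $2$-functor $A\mapsto A\text{-}\Mod$, ${}_BH_A\mapsto {}_BH\boxtimes_A-$ is shown to be locally an equivalence by an Eilenberg--Watts argument evaluating a functor at the generator $L^2A$ (Proposition~\ref{prop: func to bimod}), essentially surjective by reconstructing $C$ from $\End(g)$ of a single generator via $c\mapsto\langle g,c\rangle_\Hilb$ and a Zorn's-lemma argument with partial isometries (Proposition~\ref{prop:freydembedding} and Lemma~\ref{lem: canonical  functor an equivalence iff C_0 generates C}), and compatible with the involutions because the conjugate bimodule ${}_A\overline{X}_B$ satisfies the characterizing property of $F^\dagger$ (Lemma~\ref{lem: conjugate bimodule}). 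The steps you flag as the main obstacles are exactly the ones the paper isolates as its supporting lemmas.
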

The above result was essentially well known (\cite[Thm~2.3]{MR2325696}\footnote{Note that \cite[Thm~2.3]{MR2325696} was incorrectly stated, as it did not require the $\rmW^*$-categories to be complete.}); our contribution is to upgrade its statement from an equivalence of bicategories to an equivalence of bi-involutive bicategories, i.e., incorporating adjoints.
We also present a variant in the context of small $\rmW^*$-categories:

\begin{thmalpha}
Let $\kappa$ be an infinite cardinal. Then there is an equivalence of bi-involutive bicategories between the bicategories of:\\[1mm]
{\it (i)} $\kappa$-separable von Neumann algebras, $\kappa$-separable bimodules, and intertwiners.
\\[1mm]
{\it (ii)} $\kappa$-small $\kappa$-complete
$\rmW^*$-categories, functors, and natural transformations.
\\[1mm]
{\it (iii)} $\kappa$-generated complete $\rmW^*$-categories,
$\kappa$-small functors, and natural transformations.
\end{thmalpha}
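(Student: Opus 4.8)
The plan is to reduce everything to \emph{Theorem~A} by keeping careful track of cardinal invariants, and to treat the passage between the ``small'' picture (ii) and the ``complete'' picture (iii) separately, by a Cauchy-completion argument that never mentions von Neumann algebras. Concretely, I would first establish the equivalence (ii)~$\simeq$~(iii) directly, and then obtain (i)~$\simeq$~(iii) by restricting the bi-involutive equivalence $\mathrm{vN2}\simeq\mathrm{W^*Cat}$ to the full sub-bicategories cut out by the three cardinality conditions. Because Theorem~A already produces an equivalence of \emph{bi-involutive} bicategories, the adjoint structure comes for free once the underlying conditions are shown to match; the real content is the bookkeeping of cardinals.

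For (ii)~$\simeq$~(iii), I would write down a Cauchy-completion functor and its weak inverse. Given a $\kappa$-small $\kappa$-complete $\rmW^*$-category $C$, its $\kappa$-smallness yields a generating set of at most $\kappa$ objects, and its completion $\widehat C$ — obtained by freely adjoining the orthogonal direct sums and subobjects (splitting projections) that are missing — is complete and, crucially, still generated by the image of $C$, hence $\kappa$-generated. Conversely, a $\kappa$-generated complete $\rmW^*$-category $D$ restricts to the full subcategory $D^{\kappa}$ obtained by closing a chosen $\kappa$-small generating set under $\kappa$-small orthogonal direct sums and subobjects; this $D^{\kappa}$ is $\kappa$-small and $\kappa$-complete. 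I would check these are mutually inverse up to equivalence: completing the core recovers $D$ since a generating set is preserved, and taking the core of a completion recovers $C$ since no new $\kappa$-small objects are created. On $1$-morphisms, a $\kappa$-small functor $\widehat C\to\widehat D$ preserves $\kappa$-smallness and so restricts to a functor of cores, while an arbitrary functor $C\to D$ extends uniquely, up to unitary isomorphism, to a $\kappa$-small functor of completions, because the completion is built from $\kappa$-small colimits and $\rmW^*$-functors preserve them; the two operations are inverse. Natural transformations match because they are determined by their components on a generating set.

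For (i)~$\simeq$~(iii), I would trace the cardinality conditions through the correspondence of Theorem~A. Under it, a von Neumann algebra $M$ is sent to its $\rmW^*$-category of representations, generated by the standard form, and $M$ is recovered as the endomorphism algebra of that generator. The statement to prove is that $M$ is $\kappa$-separable (equivalently, its standard form has density character at most $\kappa$, or its predual is $\kappa$-generated) precisely when the associated category is $\kappa$-generated; that a bimodule ${}_{M}H_{N}$ is $\kappa$-separable precisely when the functor it induces (relative tensor product with $H$) is $\kappa$-small, the link being that the density character of $H$ controls the size of the functor; and that \emph{all} intertwiners correspond to \emph{all} natural transformations, so the $2$-morphism level is unrestricted on both sides. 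Granting these three matchings, (i) and (iii) are the images of one another under the equivalence of Theorem~A, which therefore restricts.

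The main obstacle is exactly this matching of cardinal invariants in both directions, together with checking that the bi-involutive structure survives the restriction. On the (ii)~$\leftrightarrow$~(iii) side I must verify that completion does not inflate cardinality — that closing a $\kappa$-small set under $\kappa$-small direct sums and subobjects yields a $\kappa$-generated, and not merely a $\kappa^{+}$-generated, category, which on the subobject side uses separability of the hom-spaces to bound the relevant projection lattices. On the (i)~$\leftrightarrow$~(iii) side I must show the Ghez--Lima--Roberts dictionary sends ``$\kappa$-separable bimodule'' to ``$\kappa$-small functor'' \emph{on the nose}, which requires controlling density characters under the relative tensor product. Finally, for the bi-involutive statement I must check that the adjoint $F^{\dagger}$ of a $\kappa$-small functor is again $\kappa$-small and that the $\dagger$-conjugate of a $\kappa$-separable bimodule is again $\kappa$-separable; both should follow from the symmetry of the defining relation $\langle c,F^{\dagger}(d)\rangle_{\Hilb}\simeq\langle F(c),d\rangle_{\Hilb}$, but the cardinal estimate for adjoints is where I expect the most care to be needed.
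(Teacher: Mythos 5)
Your proposal is correct and follows essentially the same route as the paper: the theorem is obtained there by combining a $\kappa$-indexed version of Theorem~A (the assignment $A\mapsto A\text{-}\Mod_{<\kappa}$, which is your restriction of $\mathrm{vN2}\simeq\mathrm{W^*Cat}$ to the $\kappa$-separable/$\kappa$-generated data) with the mutually inverse constructions $(\;)_{<\kappa}$ and $(\;)^{\hat\oplus}$ relating pictures \emph{(ii)} and \emph{(iii)}, and the crux is exactly the cardinal bookkeeping you flag, carried out in the paper as Lemma~\ref{lem: small <=> separable}, Corollary~\ref{cor: small <=> separable '} and Proposition~\ref{prop: functors between small categories} (identifying $\kappa$-small functors with $\kappa$-separable bimodules, hence with functors preserving $\kappa$-small objects, and showing $\dagger$ preserves this class). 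The one divergence is that you hope to handle \emph{(ii)}$\,\simeq\,$\emph{(iii)} without mentioning von Neumann algebras, whereas the paper proves even those cardinality facts --- e.g.\ that $(C^{\hat\oplus})_{<\kappa}=C$ and that extensions of functors remain $\kappa$-small --- by passing through the module/bimodule picture.
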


In the table below, we present an extensive list of analogies between the notion of Hilbert space and that of complete $\rmW^*$-category (all the $\rmW^*$-categories in the table are assumed to admit a set of generators).
Later, in the appendix, we reorganise some of those elements into a more compact diagram. 
The diagram also incorporates bicommutant categories, first introduced in \cite{MR3747830}, as the categorical analogs of von Neumann algebras, as well as speculations about how the story might extend to higher categories.

\paragraph{\large Table of analogies} {\phantom :}\label{pageref Table of analogies}
\\

\def\wi{8cm}

\noindent $ $ \hspace{-5mm}
\begin{tikzpicture}
\draw (-8.25,.9) -- (8.25,.9);
\draw (-8.25,0) --node[above, yshift=6]{\sc Table of analogies between Hilbert spaces and complete $\rmW^*$-categories
} (8.25,0);
\node [matrix, below] (my matrix) at (0,0)
{\node {\parbox{\wi}{
A Hilbert space $H$.
}}; &[.3cm] \node  {\parbox{\wi}{
A complete $\rmW^*$-category $C$ (Definition~\ref{def: Cauchy completion}).
}};
\\ };
\draw (0,0) -- ($(my matrix.south)+(0,.07)$);
\end{tikzpicture}
\vspace{-6.2mm}

\noindent $ $ \hspace{-5mm}
\begin{tikzpicture}
\draw (-8.25,0) -- (8.25,0);
\node [matrix, below] (my matrix) at (0,0)
{\node {\parbox{\wi}{
The one dimensional Hilbert space $\bbC$.
}}; &[.3cm] \node  {\parbox{\wi}{
The $\rmW^*$-category $\Hilb$.
}};
\\ };
\draw (0,0) -- ($(my matrix.south)+(0,.07)$);
\end{tikzpicture}
\vspace{-6.2mm}

\noindent $ $ \hspace{-5mm}
\begin{tikzpicture}
\draw (-8.25,0) -- (8.25,0);
\node [matrix, below] (my matrix) at (0,0)
{\node {\parbox{\wi}{
A complex number.
}}; &[.3cm] \node  {\parbox{\wi}{
A Hilbert space.
}};
\\ };
\draw (0,0) -- ($(my matrix.south)+(0,.07)$);
\end{tikzpicture}
\vspace{-6.2mm}

\noindent $ $ \hspace{-5mm}
\begin{tikzpicture}
\draw (-8.25,0) -- (8.25,0);
\node [matrix, below] (my matrix) at (0,0)
{\node {\parbox{\wi}{
A pre-Hilbert space.
}}; &[.3cm] \node  {\parbox{\wi}{
A $\rmW^*$-category with direct sums, but potentially not idempotent complete.
}};
\\ };
\draw (0,0) -- ($(my matrix.south)+(0,.07)$);
\end{tikzpicture}
\vspace{-6.2mm}

\noindent $ $ \hspace{-5mm}
\begin{tikzpicture}
\draw (-8.25,0) -- (8.25,0);
\node [matrix, below] (my matrix) at (0,0)
{\node {\parbox{\wi}{
Addition $+:H \times H \to H$.
}}; &[.3cm] \node  {\parbox{\wi}{
Orthogonal direct sum $\oplus:C\times C\to C$.
}};
\\ };
\draw (0,0) -- ($(my matrix.south)+(0,.07)$);
\end{tikzpicture}
\vspace{-6.2mm}

\noindent $ $ \hspace{-5mm}
\begin{tikzpicture}
\draw (-8.25,0) -- (8.25,0);
\node [matrix, below] (my matrix) at (0,0)
{\node {\parbox{\wi}{
Scalar multiplication $\cdot:\bbC \times H \to H$.
}}; &[.3cm] \node  {\parbox{\wi}{
The canonical tensoring $\otimes:\Hilb\times C\to C$
(Definition~\eqref{eq: action of Hilb}).
}};
\\ };
\draw (0,0) -- ($(my matrix.south)+(0,.07)$);
\end{tikzpicture}
\vspace{-6.2mm}

\noindent $ $ \hspace{-5mm}
\begin{tikzpicture}
\draw (-8.25,0) -- (8.25,0);
\node [matrix, below] (my matrix) at (0,0)
{\node {\parbox{\wi}{
The inner product\\[2mm] \centerline{$\langle\,\,,\,\rangle:\overline H\times H\to \bbC$}\\[2mm] on a (pre-)Hilbert space $H$.
}}; &[.3cm] \node  {\parbox{\wi}{
The canonical $\Hilb$-valued inner product \\[2mm] \centerline{$\langle\,\,,\,\rangle_\Hilb:\overline C\times C\to \Hilb$}\\[2mm]
on a $\rmW^*$-category (Definition~\ref{def: Hilb valued inner product}).
}};
\\ };
\draw (0,0) -- ($(my matrix.south)+(0,.07)$);
\end{tikzpicture}
\vspace{-6.2mm}

\noindent $ $ \hspace{-5mm}
\begin{tikzpicture}
\draw (-8.25,0) -- (8.25,0);
\node [matrix, below] (my matrix) at (0,0)
{\node {\parbox{\wi}{
A pre-Hilbert space $H$ is complete iff\\[3mm]
\centerline{$H \to \{f: \overline H\to \bbC \,|\, \text{$f$ is bounded}\}$}\\
${}$ $\quad\qquad\xi \,\mapsto\, \langle -,\xi\rangle$\\[2mm]
is an isomorphism (Lemma~\ref{lem: pre-Hilbert space}).
}}; &[.3cm] \node  {\parbox{\wi}{
A $\rmW^*$-category is complete iff
\\[3mm]
\centerline{$C \to \mathrm{Func}(\overline {C},\Hilb)$}\\
\centerline{$\!\!c \mapsto\, \langle-,c\rangle_{\Hilb}$}\\[2mm]
is an equivalence (Proposition~\ref{prop:riesz}).
}};
\\ };
\draw (0,0) -- ($(my matrix.south)+(0,.07)$);
\end{tikzpicture}
\vspace{-6.2mm}

\noindent $ $ \hspace{-5mm}
\begin{tikzpicture}
\draw (-8.25,0) -- (8.25,0);
\node [matrix, below] (my matrix) at (0,0)
{\node {\parbox{\wi}{
A positive number $x\in\bbR_{\ge 0}$.
}}; &[.3cm] \node  {\parbox{\wi}{
A von Neumann algebra $A$.
}};
\\ };
\draw (0,0) -- ($(my matrix.south)+(0,.07)$);
\end{tikzpicture}
\vspace{-6.2mm}

\noindent $ $ \hspace{-5mm}
\begin{tikzpicture}
\draw (-8.25,0) -- (8.25,0);
\node [matrix, below] (my matrix) at (0,0)
{\node {\parbox{\wi}{
The square-norm $\|\xi\|^2$ of $\xi\in H$.
}}; &[.3cm] \node  {\parbox{\wi}{
The endomorphism algebra $\End(c)$ of $c\in C$.
}};
\\ };
\draw (0,0) -- ($(my matrix.south)+(0,.07)$);
\end{tikzpicture}
\vspace{-6.2mm}

\noindent $ $ \hspace{-5mm}
\begin{tikzpicture}
\draw (-8.25,0) -- (8.25,0);
\node [matrix, below] (my matrix) at (0,0)
{\node {\parbox{\wi}{
The map $\bbC\to \bbR_{\ge0}:z\mapsto |z|^2$.
}}; &[.3cm] \node  {\parbox{\wi}{
The construction $\Hilb\to\operatorname{vNA}:H\mapsto B(H)$.
}};
\\ };
\draw (0,0) -- ($(my matrix.south)+(0,.07)$);
\end{tikzpicture}
\vspace{-6.2mm}

\noindent $ $ \hspace{-5mm}
\begin{tikzpicture}
\draw (-8.25,0) -- (8.25,0);
\node [matrix, below] (my matrix) at (0,0)
{\node {\parbox{\wi}{
The inclusion $\bbR_{\ge0}\hookrightarrow \bbC$.
}}; &[.3cm] \node  {\parbox{\wi}{
The construction $\operatorname{vNA}\to\Hilb:A\mapsto L^2A$. 
}};
\\ };
\draw (0,0) -- ($(my matrix.south)+(0,.07)$);
\end{tikzpicture}
\vspace{-6.2mm}

\noindent $ $ \hspace{-5mm}
\begin{tikzpicture}
\draw (-8.25,0) -- (8.25,0);
\node [matrix, below] (my matrix) at (0,0)
{\node {\parbox{\wi}{
The equation\\
\centerline{$\langle\xi,\xi\rangle=\|\xi\|^2$.}
}}; &[.3cm] \node  {\parbox{\wi}{
For $c\in C$, 
there is a canonical unitary \\[2mm]
\centerline{$\langle c,c\rangle_{\Hilb}\cong L^2(\End(c))$.} 
}};
\\ };
\draw (0,0) -- ($(my matrix.south)+(0,.07)$);
\end{tikzpicture}
\vspace{-6.2mm}

\noindent $ $ \hspace{-5mm}
\begin{tikzpicture}
\draw (-8.25,0) -- (8.25,0);
\node [matrix, below] (my matrix) at (0,0)
{\node {\parbox{\wi}{
A set of vectors $e_i\in H$ which are orthogonal, and span a dense subspace.
}}; &[.3cm] \node  {\parbox{\wi}{
An orthogonal set of generators of a $\rmW^*$-category
(Definitions~\ref{def: admit a set of generators} and~\ref{def: orthog gens}).
}};
\\ };
\draw (0,0) -- ($(my matrix.south)+(0,.07)$);
\end{tikzpicture}
\vspace{-6.2mm}

\noindent $ $ \hspace{-5mm}
\begin{tikzpicture}
\draw (-8.25,0) -- (8.25,0);
\node [matrix, below] (my matrix) at (0,0)
{\node {\parbox{\wi}{
Isometry between (pre-)Hilbert spaces.
}}; &[.3cm] \node  {\parbox{\wi}{
Fully faithful functor between $\rmW^*$-cate\-go\-ries.
}};
\\ };
\draw (0,0) -- ($(my matrix.south)+(0,.07)$);
\end{tikzpicture}
\vspace{-6.2mm}

\noindent $ $ \hspace{-5mm}
\begin{tikzpicture}
\draw (-8.25,0) -- (8.25,0);
\node [matrix, below] (my matrix) at (0,0)
{\node {\parbox{\wi}{
$\|f\|^2$, the square of the operator norm of a bounded linear map $f:H\to K$ between (pre-)Hilbert spaces.
}}; &[.3cm] \node  {\parbox{\wi}{
The endomorphism algebra $\End(F)$ of a functor $F$ between $\rmW^*$-categories.
}};
\\ };
\draw (0,0) -- ($(my matrix.south)+(0,.07)$);
\end{tikzpicture}
\vspace{-6.2mm}

\noindent $ $ \hspace{-5mm}
\begin{tikzpicture}
\draw (-8.25,0) -- (8.25,0);
\node [matrix, below] (my matrix) at (0,0)
{\node {\parbox{\wi}{
The equation\,
$\|f\|^2\|g\|^2 \ge \|f\circ g\|^2$.
}}; &[.3cm] \node  {\parbox{\wi}{
There is a canonical bilinear map\\ \centerline{$\End(F)\times \End(G)\to \End(F\circ G)$.}
}};
\\ };
\draw (0,0) -- ($(my matrix.south)+(0,.07)$);
\end{tikzpicture}
\vspace{-6.2mm}

\noindent $ $ \hspace{-5mm}
\begin{tikzpicture}
\draw (-8.25,0) -- (8.25,0);
\node [matrix, below] (my matrix) at (0,0)
{\node {\parbox{\wi}{
The Hilbert space $\bbC \Omega_x$ freely generated by a vector $\Omega_x$ of square-norm $x\in\bbR_{\ge0}$ (it is one-dimensional if $x>0$, and zero if $x=0$).
}}; &[.3cm] \node  {\parbox{\wi}{
Given a von Neumann algebra $A$, the category $A^{\op}$-$\Mod$ has a distinguished object $L^2A^{\op}$, whose endomorphism algebra is~$A$.
}};
\\ };
\draw (0,0) -- ($(my matrix.south)+(0,.07)$);
\end{tikzpicture}
\vspace{-6.2mm}

\noindent $ $ \hspace{-5mm}
\begin{tikzpicture}
\draw (-8.25,0) -- (8.25,0);
\node [matrix, below] (my matrix) at (0,0)
{\node {\parbox{\wi}{
Given a vector $\xi\in H$ in a Hilbert space, and a number $x\ge \|\xi\|^2$, the map $\Omega_x\mapsto \xi$ extends uniquely to a contracting linear map\\[2mm]
\centerline{$\bbC \Omega_x\to H$}\\
\centerline{$a \Omega_x\mapsto a\xi$.\hspace{-2mm}}
}}; &[.3cm] \node  {\parbox{\wi}{
For $c\in C$ in a complete $\rmW^*$-category, and $A\to \End(c)$ a homomorphism, the map $L^2A^{\op}\mapsto c$ extends uniquely to a functor \\[2mm]
\centerline{
$A^{\op}$-$\Mod \to C$
:
$H_A\mapsto H\boxtimes_A c$}\\[3mm]
(Corollary~\ref{cor: enough to construct on generator} and Definition~\eqref{eq: Connes fusion with an object}).
}};
\\ };
\draw (0,0) -- ($(my matrix.south)+(0,.07)$);
\end{tikzpicture}
\vspace{-6.2mm}

\noindent $ $ \hspace{-5mm}
\begin{tikzpicture}
\draw (-8.25,0) -- (8.25,0);
\node [matrix, below] (my matrix) at (0,0)
{\node {\parbox{\wi}{
An orthogonal spanning set $\{e_i\}$ for a Hilbert space $H$ yields a canonical unitary\\[3mm]
\centerline{$\displaystyle H\,\cong\,\bigoplus_i \bbC\Omega_{\|e_i\|^2}$.}
}}; &[.3cm] \node  {\parbox{\wi}{
An orthogonal set of generators $\{c_i\}$ for a complete $\rmW^*$-category $C$ yields a canonical equivalence
\\[3mm]
\centerline{$\displaystyle C\,\cong\, \boxplus_i\, \overline{\End(c_i)}\text{-}\Mod$.}\\[3mm]
(Lemma~\ref{lem: boxplus decomposition} and Proposition~\ref{prop:freydembedding'})
}};
\\ };
\draw (0,0) -- ($(my matrix.south)+(0,.07)$);
\end{tikzpicture}
\vspace{-6.2mm}

\noindent $ $ \hspace{-5mm}
\begin{tikzpicture}
\draw (-8.25,0) -- (8.25,0);
\node [matrix, below] (my matrix) at (0,0)
{\node {\parbox{\wi}{
An orthogonal spanning set $\{e_i\}$ of $H$ affords a unique expansion\\[3mm]
\centerline{$\displaystyle \xi \,=\, \sum_i\frac{\langle e_i,\xi\rangle e_i}{\|e_i\|^2}$
for every $\xi \in H$.}
}}; &[.3cm] \node  {\parbox{\wi}{
An orthogonal set of generators $\{c_i\}$ of $C$
affords a unique expression
\\[2mm]
\centerline{$\displaystyle x \,=\, \bigoplus_i\, \langle c_i,\underset{\End(c_i)}{x \rangle\,\,\boxtimes\,\,\, c_i}
$
for every $x\in C$.}
}};
\\ };
\draw (0,0) -- ($(my matrix.south)+(0,.07)$);
\end{tikzpicture}
\vspace{-6.2mm}

\noindent $ $ \hspace{-5mm}
\begin{tikzpicture}
\draw (-8.25,0) -- (8.25,0);
\node [matrix, below] (my matrix) at (0,0)
{\node {\parbox{\wi}{
Given Hilbert spaces $H$ and $K$, there is an antilinear isometry called `adjoint'\\[2.5mm]
\centerline{$*:\Hom(H,K)\stackrel{\cong}{\to} \Hom(K,H)$}
\\[2.5mm]
characterised by 
\\[2mm]
\centerline{$\displaystyle \langle \xi,f^*\eta\rangle
=\langle f\xi,\eta\rangle$.}
}}; &[.3cm] \node  {\parbox{\wi}{
Given complete $\rmW^*$-categories $C$ and $D$, there is an antilinear equivalence, `adjoint',
\\[2.5mm]
\centerline{$\displaystyle \dagger:\mathrm{Func}(C,D)\stackrel{\cong}{\to} \mathrm{Func}(D,C)$}\\[2.5mm]
characterised by unitary isomorphisms
\\[2.5mm]
\centerline{$\displaystyle \langle c,F^\dagger(d)\rangle
\simeq
\langle F(c),d\rangle$,}\\[2.5mm]
natural in $c$ and $d$
(Definition~\ref{def: adjoint functors}).
}};
\\ };
\draw (0,0) -- ($(my matrix.south)+(0,.07)$);
\end{tikzpicture}
\vspace{-6.2mm}

\noindent $ $ \hspace{-5mm}
\begin{tikzpicture}
\draw (-8.25,0) -- (8.25,0);
\node [matrix, below] (my matrix) at (0,0)
{\node {\parbox{\wi}{
Given two vectors $\xi,\eta\in H$,
the composite\\[2mm]
\centerline{$(\xi:\bbC\to H)^*\circ (\eta:\bbC\to H)$}
\\[2mm]
is multiplication by
$\langle \xi,\eta\rangle:\bbC\to \bbC$.
}}; &[.3cm] \node  {\parbox{\wi}{
Given two objects $c,d\in C$,
the composite\\[2mm]
\centerline{$(c:\Hilb\to C)^\dagger\circ (d:\Hilb\to C)$.}\\[2mm]
is tensoring with
$\langle c,d\rangle_{\Hilb}:\Hilb\to \Hilb$.
}};
\\ };
\draw (0,0) -- ($(my matrix.south)+(0,.07)$);
\end{tikzpicture}
\vspace{-6.2mm}

\noindent $ $ \hspace{-5mm}
\begin{tikzpicture}
\draw (-8.25,0) -- (8.25,0);
\node [matrix, below] (my matrix) at (0,0)
{\node {\parbox{\wi}{
A map $F:H\to K$ 
is completely determined by
$F(e_i)\in K$ via the formula
\\[3mm]
\centerline{$\displaystyle 
F(\xi) \,=\, \sum_i\frac{\langle e_i,\xi\rangle F(e_i)}{\|e_i\|^2}
$}
\\[2mm]
where $\{e_i\}$ is an orthogonal basis of $H$.
}}; &[.3cm] \node  {\parbox{\wi}{
A functor $F:C\to D$ is determined by $F(c_i)\in D$ along with the actions of $\End(c_i)$ on $F(c_i)$, by
\\[2.5mm]
\centerline{$\displaystyle F(x) \,=\, \bigoplus_i \underset{\End(c_i)}{\langle c_i,x \rangle\,\,\boxtimes\,\,\, F(c_i).}
$}
\\[1mm]
where $\{c_i\}$ is an orthogonal set of generators of $C$
(Remark~\ref{rem: witty}).
}};
\\ };
\draw (0,0) -- ($(my matrix.south)+(0,.07)$);
\end{tikzpicture}
\vspace{-6.2mm}

\noindent $ $ \hspace{-5mm}
\begin{tikzpicture}[ampersand replacement=\&]
\draw (-8.25,0) -- (8.25,0);
\node [matrix, below] (my matrix) at (0,0)
{\node {\parbox{\wi}{
A map $F:H\to K$ is determined by 
\\[2mm]
\centerline{$\displaystyle
F(\xi) \,=\, \sum_{i,j} \frac{\,\,\langle e_i,\xi\rangle\, a_{ij}\, f_j\,\,}{\|e_i\|^2\|f_j\|^2}$}
\\[2mm]
where $a_{ij}=\langle f_j,F(e_i)\rangle$,
and
$\{e_i\}$, $\{f_j\}$ are orthogonal bases of $H$ and $K$.
}}; \&[.3cm] \node  {\parbox{\wi}{
A functor $F:C\to D$ between complete $\rmW^*$-categories
is determined by
\\[2mm]
\centerline{
$\displaystyle
F(x) \,=\, \bigoplus_{i,j}\, \langle c_i,x\rangle\underset{\End(c_i)}{\boxtimes} X_{ij}\underset{\End(d_j)}{\boxtimes} d_j$}
\\[2mm]
where $X_{ij}=\langle d_j,F(c_i)\rangle$, and
$\{c_i\}$, $\{d_j\}$ are orthogonal generating sets for $C$ and $D$.
}};
\\ };
\draw (0,0) -- ($(my matrix.south)+(0,.07)$);
\draw ($(my matrix.south)+(0,.07)+(8.25,0)$) -- ($(my matrix.south)+(0,.07)+(-8.25,0)$);
\end{tikzpicture}

\vspace{1cm}

\paragraph{\large Summary of notations and definitions:}
\begin{itemize}
\item[-]
$B(H)\;$ Bounded operators on a Hilbert space
\hfill page~\pageref{pageref B(H)}

\vspace{-3mm}\item[-]
von Neumann algebras \hfill
Definition~\ref{def: vNalg}, page~\pageref{def: vNalg}

\vspace{-3mm}\item[-]
$\bar\otimes\;$ The spatial tensor product of von Neumann algebras
\hfill page~\pageref{pageref spatial tensor product}

\vspace{-3mm}\item[-]
$\prod A_i\;$ The product of von Neumann algebras
\hfill page~\pageref{pageref product vNalg}

\vspace{-3mm}\item[-]
$L^2A\;$ The standard form of a von Neumann algebra
\hfill page~\pageref{pageref Standard Form}

\vspace{-3mm}\item[-]
$\boxtimes_A\;$ The Connes fusion tensor product \hfill Definition~\ref{def: Connes fusion}, page~\pageref{def: Connes fusion}

\vspace{-3mm}\item[-]
$\overline C\;$ The complex conjugate of a $\rmW^*$-category \hfill page~\pageref{pageref: complex conjugate}

\vspace{-3mm}\item[-]
$C^\oplus\;$ The completion of a $\rmW^*$-category under finite direct sums \hfill
page~\pageref{pageref: completion under finite direct sums}

\vspace{-3mm}\item[-]
$\rmW^*$-categories \hfill Definition~\ref{def: W-star cat},
page~\pageref{def: W-star cat}

\vspace{-3mm}\item[-]
$\Hilb\;$ The category of Hilbert spaces \hfill page~\pageref{category Hilb}

\vspace{-3mm}\item[-]
$A$-$\Mod\;$ The category of modules over a von Neumann algebra \hfill page~\pageref{category AMod}

\vspace{-3mm}\item[-]
$\mathbf{B}A\;$ The $\rmW^*$-category with exactly one object $\star_A$ \hfill page~\pageref{category BA}

\vspace{-3mm}\item[-]
Functors between $\rmW^*$-categories \hfill Definition~\ref{def: functors linear and bilinear},
page~\pageref{def: functors linear and bilinear}

\vspace{-3mm}\item[-]
Faithful and fully faithful functors
\hfill page~\pageref{pageref: (fully) faithful functor}

\vspace{-3mm}\item[-]
Dominant functors
\hfill page~\pageref{pageref: dominant functor}

\vspace{-3mm}\item[-]
Idempotent complete $\rmW^*$-categories  \hfill Definition~\ref{def: idempotent complete},
page~\pageref{def: idempotent complete}

\vspace{-3mm}\item[-]
$\hat C\;$ The idempotent completion of a $\rmW^*$-category  \hfill page~\pageref{page ref: idempotent completion}

\vspace{-3mm}\item[-]
Generator of a $\rmW^*$-category  \hfill
Definition~\ref{def: admit a set of generators},
page~\pageref{def: admit a set of generators}

\vspace{-3mm}\item[-]
To admit a (set of) generator(s) \hfill
Definition~\ref{def: admit a set of generators},
page~\pageref{def: admit a set of generators}

\vspace{-3mm}\item[-]
$\mathrm{Func}(C,D)\;$ The $\rmW^*$-category of functors between two $\rmW^*$-categories \hfill
Df~\ref{def: functor categories},
page~\pageref{def: functor categories}

\vspace{-3mm}\item[-]
$\mathrm{Bilin}(C_1\times C_2,D)\;$ The $\rmW^*$-category of bilinear functors between $\rmW^*$-categories \hfill
page~\pageref{pageref: biliar functors}

\vspace{-3mm}\item[-]
$\oplus c_i\;$ Orthogonal direct sum of objects in a $\rmW^*$-category \hfill Definition~\ref{def: orthogonal direct sum},
page~\pageref{def: orthogonal direct sum}

\vspace{-3mm}\item[-]
For a $\rmW^*$-category to admit all direct sums \hfill page~\pageref{pageref: to admit all direct sums}

\vspace{-3mm}\item[-]
$C^{\bar\oplus}\;$ 
The direct sum completion of a $\rmW^*$-category \hfill
page~\pageref{pageref: direct sum completion}

\vspace{-3mm}\item[-]
Cauchy complete $\rmW^*$-categories $\equiv$ complete $\rmW^*$-categories \hfill Definition~\ref{def: Cauchy completion}, page~\pageref{def: Cauchy completion}

\vspace{-3mm}\item[-]
$C^{\hat \oplus}\;$ The Cauchy completion of a $\rmW^*$-category \hfill Definition~\ref{def: Cauchy completion}, page~\pageref{def: Cauchy completion}

\vspace{-3mm}\item[-]
The full subcategory generated by a (set of) object(s)
\hfill Definition~\ref{def: full subcategory generated by}, page~\pageref{def: full subcategory generated by}

\vspace{-3mm}\item[-]
$c \cong_{st} d\;$ Two objects of a $\rmW^*$-category are stably equivalent \hfill Definition~\ref{def: stably equivalent objects}, page~\pageref{def: stably equivalent objects}

\vspace{-3mm}\item[-]
$\boxplus C_i\;$ The orthogonal direct sum of $\rmW^*$-categories \hfill Definition~\ref{def: orthogonal direct sum of WCat}, page~\pageref{def: orthogonal direct sum of WCat}

\vspace{-3mm}\item[-]
$\boxplus c_i\;$ An object of $\boxplus C_i$ \hfill Definition~\ref{def: orthogonal direct sum of WCat}, page~\pageref{def: orthogonal direct sum of WCat}

\vspace{-3mm}\item[-]
$\otimes\;$ The action of $\Hilb$ on a $\rmW^*$-category \hfill
page~\pageref{eq: action of Hilb}

\vspace{-3mm}\item[-]
$C\bar\otimes D\;$ The tensor product of $\rmW^*$-categories \hfill
page~\pageref{pageref tens prod of WCat}

\vspace{-3mm}\item[-]
$C \hat \otimes D\;$ Tensor product of 
complete $\rmW^*$-categories \hfill
Definition~\ref{def: completed tensor product}, page~\pageref{def: completed tensor product}

\vspace{-3mm}\item[-]
$\boxtimes_A\;$ Connes fusion with an $A$-module internal to a $\rmW^*$-category \hfill
page~\pageref{eq: Connes fusion with an object}

\vspace{-3mm}\item[-]
$\langle \,\,,\,\rangle_\Hilb$ The Hilb-valued inner product on a $\rmW^*$-category \hfill Definition~\ref{def: Hilb valued inner product},
page~\pageref{def: Hilb valued inner product}

\vspace{-3mm}\item[-]
Orthogonal generators for a $\rmW^*$-category
\hfill Definition~\ref{def: orthog gens},
page~\pageref{def: orthog gens}

\vspace{-3mm}\item[-]
$F^\dagger\;$ The adjoint of a functor between $\rmW^*$-categories
\hfill
Definition~\ref{def: adjoint functors}, page~\pageref{eq: dag}

\vspace{-3mm}\item[-]
$P^{\mathsf v}_F\subset \End(F)\;$
The vertical cone
\hfill page~\pageref{pageref vertical cone}

\vspace{-3mm}\item[-]
$P^{\mathsf h}_{F,G}\subset \Hom(F^\dag\circ F,G^\dag\circ G)\;$
The horizontal cone
\hfill page~\pageref{pageref horizontal cone}

\vspace{-3mm}\item[-]
$\kappa\;$ An infinite cardinal
 \hfill page~\pageref{sec: Small W* categories}

\vspace{-3mm}\item[-]
$\kappa$-separable Hilbert spaces, $\kappa$-separable von Neumann algebras \hfill
page~\pageref{pageref k-separable}

\vspace{-3mm}\item[-]
$\kappa$-small object of a $\rmW^*$-category \hfill
Definition~\ref{def: k-small}, page~\pageref{def: k-small}

\vspace{-3mm}\item[-]
Locally $\kappa$-small $\rmW^*$-category \hfill
Definition~\ref{def: (locally) kappa-small}, page~\pageref{def: (locally) kappa-small}

\vspace{-3mm}\item[-]
$\kappa$-small $\rmW^*$-category \hfill
Definition~\ref{def: (locally) kappa-small}, page~\pageref{def: (locally) kappa-small}

\vspace{-3mm}\item[-]
$C_{<\kappa}\;$ The subcategory of $\kappa$-small objects of a $\rmW^*$-category
\hfill page~\pageref{pageref: C_<k}

\vspace{-3mm}\item[-]
Locally $\kappa$-generated $\rmW^*$-category
\hfill
Definition~\ref{def: (locally) k-generated},
page~\pageref{def: (locally) k-generated}

\vspace{-3mm}\item[-]
$\kappa$-generated $\rmW^*$-category
\hfill
Definition~\ref{def: (locally) k-generated},
page~\pageref{def: (locally) k-generated}

\vspace{-3mm}\item[-]
Admitting all $\kappa$-small direct sums \hfill page~\pageref{pageref: to admit all k-small direct sums}

\vspace{-3mm}\item[-]
$\kappa$-
complete $\rmW^*$-categories 
($\equiv$ $\kappa$-Cauchy complete)
\hfill page~\pageref{pageref: k-Cauchy complete}

\vspace{-3mm}\item[-]
$C^{\hat \oplus_\kappa}\;$ The $\kappa$-
completion of a $\rmW^*$-category
\hfill Definition~\ref{def: k-Cauchy completion}, page~\pageref{def: k-Cauchy completion}

\vspace{-3mm}\item[-]
$\boxplus^{<\kappa} C_i\;$ A certain full subcategory of $\boxplus C_i\;$
\hfill
page~\pageref{pageref boxplus^<k}

\vspace{-3mm}\item[-]
Maximal object in a $\rmW^*$-category \hfill page~\pageref{pageref: maximal}

\vspace{-3mm}\item[-]
$\rmW^*$-tensor categories \hfill page~\pageref{pageref: W^*-tensor category}

\vspace{-3mm}\item[-]
Bi-involutive $\rmW^*$-tensor categories
\hfill Definition~\ref{def: bi-involutive tensor category}, page~\pageref{def: bi-involutive tensor category} 

\vspace{-3mm}\item[-]
$\mathrm{vN2}$ The bicategory of von Neumann algebras
\hfill page~\pageref{pageref: vN2}

\vspace{-3mm}\item[-]
$\mathrm{W^*Cat}$ The bicategory of $\rmW^*$-categories
\hfill page~\pageref{pageref: WstarCat}

\end{itemize}

\paragraph{Acknowledgements}

David Penneys was supported by NSF DMS-2154389
and also by NSF DMS-1928930 while he was in residence at the Mathematical Sciences Research Institute/SLMath in Berkeley, California, during Summer 2024.

For the purpose of Open Access, the authors have applied a CC BY public copyright licence to any Author Accepted Manuscript (AAM) version arising from this submission.
AH~would like to thank Yutong Dai, Lucas Hataishi, and Adri\`a Marin Salvador for proofreading this paper.



\section{Hilbert spaces and von Neumann algebras}

In this section, we recall without proof basic results and definitions about Hilbert spaces and von Neumann algebras. We assume general familiarity with the structure and theory of von Neumann algebras.
Recall that a pre-Hilbert space is a vector space equipped with a positive definite sesquilinear form.

\begin{lem}\label{lem: pre-Hilbert space}
A pre-Hilbert space $H$ is a Hilbert space (i.e. $H$ is complete) if and only if the map
\[
H \to \{f: \overline H\to \bbC \,|\, \text{$f$ is bounded}\}:
\xi \mapsto \langle -,\xi\rangle
\]
is an isomorphism. \hfill $\square$
\end{lem}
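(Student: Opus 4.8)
The plan is to show that the displayed map $\Phi\colon\xi\mapsto\langle-,\xi\rangle$ is, for \emph{any} pre-Hilbert space, an isometric linear injection of $H$ into the Banach space $\overline H^*$ of bounded linear functionals on $\overline H$, and then to read off both implications from this single observation: the reverse direction from abstract completeness of dual spaces, the forward direction from the Riesz representation theorem.

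First I would verify that $\Phi$ is well-defined, linear, injective, and isometric, none of which requires completeness. For fixed $\xi$, the Cauchy--Schwarz inequality gives $|\langle\eta,\xi\rangle|\le\|\eta\|\,\|\xi\|$, so $\langle-,\xi\rangle$ is bounded with norm at most $\|\xi\|$; taking $\eta=\xi$ shows equality, so $\Phi$ is isometric and hence injective (this is where positive definiteness of the form is used). It is $\bbC$-linear in $\xi$ because the form is linear in its second slot, and each $\langle-,\xi\rangle$ is genuinely linear---not merely antilinear---on $\overline H$ precisely because passing to the conjugate space absorbs the antilinearity of the first slot. Thus $\Phi\colon H\hookrightarrow\overline H^*$ is always an isometric embedding of normed spaces, and the entire content of the lemma is whether this embedding is surjective.

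For the implication that $\Phi$ being an isomorphism forces $H$ to be complete, I would invoke the fact that the continuous dual $\overline H^*$ of any normed space is automatically a Banach space. If $\Phi$ is moreover surjective, then $H$ is isometrically isomorphic to the complete space $\overline H^*$, and completeness transports across isometric isomorphisms, so $H$ is complete. For the converse, assuming $H$ complete, surjectivity of $\Phi$ is exactly the Riesz representation theorem applied to the Hilbert space $\overline H$: given a bounded functional $f$ on $\overline H$, one produces a representing vector $\xi\in H$ by taking the closed hyperplane $\ker f$, choosing a unit vector $z$ orthogonal to it, and setting $\xi$ to be an appropriate scalar multiple of $z$ so that $\langle-,\xi\rangle$ and $f$ agree.

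The main obstacle is the forward direction, and within it the single point where completeness is genuinely used: the existence of a vector orthogonal to the closed subspace $\ker f$. This rests on the existence of the closest-point (orthogonal) projection onto a closed subspace of a Hilbert space, which in turn follows from the parallelogram law together with the convergence of a minimizing sequence---and it is exactly the convergence of that Cauchy sequence that requires $H$ to be complete. Everything else in the argument is formal.
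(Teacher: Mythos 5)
Your argument is correct: the map $\xi\mapsto\langle-,\xi\rangle$ is always an isometric linear embedding of $H$ into the Banach space of bounded functionals on $\overline H$, so surjectivity forces completeness, and conversely completeness gives surjectivity via the Riesz representation theorem (whose reliance on completeness you correctly locate in the existence of orthogonal projections onto closed subspaces). The paper states this lemma as standard background and deliberately omits the proof, so there is nothing to compare against; your write-up is exactly the standard argument being invoked, with the only trivial omission being the case $f=0$ (where one takes $\xi=0$ rather than a unit vector orthogonal to $\ker f$).
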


If $H$ is a Hilbert space,
we write $B(H)$ for the \emph{$*$-algebra of bounded operators}\label{pageref B(H)} on $H$.

\begin{defn}\label{def: vNalg}
A \emph{von Neumann algebra} on $H$ is a $*$-subalgebra of $B(H)$ such that $A''=A$, where $A':=\{b\in B(H):ab=ba,\forall a\in A\}$ denotes the commutant of $A$, and $A'':=(A')'$.
\end{defn}

Given two von Neumann algebras $A\subset B(H)$
and $B\subset B(K)$, their \emph{spatial tensor product}\label{pageref spatial tensor product} $A\bar\otimes B$ is the von Neumann algebra generated by the algebraic tensor product $A\otimes^{alg} B$ acting on the Hilbert space tensor product $H\otimes K$.

Given a collection of von Neumann algebras $A_i\subset B(H_i)$ indexed by some set $I$, their \emph{product}\label{pageref product vNalg} $\prod_{i\in I}A_i$
is the von Neumann algebra generated by the algebraic direct sum $\bigoplus^{alg}_{i\in I}A_i$ acting on the Hilbert space direct sum $\bigoplus_{i\in I}H_i$.
When the indexing set is finite, we also write $\bigoplus_{i\in I}A_i$ in place of $\prod_{i\in I}A_i$.
\medskip

A $*$-algebra homomorphism $f:A\to B$ between von Neumann algebras is called \emph{normal} if $f(\sup a_i)=\sup f(a_i)$ for every bounded increasing net $a_i$ of positive elements of $A$.
\begin{defn}
A \emph{(von Neumann algebra) homomorphism} between two von Neumann algebras is a normal unital $*$-algebra homomorphism.
\end{defn}

Homomorphisms between von Neumann algebras behave much more rigidly than homomorphisms between algebras. For example:

\begin{lem}\label{lem: kernel of maps of vNalg}
If $f:A\to B$ is a von Neumann algebra homomorphism, then $A$ splits as a von Neumann algebra direct sum $A=A_0\oplus A_1$, were $A_0=\ker(f)$ and $A_1\cong \mathrm{im}(f)$. \hfill $\square$
\end{lem}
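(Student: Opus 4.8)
The plan is to realise the splitting through a central projection extracted from the kernel. First I would record that $\ker(f)$ is a two-sided $*$-ideal of $A$ by the usual algebraic argument (since $f$ is a unital $*$-homomorphism, $\ker(f)$ is closed under the algebra operations, absorbs multiplication from both sides, and is $*$-closed), and moreover that it is $\sigma$-weakly closed: normality of $f$ in the sense of the preceding definition is equivalent to $\sigma$-weak continuity on bounded sets, so $\ker(f)=f^{-1}(0)$ is $\sigma$-weakly closed.

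The crucial step is the standard structure theorem for weakly closed ideals: every $\sigma$-weakly closed two-sided ideal $I\subseteq A$ has the form $I=zA=Az$ for a unique central projection $z\in A$. I would recall its proof rather than merely cite it: a $\sigma$-weakly closed ideal contains an increasing net of projections whose supremum $z$ lies in $I$ and serves as a unit for $I$; since $I$ is two-sided, for any unitary $u\in A$ the element $uzu^*$ is again a projection in $I$ and is therefore dominated by the unit $z$, and applying the same to $u^*$ forces $uzu^*=z$, so $z$ is central. Applying this to $I=\ker(f)$ yields a central projection $z$ with $\ker(f)=zA$.

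Now set $A_0:=zA=\ker(f)$ and $A_1:=(1-z)A$. Because $z$ is central, $A_0$ and $A_1$ are von Neumann algebras with units $z$ and $1-z$, and $A=A_0\oplus A_1$ as a von Neumann algebra direct sum, which settles the first assertion. For the second, I would observe that $f|_{A_1}$ is injective, since $A_1\cap\ker(f)=A_1\cap A_0=0$, and that it is a normal unital $*$-homomorphism whose image is all of $\mathrm{im}(f)$ (as $f(A)=f(A_0)+f(A_1)=f(A_1)$, using $f(A_0)=0$). An injective normal $*$-homomorphism between von Neumann algebras is isometric and a $\sigma$-weak homeomorphism onto its image, so $\mathrm{im}(f)$ is itself a von Neumann algebra and $f|_{A_1}\colon A_1\xrightarrow{\cong}\mathrm{im}(f)$.

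The main obstacle is the structure theorem for $\sigma$-weakly closed ideals — producing the central projection $z$ and checking it is a two-sided unit — together with the fact that the image of a normal $*$-homomorphism is $\sigma$-weakly closed; once these are in hand, everything else is bookkeeping around the central decomposition $A=zA\oplus(1-z)A$.
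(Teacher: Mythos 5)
Your proof is correct. The paper states this lemma without proof: it sits in the background section, whose results are explicitly ``recalled without proof,'' so there is nothing to compare against line by line. Your argument --- $\ker(f)$ is a $\sigma$-weakly closed two-sided ideal because $f$ is normal, hence equals $zA$ for a central projection $z$ (with centrality extracted from $uzu^*\le z$ for all unitaries $u$), giving $A=zA\oplus(1-z)A$ with $f$ restricting to an injective normal $*$-homomorphism of $(1-z)A$ onto its $\sigma$-weakly closed image --- is precisely the standard argument the authors are implicitly invoking, and all the steps you flag as the real content (the structure theorem for $\sigma$-weakly closed ideals and the closedness of the image of a normal $*$-homomorphism) are the right ones.
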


\noindent
So every homomorphism $A\to B$ between von Neumann algebras admits a canonical factorisation as a projection onto a direct summand, followed by an injective homomorphism.

\begin{defn}
A \emph{left module} (\emph{right module}) over a von Neumann algebra $A$ is a Hilbert space $H$ equipped with a homomorphism $A\to B(H)$ ($A^{\op}\to B(H)$).
An \emph{$A$-$B$-bimodule} is a Hilbert space $H$ equipped with two homomorphisms $A\to B(H)$, $B^{\op}\to B(H)$ whose images commute in $B(H)$.

We write ${}_AH$ ($H_A$) to indicate that $H$ is a left module (right module), and ${}_AH_B$ to indicate that $H$ is an $A$-$B$-bimodule.
\end{defn}

The following is a fundamental result about the representation theory of von Neumann algebras:

\begin{prop}
Let $A$ be a von Neumann algebra, and $H$ a faithful $A$-module. 
Then for every non-zero $A$-module $K$,
$\Hom(K,H)\neq 0$.
\hfill $\square$
\end{prop}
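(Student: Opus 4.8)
The plan is to reduce to the case where $K$ is cyclic and then realise that cyclic module inside an amplification of $H$ by a GNS-type argument. First I would pick a nonzero vector $\xi\in K$ and let $K_\xi:=\overline{A\xi}$ be the closed $A$-submodule it generates. Being a closed invariant subspace of a $*$-representation, $K_\xi$ is the range of an orthogonal projection $P:K\to K_\xi$ that lies in the commutant of $A$, hence is $A$-linear, and $P$ restricts to the identity on $K_\xi$. Therefore any nonzero element of $\Hom(K_\xi,H)$ composes with $P$ to a nonzero element of $\Hom(K,H)$, and it suffices to treat the case $K=K_\xi$.

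Next I would pass to states. After rescaling $\xi$, the vector functional $\omega(a):=\langle\xi,a\xi\rangle$ is a normal state on $A$, and $(K_\xi,\xi)$ is its GNS representation. The crucial input is that $\omega$ is a vector state of the amplified module $H^{\infty}:=H\otimes\ell^2(\bbN)=\bigoplus_n H$, which is again faithful. Indeed, writing $M:=\mathrm{image}(A\to B(H))\cong A$ (using faithfulness), every normal state on $M\subseteq B(H)$ is the restriction of a normal state on $B(H)$ (since $M_*$ is a quotient of $B(H)_*$), and normal states on $B(H)$ have the form $a\mapsto\sum_n\langle\eta_n,a\eta_n\rangle$ with $\sum_n\|\eta_n\|^2=1$; assembling $\eta:=\sum_n \eta_n\otimes\delta_n\in H^{\infty}$ then gives $\omega(a)=\langle\eta,a\eta\rangle$.

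Finally I would transport $K_\xi$ into $H$. By uniqueness of the GNS construction, the unitary carrying $\xi$ to $\eta$ identifies $K_\xi$ with the cyclic submodule $\overline{A\eta}\subseteq H^{\infty}$, producing a nonzero (isometric) intertwiner $V:K_\xi\to H^{\infty}$. Since $V\neq 0$, there is an index $n$ with $Q_n V\neq 0$, where $Q_n:H^{\infty}\to H$ is the $A$-linear projection onto the $n$-th summand; then $Q_n V:K_\xi\to H$ is the desired nonzero map, and precomposing with $P$ finishes the proof.

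The main obstacle is the middle step: that every normal state on $A$ is a vector state in an amplification of any faithful normal representation. Conceptually this is the representation-theoretic content of faithfulness — it says precisely that the central support of $H$ equals $1$ — and it is what forces every module to share a subrepresentation with $H$. A slicker alternative route is to invoke the comparison theory of normal representations directly: faithfulness gives central support $z(H)=1\in Z(A)$, every nonzero $K$ has nonzero central support $z(K)$, and since $z(H)z(K)=z(K)\neq 0$ the representations $K$ and $H$ are not disjoint, whence $\Hom(K,H)\neq 0$; this trades the explicit GNS construction for the (standard but less elementary) disjointness-versus-central-support dichotomy.
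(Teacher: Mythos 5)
The paper states this proposition as a recalled background fact and gives no proof (the $\square$ follows the statement directly), so there is no in-paper argument to compare against; your job was therefore to supply a genuine proof, and the one you give is correct. The reduction to the cyclic case via the $A$-linear projection onto $\overline{A\xi}$ is fine, the GNS uniqueness step is fine, and the key input --- that every normal state of $A$, viewed on its faithful image $M\subseteq B(H)$, is of the form $a\mapsto\sum_n\langle\eta_n,a\eta_n\rangle$ with $\eta_n\in H$ and $\sum_n\|\eta_n\|^2=1$ --- is a standard structure theorem for normal states. The one place where your justification is thinner than it looks is the parenthetical ``since $M_*$ is a quotient of $B(H)_*$'': to get a normal \emph{state} lift you need a representative of the coset of norm exactly $1$ (quotient norms are infima and need not be attained) and then the observation that $\|\psi\|=\psi(1)=1$ forces positivity; the standard proofs of this theorem (Takesaki II.2.6, Kadison--Ringrose 7.1.12) in fact take a more hands-on route, so it is cleaner simply to cite the theorem than to derive it from the predual quotient. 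Note also that your argument is not circular with respect to the paper's Corollary~\ref{cor: submodules of opplus of faithful A-module} (which the paper deduces \emph{from} this proposition): you prove the embedding of a cyclic module into an amplification directly from the normal-state structure theorem. Finally, the alternative you sketch at the end --- faithfulness means the central support of $H$ is $1$, a nonzero $K$ has nonzero central support, non-orthogonal central supports means non-disjoint representations, hence $\Hom(K,H)\neq 0$ --- is the textbook one-line argument and is what the authors most likely have in mind; your GNS route buys a more self-contained and constructive proof at the cost of invoking the normal-state decomposition.
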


\noindent
By polar decomposition and an application of Zorn's Lemma, we then have:

\begin{cor}\label{cor: submodules of opplus of faithful A-module}
Let $A$ be a von Neumann algebra, and $H$ a faithful $A$-module. 
Then for every $A$-module $K$ there exists a set $I$ such that $K$ is isomorphic to a submodule of $H^{\oplus I}$.
We may furthermore arrange for the inclusion $K\hookrightarrow H^{\oplus I}$
to be of the form
\[
\begin{tikzcd}
K=\bigoplus_{i\in I} K_i\arrow[r, hook, "\,\,\,\oplus_{i} f_i\,\,\,"] &
\bigoplus_{i\in I} H
\end{tikzcd}
\]
where each $f_i:K_i\hookrightarrow H$ is an isometric inclusion.
\hfill $\square$
\end{cor}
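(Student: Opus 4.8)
The plan is to build the embedding one orthogonal summand at a time, using the preceding Proposition to guarantee that a nonzero piece always remains mappable into $H$, and polar decomposition to upgrade such a map into an isometric embedding of a submodule. This produces directly the refined form of the inclusion demanded by the ``furthermore'' clause, from which the first assertion is immediate.

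First I would set up Zorn's Lemma on the poset of families $\bigl(\{K_i\}_{i\in I},\{f_i\}_{i\in I}\bigr)$, where the $K_i$ are pairwise orthogonal closed $A$-submodules of $K$ and each $f_i:K_i\hookrightarrow H$ is an isometric $A$-module inclusion, ordered by extension of families (i.e.\ by enlarging the index set). Any chain has an upper bound given by the union of index sets together with the union of the corresponding submodules and maps, since pairwise orthogonality is preserved under such unions; Zorn's Lemma then yields a maximal family $\bigl(\{K_i\}_{i\in I},\{f_i\}\bigr)$. Let $K_0:=\overline{\bigoplus_{i\in I}K_i}\subseteq K$ be the closure of the orthogonal direct sum. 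Since each $K_i$ is a closed $A$-submodule, so is $K_0$, and hence so is its orthogonal complement $K_0^\perp$.

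The crux is to show $K_0^\perp=0$. Suppose not. By the Proposition applied to the nonzero $A$-module $K_0^\perp$, there is a nonzero $A$-module map $g:K_0^\perp\to H$. Taking the polar decomposition $g=u|g|$ with $|g|=(g^*g)^{1/2}$, the positive operator $g^*g$ lies in $\End(K_0^\perp)$ because it commutes with the $A$-action, so $u$ is again an $A$-module map whose restriction to its initial space $(\ker g)^\perp$ is an isometric $A$-module embedding into $H$. As $g\neq 0$, the submodule $(\ker g)^\perp\subseteq K_0^\perp$ is nonzero and orthogonal to every $K_i$; adjoining it to the family contradicts maximality. Therefore $K_0^\perp=0$, so $K=\overline{\bigoplus_i K_i}$.

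Finally, because the $K_i$ are pairwise orthogonal and each $f_i$ is isometric, the map $\bigoplus_i f_i$ is an isometry on the algebraic direct sum and hence extends to an isometric $A$-module inclusion $K=\bigoplus_i K_i\hookrightarrow \bigoplus_i H=H^{\oplus I}$ of the asserted form. The main obstacle I anticipate is the verification that polar decomposition stays within the category of $A$-modules---that the partial isometry $u$ intertwines the $A$-actions---which rests on the standard fact that the polar decomposition of an intertwiner between representations has intertwiners as both of its factors; the Zorn's Lemma bookkeeping and the passage to the completed direct sum are then routine.
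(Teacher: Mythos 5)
Your proof is correct and follows exactly the route the paper indicates: the corollary is stated there without a written proof beyond the remark ``By polar decomposition and an application of Zorn's Lemma,'' and your Zorn's-Lemma argument on maximal families of orthogonal submodules isometrically embedded in $H$, with the preceding proposition plus polar decomposition supplying the contradiction to maximality, is precisely the intended filling-in of that remark.
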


An abstract von Neumann algebra, also known as a \emph{$\rmW^*$-algebra}, is a $*$-algebra isomorphic to a von Neumann algebra on some Hilbert space.
By the celebrated work of Haagerup \cite{MR0407615}, every abstract von Neumann algebra $A$ admits a canonically associated Hilbert space $L^2A$ on which it acts, called its \emph{standard form},\label{pageref Standard Form} or non-commutative $L^2$-space. The space $L^2A$ is in fact an $A$-$A$-bimodule, and comes equipped with a canonical antiunitary involution
\begin{equation}\label{eq: mod conj}
J:L^2A\to L^2A 
\end{equation}
called \emph{modular conjugation}.
The modular conjugation satisfies $J(a\xi b) = b^* J(\xi) a^*$
for all $a,b\in A$ and $\xi \in L^2A$.
Importantly, the left and right actions of $A$ on $L^2A$ are each other's commutants.

The $L^2$ construction is compatible with the operation of taking corners:

\begin{lem}[{\cite[Lem~2.6]{MR0407615}}]
If $p\in A$ is a projection, then there is a canonical unitary $pL^2(A)p = L^2(pAp)$, compatible with the modular conjugations, and with left and right actions of $pAp$.
\hfill $\square$
\end{lem}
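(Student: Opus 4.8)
The plan is to deduce the statement from the \emph{uniqueness} of Haagerup's standard form. Recall that for a von Neumann algebra $M$ the standard form $L^2M$ is characterised, up to a canonical unitary, as the datum of a Hilbert space carrying commuting normal faithful representations of $M$ and $M^{\op}$, together with an antiunitary involution $J$ implementing $M\leftrightarrow M'$ and a self-dual positive cone $P$ fixed pointwise by $J$, all subject to the usual standard-form axioms. Consequently it suffices to equip the space $pL^2(A)p$ with all of this data for the corner algebra $pAp$ and to check the axioms: the asserted unitary $pL^2(A)p\cong L^2(pAp)$ is then the one produced by uniqueness, and because we install the $J$ and the $pAp$-actions by restriction, the claimed compatibilities come for free.

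First I would build the data. The left action of the projection $p$ and the right action of $p$ are commuting orthogonal projections on $L^2A$ (they commute since the left and right $A$-actions do), so their product is an orthogonal projection whose range is the closed subspace $pL^2(A)p$. For $a=pap\in pAp$, left multiplication sends $p\xi p\mapsto pap\,\xi\, p$, which again lies in $pL^2(A)p$; this yields a unital normal $*$-representation of $pAp$ (with $p$ acting as the identity), and symmetrically one gets a representation of $(pAp)^{\op}$, the two commuting. The modular-conjugation formula $J(p\xi p)=p^*J(\xi)p^*=pJ(\xi)p$ shows that $J$ preserves $pL^2(A)p$ and restricts there to an antiunitary involution intertwining the two actions correctly. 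For the cone I would take $P_{pAp}:=pP_Ap$, where $P_A$ is the natural cone of $L^2A$; note $pP_Ap\subseteq P_A$ since, by the identity $JpJ=(\text{right mult.\ by }p)$, one has $p\,(JpJ)\,\xi=p\xi p$, and $P_A$ is invariant under $a\,JaJ$ for every $a\in A$, in particular for $a=p$. The two axioms that $J$ fixes the cone pointwise and that $a\,JaJ$ preserves it then restrict immediately from the corresponding facts for $A$ on $L^2A$.

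The crux, and the step I expect to be the main obstacle, is the commutant axiom: that the commutant of the left $pAp$-action on $pL^2(A)p$ is \emph{exactly} the right $pAp$-action, equivalently $J(pAp)J=(pAp)'$ computed inside $B(pL^2(A)p)$. This is where the reduction theory of von Neumann algebras is needed, and I would carry it out by two successive compressions. Writing $M$ for the left $A$-action, so that $M'$ is the right action, let $e\in M$ and $f\in M'$ be the left- and right-$p$ projections. Reduction by the projection $f\in M'$ gives, on $fL^2A$, that $M|_{fL^2A}$ has commutant $(fM'f)|_{fL^2A}$; reducing this algebra further by the projection $e$ (which lies in it) gives, on $efL^2A=pL^2(A)p$, that the compressed algebra has commutant the further compression of $fM'f$. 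Tracking the identifications, the compressed algebra is precisely the left $pAp$-action and its commutant is precisely the right $pAp$-action, as required. With all the standard-form axioms verified for $(pL^2(A)p,\,pAp,\,J|,\,pP_Ap)$, uniqueness of the standard form delivers the canonical unitary $pL^2(A)p\cong L^2(pAp)$, compatible with the modular conjugations and with the left and right $pAp$-actions by construction.
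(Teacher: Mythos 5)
The paper gives no proof of this lemma: it is stated with a terminal square and a citation to Haagerup's Lemma~2.6, in a section that explicitly recalls such facts without proof. Your argument is correct, and it is in substance the argument of the cited reference: equip $pL^2(A)p$ with the restricted $pAp$-actions, the restricted $J$, and the cone $pP_Ap=\lambda(p)J\lambda(p)J\,P_A$, verify the standard-form axioms, and invoke Haagerup's uniqueness theorem, which then produces the canonical unitary together with all the stated compatibilities (compatibility with the right action follows from compatibility with $J$ and the left action). The two-step compression argument for the commutant axiom is the right way to handle the one genuinely nontrivial point. Two items you leave implicit and should record to make the verification complete: (a) faithfulness of $\lambda(pAp)$ on $pL^2(A)p$ --- if $a\in pAp$ kills $pL^2(A)p$ then $\lambda(a)\rho(p)=0$, hence $\lambda(a)\lambda(z)=0$ for $z$ the central support of $p$ (the central support of $\rho(p)$ in $\lambda(A)'$ is $\rho(z)=\lambda(z)$), so $a=az=0$; and (b) self-duality of $pP_Ap$ in $pL^2(A)p$ --- for any orthogonal projection $q$ with $qP_A\subseteq P_A$ one has $qP_A=P_A\cap qL^2A$, and if $\xi\in qL^2A$ pairs positively with $qP_A$ then it pairs positively with all of $P_A$ (since $\langle\xi,\eta\rangle=\langle\xi,q\eta\rangle$), so $\xi\in P_A\cap qL^2A=qP_A$. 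With these added, the proof is complete.
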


\begin{lem}
If $p,q\in A$ are projections, then $pL^2(A)q = 0$ if and only if $pAq = 0$.
\hfill $\square$
\end{lem}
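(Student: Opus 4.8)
The plan is to show that each of the two conditions is equivalent to the vanishing of the product of \emph{central supports} $z(p)z(q)$, where $z(p)\in Z(A)$ denotes the smallest central projection dominating $p$. This reduces the lemma to the single chain $pAq=0 \iff z(p)z(q)=0 \iff pL^2(A)q=0$.

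First I would handle the algebraic side. The implication $z(p)z(q)=0\Rightarrow pAq=0$ is a one-line computation: since $z(p),z(q)$ are central with $z(p)p=p$ and $qz(q)=q$, for any $a\in A$ one gets $paq=z(p)p\,a\,qz(q)=z(p)z(q)\,paq$, which is $0$ when $z(p)z(q)=0$. For the converse I would realise $A$ on a faithful Hilbert space $\mathcal H$ and use the standard description $\overline{Ap\mathcal H}=z(p)\mathcal H$ of the central support as a range projection: the hypothesis $pAq=0$ gives $\langle ap\xi, bq\eta\rangle=\langle\xi, p(a^*b)q\,\eta\rangle=0$ for all $a,b\in A$, $\xi,\eta\in\mathcal H$, so $Ap\mathcal H\perp Aq\mathcal H$, hence $z(p)z(q)=0$.

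Next comes the Hilbert-space side, which is where the modular conjugation enters. Using the relation $J(a\xi b)=b^*J\xi a^*$ one checks that right multiplication by $q$ is implemented on the left by $JqJ\in A'$, i.e.\ $\xi q=(JqJ)\xi$; since the left and right actions of $A$ on $L^2A$ are each other's commutants, $p$ and $JqJ$ are commuting projections, and $pL^2(A)q$ is exactly the range of the product projection $p\cdot JqJ$. Thus $pL^2(A)q=0 \iff p(JqJ)=0$. I would then invoke the general fact that for a projection $p$ in a von Neumann algebra $M$ and a projection $r$ in $M'$ one has $pr=0\iff c_M(p)\,c_{M'}(r)=0$ (the $(\Leftarrow)$ direction is the same central-pull-through as above, and $(\Rightarrow)$ follows from $\langle mp\xi, m'r\eta\rangle=\langle m'^*\xi, pr\,m^*\eta\rangle=0$, giving $\overline{Mp\mathcal H}\perp\overline{M'r\mathcal H}$). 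Applying this with $M=A$, $r=JqJ\in A'$, and computing $c_{A'}(JqJ)=J\,z(q)\,J=z(q)$ — using that $J$ fixes central projections, $JzJ=z^*=z$ for $z\in Z(A)=Z(A')$ — yields $pL^2(A)q=0\iff z(p)z(q)=0$, completing the chain.

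The routine bookkeeping (central elements commuting through, the orthogonality computations) is harmless; the genuinely load-bearing inputs, and hence the main obstacle, are the two structural facts about the standard form: the range characterisation $\overline{Ap\mathcal H}=z(p)\mathcal H$ together with the commutant lemma $pr=0\iff c_M(p)c_{M'}(r)=0$, and the identity $c_{A'}(JqJ)=z(q)$, i.e.\ that modular conjugation fixes central projections. Both are standard consequences of Tomita–Takesaki theory (and can be verified directly in the abelian model $A=L^\infty(X,\mu)$, $J=$ complex conjugation), so they may be cited rather than reproved, but the argument hinges on invoking them correctly.
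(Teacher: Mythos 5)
Your proof is correct. The paper states this lemma without proof (it is one of the background facts recalled in Section~2), and your reduction of both conditions to the vanishing of the product of central supports $z(p)z(q)$ is precisely the argument the authors themselves invoke later, in the proof that $\langle x,y\rangle_\Hilb=0$ iff $\Hom(x,y)=0$, where both $p_yAp_x=0$ and $p_yL^2(A)p_x=0$ are asserted to be equivalent to $p_x$ and $p_y$ having disjoint central supports.
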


The construction $A\mapsto L^2A$ is functorial for isomorphisms of von Neumann algebras, but not for general morphisms. It satisfies:

\begin{lem}
If $u\in A$ is a unitary, then the operators
$L^2(\mathrm{Ad}(u))$ and 
$\xi\mapsto u\xi u^*$
on $L^2(A)$ agree.
\hfill $\square$
\end{lem}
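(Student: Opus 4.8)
The plan is to recognise both operators as the canonical implementation of the inner automorphism $\mathrm{Ad}(u)$ on the standard form, and then appeal to the uniqueness of that implementation. Recall that, in Haagerup's theory, $L^2A$ carries not only the modular conjugation $J$ but also a self-dual positive cone $P\subseteq L^2A$, and that for an isomorphism $\alpha\colon A\to A$ the unitary $L^2(\alpha)$ is precisely the unique unitary $W$ on $L^2A$ satisfying
\[
W(a\xi b)=\alpha(a)\,W(\xi)\,\alpha(b),\qquad WJ=JW,\qquad W(P)=P
\]
for all $a,b\in A$ and $\xi\in L^2A$. It therefore suffices to verify that the operator $T_u\colon\xi\mapsto u\xi u^*$ fulfils these three conditions with $\alpha=\mathrm{Ad}(u)$.

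The first two conditions are immediate from the explicit formula. Since $u$ is unitary, $T_u$ is a composite of the (commuting) unitaries $\xi\mapsto u\xi$ and $\xi\mapsto\xi u^*$, hence unitary, with inverse $T_{u^*}\colon\xi\mapsto u^*\xi u$. The bimodule intertwining is the one-line computation $T_u(a\xi b)=ua\xi bu^*=(uau^*)(u\xi u^*)(ubu^*)=\mathrm{Ad}(u)(a)\,T_u(\xi)\,\mathrm{Ad}(u)(b)$, using $u^*u=1$. For the compatibility with $J$, apply the modular conjugation formula $J(a\xi b)=b^*J(\xi)a^*$ with $a=u$ and $b=u^*$ to obtain $J(u\xi u^*)=uJ(\xi)u^*$, i.e.\ $JT_u=T_uJ$.

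The crux is the third condition, preservation of $P$. Writing $\lambda(a)$ and $\rho(a)$ for left and right multiplication by $a$, the modular conjugation formula gives $J\lambda(u)J=\rho(u^*)$, whence $T_u=\lambda(u)\rho(u^*)=\lambda(u)J\lambda(u)J$. Operators of the form $\lambda(a)J\lambda(a)J$ preserve the self-dual cone — this is one of its defining properties — so $T_u(P)\subseteq P$; as $T_u^{-1}=T_{u^*}=\lambda(u^*)J\lambda(u^*)J$ has the same shape, in fact $T_u(P)=P$. With all three conditions in hand, uniqueness of the canonical implementation yields $T_u=L^2(\mathrm{Ad}(u))$. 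I expect the only real obstacle to be care around the uniqueness statement: intertwining the left action together with commuting with $J$ does \emph{not} by itself determine $W$ — for instance $-\id$ intertwines the identity automorphism and commutes with $J$ yet reverses the cone — so the condition $W(P)=P$ is essential, and the rewriting $T_u=\lambda(u)J\lambda(u)J$ is precisely what exhibits it.
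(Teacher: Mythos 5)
The paper states this lemma without proof (it appears in the background section, where results are ``recalled without proof''), so there is no argument of the authors' to compare against. Your proof is correct and is the standard one: you verify that $\xi\mapsto u\xi u^*$ intertwines $\mathrm{Ad}(u)$ on both actions, commutes with $J$ (via $J(a\xi b)=b^*J(\xi)a^*$), and preserves the self-dual cone (via the rewriting $\lambda(u)\rho(u^*)=\lambda(u)J\lambda(u)J$ and the standard-form axiom $aJaJ(P)\subseteq P$), and then invoke Haagerup's uniqueness of the canonical implementation. You also correctly flag the one genuine subtlety — that cone preservation is indispensable for uniqueness — so nothing is missing.
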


We write $\lambda:A\to B(L^2A)$ and $\rho: A^{\op}\to B(L^2A)$ for the two actions of $A$ on $L^2A$.
\begin{defn}[\cite{MR703809,MR1303779}]\label{def: Connes fusion}
If $H$ is a right $A$-module, and $K$ is a left $A$-module, their \emph{Connes fusion} $H\boxtimes_A K$ is the completion of the vector space
\[
\Hom_A(L^2A,H)\otimes_A L^2A\otimes_A \Hom_A(L^2A,K)
\]
with respect to the inner product
$
\langle \varphi_1\otimes \xi_1\otimes \psi_1,
\varphi_2\otimes \xi_2\otimes \psi_2\rangle
:=
\langle \xi_1,
\lambda^{-1}(\varphi_1^*\circ\varphi_2)\xi_2\rho^{-1}(\psi_1^*\circ \psi_2)\rangle
$.
\end{defn}

\begin{rem}
The Connes fusion of $H_A$ and ${}_AK$ also admits two asymmetric descriptions, as completions of $H\otimes_A \Hom_A(L^2A,K)$ and of $\Hom_A(L^2A,H)\otimes_A K$, respectively.
\end{rem}

The collection of all von Neumann algebras and all bimodules for von Neumann algebras forms the objects and $1$-morphisms of a bicategory. Composition of $1$-morphisms is given by Connes fusion, and the unit $1$-morphism on the object $A$ is the bimodule ${}_AL^2A_A$.
As part of the data of this bicategory,
there exist unitary associators and left and right unitors:
\begin{equation}\label{eq: associator + unitors}
\begin{split}
{}_A(H\boxtimes_BK)\boxtimes_CL_D
&\cong
{}_AH\boxtimes_B(K\boxtimes_CL)_D,
\\
{}_AL^2A\boxtimes_A H_B\cong {}_AH_B,\quad&\text{and}\quad {}_AH\boxtimes_BL^2B_B \cong {}_AH_B.
\end{split}
\end{equation}
We refer the reader to \cite{10.1063/1.1563733,MR2111973,MR4419534} for details about this bicategory.

\section{\texorpdfstring{$\rmW^*$}{W*}-categories: basic properties and operations}

A \emph{$*$-category} is a $\C$-linear category equipped with a dagger structure $*:\Hom(X,Y)\to \Hom(Y,X)$ which is $\C$-antilinear, and satisfies $f^{**}=f$ and $(f\circ g)^*=g^*\circ f^*$.
An invertible morphism in such a category is called \emph{unitary} if $f^*=f^{-1}$.

Given a $*$-category $C$, we write $\overline{C}$ 
\label{pageref: complex conjugate}
for the category with same objects, and complex conjugate hom-spaces.
The $*$-operation provides a canonical identification between $\overline{C}$ and $C^{\op}$.
Given a $*$-category $C$, we write $C^{\oplus}$
\label{pageref: completion under finite direct sums}
for the category whose objects are formal finite sums of objects of $C$, and whose morphisms are given by $\Hom(\oplus c_i,\oplus c_j):=\oplus_{i,j}\Hom(c_i,c_j)$.

\begin{defn}\label{def: W-star cat}
A \emph{$\rmW^*$-category} is a $*$-category $C$ such that $\End(X)$ is a $\rmW^*$-algebra for every $X\in C^{\oplus}$.\footnote{See \cite[Cor.~1.2]{MR2325696} for the equivalence between this definition, and other definitions of $\rmW^*$-category that exist in the literature.} The objects of a $\rmW^*$-category may form a proper class, but the hom-spaces between any two objects are required to be set-sized.
We refer the reader to \cite{MR808930} for further details.
\end{defn}

The norm of a morphism $f:c\to d$ in a $\rmW^*$-category is denoted $\|f\|$, and is defined to be the norm of $f$ in the 
$\rmW^*$-algebra $\End_{C^\oplus}(c\oplus d)$.

\begin{exs*} The following are examples of $\rmW^*$-categories:
\begin{itemize}
\item\label{category Hilb}
$\Hilb$, the category of Hilbert spaces and bounded linear maps.

\item\label{category AMod}
The category $A$-$\Mod$ of normal modules
over some von Neumann algebra $A$.

\item
The category $\Rep(G)$ of unitary representations of a group $G$.

\item
The category of modules over a $\rmC^*$-algebra.

\item\label{category BA}
The category $\mathbf{B}A$ with exactly one object $\star_A$, and $\End(\star_A):=A$, where $A$ is some von Neumann algebra.
\end{itemize}
\end{exs*}

\noindent
Henceforth, all modules over von Neumann algebras will be assumed to be normal, even when this is not mentioned explicitly.

\begin{defn}
\label{def: functors linear and bilinear}
A functor between $\rmW^*$-categories is a $*$-functor
\[
F:C\to D
\]
that induces normal homomorphisms $\End(c)\to \End(F(c))$ for all $c \in C^{\oplus}$.
Similarly, a bilinear functor between $\rmW^*$-categories is a $*$-functor $F:C_1\times C_2\to D$ which is bilinear, and normal in each variable.

A natural transformation $\alpha:C\tworarrow D$ between functors $F,G:C\to D$ is called \emph{bounded} if $\|\alpha\|:=\sup_{c\in C} \|\alpha_c\|<\infty$.
From now on, all natural transformation will be assumed bounded, even when this is not mentioned explicitly.
\end{defn}

A functor $F:C\to D$ is called 
\emph{faithful} if for every pair of objects $c,c'\in C$, the map $\Hom(c,c')\to \Hom(F(c),F(c'))$ is injective. 
It is called \emph{fully faithful}\label{pageref: (fully) faithful functor} if that map is always an isomorphism.
The functor is called
\emph{dominant}\label{pageref: dominant functor} if every object of $D$ is a direct summand of an object of the form $F(c)$, for some $c\in C$.

\begin{defn}\label{def: idempotent complete}
A $\rmW^*$-category is called \emph{idempotent complete} if whenever a morphism $p:x\to x$ satisfies $p^2=p^*=p$,
there exists an object $y$ and a morphism $\iota:y\to x$ such that $\iota\iota^*=p$ and $\iota^*\iota=\id_y$.
In the above situation, we say that $y$ splits the idempotent $p$.
(This condition is called \emph{having sufficient subobjects} in \cite{MR808930}.)
\end{defn}

\begin{lem}
A functor $F:C\to D$ between idempotent complete $\rmW^*$-categories is an equivalence if and only if it is fully faithful and dominant.
\end{lem}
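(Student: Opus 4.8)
The plan is to prove both directions. The forward direction is immediate: any equivalence of categories is automatically fully faithful, and it is dominant because every object is (isomorphic to, hence) a direct summand of something in the essential image. So the content is in the converse: assuming $F$ is fully faithful and dominant between idempotent complete $\rmW^*$-categories, I want to construct a quasi-inverse. The standard strategy is to show $F$ is essentially surjective and then invoke the usual principle that a fully faithful, essentially surjective functor is an equivalence — but here I must be careful to produce a quasi-inverse that is itself a genuine functor between $\rmW^*$-categories (i.e.\ a $*$-functor inducing normal homomorphisms on endomorphism algebras), not merely a linear equivalence.

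First I would upgrade dominance to essential surjectivity. Given $d \in D$, dominance gives an object $c \in C$ and a morphism exhibiting $d$ as a direct summand of $F(c)$; concretely there is a projection $p = p^2 = p^* \in \End(F(c))$ whose image is (unitarily isomorphic to) $d$. Since $F$ is fully faithful, the induced map $\End(c) \to \End(F(c))$ is an isomorphism of $*$-algebras, so $p$ is the image $F(q)$ of a unique projection $q = q^2 = q^* \in \End(c)$. Now I use that $C$ is idempotent complete: the idempotent $q$ splits, yielding an object $y \in C$ and an isometry $\iota : y \to c$ with $\iota\iota^* = q$, $\iota^*\iota = \id_y$. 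Applying $F$ and using faithfulness, $F(\iota)$ splits $p = F(q)$, so $F(y)$ is unitarily isomorphic to the image of $p$, which is $d$. Hence $F(y) \cong d$, establishing essential surjectivity.

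With $F$ fully faithful and essentially surjective, I construct a quasi-inverse $G$ by choosing, for each $d \in D$, an object $G(d) \in C$ together with a unitary $u_d : F(G(d)) \to d$ (using essential surjectivity and replacing the given isomorphism by a unitary via polar decomposition, which is available since the hom-spaces sit inside $\rmW^*$-algebras). On morphisms, define $G(g) := (\eta_{d'})^{-1}\big(u_{d'}^{-1} \circ g \circ u_d\big)$, where $\eta : \End(G(d), G(d')) \cong \Hom(F(G(d)), F(G(d')))$ is the inverse of the full-faithfulness isomorphism. The routine checks are that $G$ is functorial, that $G$ is a $*$-functor (this follows because the full-faithfulness isomorphisms are $*$-preserving and the $u_d$ are unitary, so daggers are respected), and that the natural isomorphisms $u : F \circ G \Rightarrow \id_D$ and the analogous $\varepsilon : G \circ F \Rightarrow \id_C$ are unitary and bounded. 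The key point requiring $\rmW^*$-structure is that $G$ induces \emph{normal} homomorphisms on endomorphism algebras: this holds because $\eta^{-1}$ is the inverse of a normal isomorphism (full faithfulness in the $\rmW^*$ sense) and conjugation by the unitaries $u_d$ is a normal operation.

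The main obstacle I anticipate is the passage from ``isomorphic'' to ``unitarily isomorphic'' and ensuring all the coherence isomorphisms can be taken unitary and bounded, so that $G$ lands in the correct ambient structure and the equivalence is one of $\rmW^*$-categories rather than merely of underlying linear categories. The uniform boundedness $\sup_d \|u_d\| < \infty$ needed for $u$ to be a bounded natural transformation is free here since each $u_d$ is unitary (hence of norm one), but one should confirm the normality of the induced maps on $\End$ carefully, as that is precisely where the $\rmW^*$-category hypotheses (as opposed to plain $*$-category hypotheses) are used.
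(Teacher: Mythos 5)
Your proof is correct and follows essentially the same route as the paper: the paper's entire argument is your second paragraph (dominance gives $d$ as a summand of $F(c)$, full faithfulness lifts the projection to $q\in\End(c)$, idempotent completeness splits $q$, and the splitting object maps to $d$), after which it silently invokes the standard fact that fully faithful plus essentially surjective implies equivalence. Your additional care about constructing a unitary, normal quasi-inverse is sound but goes beyond what the paper records.
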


\begin{proof}
We show that if $F$ is fully faithful and dominant, then it is essentially surjective. 
Let $d\in D$ be an object.
Since $F$ is dominant, there exists $c\in C$ such that $d$ is a direct summand of $F(c)$.
Let $p:F(c)\to F(c)$ be the orthogonal projection onto that summand.
Since $F$ is fully faithful, there exists a projection $q:c\to c$ such that $F(q)=p$.
Let $c'\in C$ be the object that splits the idempotent $q$. Then $F(c')\cong d$.
\end{proof}

The \emph{idempotent completion}
\label{page ref: idempotent completion}
of a $\rmW^*$-category $C$ is the category $\hat C$ whose objects are pairs $(x,p:x\to x)$ with $p^2=p^*=p$, and whose morphisms are given by
\[
\Hom_{\hat C}((x_1,p_1),(x_2,p_2)):=p_2 \Hom_C(x_1,x_2) p_1.
\]
The idempotent completion of an idempotent complete category is always equivalent to the original category.

\begin{defn}\label{def: admit a set of generators}
A $\rmW^*$-category $C$ is said to \emph{admit a set of generators} if there exists a set of objects such that every non-zero object of $\hat C$ admits a non-zero map from at least one of the generators.
It is said to \emph{admit a generator} if the above set may be taken to consist of a single object (see \cite[Prop 7.3]{MR808930} for an equivalent characterisation of generators).
\end{defn}

\begin{exs*}$\,$\vspace{-1mm}
\begin{itemize}
\item
If $A$ is a von Neumann algebra, then
any faithful $A$-module generates $A$-$\Mod$.
For example, ${}_AL^2A$ is a generator.

\item
The set of all unitary representations of $G$ on $\ell^2G$ (not just the regular one) forms a set of generators for $\Rep(G)$.

\item
Let $C$ be the category whose objects are collections of Hilbert spaces indexed by the class of all ordinals, where the Hilbert spaces are required to be zero beyond a certain ordinal. Then $C$ is a $\rmW^*$-category which does not admit a set of generators.
\end{itemize}
\end{exs*}

\begin{defn}
\label{def: functor categories}
Let $C$ and $D$ be $\rmW^*$-categories. If $C$ admits a set of generators, then the collection of all functors $C\to D$, and all (bounded) natural transformations $C\tworarrow D$ form the objects and morphisms of a $\rmW^*$-category
\[
\mathrm{Func}(C,D).
\]
Given a natural transformation $\alpha:F\Rightarrow G$ between functors $F,G\in \mathrm{Func}(C,D)$, then $\alpha^*:G\Rightarrow F$ is defined pointwise, by the formula $(\alpha^*)_c:=(\alpha_c)^*$.
\end{defn}

Note that if $C$ doesn't have a set of generators, then the functors $C\to D$ might be too numerous, and thus fail to form a class.
Similarly, natural transformations between two functors might fail to form a set.

From now on, all $\rmW^*$-categories in this paper will be assumed to have a set of generators.\medskip

\noindent
\label{page technical remark}
{\bf Technical remark.}\, \it When $C$ is a large $\rmW^*$-category, even if $C$ admits a set of generators, the collection of all functors $C\to D$ is typically too numerous, and thus fails to form a class. 
A way to address this set-theoretic difficulty is to pick a full subcategory $C_0\subset C$ on some set of generators
and replace $\mathrm{Func}(C,D)$
by the full subcategory of $\mathrm{Func}(C_0,D)$ on those functors $C_0\to D$ that admit an extension $C\to D$.
Such an extension is essentially unique if it exists (see Lemma~\ref{lem: Func(C,D) is a full subcategory} below), so
this trick produces a category which is equivalent to the very large category of all functors $C\to D$, but whose objects nevertheless do form a class.
\rm \medskip

If $C_1$, $C_2$, $D$ are $\rmW^*$-categories, then currying provides an equivalence\label{pageref: biliar functors}
\[
\mathrm{Bilin}\big(C_1\times C_2,D\big)
\cong
\mathrm{Func}\big(C_1,\mathrm{Func}(C_2,D)\big)
\]
between blinear functors $C_1\times C_2\to D$ and functors $C_1\to \mathrm{Func}(C_2,D)$.

\begin{defn}
[{\cite[p.100]{MR808930}}]\label{def: orthogonal direct sum}
Given a collection of objects $c_i$ in a $\rmW^*$-category indexed by some set $I$, their \emph{orthogonal direct sum} is an object $x$ equipped with morphisms $\iota_i:c_i\to x$ satisfying
\begin{equation}
\label{eq: dir sum}
\iota_i^* \iota_j=\delta_{ij}\id_{c_i}\qquad\qquad \sum \iota_i^* \iota_i=\id_x.
\end{equation}
The orthogonal direct sums, if it exists, is denoted $\oplus_{i\in I} c_i$.
Here, the infinite sum $\sum_{i\in I} \iota_i^* \iota_i$ in \eqref{eq: dir sum} is defined as the sup over all finite subsets $I_0\subset I$ of the finite sums $\sum_{i\in I_0} \iota_i^* \iota_i$.
\end{defn}

Orthogonal direct sums, if they exist, are unique up to unique unitary isomorphism.
We refer the reader to \cite[Def.~4.2]{MR4077245}
for a universal property satisfied by orthogonal direct sums.
As a corollary of the universal property, if $\jmath_i:c_i\to y$ satisfy $\jmath_i^*\jmath_j=\delta_{ij}\id_{c_i}$, then they induce an isometry $\oplus c_i\to y$ -- this also follows from part {\it(ii)} of Proposition~\ref{prop:freydembedding}, below.

Orthogonal direct sums are universal in the following sense:
\begin{lem}
If $F:C\to D$ is a functor between $\rmW^*$-categories, and $c_i\in C$ is a collection of objects that admits an orthogonal direct sum, then so do their images under $F$, and there is a canonical unitary
$F(\oplus_i c_i) \cong \oplus_i F( c_i)$. \hfill $\square$
\end{lem}

\begin{proof}
If $\iota_i:c_i\to \oplus_i c_i$ are the inclusions, then $F(\iota_i)$ satisfy \eqref{eq: dir sum}, hence exhibit $F(\oplus c_i)$ as $\oplus F(c_i)$.
\end{proof}

A $\rmW^*$-category is said to \emph{admit all direct sums}\label{pageref: to admit all direct sums} if every collection of objects admits an orthogonal direct sum.
Note that a functor between $\rmW^*$-categories automatically preserves orthogonal direct sums (because functors are required, by definition, to induce normal homomorphisms between endomorphism algebras).

The following construction was hinted in \cite[p.100]{MR808930}.
Let us write $I_0\Subset I$ to indicate that $I_0$ is a finite subset of some set $I$.
Let $C$ be a $\rmW^*$-category.
The \emph{direct sum completion} of $C$ is the $\rmW^*$-category $C^{\bar\oplus}$\label{pageref: direct sum completion} whose objects are formal direct sums $\oplus_{i\in I} x_i$ of objects of $C$, indexed over arbitrary sets.
Its morphisms are defined as follows.
We first let $\End_{C^{\bar \oplus}}(\oplus_{i\in I} x_i)$
be the von Neumann algebra generated by 
${\oplus}_{i,j\in I}\Hom(x_i,x_j) = \bigcup_{I_0\Subset I}
\End_{C^\oplus}(\oplus_{i\in I_0} x_i)$ 
inside the algebra of bounded operators on the Hilbert space completion of the pre-Hilbert space
$\bigcup_{I_0\Subset I}
L^2(\End_{C^\oplus}(\oplus_{i\in I_0} x_i))$.
We then define
\[
\textstyle
\Hom_{C^{\bar \oplus}}\!\big(\!\oplus_{i\in I} x_i,\oplus_{j\in J} x_j\big) :=\,
p_2
\End_{C^{\bar \oplus}}\!\big(\!\oplus_{k\in I\sqcup J} x_k\big)p_1,
\]
where $p_1$ and $p_2$ are the projections onto the subobjects $\oplus_{i\in I} x_i$ and $\oplus_{j\in J} x_j$ of $\oplus_{k\in I\sqcup J} x_k$.

\begin{lem}
\label{lem: if C admits all direct sums, then so does its idempotent completion}
If $C$ admits all direct sums, then so does its idempotent completion $\hat{C}$.
\end{lem}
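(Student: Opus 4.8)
The plan is to construct orthogonal direct sums in $\hat C$ by lifting to $C$, forming the direct sum there, and then cutting it down by a projection. Let $\{(x_i,p_i)\}_{i\in I}$ be a family of objects of $\hat C$, so each $x_i$ is an object of $C$ and each $p_i\in\End_C(x_i)$ is a projection ($p_i^2=p_i^*=p_i$). Since $C$ admits all direct sums, I would first form $x:=\oplus_{i\in I}x_i$ in $C$, together with its structure morphisms $\iota_i\colon x_i\to x$ satisfying the orthonormality relation $\iota_i^*\iota_j=\delta_{ij}\id_{x_i}$ of Definition~\ref{def: orthogonal direct sum}.

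Next I would define the candidate projection $P:=\sum_{i\in I}\iota_i p_i\iota_i^*\in\End_C(x)$. The summands $\iota_i p_i\iota_i^*$ are mutually orthogonal projections: each is manifestly self-adjoint, and $(\iota_i p_i\iota_i^*)(\iota_j p_j\iota_j^*)=\delta_{ij}\,\iota_i p_i\iota_i^*$ using $\iota_i^*\iota_j=\delta_{ij}\id$ together with $p_i^2=p_i$. Because $\End_C(x)$ is a $\rmW^*$-algebra, the supremum of the finite partial sums of such a family exists and is again a projection; this supremum is precisely the infinite sum $P$ in the sense of Definition~\ref{def: orthogonal direct sum}. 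I then propose $(x,P)\in\hat C$ as the orthogonal direct sum of the $(x_i,p_i)$.

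To verify this, I would take as structure morphisms $J_i:=\iota_i p_i$. A one-line computation gives $P\iota_i=\iota_i p_i$, hence $J_i=P\iota_i p_i\in P\,\Hom_C(x_i,x)\,p_i=\Hom_{\hat C}\big((x_i,p_i),(x,P)\big)$, so each $J_i$ is a genuine morphism of $\hat C$, with $\hat C$-adjoint $J_i^*=p_i\iota_i^*$. The defining relations of an orthogonal direct sum then follow formally: $J_i^*J_j=p_i(\iota_i^*\iota_j)p_j=\delta_{ij}p_i=\delta_{ij}\id_{(x_i,p_i)}$, and $\sum_i J_iJ_i^*=\sum_i\iota_i p_i\iota_i^*=P=\id_{(x,P)}$. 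By the uniqueness of orthogonal direct sums, this exhibits $(x,P)$ as $\oplus_i(x_i,p_i)$ in $\hat C$, so $\hat C$ admits all direct sums.

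The one step requiring genuine care is the construction of $P$: one must check that the infinite sum is a well-defined endomorphism of $x$ and that it is a projection. This is exactly where the $\rmW^*$-structure enters, since it is the von Neumann algebra $\End_C(x)$ that guarantees that a family of mutually orthogonal projections has a supremum, matching the sup-over-finite-subsets convention for infinite sums. Everything else is purely formal manipulation of the identities $\iota_i^*\iota_j=\delta_{ij}\id$ and $p_i^2=p_i^*=p_i$.
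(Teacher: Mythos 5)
Your proof is correct and follows essentially the same route as the paper: the paper forms $(\oplus_i x_i,\sum_i p_i)$, where $\sum_i p_i$ is exactly your $P=\sum_i\iota_i p_i\iota_i^*$ written with the summands implicitly identified with their images in $\End(\oplus_i x_i)$, and then checks the same relations. You have simply spelled out the verification that the paper leaves as ``one readily checks.''
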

\begin{proof}
Given a family $(x_i,p_i)\in\hat C$, the sum  $\sum p_i$ is a projection in $\End(\oplus x_i)$.
One readily checks that $(\oplus x_i, \sum p_i)$ is an orthogonal direct sum of the $(x_i,p_i)$.
\end{proof}

\begin{defn}\label{def: Cauchy completion}
A $\rmW^*$-category is said to be \emph{Cauchy complete} if it admits all direct sums, and if it is idempotent complete.
We write $C^{\hat \oplus}:=
(\hat C)^{\bar\oplus}$
for the direct sum completion of the idempotent completion 
of a $\rmW^*$-category $C$, and call it
the \emph{Cauchy completion} of $C$.
(See Corollary~\ref{cor: C hat oplus = c oplus hat} for an equivalent definition of $C^{\hat \oplus}$, and the fact that the Cauchy completion of $C$ is indeed Cauchy complete.)
\end{defn}

\begin{term}\label{rem: terminology complete}
We propose that the term `complete $\rmW^*$-category' be used synonymously to `Cauchy complete $\rmW^*$-category' (as in the title of this paper). However, given the potential for confusion with other types of completeness ---idempotent complete, direct sum complete--- we will refrain from using that terminology in this section.
\end{term}

Note that a set of objects generates $C$ iff it generates $C^{\hat \oplus}$.

\begin{exs*}$\,$\vspace{-1mm}
\begin{itemize}
\item
$(\mathbf{B}\bbC)^{\hat \oplus}=(\mathbf{B}\bbC)^{\bar \oplus}=\Hilb$.

\item
If $A$ is a von Neumann algebra, then the map $\star_{A^{\op}}\mapsto {}_AL^2A$ extends to an equivalence $(\mathbf{B}A^{\op})^{\hat \oplus}\cong A\text{-}\Mod$. (See Lemma~\ref{lem: canonical  functor an equivalence iff C_0 generates C}.)

\item
If $D$ is Cauchy complete, then so is $\mathrm{Func}(C,D)$.
\end{itemize}
\end{exs*}

\noindent
The next result is very well known (see \cite[Thm.7.13]{MR808930} for a special case):

\begin{prop}
\label{prop: idempotent completion}
Let $C$ be a $\rmW^*$-category, with idempotent 
completion $\hat C$, and let $D$ be an idempotent complete $\rmW^*$-category. 
Then the restriction functor
$
r:\mathrm{Func}(\hat C,D) \to \mathrm{Func}(C,D)
$
is an equivalence of categories.
\end{prop}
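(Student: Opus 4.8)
The plan is to exhibit a quasi-inverse to the restriction functor $r$ by sending a functor $F \colon C \to D$ to a canonical extension $\hat F \colon \hat C \to D$, using that $D$ is idempotent complete to split the relevant idempotents. First I would define $\hat F$ on objects: given $(x,p) \in \hat C$ with $p^2 = p^* = p$, the morphism $F(p) \colon F(x) \to F(x)$ is a projection (because $F$ is a $*$-functor and hence preserves the relations $p^2 = p^* = p$), so I use idempotent completeness of $D$ to pick an object splitting it, i.e.\ an object $\hat F(x,p) \in D$ together with an isometry $\iota \colon \hat F(x,p) \to F(x)$ satisfying $\iota\iota^* = F(p)$ and $\iota^*\iota = \id$. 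On morphisms, for $f \in \Hom_{\hat C}((x_1,p_1),(x_2,p_2)) = p_2 \Hom_C(x_1,x_2) p_1$, I would set $\hat F(f) := \iota_2^*\, F(f)\, \iota_1$, where $\iota_1,\iota_2$ are the chosen splitting isometries. A short computation using $\iota_i\iota_i^* = F(p_i)$ and the fact that $f = p_2 f p_1$ checks that this respects composition and the $*$-operation, and that $\hat F$ restricts to (something naturally isomorphic to) $F$ on $C$; one must also verify $\hat F$ induces normal homomorphisms on endomorphism algebras, which follows because $a \mapsto \iota^* a \iota$ is a normal map $\End(F(x)) \to \End(\hat F(x,p))$ and $F$ is already normal.

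Next I would argue that $\hat F$ is well defined up to canonical unitary isomorphism independent of the splitting choices: any two splittings of the same projection $F(p)$ differ by a unique unitary, so different choices yield naturally unitarily isomorphic extensions, and hence a well-defined functor on isomorphism classes. This gives a functor $e \colon \mathrm{Func}(C,D) \to \mathrm{Func}(\hat C,D)$ (after fixing choices, e.g.\ by the axiom of choice on the class of projections), and the computation above shows $r \circ e \cong \id$. For the other composite, given $G \colon \hat C \to D$, I would use that every object $(x,p)$ of $\hat C$ is a direct summand of the image $(x, \id_x)$ of an object of $C$ under the canonical inclusion; since $G$ is a $*$-functor it carries the splitting data of $(x,p)$ inside $(x,\id_x)$ to splitting data in $D$, and comparing with the canonical splitting chosen in the construction of $\widehat{(r G)}$ produces a natural unitary isomorphism $e \circ r \cong \id$. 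The naturality of both isomorphisms in $F$ (resp.\ $G$) is the bookkeeping that makes $e$ a genuine quasi-inverse.

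The main obstacle I expect is not any single hard estimate but the coherence and naturality bookkeeping: one must check that the assignment $f \mapsto \iota_2^* F(f) \iota_1$ is functorial, $*$-preserving, and \emph{bounded}/normal on endomorphism algebras, and then verify that the two families of unitary isomorphisms $r e \cong \id$ and $e r \cong \id$ are natural transformations of functors between $\rmW^*$-functor categories. The boundedness of the relevant natural transformations is automatic here because the comparison unitaries are pointwise unitaries, so $\|\cdot\| \le 1$. The only genuinely structural input beyond formal nonsense is the idempotent completeness of $D$, used exactly once (to produce the splitting object $\hat F(x,p)$), together with the fact established earlier in the excerpt that the idempotent completion of an idempotent complete category is equivalent to itself, which guarantees that on $C$ the construction really does recover $F$ up to canonical unitary.
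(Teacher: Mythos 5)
Your proposal is correct and follows essentially the same route as the paper: the extension $\hat F(c,p)$ is defined by splitting the projection $F(p)$ in $D$, with morphisms sent to $\iota_2^*\,F(f)\,\iota_1$, and the paper likewise extends natural transformations by compressing with the splitting isometries. The only difference is presentational — the paper verifies essential surjectivity, fullness, and faithfulness of $r$ directly, while you package the same construction as an explicit quasi-inverse — and both hinge on the single use of idempotent completeness of $D$ that you identify.
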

\begin{proof}
\underline{$r$ is essentiall}y\!\underline{\, sur}j\underline{ective:}\,
Given a functor $F:C\to D$, we construct an extension $\hat F:\hat C\to D$
by leting $\hat{F}(c,p)$ be any object that splits $F(p)$.
The\vspace{-1.2mm}
image under $\hat{F}$ of a morphisms $f:(c_1,p_1)\to (c_2,p_2)$  is the composite $\hat F(c_1,p_1)\hookrightarrow F(c_1)\stackrel{F(f)}\rightarrow F(c_2)\twoheadrightarrow F(c_2,p_2)$.\medskip

\noindent
\underline{$r$ is full:}\,
Given a natural transformation $\alpha: F\Rightarrow G$,
we construct an extension $\hat\alpha:\hat F\Rightarrow \hat G$
by letting $\hat\alpha_{(c,p)}$
be the composite
$\hat F(c,p)\hookrightarrow F(c)\stackrel{\alpha_c}\rightarrow G(c)\twoheadrightarrow G(c,p)$.\medskip

\noindent
\underline{$r$ is faithful:}\,
If $\beta: \hat{F}\Rightarrow \hat{G}$ satisfies $\beta|_C=0$, then $\beta_{(c,p)}:\hat F(c,p) \to \hat G(c,p)$ is zero $\forall (c,p)\in\hat C$ by naturality with respect to the inclusions $(c,p)\hookrightarrow (c,0)$.
\end{proof}

The next result is quite special to the theory of $\rmW^*$-categories, and is a big part of what makes the whole theory work:

\begin{prop}
\label{prop: direct sum completion}
Let $C$ be a $\rmW^*$-category, and let $D$ be a $\rmW^*$-category which admits all direct sums. Then the restriction functor
\begin{equation}
\label{eq: restrictionfunctor}
\mathrm{Func}(C^{\bar\oplus},D) \to \mathrm{Func}(C,D)
\end{equation}
is an equivalence of categories.
\end{prop}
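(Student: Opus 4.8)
The plan is to follow the same three-step template as Proposition~\ref{prop: idempotent completion} and show that the restriction functor $r$ in~\eqref{eq: restrictionfunctor} is essentially surjective, full, and faithful. On objects every extension is forced: for $\Phi\in\mathrm{Func}(C^{\bar\oplus},D)$, the inclusions $\iota_i:x_i\to\oplus_{i}x_i$ in $C^{\bar\oplus}$ satisfy the defining relations~\eqref{eq: dir sum}, so $\oplus_i x_i$ is an orthogonal direct sum in $C^{\bar\oplus}$, and since any functor preserves orthogonal direct sums we get a canonical unitary $\Phi(\oplus_i x_i)\cong\oplus_i\Phi(x_i)$. Thus all the content is about morphisms.

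For essential surjectivity I would start from $F:C\to D$ and set $\tilde F(\oplus_{i\in I}x_i):=\oplus_{i\in I}F(x_i)$, which exists because $D$ admits all direct sums; write $\jmath_i:F(x_i)\to\oplus_i F(x_i)$ for the inclusions and $q_{I_0}:=\sum_{i\in I_0}\jmath_i\jmath_i^*$ for the projections onto finite sub-sums, so that $q_{I_0}\to\id$ strongly as $I_0\Subset I$ grows. To define $\tilde F$ on morphisms it suffices, by the corner description of $\Hom_{C^{\bar\oplus}}$, to treat an endomorphism $f\in\End_{C^{\bar\oplus}}(\oplus_i x_i)$. For each finite $I_0$ the truncation $p_{I_0}fp_{I_0}$ lies in $\End_{C^\oplus}(\oplus_{i\in I_0}x_i)$, on which $F$ induces a normal, hence contractive, $*$-homomorphism into $\End_D(\oplus_{i\in I_0}F(x_i))=q_{I_0}\End_D(\oplus_i F(x_i))q_{I_0}$. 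I would then check that these truncated images stabilise on each fixed corner, i.e.\ $q_{I_1}F(p_{I_0}fp_{I_0})q_{I_1}=F(p_{I_1}fp_{I_1})$ whenever $I_1\subseteq I_0$, because $F$ carries the corner inclusion $\oplus_{I_1}x\hookrightarrow\oplus_{I_0}x$ to the corner inclusion $q_{I_1}\le q_{I_0}$. Consequently the uniformly bounded net $F(p_{I_0}fp_{I_0})$ converges strongly to a single operator $\tilde F(f)$ with $\|\tilde F(f)\|\le\|f\|$; this operator lies in $\End_D(\oplus_i F(x_i))$ since a von Neumann algebra is closed under strong limits of bounded nets. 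One then verifies that $\tilde F$ is multiplicative, $*$-preserving, and normal, and that it restricts to $F$.

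The remaining two steps are lighter. For fullness, given $\Phi,\Psi\in\mathrm{Func}(C^{\bar\oplus},D)$ and a natural transformation $\alpha:\Phi|_C\Rightarrow\Psi|_C$, I would define $\tilde\alpha_{\oplus_i x_i}:=\sum_i\Psi(\iota_i)\,\alpha_{x_i}\,\Phi(\iota_i)^*$; the sum converges strongly and $\|\tilde\alpha_{\oplus x_i}\|\le\|\alpha\|<\infty$ because $\Phi(\iota_i)$ and $\Psi(\iota_i)$ are orthonormal families of isometries, making $\tilde\alpha_{\oplus x_i}$ block-diagonal with blocks $\alpha_{x_i}$. Naturality against a general morphism of $C^{\bar\oplus}$ then reduces, via the strong approximation by finite truncations from the previous step, to naturality against morphisms of $C$, where it holds by hypothesis. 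For faithfulness, if $\beta:\Phi\Rightarrow\Psi$ restricts to $0$ on $C$, then naturality against each $\iota_i$ gives $\beta_{\oplus x_i}\circ\Phi(\iota_i)=\Psi(\iota_i)\circ\beta_{x_i}=0$, whence $\beta_{\oplus x_i}=\beta_{\oplus x_i}\sum_i\Phi(\iota_i)\Phi(\iota_i)^*=0$.

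I expect the main obstacle to be the morphism half of essential surjectivity: one must produce $\tilde F(f)$ for $f$ lying in the weakly closed von Neumann algebra $\End_{C^{\bar\oplus}}(\oplus_i x_i)$, not merely in the weakly dense union of its finite corners. This is precisely where the $\rmW^*$-hypotheses are indispensable, since it is normality of the corner homomorphisms induced by $F$, contractivity of $*$-homomorphisms, and weak closedness of $\End_D(\oplus_i F(x_i))$ that together force the truncations to converge strongly to a genuine morphism of $D$.
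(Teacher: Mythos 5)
Your outline matches the paper's on three of the four fronts: the formula $\tilde F(\oplus_i x_i):=\oplus_i F(x_i)$ on objects, the block-diagonal sum $\sum_i \Psi(\iota_i)\alpha_{x_i}\Phi(\iota_i)^*$ for fullness, and the naturality-against-$\iota_i$ argument for faithfulness are all exactly what the paper does. Where you genuinely diverge is the morphism half of essential surjectivity, which you correctly identify as the crux. The paper does not take strong limits of truncations by hand; instead it isolates a standalone statement (Lemma~\ref{lem: homomorphism defined on union of pMp}): a $*$-homomorphism $\varphi$ defined on the union $\bigcup_{J\Subset I}p_JMp_J$ of finite corners, normal on each corner and with $\sum\varphi(p_i)=1$, extends uniquely to a normal homomorphism $M\to N$. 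That lemma is proved indirectly --- one forms the von Neumann algebra $A$ generated by the image of $\lambda\oplus\varphi$ on $L^2M\oplus H$, checks that each corner map $p_JM p_J\to q_JAq_J$ is an isomorphism, and deduces $A\cong M$ --- which sidesteps all convergence estimates. Your direct route (show the bounded net $F(p_{I_0}fp_{I_0})$ is strongly Cauchy, take the limit) is viable and arguably more concrete, but the sentence ``one then verifies that $\tilde F$ is multiplicative, $*$-preserving, and normal'' hides the real work. Cauchyness needs the orthogonality trick $\|y_{I_0'}q_m\xi\|^2-\|y_{I_0}q_m\xi\|^2=\|(y_{I_0'}-y_{I_0})q_m\xi\|^2$ on the dense union of finite corners; and multiplicativity is \emph{not} immediate, since $p_{I_0}fgp_{I_0}\neq p_{I_0}fp_{I_0}gp_{I_0}$ --- one has to fix a corner $q_{I_1}$, write $q_{I_1}\tilde F(f)\tilde F(g)q_{I_1}=\lim_{I_0}F(p_{I_1}fp_{I_0}gp_{I_1})$, and invoke normality of $F$ on the \emph{fixed} corner $\End_{C^\oplus}(\oplus_{I_1}x_i)$ to identify the limit with $F(p_{I_1}fgp_{I_1})$; normality of $\tilde F$ is checked similarly corner by corner. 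So: same skeleton, but you replace the paper's key lemma by an explicit limit construction; the paper's version buys a clean, reusable statement with no epsilon-chasing, while yours buys concreteness at the price of several convergence verifications that must actually be written out.
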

\begin{proof}
We first show that
\eqref{eq: restrictionfunctor} is essentially surjective.
Given a functor $F: C\to D$, 
we define an extension $F^+: C^{\bar\oplus}\to D$ of $F$ as follows.
At the level of objects,
$F^+(\oplus c_i):=\oplus F(c_i)$.
At the level of endomorphism algebras, we let
\[
\textstyle
F^+:\End(\oplus_{i\in I} c_i) \to \End(\oplus_{i\in I} F(c_i))
\]
be the unique extension, by Lemma~\ref{lem: homomorphism defined on union of pMp}, of
\begin{align*}
\bigcup_{I_0\Subset I}
\End(\oplus_{i\in {I_0}} c_i)
\,&\to\,
\bigcup_{I_0\Subset I}
\End(\oplus_{i\in {I_0}} F(c_i))
\,\hookrightarrow\,
\End(\oplus_{i\in I} F(c_i)),
\\
(f_{ij})_{i,j \in I_0}
\quad
&\mapsto\quad \sum_{i,j\in I_0} \iota^*_j\, F(f_{ij})\, \iota_i
\end{align*}
where $\iota_j:F(c_j) \to \oplus_{i\in {I_0}} F(c_i)$ is the inclusion of the $j$-th summand.
Here, as before, we have used the notation $I_0\Subset I$ to mean that $I_0\subset I$ and $|I_0|<\infty$.
Finally, at the level of hom-spaces, we let
\(
\textstyle
F^+:\Hom(\oplus_{i\in I} c_i, \oplus_{j\in J} c_j) \to \Hom(\oplus_{i\in I} F(c_i),\oplus_{j\in J} F(c_j))
\)
be the appropriate corner of
$
\textstyle
F^+:\End(\oplus_{k\in I\cup J} c_k) \to \End(\oplus_{k\in I\cup J} F(c_k))
$.

To show that \eqref{eq: restrictionfunctor} is full,
we argue that every (bounded) natural transformation $\alpha: F\Rightarrow G$ between functors $C\to D$
extends to a natural transformation $\alpha^+: F^+\Rightarrow G^+$.
Letting 
$\iota^F_j:F(c_j) \to \oplus_{i\in I} F(c_i)$ and
$\iota^G_j:G(c_j) \to \oplus_{i\in I} G(c_i)$
denote the inclusions,
$\alpha^+$ is given by
\begin{equation*}
(\alpha^+)_{\oplus_{i\in I} c_i}:= \sum_{i\in I} \iota^G_i\circ \alpha_{c_i}\circ (\iota^F_i)^* : \oplus_{i\in I} F(c_i)\to \oplus_{i\in I} G(c_i).
\end{equation*}
The sum is convergent because $\alpha$ is bounded.

Finally, if $\beta: F^+\Rightarrow G^+$ restricts to zero on $C\subset C^{\bar\oplus}$, then each composite
$\beta_{\oplus c_i}\circ \iota^F_j: F(c_j) \to \oplus_i F(c_i)\to \oplus_i G(c_i)$
is equal to $\iota^G_j\circ \beta_{c_j}$ by naturality, hence is zero.
So $\beta^+$ is zero, and \eqref{eq: restrictionfunctor} is faithful.
\end{proof}

As a formal consequence of Proposition~\ref{prop: direct sum completion},
the direct sum completion of a category which admits all direct sums is always equivalent to the original category.
Combining Propositions~\ref{prop: idempotent completion} and~\ref{prop: direct sum completion}, we get a version of \cite[Thm~7.13]{MR808930}:

\begin{cor}
\label{cor: Cauchy completion}
Let $C$ and $D$ be $\rmW^*$-categories, with $D$ Cauchy complete.
Then the restriction functor
\[
\mathrm{Func}\big(C^{\hat \oplus},D\big) \to \mathrm{Func}\big(C,D\big)
\]
is an equivalence of categories.\hfill $\square$
\end{cor}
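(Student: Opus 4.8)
The plan is to realise the restriction functor as a composite of two restriction functors, each an equivalence by one of the two immediately preceding propositions. First, since $D$ is Cauchy complete, Definition~\ref{def: Cauchy completion} tells us that $D$ both admits all direct sums and is idempotent complete. Recall also that $C^{\hat\oplus}$ is \emph{defined} as $(\hat C)^{\bar\oplus}$, so there are canonical inclusions $C\hookrightarrow\hat C\hookrightarrow(\hat C)^{\bar\oplus}=C^{\hat\oplus}$.

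Next, I would factor the restriction functor $\mathrm{Func}(C^{\hat\oplus},D)\to\mathrm{Func}(C,D)$ as the composite
\[
\mathrm{Func}\big((\hat C)^{\bar\oplus},D\big)\xrightarrow{\;r_1\;}\mathrm{Func}(\hat C,D)\xrightarrow{\;r_2\;}\mathrm{Func}(C,D),
\]
where $r_1$ restricts functors along $\hat C\hookrightarrow(\hat C)^{\bar\oplus}$ and $r_2$ restricts them along $C\hookrightarrow\hat C$. That this composite agrees with restriction along $C\hookrightarrow C^{\hat\oplus}$ is immediate from the definitions. Now Proposition~\ref{prop: direct sum completion}, applied with $\hat C$ in place of $C$ (legitimate because $D$ admits all direct sums), shows that $r_1$ is an equivalence; and Proposition~\ref{prop: idempotent completion} (legitimate because $D$ is idempotent complete) shows that $r_2$ is an equivalence. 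A composite of equivalences is an equivalence, which finishes the argument.

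The argument is essentially formal, so I do not expect any serious obstacle; the only points requiring care are set-theoretic. Each of the three functor categories above is defined only once its source admits a set of generators, so I would record that the \emph{same} set of objects generates $C$, $\hat C$, and $C^{\hat\oplus}$. This follows from Definition~\ref{def: admit a set of generators}, which is already phrased in terms of $\hat C$ (so generators of $C$ generate $\hat C$), together with the remark following Definition~\ref{def: Cauchy completion} that a set generates $C$ iff it generates $C^{\hat\oplus}$. With that bookkeeping in place, the composition of the two propositions is the entire content of the proof.
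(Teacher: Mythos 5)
Your proof is correct and is exactly the argument the paper intends: the corollary is obtained by composing the two restriction equivalences from Proposition~\ref{prop: direct sum completion} (applied to $\hat C$, using that $D$ admits all direct sums) and Proposition~\ref{prop: idempotent completion} (using that $D$ is idempotent complete). The set-theoretic bookkeeping about generators is a sensible extra precaution consistent with the paper's standing assumptions.
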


Propositions~\ref{prop: idempotent completion},~\ref{prop: direct sum completion}, and Corollary~\ref{cor: Cauchy completion} admit straightforward generalisations to the case of bilinear functors.
For example:

\begin{cor}
\label{cor: Cauchy completion'}
Let $C_1$, $C_2$ and $D$ be $\rmW^*$-categories, with $D$ Cauchy complete.
Then the restriction functor
\[
\mathrm{Bilin}\big(C_1^{\hat \oplus}\times C_2^{\hat \oplus},D\big) \to \mathrm{Bilin}\big(C_1\times C_2,D\big)
\]
is an equivalence of categories.
\end{cor}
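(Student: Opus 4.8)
The plan is to deduce this from the single-variable result Corollary~\ref{cor: Cauchy completion} by currying and completing one factor at a time. First I would factor the restriction functor in the statement as a composite
\[
\mathrm{Bilin}\big(C_1^{\hat\oplus}\times C_2^{\hat\oplus},D\big)
\longrightarrow
\mathrm{Bilin}\big(C_1\times C_2^{\hat\oplus},D\big)
\longrightarrow
\mathrm{Bilin}\big(C_1\times C_2,D\big),
\]
where the first arrow restricts along $C_1\hookrightarrow C_1^{\hat\oplus}$ and the second along $C_2\hookrightarrow C_2^{\hat\oplus}$. Since a composite of equivalences is again an equivalence, it suffices to treat each arrow separately. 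Note that $C_1^{\hat\oplus}$ and $C_2^{\hat\oplus}$ admit sets of generators (a set of objects generates $C$ iff it generates $C^{\hat\oplus}$), so all the functor and bilinear categories below are well-defined.

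For the first arrow, I would apply the currying equivalence (page~\pageref{pageref: biliar functors}) in the first variable to both source and target, identifying the arrow with the restriction functor
\[
\mathrm{Func}\big(C_1^{\hat\oplus},\mathrm{Func}(C_2^{\hat\oplus},D)\big)
\longrightarrow
\mathrm{Func}\big(C_1,\mathrm{Func}(C_2^{\hat\oplus},D)\big).
\]
The target $\mathrm{Func}(C_2^{\hat\oplus},D)$ is Cauchy complete because $D$ is, and functor categories into a Cauchy complete target are again Cauchy complete (noted in the examples preceding this corollary). Hence Corollary~\ref{cor: Cauchy completion}, applied with ambient target $\mathrm{Func}(C_2^{\hat\oplus},D)$, shows this restriction is an equivalence. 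The second arrow is handled symmetrically: currying in the second variable identifies it with the restriction $\mathrm{Func}(C_2^{\hat\oplus},\mathrm{Func}(C_1,D))\to\mathrm{Func}(C_2,\mathrm{Func}(C_1,D))$, and $\mathrm{Func}(C_1,D)$ is Cauchy complete, so Corollary~\ref{cor: Cauchy completion} again applies.

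The only content beyond invoking Corollary~\ref{cor: Cauchy completion} is checking that currying intertwines the two-variable restriction with the one-variable restriction, i.e. that the relevant squares commute. This is the step I expect to require the most care: one must verify that restricting a bilinear functor $C_1^{\hat\oplus}\times C_2^{\hat\oplus}\to D$ along $C_1\hookrightarrow C_1^{\hat\oplus}$ corresponds, under currying, to restricting the associated functor $C_1^{\hat\oplus}\to\mathrm{Func}(C_2^{\hat\oplus},D)$ along the same inclusion, together with the analogous statement for bounded natural transformations. Both are immediate from the definition of currying, since the construction is computed objectwise in the first variable and leaves the second variable untouched; the work is purely bookkeeping.

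Alternatively, one could give a direct proof mirroring Propositions~\ref{prop: idempotent completion} and~\ref{prop: direct sum completion}: extend a bilinear functor by the same object- and morphism-level formulas applied in each variable, and extend bounded natural transformations by the same convergent sums, checking fullness and faithfulness as there. This avoids currying but essentially repeats those two arguments twice; since the single-variable machinery is already in place, the currying route is the cleaner one to write up.
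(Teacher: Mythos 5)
Your proposal is correct and is essentially the paper's own argument: the paper also curries, notes that $\mathrm{Func}(C_2^{\hat\oplus},D)$ (resp.\ $\mathrm{Func}(C_2,D)$) is Cauchy complete because $D$ is, and applies Corollary~\ref{cor: Cauchy completion} once in each variable. The only cosmetic difference is that you curry in the second variable for the second step, whereas the paper keeps currying in the first variable and instead applies the one-variable result to the inner functor category; both are routine bookkeeping of the same idea.
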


\begin{proof}
If $D$ is Cauchy complete,\, then so is $\mathrm{Func}(C,D)$.\, By Corollary~\ref{cor: Cauchy completion},\, we then have:\vspace{-3mm}
\[
\mathrm{Bilin}\big(C_1^{\hat \oplus}\times C_2^{\hat \oplus},D\big)
=
\mathrm{Func}\big(C_1^{\hat \oplus},\mathrm{Func}(C_2^{\hat \oplus},D)\big)
=
\mathrm{Func}\big(C_1,\mathrm{Func}(C_2,D)\big)
=\mathrm{Bilin}\big(C_1\times C_2,D\big).
\qedhere
\]
\end{proof}

\begin{lem}
\label{lem: canonical  functor an equivalence iff C_0 generates C}
Let $C$ be a Cauchy complete $\rmW^*$-category, and let $C_0\subset C$ be the full subcategory on some set of objects.
Then the  functor $F:(C_0)^{\hat \oplus}\to C$ provided by Corollary~\ref{cor: Cauchy completion} is fully faithful. It is an equivalence if and only if $C_0$ generates $C$.
\end{lem}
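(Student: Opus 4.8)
The plan is to prove the two assertions separately: first that $F$ is always fully faithful, and then that essential surjectivity of $F$ is equivalent to the generation hypothesis. Throughout I identify $(C_0)^{\hat\oplus}=(\hat C_0)^{\bar\oplus}$, so a typical object is a formal orthogonal direct sum $\oplus_{i\in I}z_i$ with $z_i\in\hat C_0$, and I use that $F$ sends it to $\oplus_i F(z_i)$ in $C$ (this sum exists because $C$ is Cauchy complete, and every functor preserves orthogonal direct sums).

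For full faithfulness I would first observe that the inclusion $C_0\hookrightarrow C$ is fully faithful and that its extension to $\hat C_0$ remains so: since $C$ is idempotent complete, $\hat C_0\to C$ factors through the fully faithful $\hat C_0\to\hat C\simeq C$, and on hom-spaces both sides are the matching corners $p_2\Hom(c_1,c_2)p_1$. Hence $F$ is fully faithful on finite direct sums of objects of $\hat C_0$. For infinite direct sums it suffices to show that for any family $\{z_k\}_{k\in I\sqcup J}$ the induced map of von Neumann algebras $\End_{(\hat C_0)^{\bar\oplus}}(\oplus_k z_k)\to \End_C(\oplus_k F(z_k))$ is an isomorphism, since the hom-spaces on both sides are the corners of these algebras cut down by the projections onto $\oplus_{i\in I}$ and $\oplus_{j\in J}$, and $F$ carries one pair of projections to the other. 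Both algebras are generated by the increasing net of finite corners $\End(\oplus_{k\in K_0}z_k)$, $K_0\Subset I\sqcup J$, with the corner projections $p_{K_0}\nearrow\id$ strongly, and $F$ restricts on these to the finite-corner isomorphisms just established. Injectivity then follows because $F(p_{K_0}a\,p_{K_0})=0$ forces $p_{K_0}a\,p_{K_0}=0$ for all $K_0$ while $p_{K_0}a\,p_{K_0}\to a$ strongly; surjectivity follows because the image is a von Neumann subalgebra (Lemma~\ref{lem: kernel of maps of vNalg}) containing the generating finite corners of the target.

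For the equivalence, since $F$ is fully faithful it is an equivalence if and only if it is essentially surjective. The easy direction is that if $F$ is essentially surjective then $C_0$ generates $C$: any nonzero $d\cong\oplus_i F(z_i)$ is a direct sum of summands of objects of $C_0$, so composing the inclusion $d\hookrightarrow\oplus_i c_i$ with a projection onto some summand $c_i\in C_0$ gives a nonzero morphism, and applying $*$ yields a nonzero morphism from an object of $C_0$ into $d$.

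The substantial direction, and the main obstacle, is to produce from the generation hypothesis an isomorphism $d\cong\oplus_i c_i'$ with $c_i'\in\hat C_0$. The key tool is polar decomposition inside the von Neumann algebra $\End(c\oplus d)$: a nonzero $f\colon c\to d$ with $c\in C_0$ yields a partial isometry $c\to d$ whose source-support projection splits, by idempotent completeness of $C$, off an object $c'\in\hat C_0$, giving an honest isometry $v\colon c'\to d$ whose range projection is the support of $ff^*$. I would then run Zorn's lemma over families $\{v_i\colon c_i'\to d\}$ of such isometries with mutually orthogonal range projections; such a family satisfies $v_i^*v_j=\delta_{ij}\id_{c_i'}$ and hence assembles into an isometry $V\colon\oplus_i c_i'\to d$ with $VV^*=\sup_i v_iv_i^*$, by the universal property of orthogonal direct sums. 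For a maximal family, if $VV^*\neq\id_d$ then the complementary summand $d''\neq 0$ receives, by the generation hypothesis together with $*$, a nonzero morphism from some $c\in C_0$; polar-decomposing it produces an isometry into $d$ with range orthogonal to every $v_iv_i^*$, contradicting maximality. Hence $VV^*=\id_d$, so $V$ is unitary and $d\cong\oplus_i c_i'=F(\oplus_i c_i')$, proving essential surjectivity. The points needing care are that these families genuinely form a set (their range projections are mutually orthogonal nonzero projections in the fixed von Neumann algebra $\End(d)$, so their number is bounded) and the strong convergence of the defining sums, both of which are routine.
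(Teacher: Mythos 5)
Your proof is correct and follows essentially the same route as the paper's: full faithfulness by reducing to the fully faithful inclusion on finite corners of the endomorphism von Neumann algebras, and essential surjectivity via a Zorn's-lemma-maximal family of partial isometries with orthogonal ranges, using polar decomposition and the generation hypothesis to show the range projections sum to the identity. The only differences are cosmetic (you split off the source projections into $\hat C_0$ at the outset where the paper does so at the end, and you spell out the density/normality details that the paper leaves implicit), so there is nothing to flag.
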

\begin{proof}
Since $C_0\hookrightarrow C$ is fully faithful, so is
$C_0^{\hat \oplus}\hookrightarrow C^{\hat \oplus}$.
The functor $F$ factors as $C_0^{\hat \oplus}\hookrightarrow C^{\hat \oplus}\cong C$, so it is also fully faithful.

We now show that if $C_0$ generates $C$, then
$F$ is essentially surjective.
Given $c\in C$, we may pick by Zorn's lemma a maximal family of non-zero partial isometries $\{v_i: x_i\to c\}$ with domains $x_i\in C_0$, and orthogonal ranges.
Let $p_i:=v_iv_i^*$ and $q_i:=v_i^*v_i$.
We have $\sum p_i=1$, as otherwise, by picking a non-zero morphism from some object $x\in C_0$ to the object $y$ that splits the idempotent $1-\sum p_j$, and 
applying polar decomposition, we would get a new non-zero partial isometry $x\to y \hookrightarrow c$ whose range is orthogonal to the ones in our family, contradicting the maximality of the family.
So $\sum p_i=1$, and the $v_i$ induce a unitary from  $F(\oplus_i(x_i,q_i))$ to $c$. 

Finally, since $C_0$ generates its Cauchy completion, if the functor $(C_0)^{\hat \oplus}\to C$ is an equivalence, then $C_0$ generates $C$.
\end{proof}

We are now in position to justify the technical remark on page \pageref{page technical remark}:

\begin{lem}[{\cite[Lem~2.2]{MR2325696}}]
\label{lem: Func(C,D) is a full subcategory}
Let $C$ and $D$ be $\rmW^*$-categories,
and let $C_0\subset C$ be the 
full subcategory on some set of generators.
Then $\mathrm{Func}(C,D)$ is equivalent to the full subcategory of $\mathrm{Func}(C_0,D)$ on those functors $C_0\to D$ that admit an extension $C\to D$.
\end{lem}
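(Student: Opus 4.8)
The statement is equivalent to the assertion that the restriction functor
\[
r:\mathrm{Func}(C,D)\longrightarrow \mathrm{Func}(C_0,D)
\]
is fully faithful: its essential image is, by definition, the full subcategory of those functors $C_0\to D$ that extend to $C$, so only full faithfulness carries content. The plan is to first prove the corresponding statement when the target is Cauchy complete, where the completion machinery applies directly, and then descend to a general $D$ by embedding it into its Cauchy completion $D^{\hat\oplus}$.

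\emph{Step 1 (Cauchy complete target).} Suppose $D'$ is Cauchy complete. Since $C_0$ generates $C$, it also generates $C^{\hat\oplus}$ (the observation following Definition~\ref{def: Cauchy completion}), and $C_0$ is a full subcategory of the Cauchy complete category $C^{\hat\oplus}$. Hence Lemma~\ref{lem: canonical  functor an equivalence iff C_0 generates C} applies and produces an equivalence $e:C_0^{\hat\oplus}\xrightarrow{\simeq} C^{\hat\oplus}$ extending the inclusion $C_0\hookrightarrow C^{\hat\oplus}$. Now assemble the square
\[
\begin{array}{ccc}
\mathrm{Func}(C^{\hat\oplus},D') & \longrightarrow & \mathrm{Func}(C,D')\\[1mm]
\big\downarrow & & \big\downarrow\, r'\\[1mm]
\mathrm{Func}(C_0^{\hat\oplus},D') & \longrightarrow & \mathrm{Func}(C_0,D'),
\end{array}
\]
in which the horizontal arrows are restriction along $C\hookrightarrow C^{\hat\oplus}$ and $C_0\hookrightarrow C_0^{\hat\oplus}$, the left arrow is precomposition with $e$, and $r'$ is restriction along $C_0\hookrightarrow C$. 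The square commutes up to natural isomorphism, because $e|_{C_0}$ is the inclusion $C_0\hookrightarrow C^{\hat\oplus}$. The two horizontal arrows are equivalences by Corollary~\ref{cor: Cauchy completion} (applied to $C$ and to $C_0$ respectively), and the left arrow is an equivalence since $e$ is. Therefore $r'$ is an equivalence; in particular it is fully faithful.

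\emph{Step 2 (descent to general $D$).} Let $\iota:D\hookrightarrow D^{\hat\oplus}=:D'$ denote the canonical fully faithful inclusion. Post-composition with $\iota$ does not change spaces of natural transformations: a bounded natural transformation between $F,G:E\to D$ is literally the same datum as one between $\iota F,\iota G$, since $\iota$ induces isometric isomorphisms on hom-spaces. Thus both induced functors $\iota_*:\mathrm{Func}(C,D)\to\mathrm{Func}(C,D')$ and $\iota_*:\mathrm{Func}(C_0,D)\to\mathrm{Func}(C_0,D')$ are fully faithful, and they fit into a strictly commuting square with $r$ and $r'$, since restriction and post-composition commute. As $r'\circ\iota_*$ is fully faithful (a composite of fully faithful functors) and equals $\iota_*\circ r$, while the right-hand $\iota_*$ is faithful, the usual cancellation argument forces $r$ to be fully faithful: faithfulness is immediate, and for fullness one lifts a natural transformation $\phi$ between restrictions along $C_0$, applies the right $\iota_*$, lifts through the fully faithful $\iota_*\circ r$, and cancels the faithful $\iota_*$. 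This yields the claimed equivalence of $\mathrm{Func}(C,D)$ with the full subcategory of $\mathrm{Func}(C_0,D)$ on functors admitting an extension.

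\emph{Main obstacle.} The only non-formal ingredient is the equivalence $C_0^{\hat\oplus}\simeq C^{\hat\oplus}$, whose proof (Lemma~\ref{lem: canonical  functor an equivalence iff C_0 generates C}) rests on the Zorn's-lemma decomposition of an object of a Cauchy complete category into partial isometries from a generating set; everything else is bookkeeping with Corollary~\ref{cor: Cauchy completion}. The point requiring genuine care is set-theoretic: $\mathrm{Func}(C,D)$ is a priori only a ``very large'' collection, and the substance of the lemma is precisely that $r$ identifies it with an honest, class-sized full subcategory of $\mathrm{Func}(C_0,D)$, thereby justifying the technical remark.
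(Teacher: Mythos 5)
Your proof is correct and follows essentially the same route as the paper: both rest on the equivalence $(C_0)^{\hat\oplus}\cong C^{\hat\oplus}$ from Lemma~\ref{lem: canonical  functor an equivalence iff C_0 generates C} together with Corollary~\ref{cor: Cauchy completion}, and both pass through the fully faithful embedding of $D$ into $D^{\hat\oplus}$. The paper merely compresses your two steps into a single chain of fully faithful functors $\mathrm{Func}(C,D)\to\mathrm{Func}(C,D^{\hat\oplus})\cong\mathrm{Func}(C^{\hat\oplus},D^{\hat\oplus})\cong\mathrm{Func}(C_0,D^{\hat\oplus})\leftarrow\mathrm{Func}(C_0,D)$.
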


\begin{proof}
By Lemma~\ref{lem: canonical  functor an equivalence iff C_0 generates C}, since $C_0$ generates $C^{\hat \oplus}$, we have
$(C_0)^{\hat \oplus}\cong C^{\hat \oplus}$.
The functors
\[
\mathrm{Func}(C,D) \to 
\mathrm{Func}(C,D^{\hat\oplus})
\cong
(C^{\hat\oplus},D^{\hat\oplus})
\cong
\mathrm{Func}(C_0,D^{\hat\oplus})
\leftarrow
\mathrm{Func}(C_0,D)
\]
are fully faithful
(were the middle equivalences hold by Corollary~\ref{cor: Cauchy completion}), so $\mathrm{Func}(C,D)\to \mathrm{Func}(C_0,D)$ is fully faithful.
Its essential image consists of those functors $C_0\to D$ that admit an extension $C\to D$.
\end{proof}

Recall that our $\rmW^*$-categories are assumed to all admit a set of generators.
As an immediate consequence of 
Lemma~\ref{lem: canonical  functor an equivalence iff C_0 generates C},
we get the following important structural result about Cauchy complete $\rmW^*$-categories:

\begin{prop}[{\cite[Prop.~7.6]{MR808930}}]
\label{prop:freydembedding}
\mbox{}
\begin{enumerate}[label=(\roman*)]
    \item 
    Every Cauchy complete $\rmW^*$-category $C$ is equivalent to the category of modules over some von Neumann algebra.
    \item 
    Every $\rmW^*$-category is equivalent to a full subcategory of the category of modules over some von Neumann algebra.
\end{enumerate}
\end{prop}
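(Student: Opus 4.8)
The plan is to deduce both parts from the machinery already in place --- Corollary~\ref{cor: Cauchy completion}, Lemma~\ref{lem: canonical  functor an equivalence iff C_0 generates C}, and the equivalence $(\mathbf{B}A^{\op})^{\hat\oplus}\cong A\text{-}\Mod$ recorded above --- after first reducing to the case of a \emph{single} generator. Recall that, by our standing convention, $C$ admits a set of generators $\{c_i\}_{i\in I}$. Since $C$ is Cauchy complete it admits all direct sums, so I can form $c:=\oplus_{i\in I}c_i$ with inclusions $\iota_i:c_i\to c$. I claim $c$ is by itself a generator: given a non-zero object $x$ of $\hat C\cong C$ (using idempotent completeness to pass between them), there is a non-zero morphism $f:c_i\to x$ for some $i$, and then $f\circ\iota_i^*:c\to x$ is non-zero, since $(f\circ\iota_i^*)\circ\iota_i=f\circ(\iota_i^*\iota_i)=f\neq 0$.

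To finish (i), let $C_0\subset C$ be the full subcategory on the single object $c$, so that $C_0=\mathbf{B}A$ with $A:=\End(c)$, a $\rmW^*$-algebra. Because $c$ generates $C$, Lemma~\ref{lem: canonical  functor an equivalence iff C_0 generates C} gives an equivalence $(\mathbf{B}A)^{\hat\oplus}=(C_0)^{\hat\oplus}\stackrel{\simeq}{\to}C$. Applying the recorded equivalence with $A^{\op}$ in place of $A$ (so that $(A^{\op})^{\op}=A$) identifies $(\mathbf{B}A)^{\hat\oplus}\cong A^{\op}\text{-}\Mod$. Composing, $C\cong \End(c)^{\op}\text{-}\Mod$, exhibiting $C$ as the category of modules over the von Neumann algebra $\End(c)^{\op}$.

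For (ii), let $C$ be an arbitrary $\rmW^*$-category. The canonical functor $C\to C^{\hat\oplus}=(\hat C)^{\bar\oplus}$ is fully faithful, being the composite of the fully faithful inclusions $C\hookrightarrow\hat C$ (immediate from $\Hom_{\hat C}((x_1,\id),(x_2,\id))=\Hom_C(x_1,x_2)$) and $\hat C\hookrightarrow(\hat C)^{\bar\oplus}$ (the singleton-indexed formal sums). Moreover $C^{\hat\oplus}$ is Cauchy complete and, as noted just after Definition~\ref{def: Cauchy completion}, has exactly the same generating sets as $C$; hence part (i) applies and yields $C^{\hat\oplus}\cong B\text{-}\Mod$ for a von Neumann algebra $B$. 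Composing the fully faithful embedding with this equivalence realises $C$ as a full subcategory of $B\text{-}\Mod$.

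Since the bulk of the argument is assembly of prior results, the step that deserves the most care is the reduction to a single generator: it genuinely uses Cauchy completeness (to form the infinite orthogonal direct sum $\oplus_{i\in I}c_i$) together with idempotent completeness (to test the generation condition on $C$ rather than on $\hat C$). The only other delicate point is purely bookkeeping --- tracking the placement of $(-)^{\op}$ when passing through $(\mathbf{B}A^{\op})^{\hat\oplus}\cong A\text{-}\Mod$ --- which is easy to get backwards but otherwise harmless, since either way one lands on modules over \emph{some} von Neumann algebra.
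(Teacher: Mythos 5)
Your proof is correct and takes essentially the same route as the paper's: both parts rest on Lemma~\ref{lem: canonical  functor an equivalence iff C_0 generates C} applied to the one-object subcategory on a generator $c$, identifying $C\cong(\mathbf{B}\End(c))^{\hat\oplus}\cong\End(c)^{\op}\text{-}\Mod$, and part (ii) follows by embedding $C$ into its Cauchy completion. The only difference is that you explicitly justify the reduction to a single generator by forming $\oplus_i c_i$ and checking it generates, a step the paper's proof takes for granted.
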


\begin{proof}
{\it (i)} Let $c \in C$ be a generator, let $A:=\End (c)^{\op}$,
and let $\mathbf BA^{\op}\to C$ and $\mathbf BA^{\op}\to A\text{-}\Mod$ be the functors given by $\star_{A^{\op}}\mapsto c$ and $\star_{A^{\op}}\mapsto {}_AL^2A$, respectively.
By Lemma~\ref{lem: canonical  functor an equivalence iff C_0 generates C},
these extend to equivalences $\mathbf (BA^{\op})^{\hat\oplus}\to C$ and $(BA^{\op})\to A\text{-}\Mod$.
So $C\cong A\text{-}\Mod$.

{\it (ii)} Any $\rmW^*$-category is a full subcategory of its Cauchy completion. By part {\it (i)}, the latter is equivalent to the category of modules over some von Neumann algebra.
\end{proof}

In \eqref{eq: freydembedding'} below, we will refine the above result 
by providing an explicit formula for the functor 
$C\to A\text{-}\Mod$ whose existence is guaranteed by the above proposition.

The following analog of the Cantor-Schr\"oder-Bernstein theorem is very well known:

\begin{lem}\label{lem:Cantor-Schroeder-Bernstein}
If there exist isometries $a\to b$ and $b\to a$ in a $\rmW^*$-category, then $a\cong b$.
\end{lem}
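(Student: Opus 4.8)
The plan is to mimic the classical Cantor--Schr\"oder--Bernstein argument, but carried out at the level of partial isometries in a $\rmW^*$-category rather than injections of sets. Given isometries $u:a\to b$ and $v:b\to a$ (so $u^*u=\id_a$, $v^*v=\id_b$), I would work inside an ambient von Neumann algebra: by Proposition~\ref{prop:freydembedding}{\it (ii)} the relevant full subcategory embeds in $A\text{-}\Mod$ for some von Neumann algebra $A$, so $a$ and $b$ become $A$-modules, $\End(a\oplus b)$ becomes a von Neumann algebra, and the two projections $p:=uu^*\in\End(b)$ and $q:=vv^*\in\End(a)$ are genuine projections onto the ranges of the isometries. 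The isometry $u$ restricts to a unitary $a\cong \mathrm{im}(u)$, and likewise $v$ to a unitary $b\cong\mathrm{im}(v)$; the goal is to manufacture a \emph{unitary} $a\to b$, i.e.\ an isometry that is also surjective.

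The key computation is the standard telescoping / alternating-subspace decomposition, made unitary-friendly by the polar structure. I would set $w:=vu:a\to a$, an isometry with range projection $r:=wuw^*$ (more carefully, $r=vuu^*v^*$), and consider the descending chain of projections $q\ge r\ge \dots$ obtained by iterating $x\mapsto wxw^*$. Decompose $\id_a$ into the pieces $q-r,\ w(q-r)w^*,\ w^2(q-r)(w^*)^2,\dots$ together with the residual projection $e:=\inf_n w^n\id_a (w^*)^n$ (the intersection of all the ranges, which exists as an infimum of projections in the von Neumann algebra $\End(a)$). On the residual piece $e$, $w$ restricts to a unitary; on the "even/odd shifted" pieces one alternately uses $u$ and $v$ to move blocks, exactly as in the set-theoretic CSB bijection. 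Reassembling these block-wise (anti)unitaries yields a single unitary $a\to b$, whence $a\cong b$ in $A\text{-}\Mod$, and therefore in the original $\rmW^*$-category since the embedding is fully faithful.

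\begin{proof}[Proof sketch]
By Proposition~\ref{prop:freydembedding}{\it (ii)} we may assume $a,b$ are modules over a von Neumann algebra $A$, so that the given isometries $u:a\to b$, $v:b\to a$ live in $A\text{-}\Mod$ and all the projections below are honest projections in the von Neumann algebras $\End(a)$, $\End(b)$. Write $w:=vu\in\End(a)$, an isometry, and let $r:=ww^*$ be its range projection. Iterating, the projections $p_n:=w^n(w^*)^n$ form a decreasing sequence $\id_a=p_0\ge p_1\ge\cdots$, with infimum $e:=\inf_n p_n\in\End(a)$. Setting $f_n:=p_n-p_{n+1}$, one has the orthogonal decomposition $\id_a=e+\sum_{n\ge0} f_n$, and $w$ carries $f_n$ unitarily onto $f_{n+1}$ and $e$ unitarily onto $e$. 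Grouping the $f_n$ by parity and interleaving the unitaries induced by $u$ and $v$ (on the relevant ranges) with the identity, exactly as in the Cantor--Schr\"oder--Bernstein construction, assembles a unitary $a\to b$. Hence $a\cong b$ in $A\text{-}\Mod$, and since the embedding of the ambient subcategory into $A\text{-}\Mod$ is fully faithful, $a\cong b$ in the original $\rmW^*$-category.
\end{proof}

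The main obstacle I anticipate is \emph{convergence and correct bookkeeping of the infinitely many blocks}: one must verify that the residual projection $e=\inf_n p_n$ exists (it does, as a decreasing net of projections in a von Neumann algebra converges strongly to its infimum) and that the assembled unitary is genuinely bounded, adjointable, and intertwines the $A$-actions. Care is also needed to ensure the block-wise pieces are unitaries rather than merely isometries onto proper subspaces, which is where the precise matching of range projections of $u$, $v$, and $w$ must be tracked. An alternative, slicker route would sidestep the telescoping entirely: since $w=vu:a\to a$ is an isometry, polar decomposition plus Murray--von Neumann comparison theory in $\End(a)$ gives $\id_a\sim_{\mathrm{MvN}} ww^*$ directly, and chasing the resulting equivalences of projections through $u$ and $v$ yields $a\cong b$; I would likely present whichever of the two is shorter to verify rigorously.
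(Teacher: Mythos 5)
Your strategy is the same as the paper's: embed the relevant full subcategory into $A\text{-}\Mod$ via Proposition~\ref{prop:freydembedding}, then run the classical alternating-subspace Cantor--Schr\"oder--Bernstein argument on the resulting $A$-modules, with the residual piece handled by the infimum of a decreasing chain of projections. The one place where your bookkeeping is off is the chain you iterate: starting from $w=vu$ and taking $p_n:=w^n(w^*)^n$ records only every other term of the chain that the argument actually needs, namely $\id_a\ge vv^*\ge vuu^*v^*\ge vuvv^*u^*v^*\ge\cdots$ (the paper's $H_0\supseteq H_1\supseteq H_2\supseteq\cdots$ with $H_{n+1}=g(K_n)$, $K_{n+1}=f(H_n)$). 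Each of your blocks $f_n=p_n-p_{n+1}$ therefore still contains both a ``route through $u$'' piece $p_n-w^nvv^*(w^*)^n$ and a ``route through $v^*$'' piece $w^nvv^*(w^*)^n-p_{n+1}$, so it is these sub-blocks, not the $f_n$ themselves, that must be sorted by parity; likewise the list $q-r,\,w(q-r)w^*,\dots$ in your plan omits the complementary pieces and does not sum to $\id_a-e$. This is exactly the ``precise matching of range projections'' you flag yourself, and once the interleaved chain is used the argument closes just as in the paper. Your alternative route---$p_a\precsim p_b$ and $p_b\precsim p_a$ in $\End(a\oplus b)$, hence $p_a\sim p_b$ by Murray--von Neumann comparison---is also valid, though that standard fact is proved by the same telescoping decomposition, so it mostly relocates rather than avoids the work.
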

\begin{proof}
By Proposition~\ref{prop:freydembedding}, we may realise $C$ as a full subcategory of $A$-$\Mod$, for some von Neumann algebra $A$.
Let ${}_AH$ and ${}_AK$ be the $A$-modules corresponding to $a$ and $b$,
with isometries $f: H \to K$ and $g: K \to H$. Setting
\begin{align*}
H_0 := H&  &H_{n+1} := g(K_n)&  &H^n := H_n \ominus H_{n-1}&  &H_\infty := \bigcap H_n\\
K_0 := K&  &K_{n+1} := f(H_n)&  &K^n := K_n \ominus K_{n-1}& &K_\infty := \bigcap K_n
\end{align*}
we have
$H  =  H_\infty \oplus H^0 \oplus H^1 \oplus H^2 \oplus \ldots$ and
$K  =  K_\infty \oplus K^0 \oplus K^1 \oplus K^2 \oplus \ldots$
The desired isomorphism $H \to K$ is the direct sum of the isomorphisms $f: H^i \to K^{i+1}$ for $i$ even, the inverses of $g: K^{i-1} \to H^{i}$ for $i$ odd, and $f: H_\infty \to K_\infty$.
\end{proof}

\begin{defn}\label{def: full subcategory generated by}
Given an object $c\in C$ of some Cauchy complete $\rmW^*$-category, we define the \emph{full subcategory of $C$ generated by $c$} to be the essential image of the functor $\mathbf B\End(c)^{\hat \oplus}\to C$ provided by Corollary~\ref{cor: Cauchy completion}.

Similarly, we define the full subcategory of $C$ generated by some collection $\{c_i\}$ of objects to be the full subcategory generated by $\oplus_i c$.
\end{defn}

\begin{cor}
\label{cor: C hat oplus = c oplus hat}
If $C$ is a $\rmW^*$-category, then the canonical functor $C^{\hat\oplus}=(\hat C)^{\bar\oplus}\to (C^{\bar\oplus})\hspace{-.5mm}\hat{\phantom t}:
\oplus_i(x_i,p_i)\mapsto(\oplus_ix_i,\sum p_i)$
is an equivalence of categories.
\end{cor}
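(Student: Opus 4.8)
The plan is to prove that the canonical functor
$\Phi\colon (\hat C)^{\bar\oplus}=C^{\hat\oplus}\to \widehat{C^{\bar\oplus}}$
(with $\widehat{C^{\bar\oplus}}=(C^{\bar\oplus})\hat{\phantom t}$ the idempotent completion of the direct sum completion) is an equivalence by showing it is fully faithful and essentially surjective. First I would check that the target $\widehat{C^{\bar\oplus}}$ is Cauchy complete; then establish full faithfulness by a direct comparison of hom-spaces, which is the main work; and finally deduce essential surjectivity by factoring through $\Phi$ an equivalence coming from a set of generators. Note that $\Phi$ restricts to the identity on $C$ (it sends a one-term sum $(x,\id_x)$ to $(x,\id_x)$), and that by Corollary~\ref{cor: Cauchy completion} it is, up to isomorphism, the unique extension of the inclusion $C\hookrightarrow \widehat{C^{\bar\oplus}}$ along $C\hookrightarrow C^{\hat\oplus}$.

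The target is Cauchy complete: it is idempotent complete by construction, and it admits all direct sums because $C^{\bar\oplus}$ does (a formal direct sum of formal direct sums, indexed by a disjoint union, is again an object of $C^{\bar\oplus}$), so Lemma~\ref{lem: if C admits all direct sums, then so does its idempotent completion} applies to $C^{\bar\oplus}$.

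The heart of the argument, and the step I expect to be the main obstacle, is full faithfulness, which is a von Neumann algebraic bookkeeping comparison. For \emph{finitely} many summands the two constructions already agree at the level of endomorphism algebras: directly from the definitions of morphisms in $\hat C$ and in $C^\oplus$ one has, for a finite index set $K_0$,
\[
\textstyle\End_{(\hat C)^\oplus}\big(\oplus_{k\in K_0}(x_k,p_k)\big)\;=\;\big(\oplus_k p_k\big)\,\End_{C^\oplus}\big(\oplus_{k\in K_0}x_k\big)\,\big(\oplus_k p_k\big).
\]
For an arbitrary family, both endomorphism algebras are the von Neumann algebras generated by the directed union of these finite corners, acting on the Hilbert space completion of the corresponding union of $L^2$-spaces. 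Since $L^2$ is compatible with corners ($pL^2(A)p=L^2(pAp)$), the finite $L^2$-spaces on the source side are the $(\oplus_kp_k)$-corners of those on the target side, so the source endomorphism algebra is identified with $P\cdot\End_{C^{\bar\oplus}}(\oplus_k x_k)\cdot P$, where $P=\sum_k p_k$ is the diagonal projection. Cutting further by the projections separating the two object-families then reproduces exactly the hom-spaces of $\widehat{C^{\bar\oplus}}$, giving $\Phi$ fully faithful. The difficulty here is purely the care needed to match the \emph{generated} von Neumann algebras and their completions under the directed-union procedure; no new idea beyond compatibility of $L^2$ with corners is required.

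For essential surjectivity I would pick a set of generators $C_0\subset C$ and argue that it also generates the Cauchy complete category $D:=\widehat{C^{\bar\oplus}}$: given a nonzero $Y=(\oplus_i x_i,p)\in D$, some $p\iota_i\neq 0$, and polar decomposition followed by splitting the source projection (possible since $D$ is idempotent complete) yields an isometry into $Y$ from a nonzero object of $\hat C$; composing with a nonzero map out of a generator (which exists since $C_0$ generates $C$, hence $\hat C$) produces a nonzero map $C_0\to Y$. By Lemma~\ref{lem: canonical  functor an equivalence iff C_0 generates C} the canonical functor $(C_0)^{\hat\oplus}\to D$ is then an equivalence. Since this functor factors as $(C_0)^{\hat\oplus}\to (\hat C)^{\bar\oplus}\xrightarrow{\ \Phi\ } D$ (both composites extend $C_0\hookrightarrow D$, so they agree by the uniqueness in Corollary~\ref{cor: Cauchy completion}), $\Phi$ is essentially surjective. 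Together with full faithfulness this shows $\Phi$ is an equivalence; as a byproduct, $C^{\hat\oplus}=(\hat C)^{\bar\oplus}$ is Cauchy complete.
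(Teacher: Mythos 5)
Your argument is correct, but it takes a noticeably longer route than the paper on one of its two halves. The paper's entire proof is: the target $(C^{\bar\oplus})\hspace{-.5mm}\hat{\phantom t}$ is Cauchy complete by Lemma~\ref{lem: if C admits all direct sums, then so does its idempotent completion}, and then Lemma~\ref{lem: canonical  functor an equivalence iff C_0 generates C} applied to the full subcategory $C\hookrightarrow (C^{\bar\oplus})\hspace{-.5mm}\hat{\phantom t}$ delivers \emph{both} full faithfulness and (since $C$ visibly generates the target) essential surjectivity in one stroke; the identification of the resulting functor with the explicit formula $\oplus_i(x_i,p_i)\mapsto(\oplus_ix_i,\sum p_i)$ is the uniqueness clause of Corollary~\ref{cor: Cauchy completion}, exactly as you note at the outset. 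Your essential-surjectivity step is essentially the paper's argument (generators plus Lemma~\ref{lem: canonical  functor an equivalence iff C_0 generates C}), but your full-faithfulness step re-proves by hand, via the directed-union/$pL^2(A)p=L^2(pAp)$ bookkeeping, something the paper gets for free from the full-faithfulness clause of that same lemma (which holds whether or not the chosen subcategory generates). What your hands-on route buys is an explicit matching of hom-spaces and it sidesteps the mild set-theoretic wrinkle of invoking the lemma with $C_0=C$ when $C$ has a proper class of objects; what it costs is that the compression step needs one more (routine) observation than you advertise: besides $pL^2(A)p=L^2(pAp)$ you must check that the $P$-corner of the von Neumann algebra generated by the increasing union $\bigcup_{I_0}\End_{C^\oplus}(\oplus_{i\in I_0}x_i)$ is generated by the finite corners $P_{I_0}\End_{C^\oplus}(\oplus_{i\in I_0}x_i)P_{I_0}$, which follows since $PaP=P_{I_0}aP_{I_0}$ for $a$ supported on $I_0$ and compression by a fixed projection is ultraweakly continuous.
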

\begin{proof}
The category 
$(C^{\bar\oplus})\hspace{-.5mm}\hat{\phantom t}$ is Cauchy complete by Lemma~\ref{lem: if C admits all direct sums, then so does its idempotent completion}. Now
apply Lemma~\ref{lem: canonical  functor an equivalence iff C_0 generates C} 
to its full subcategory $C\hookrightarrow (C^{\bar\oplus})\hspace{-.5mm}\hat{\phantom t}$.
\end{proof}

\begin{siderem*}
Applying Corollary~\ref{cor: C hat oplus = c oplus hat} to the $\rmW^*$-category $\mathbf BA$ for some von Neumann algebra $A$, we learn that every projection in the matrix algebra $M_n(A)$ is equivalent to a direct sum of projections in $A$. It is interesting to note that $\rmC^*$-algebras typically do not have that property (for example, $C(S^2)$ does not have that property). Those $\rmC^*$-algebras that do are said to satisfy $K_0$-surjectivity \cite{MR3206515}
\end{siderem*}

\begin{cor}
\label{cor: C full subcategory of C_0 plus}
If $C$ is a $\rmW^*$-category and $C_0\hookrightarrow C$ is the full subcategory on some set of generators, then $C$ is a full subcategory of $(C_0)^{\hat \oplus}$.
\end{cor}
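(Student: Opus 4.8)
The plan is to deduce this from Lemma~\ref{lem: canonical  functor an equivalence iff C_0 generates C} by passing to the Cauchy completion $C^{\hat\oplus}$, which is the setting in which that lemma applies (note that $C$ itself need not be Cauchy complete, so the lemma cannot be invoked for $C$ directly).

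First I would record that $C$ embeds into its Cauchy completion as a full subcategory. By construction $C^{\hat\oplus}=(\hat C)^{\bar\oplus}$, and both inclusions $C\hookrightarrow \hat C$ (via the objects $(x,\id_x)$) and $\hat C\hookrightarrow (\hat C)^{\bar\oplus}$ (via one-term direct sums) are fully faithful. Hence it suffices to produce an equivalence $(C_0)^{\hat\oplus}\simeq C^{\hat\oplus}$ and then compose with the full inclusion $C\hookrightarrow C^{\hat\oplus}$ to exhibit $C$ as a full subcategory of $(C_0)^{\hat\oplus}$.

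Next I would apply Lemma~\ref{lem: canonical  functor an equivalence iff C_0 generates C} to the Cauchy complete $\rmW^*$-category $C^{\hat\oplus}$ together with its full subcategory $C_0$ (which is a full subcategory on a set of objects, sitting inside $C\subset C^{\hat\oplus}$). That lemma furnishes a fully faithful functor $(C_0)^{\hat\oplus}\to C^{\hat\oplus}$ and tells me it is an equivalence exactly when $C_0$ generates $C^{\hat\oplus}$.

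The single hypothesis left to verify is therefore that $C_0$ generates $C^{\hat\oplus}$. This is immediate from the remark following Definition~\ref{def: Cauchy completion}, that a set of objects generates $C$ if and only if it generates $C^{\hat\oplus}$: since $C_0$ is a set of generators of $C$ by assumption, it generates $C^{\hat\oplus}$. Thus $(C_0)^{\hat\oplus}\to C^{\hat\oplus}$ is an equivalence, and composing with $C\hookrightarrow C^{\hat\oplus}$ gives the claim. I do not expect a genuine obstacle here; the proof is purely a matter of assembling Lemma~\ref{lem: canonical  functor an equivalence iff C_0 generates C} with the generation-passes-to-the-completion remark, the only point requiring care being to carry out the argument in $C^{\hat\oplus}$ rather than in $C$.
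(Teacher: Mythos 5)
Your proposal is correct and follows exactly the paper's argument: pass to $C^{\hat\oplus}$, note that $C_0$ generates $C^{\hat\oplus}$ because it generates $C$, and invoke Lemma~\ref{lem: canonical  functor an equivalence iff C_0 generates C} to identify $(C_0)^{\hat\oplus}$ with $C^{\hat\oplus}$, into which $C$ includes as a full subcategory. No discrepancies to report.
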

\begin{proof}
Since $C_0$ generates $C$, it also generates $C^{\hat \oplus}$,
so we have 
$C \hookrightarrow C^{\hat \oplus}\cong (C_0)^{\hat \oplus}$,
where the last equivalence is provided by Lemma~\ref{lem: canonical  functor an equivalence iff C_0 generates C}.
\end{proof}

\begin{cor}
\label{cor: equivalence checkable on generators}
Let $F:C\to D$ be a functor between Cauchy complete $\rmW^*$-categories.
If $c$ generates $C$,
if $d:=F(c)$ generates $D$, and
if $F:\End(c)\to \End(d)$ is an isomorphism,
then $F$ is an equivalence of categories.
\end{cor}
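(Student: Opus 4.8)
The plan is to realise $F$ as the bottom edge of a square of functors that commutes up to natural isomorphism and whose other three edges are equivalences; the conclusion then follows because an edge of such a square is an equivalence whenever the other three are.

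First I would pass to the one-object full subcategories $C_0\subset C$ and $D_0\subset D$ on the objects $c$ and $d=F(c)$, so that $C_0=\mathbf B\End(c)$ and $D_0=\mathbf B\End(d)$. Since $c$ generates $C$ and $d$ generates $D$, Lemma~\ref{lem: canonical  functor an equivalence iff C_0 generates C} (applied via the functors of Corollary~\ref{cor: Cauchy completion}) shows that the canonical functors $v_C\colon (C_0)^{\hat\oplus}\to C$ and $v_D\colon (D_0)^{\hat\oplus}\to D$ are equivalences. Next I observe that $F$ carries $C_0$ into $D_0$: it sends $c$ to $d$, and on endomorphisms it is the hypothesised isomorphism $\End(c)\xrightarrow{\cong}\End(d)$, which is a $*$-isomorphism because $F$ is a $*$-functor. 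Hence the restriction $F_0:=F|_{C_0}\colon C_0\to D_0$ is an isomorphism of one-object $*$-categories, in particular an equivalence. Applying Corollary~\ref{cor: Cauchy completion} to the Cauchy complete category $(D_0)^{\hat\oplus}$, I extend the composite $C_0\xrightarrow{F_0}D_0\hookrightarrow (D_0)^{\hat\oplus}$ to a functor $\widehat{F_0}\colon (C_0)^{\hat\oplus}\to (D_0)^{\hat\oplus}$. Extending a quasi-inverse of $F_0$ in the same way, and comparing composites on $C_0$ as in the next paragraph, shows $\widehat{F_0}$ is itself an equivalence.

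It then remains to check that the square commutes up to natural isomorphism, i.e. $v_D\circ\widehat{F_0}\simeq F\circ v_C$. Both sides are functors $(C_0)^{\hat\oplus}\to D$, and by construction each restricts on $C_0$ to the single functor $C_0\xrightarrow{F_0}D_0\hookrightarrow D$. Since $D$ is Cauchy complete, the restriction functor $\mathrm{Func}\big((C_0)^{\hat\oplus},D\big)\to \mathrm{Func}(C_0,D)$ is an equivalence by Corollary~\ref{cor: Cauchy completion}, hence fully faithful; a fully faithful functor both lifts the natural isomorphism between the two restrictions and reflects it to an isomorphism, so $v_D\circ\widehat{F_0}$ and $F\circ v_C$ are naturally isomorphic. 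With $v_C$, $v_D$, and $\widehat{F_0}$ all equivalences, $F$ is an equivalence as well.

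I expect the only genuinely delicate point to be the commutativity of the square: because the extensions $v_C$, $v_D$, and $\widehat{F_0}$ are only characterised up to natural isomorphism, one cannot verify $v_D\circ\widehat{F_0}\simeq F\circ v_C$ by direct computation but must instead deduce it from the universal property, namely the fully faithfulness of the restriction functor of Corollary~\ref{cor: Cauchy completion}. (Alternatively, one could argue directly that $F$ is dominant—every object of $D$ being a summand of some $d^{\oplus I}=F(c^{\oplus I})$ since $d$ generates the Cauchy complete category $D$—and fully faithful, and then invoke the earlier Lemma that a fully faithful dominant functor between idempotent complete $\rmW^*$-categories is an equivalence.)
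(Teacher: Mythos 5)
Your proof is correct and follows essentially the same route as the paper's: the paper forms a commuting triangle through $\mathbf{B}\End(c)^{\hat\oplus}$ (using the isomorphism $\End(c)\cong\End(d)$ to map $\star_A$ to both $c$ and $d$), invoking Lemma~\ref{lem: canonical  functor an equivalence iff C_0 generates C} for the two equivalences and Corollary~\ref{cor: Cauchy completion} to deduce commutativity from agreement on the generator, exactly as you do. Your square with separate $C_0$ and $D_0$ is just the paper's triangle with the identification $\mathbf{B}\End(c)\cong\mathbf{B}\End(d)$ made explicit as the extra edge $\widehat{F_0}$.
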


\begin{proof}
Let $A:=\End (c)$, and let $\mathbf BA\to C$ and $\mathbf BA\to D$ be the functors given by $\star_A\mapsto c$ and $\star_A\mapsto d$, respectively.
These induce equivalences $\mathbf BA^{\hat\oplus}\to C$ and $\mathbf BA^{\hat\oplus}\to D$ by Lemma~\ref{lem: canonical  functor an equivalence iff C_0 generates C}.
The diagram 
\[
\begin{tikzcd}[baseline=-27]
&\mathbf BA^{\hat \oplus}\!
\arrow[dr,"\cong"]
\arrow[dl,"\cong"']
&
\\
C\arrow[rr,"F"]
&&D
\end{tikzcd}
\]
commutes when precomposed with the inclusion $\mathbf BA\hookrightarrow \mathbf BA^{\hat \oplus}$, hence commutes (up to an invertible $2$-morphism) by Corollary~\ref{cor: Cauchy completion}.
So $F$ is an equivalence.
\end{proof}

\begin{cor}
\label{cor: full subcat same functors}
Let $C$ and $D$ be $\rmW^*$-categories, with $D$ Cauchy complete, and let $C_0\subset C$ be the full subcategory on some set of generators. Then the restriction functor
$\mathrm{Func}(C,D) \to \mathrm{Func}(C_0,D)$
is an equivalence of categories.
\end{cor}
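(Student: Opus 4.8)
The plan is to deduce the statement from the two completion results established above, exactly as in the proof of Lemma~\ref{lem: Func(C,D) is a full subcategory}, but now exploiting the hypothesis that $D$ is Cauchy complete, so that no auxiliary passage to $D^{\hat\oplus}$ is needed.

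First I would observe that since $C_0$ generates $C$, it also generates the Cauchy completion $C^{\hat\oplus}$ (as noted right after Definition~\ref{def: Cauchy completion}). Applying Lemma~\ref{lem: canonical  functor an equivalence iff C_0 generates C} to the Cauchy complete category $C^{\hat\oplus}$ together with its full subcategory $C_0$, the canonical functor $(C_0)^{\hat\oplus}\to C^{\hat\oplus}$ is an equivalence. Precomposition with this equivalence then yields an equivalence $\mathrm{Func}(C^{\hat\oplus},D)\stackrel{\cong}{\to}\mathrm{Func}((C_0)^{\hat\oplus},D)$.

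Next I would assemble the square of restriction functors arising from the inclusions $C_0\hookrightarrow C\hookrightarrow C^{\hat\oplus}$ and $C_0\hookrightarrow (C_0)^{\hat\oplus}$:
\[
\begin{tikzcd}
\mathrm{Func}(C^{\hat\oplus},D)\arrow[r,"\cong"]\arrow[d] & \mathrm{Func}((C_0)^{\hat\oplus},D)\arrow[d]\\
\mathrm{Func}(C,D)\arrow[r] & \mathrm{Func}(C_0,D)
\end{tikzcd}
\]
Because $D$ is Cauchy complete, both vertical restriction functors are equivalences by Corollary~\ref{cor: Cauchy completion}, and the top arrow is an equivalence by the previous paragraph. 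The bottom arrow is precisely the restriction functor in the statement. By the two-out-of-three property for equivalences of categories, once the square is known to commute (up to natural isomorphism) it follows that the bottom functor is an equivalence, which is what we want.

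The only point requiring care — and thus the main potential obstacle — is the commutativity of this square, i.e.\ checking that the composite $C_0\hookrightarrow(C_0)^{\hat\oplus}\cong C^{\hat\oplus}$ agrees up to natural isomorphism with $C_0\hookrightarrow C\hookrightarrow C^{\hat\oplus}$. This is immediate from the fact that the equivalence $(C_0)^{\hat\oplus}\to C^{\hat\oplus}$ furnished by Corollary~\ref{cor: Cauchy completion} is characterised as an extension of the given inclusion of $C_0$, so restricting it back along $C_0\hookrightarrow(C_0)^{\hat\oplus}$ recovers that inclusion; all four arrows are then restrictions along compatible inclusions and the square commutes. Everything else is formal.
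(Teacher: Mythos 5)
Your proposal is correct and follows essentially the same route as the paper, whose proof is the chain of equivalences $\mathrm{Func}(C,D)\cong\mathrm{Func}(C^{\hat\oplus},D)\cong\mathrm{Func}(C_0^{\hat\oplus},D)\cong\mathrm{Func}(C_0,D)$ obtained from Corollary~\ref{cor: Cauchy completion} and Lemma~\ref{lem: canonical  functor an equivalence iff C_0 generates C}. You merely make explicit the compatibility (commutativity of the square) that the paper leaves implicit when identifying this composite with the restriction functor.
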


\begin{proof}
We have $\mathrm{Func}\big(C,D\big) \cong
\mathrm{Func}\big(C^{\hat \oplus},D\big) \cong \mathrm{Func}\big(C_0^{\hat \oplus},D\big) \cong \mathrm{Func}\big(C_0,D\big)$,
where the first and last equivalences are provided by Corollary~\ref{cor: Cauchy completion},
and the middle equivalence is provided by 
Lemma~\ref{lem: canonical  functor an equivalence iff C_0 generates C}.
\end{proof}

The most important instance of Corollary~\ref{cor: full subcat same functors} is when $C_0$ is the full subcategory on a single object $c\in C$, in which case $C_0\cong \mathbf B A$, for $A=\End(c)$. By Corollary~\ref{cor: full subcat same functors}, any functor $\mathbf B A \to D$ admits a unique up to contractible choice extension to a functor $C\to D$, and any natural transformation $\mathbf BA\tworarrow D$ admits a unique extension to a natural transformation $C\tworarrow D$.
Since the data of a functor $\mathbf B A \to D$ is equivalent to the data of an object $D$ equipped with an action of $A$,
and the data of a natural transformation $\mathbf BA\tworarrow D$ is equivalent to the data of an $A$-equivariant map between two $A$-objects of $D$, we may rephrase the result as follows:

\begin{cor}
\label{cor: enough to construct on generator}
Let $C$ and $D$ be $\rmW^*$-categories. Assume that $D$ is Cauchy complete, and that $c\in C$ generates $C$. Let $A:=\End_C(c)$. Then:
\begin{enumerate}[label=(\arabic*)]
\item
Providing an object $d\in D$ with an action $A\to \End_D(d)$ is equivalent, up to contractible choice, to providing a functor $C\to D$.
\item
Providing an $A$-equivariant map $d_1\to d_2$ between $A$-objects $d_1,d_2\in D$ is equivalent to providing a natural transformation $C\tworarrow D$ between the functors associated to $d_1$ and~$d_2$.
\end{enumerate}
\end{cor}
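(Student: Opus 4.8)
The plan is to deduce both statements directly from Corollary~\ref{cor: full subcat same functors}, applied to the full subcategory $C_0\subset C$ on the single generating object $c$. Since $C_0$ has one object whose endomorphism algebra is $A=\End_C(c)$, it is by definition the $\rmW^*$-category $\mathbf{B}A$. As $c$ generates $C$ and $D$ is Cauchy complete, Corollary~\ref{cor: full subcat same functors} then provides an equivalence of $\rmW^*$-categories
\[
\mathrm{Func}(C,D)\stackrel{\cong}{\longrightarrow}\mathrm{Func}(\mathbf{B}A,D),
\]
given by restriction along $\mathbf{B}A\cong C_0\hookrightarrow C$. Everything else is a matter of unwinding what the two sides of this equivalence are.

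For part (1), I would identify the objects of $\mathrm{Func}(\mathbf{B}A,D)$. By Definition~\ref{def: functors linear and bilinear}, a functor $\mathbf{B}A\to D$ is a $*$-functor inducing a normal homomorphism on endomorphism algebras; since $\mathbf{B}A$ has the single object $\star_A$ with $\End(\star_A)=A$, such a functor is exactly the data of an object $d:=F(\star_A)\in D$ together with the induced normal unital $*$-homomorphism $A\to\End_D(d)$, i.e.\ an action of $A$ on $d$. Under the equivalence above, each such pair therefore extends to a functor $C\to D$; the phrase ``up to contractible choice'' simply records that the restriction functor, being an equivalence, has contractible fibres, so the extension is unique up to unique isomorphism.

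For part (2), I would identify the morphisms. A bounded natural transformation between two functors $F_1,F_2:\mathbf{B}A\to D$ with $F_i(\star_A)=d_i$ has a single component $\alpha_{\star_A}:d_1\to d_2$, and its naturality square for each $a\in A$ reads $\alpha_{\star_A}\circ F_1(a)=F_2(a)\circ\alpha_{\star_A}$; that is, $\alpha_{\star_A}$ is precisely an $A$-equivariant map. Boundedness is automatic, there being a single component. Since the restriction functor is in particular fully faithful, these $A$-equivariant maps are in bijection with natural transformations $C\tworarrow D$ between the associated functors, which is the claim.

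The hard part is already done: it lives entirely in Corollary~\ref{cor: full subcat same functors} and the completeness machinery behind it. The only thing left to verify here is the routine dictionary ``functor out of $\mathbf{B}A$ $=$ object with $A$-action'' and ``natural transformation $=$ equivariant map'', together with the observation that the $*$-structure, unitality, and normality conditions match on both sides. I expect no genuine obstacle; this corollary is pure bookkeeping, repackaging the preceding equivalence into a usable form.
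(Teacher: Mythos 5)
Your proposal is correct and is essentially identical to the paper's own argument: the paper likewise derives this corollary by applying Corollary~\ref{cor: full subcat same functors} to the full subcategory $C_0\cong\mathbf{B}A$ on the single generator $c$, and then unwinding the dictionary between functors $\mathbf{B}A\to D$ and objects of $D$ with an $A$-action, and between natural transformations and $A$-equivariant maps. Nothing is missing.
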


\noindent
In equations \eqref{eq: formula for unique extension} and \eqref{eq: formula for unique extension 2} further down, we will provide explicit formulas for the extensions whose existence and uniqueness is guaranteed by Corollary~\ref{cor: enough to construct on generator}.

\begin{defn}\label{def: stably equivalent objects}
Two objects $c, d\in C$ of a $\rmW^*$-category are called \emph{stably equivalent} if they generate the same full subcategory of $C^{\hat \oplus}$. We write $c \cong_{st} d$ to indicate that $c$ and $d$ are stably equivalent.
\end{defn}

\begin{lem}
Two objects $c,d\in C$ are stably equivalent if and only if for every set $I$ of sufficiently large cardinality, we have $c^{\oplus I}\cong d^{\oplus I}$ in $C^{\bar \oplus}$.
\end{lem}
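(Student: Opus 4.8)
The plan is to reformulate stable equivalence as a pair of isometries and then finish with a Cantor--Schr\"oder--Bernstein argument preceded by an infinite-cardinal absorption trick. By Definition~\ref{def: full subcategory generated by}, the full subcategory of $C^{\hat\oplus}$ generated by $c$ is the essential image of $\mathbf B\End(c)^{\hat\oplus}\to C^{\hat\oplus}$; unwinding the objects of $\mathbf B\End(c)^{\hat\oplus}$ (formal direct sums of splittings of projections in $\End(c)$), this essential image is exactly the class of objects isomorphic to an orthogonal direct summand of $c^{\oplus I}$ for some set $I$. Hence $d$ lies in the subcategory generated by $c$ if and only if there is an isometry $d\to c^{\oplus I}$ in $C^{\hat\oplus}$ for some $I$; and since $d\in C$ while $c^{\oplus I}\in C^{\bar\oplus}$, Corollary~\ref{cor: C hat oplus = c oplus hat} identifies $\Hom_{C^{\hat\oplus}}(d,c^{\oplus I})$ with $\Hom_{C^{\bar\oplus}}(d,c^{\oplus I})$, so the isometry may be taken in $C^{\bar\oplus}$. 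A short check (a summand of a summand is a summand, so containment of generated subcategories is transitive) then shows that $c\cong_{st}d$ holds if and only if there exist sets $I,J$ and isometries $g:d\to c^{\oplus I}$ and $f:c\to d^{\oplus J}$ in $C^{\bar\oplus}$.

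Granting this reformulation, the backward implication is immediate: if $c^{\oplus K}\cong d^{\oplus K}$ for some $K$ of sufficiently large (in particular infinite, hence nonempty) cardinality, then the summand inclusion $c\hookrightarrow c^{\oplus K}\cong d^{\oplus K}$ is an isometry $c\to d^{\oplus K}$, and symmetrically one obtains an isometry $d\to c^{\oplus K}$; so $c$ and $d$ generate the same subcategory.

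For the forward implication, fix isometries $f:c\to d^{\oplus J}$ and $g:d\to c^{\oplus I}$ as above, and let $K$ be any set with $K$ infinite and $|K|\ge\max(|I|,|J|)$ --- this is the intended meaning of ``sufficiently large cardinality''. The $K$-fold orthogonal direct sum of an isometry is again an isometry, so $f$ yields an isometry $c^{\oplus K}\to (d^{\oplus J})^{\oplus K}\cong d^{\oplus (J\times K)}$; since $|J\times K|=|K|$, a bijection $J\times K\cong K$ induces a unitary $d^{\oplus (J\times K)}\cong d^{\oplus K}$, and composing gives an isometry $c^{\oplus K}\to d^{\oplus K}$. The same construction applied to $g$ gives an isometry $d^{\oplus K}\to c^{\oplus K}$. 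Applying the Cantor--Schr\"oder--Bernstein lemma (Lemma~\ref{lem:Cantor-Schroeder-Bernstein}) in the $\rmW^*$-category $C^{\bar\oplus}$ then produces the desired isomorphism $c^{\oplus K}\cong d^{\oplus K}$.

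The main obstacle is the first paragraph: one must check carefully that membership in the generated subcategory is exactly ``orthogonal direct summand of $c^{\oplus I}$'', and, crucially, that the witnessing isometries already live in $C^{\bar\oplus}$, so that no passage to the idempotent completion is needed and Lemma~\ref{lem:Cantor-Schroeder-Bernstein} applies verbatim (the degenerate case $c\cong 0$, which forces $d\cong 0$, is handled trivially). Once the reformulation in terms of isometries is in place, the cardinal absorption $|J\times K|=|K|$ together with Cantor--Schr\"oder--Bernstein finishes the argument with no further difficulty.
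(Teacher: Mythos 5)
Your proof is correct, and its endgame (absorb the indexing set via $|J\times K|=|K|$, then apply the Cantor--Schr\"oder--Bernstein Lemma~\ref{lem:Cantor-Schroeder-Bernstein}) coincides with the paper's. Where you differ is in how you produce the two initial isometries. The paper reduces without loss of generality to the case where $c$ and $d$ both generate $C$ and $C$ is Cauchy complete, invokes Proposition~\ref{prop:freydembedding} to identify $C$ with $A$-$\Mod$, and then gets the embeddings $K\hookrightarrow H^{\oplus I}$ and $H\hookrightarrow K^{\oplus J}$ from the faithfulness of the corresponding modules via Corollary~\ref{cor: submodules of opplus of faithful A-module}. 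You instead work intrinsically: you unwind Definition~\ref{def: full subcategory generated by} and Corollary~\ref{cor: C hat oplus = c oplus hat} to identify the subcategory generated by $c$ with the orthogonal direct summands of the objects $c^{\oplus I}$, so that mutual generation is literally a pair of isometries $d\to c^{\oplus I}$, $c\to d^{\oplus J}$ in $C^{\bar\oplus}$. This costs you the bookkeeping in your first paragraph (checking that the essential image of $\mathbf{B}\End(c)^{\hat\oplus}\to C^{\hat\oplus}$ is exactly the summands of the $c^{\oplus I}$, and that the isometries can be taken in $C^{\bar\oplus}$ by full faithfulness of $C^{\bar\oplus}$ in its idempotent completion), but it buys a proof that never passes through the model $A$-$\Mod$ and that applies Lemma~\ref{lem:Cantor-Schroeder-Bernstein} verbatim in the $\rmW^*$-category $C^{\bar\oplus}$. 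Both routes are sound; yours is slightly longer but more self-contained, and it makes explicit the quantifier ``sufficiently large'' as $K$ infinite with $|K|\ge\max(|I|,|J|)$, which the paper leaves implicit.
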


\begin{proof}
If $c^{\oplus I}\cong d^{\oplus I}$, then $c \cong_{st} c^{\oplus I}\cong d^{\oplus I} \cong_{st} d$, hence $c \cong_{st} d$.

If $c \cong_{st} d$, we may assume without loss of generality that $c$ and $d$ generate $C$, and that $C$ is Cauchy complete.
Upon identifying $C$ with $A$-$\Mod$ for some von Neumann algebra $A$, the objects $c$ and $d$ correspond to faithful $A$-modules. Call those ${}_AH$ and ${}_AK$.
Since $H$ is faithful, there exists a set $I$ and an $A$-linear isometry $K\hookrightarrow H^{\oplus I}$. Similarly, there exists a set $J$ and an $A$-linear isometry $H\hookrightarrow K^{\oplus J}$. Assuming without loss of generality that $I=J$ and that $I$ is infinite, we then get (using the well known fact that the cardinality of an infinite set is equal to that of its square):
\[
K^{\oplus I}\hookrightarrow (H^{\oplus I})^{\oplus I} = H^{\oplus I\times I} \cong H^{\oplus I} \qquad\text{and}\qquad 
H^{\oplus I}\hookrightarrow (K^{\oplus I})^{\oplus I} = K^{\oplus I\times I} \cong K^{\oplus I},
\]
from which it follows that $H^{\oplus I}\cong K^{\oplus I}$ by Lemma~\ref{lem:Cantor-Schroeder-Bernstein}.

The same argument holds for any set of cardinality larger than that of $I$. 
\end{proof}

In Proposition~\ref{prop:freydembedding}, we have seen that every Cauchy complete $\rmW^*$-category is equivalent the category of modules over some von Neumann algebra.
The next result characterises functors between such $\rmW^*$-categories:

\begin{prop}[{\cite[Prop~1.13]{MR2325696}}]
\label{prop: func to bimod}
Given von Neumann algebras $A$ and $B$, the functor \begin{align}
\label{eq: Func equivalence}
\Bim(B, A)
\,&\to\, \mathrm{Func}\big(
A\text{-}\Mod, B\text{-}\Mod\big)
\end{align}
which sends a bimodule 
${}_BX_A $ to the functor ${}_BX\boxtimes_A-$
is an equivalence of categories. The inverse of \eqref{eq: Func equivalence} sends a functor $F:A\text{-}\Mod\to B\text{-}\Mod$
to the bimodule
${}_B\big(F({}_AL^2A)\big){}_A$.
\end{prop}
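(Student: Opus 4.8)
The plan is to derive this from Corollary~\ref{cor: enough to construct on generator}, taking the generator of $A$-$\Mod$ to be ${}_AL^2A$. The first step is to compute $\End_{A\text{-}\Mod}({}_AL^2A)$: a morphism ${}_AL^2A\to {}_AL^2A$ is a bounded operator on $L^2A$ commuting with the left action $\lambda(A)$, i.e.\ an element of $\lambda(A)'$, which by the standard form theory equals the right action $\rho(A^{\op})$. Hence $\End_{A\text{-}\Mod}({}_AL^2A)\cong A^{\op}$, consistent with the equivalence $(\mathbf BA^{\op})^{\hat\oplus}\cong A\text{-}\Mod$ recorded earlier. Since ${}_AL^2A$ is a generator and $B\text{-}\Mod$ is Cauchy complete (it admits Hilbert-space direct sums and is idempotent complete), Corollary~\ref{cor: enough to construct on generator} applies with $C=A\text{-}\Mod$, $c={}_AL^2A$, $\End_C(c)=A^{\op}$, and $D=B\text{-}\Mod$.

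Next I would unwind the two clauses of Corollary~\ref{cor: enough to construct on generator} in this case. Clause~(1) says a functor $A\text{-}\Mod\to B\text{-}\Mod$ is the same datum, up to contractible choice, as an object $d\in B\text{-}\Mod$ equipped with an action $A^{\op}\to\End_{B\text{-}\Mod}(d)$. Now $d$ is a left $B$-module and $\End_{B\text{-}\Mod}(d)$ is the commutant of the $B$-action, so a (normal, unital) homomorphism $A^{\op}\to\End_{B\text{-}\Mod}(d)$ is precisely a right $A$-action on $d$ commuting with the left $B$-action, i.e.\ a $B$-$A$-bimodule structure ${}_Bd_A$. Likewise, by clause~(2), an $A^{\op}$-equivariant map between two such objects is exactly a $B$-$A$-bimodule intertwiner. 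Thus the category of \emph{$A^{\op}$-objects in $B\text{-}\Mod$} coincides on the nose with $\Bim(B,A)$, and Corollary~\ref{cor: enough to construct on generator} upgrades to an equivalence $\Bim(B,A)\simeq\mathrm{Func}(A\text{-}\Mod,B\text{-}\Mod)$ whose inverse sends a functor $F$ to $F({}_AL^2A)$ with its induced $A^{\op}$-action, i.e.\ to the bimodule ${}_B\big(F({}_AL^2A)\big){}_A$. This already matches the stated inverse.

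It then remains to identify the \emph{forward} functor, abstractly the unique extension sending the generator ${}_AL^2A$ to ${}_BX_A$, with the concrete Connes fusion functor ${}_BX\boxtimes_A-$. For this I would observe that ${}_BX\boxtimes_A-$ is a genuine normal functor $A\text{-}\Mod\to B\text{-}\Mod$, and evaluate it on the generator: the right unitor of \eqref{eq: associator + unitors} gives a unitary ${}_BX\boxtimes_A L^2A\cong {}_BX$ of $B$-modules, and under this identification the $A^{\op}$-action on ${}_AL^2A$ (namely $\rho$) is carried to the given right $A$-action on $X$. Hence ${}_BX\boxtimes_A-$ agrees with the abstract extension on the generator, both as an object and with its $A^{\op}$-action, so by the uniqueness in Corollary~\ref{cor: enough to construct on generator} the two functors are canonically isomorphic.

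The routine bookkeeping (normality of all the homomorphisms, boundedness of natural transformations) is automatic from the setup, so the one genuine point requiring care is this last identification: verifying, via the right unitor, that Connes fusion with $X$ reproduces $X$-with-its-bimodule-structure on the generator ${}_AL^2A$, including the matching of the $A^{\op}$-action. Everything else is a direct translation through Corollary~\ref{cor: enough to construct on generator}.
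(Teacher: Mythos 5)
Your proposal is correct and takes essentially the same route as the paper: both arguments reduce everything to Corollary~\ref{cor: enough to construct on generator} applied to the generator ${}_AL^2A$ with $\End_{A\text{-}\Mod}({}_AL^2A)\cong A^{\op}$, and both identify the Connes fusion functor with the abstract extension by evaluating on the generator and using the unitor ${}_BX\boxtimes_AL^2A\cong{}_BX$ together with its $A^{\op}$-equivariance. The paper's proof is simply a more compressed version of the same argument, phrased as a direct check that the two composites are (naturally isomorphic to) the identity.
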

\begin{proof}
Starting from ${}_BX_A\in\mathrm{Bim}(B, A)$,
one easily checks that
$F \mapsto {_B}(F({}_AL^2A))_A$
applied to 
$({}_BX\boxtimes_A-)$ recovers ${}_BX_A$.
Starting from $F:A\text{-}\Mod\to B\text{-}\Mod$,
we need to check that 
${_B}\big(F({}_AL^2A))\boxtimes_A-$
is naturally isomorphic to $F$.
By Corollary~\ref{cor: enough to construct on generator}, it is sufficient to construct the desired natural isomorphism on the generator ${}_AL^2A$ of $A\text{-}\Mod$:
\[
\Big({_B}\big(F({}_AL^2A)\big)\boxtimes_A-\Big)(L^2A)=
F({}_AL^2A)\boxtimes_AL^2A = F({}_AL^2A).
\]
This is a morphism in $B\text{-}\Mod$, and it is visibly $\End({}_AL^2A)$-equivariant.
\end{proof}

\begin{defn}\label{def: orthogonal direct sum of WCat}
Given a collection $\{C_i\}_{i\in I}$ of $\rmW^*$-categories, their \emph{orthogonal direct sum} $\boxplus C_i$ has objects formal sums $\boxplus_{i\in I} c_i$ with $c_i\in C_i$, and morphisms $\boxplus c_i\to \boxplus c'_i$ bounded families of morphisms $c_i \to c'_i$ in $C_i$.
\end{defn}

\begin{ex}
If $\{A_i\}$ is a collection of von Neumann algebras indexed by some set $I$,
then $\boxplus (A_i\text{-}\Mod) = (\prod A_i)\text{-}\Mod$.
\end{ex}

\begin{lem}
\label{lem: boxplus decomposition}
Let $C$ be a Cauchy complete $\rmW^*$-category, and let $c_i\in C$ be objects satisfying $\Hom(c_i,c_j)=0\,$ $\forall i\neq j$. Then $C$ splits as an orthogonal direct sum $C=(\boxplus_i C_i) \boxplus D$, where $C_i\subset C$ is the full subcategory generated by $c_i$, and $D=\{x\in C\,|\, \Hom(c_i,x)=0\;\forall i\}$.
\end{lem}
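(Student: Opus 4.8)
The plan is to pass to the representation-theoretic model and read off the decomposition from the center of a von Neumann algebra. By Proposition~\ref{prop:freydembedding}\,(i) we may assume $C=A\text{-}\Mod$ for some von Neumann algebra $A$, with the $c_i$ corresponding to $A$-modules $H_i$. Write $z_i\in Z(A)$ for the central support of $H_i$. The first step is to show that the hypothesis is equivalent to the $z_i$ being mutually orthogonal central projections: namely $\Hom_A(H_i,H_j)=0$ if and only if $z_iz_j=0$. The forward direction is a direct computation --- since $z_i$ commutes with any $A$-linear map and acts as the identity on $H_i$ (and $z_j$ on $H_j$), any $T\colon H_i\to H_j$ satisfies $T=z_iz_jT$, which vanishes when $z_iz_j=0$. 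For the reverse direction I would restrict to the corner $Az_iz_j$: when $z_iz_j\neq 0$ the cut-downs $z_iz_jH_i$ and $z_iz_jH_j$ are nonzero and faithful over $Az_iz_j$, so the fundamental representation-theoretic fact recalled above (a faithful module receives a nonzero map from every nonzero module) produces a nonzero intertwiner, hence $\Hom_A(H_i,H_j)\neq 0$.

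Granting this, set $z:=\sup_i z_i=\sum_i z_i\in Z(A)$. Then $\{z_i\}_{i}\cup\{1-z\}$ is a family of mutually orthogonal central projections summing to $1$, so $A$ splits as a product of von Neumann algebras $A\cong\big(\prod_i A z_i\big)\times A(1-z)$. The second step is to identify the three pieces with the categories in the statement. Since $z_i$ is the central support of $H_i$, the module $H_i$ is faithful over $Az_i$; by Corollary~\ref{cor: submodules of opplus of faithful A-module} every $Az_i$-module embeds in an amplification of $H_i$, so (using that $C$ is Cauchy complete, so the relevant summands split) the full subcategory $C_i$ generated by $c_i$ in the sense of Definition~\ref{def: full subcategory generated by} is exactly $Az_i\text{-}\Mod$. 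Applying the central-support criterion once more, $D=\{K:\Hom(H_i,K)=0\ \forall i\}=\{K: z(K)z_i=0\ \forall i\}=\{K:z(K)\le 1-z\}$, which is precisely $A(1-z)\text{-}\Mod$.

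Finally I would invoke the Example following Definition~\ref{def: orthogonal direct sum of WCat}, that $\boxplus_j(A_j\text{-}\Mod)\cong(\prod_j A_j)\text{-}\Mod$ for any family of von Neumann algebras, applied to the family $\{Az_i\}_i\cup\{A(1-z)\}$. This gives
\[
(\boxplus_i C_i)\boxplus D\;\cong\;\Big(\big(\prod\nolimits_i A z_i\big)\times A(1-z)\Big)\text{-}\Mod\;\cong\;A\text{-}\Mod=C,
\]
and one checks that this composite equivalence is induced by the orthogonal-direct-sum functor $(\boxplus_i x_i)\boxplus y\mapsto(\oplus_i x_i)\oplus y$, as the statement asserts. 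I expect the main obstacle to be the central-support characterization of the vanishing of $\Hom$-spaces, and in particular its reverse implication, which is where the representation theory genuinely enters and must be set up carefully over the corner $Az_iz_j$; everything downstream is formal once $A$ has been split along its center. As an alternative to passing through $A\text{-}\Mod$, the essential surjectivity could instead be argued intrinsically, by choosing for each $i$ a maximal family of partial isometries from $c_i$ into a given $x\in C$ with orthogonal ranges --- as in the proof of Lemma~\ref{lem: canonical  functor an equivalence iff C_0 generates C} --- and verifying that the complementary projection lands in $D$.
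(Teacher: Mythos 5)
Your proposal is correct and follows essentially the same route as the paper's proof: reduce to $C=A\text{-}\Mod$ via Proposition~\ref{prop:freydembedding}, take central supports of the $H_i$, observe they are orthogonal, and split $A$ along its center as $(\prod_i Az_i)\oplus A(1-z)$. You spell out details the paper leaves implicit (the two-way equivalence between $\Hom_A(H_i,H_j)=0$ and $z_iz_j=0$, and the identification of $C_i$ with $Az_i\text{-}\Mod$), which is fine; only note that your labels ``forward'' and ``reverse'' are swapped relative to how you stated the equivalence.
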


\begin{proof}
By Proposition~\ref{prop:freydembedding}, we may assume $C=A$-$\Mod$. Let $H_i$ be the underling Hilbert space of the object $c_i$, and let $p_i\in Z(A)$ be its central support, so that $p_iA$ is the image of $A$ in $B(H_i)$. Since $\Hom_A(H_i,H_j)=0$, the projections $p_i$ are orthogonal. Let $q:=1-\sum p_i$.
Then $A=(\prod_i p_iA)\oplus qA$.

Letting $C_i:=p_iA$-$\Mod$, and $D:=qA$-$\Mod$, we then have 
$A$-$\Mod\cong (\boxplus_i C_i) \boxplus D$.
Finally, since $H_i$ generates $C_i$,
the only objects that satisfy $\Hom(c_i,x)=0\;\forall i$ are those in $D$.
%
\end{proof}

If $D$ is a Cauchy complete full subcategory of some Cauchy complete $\rmW^*$-category $C$, then, by the previous lemma, $C$ decomposes as an orthogonal direct sum $C=D\boxplus D^\bot$, where $D^\bot=\{x\in C\,|\, \Hom(d,x)=0\;\forall d\in D\}$.
If we write
\begin{equation}
\label{eq: two orthogonal projections}
P:C\twoheadrightarrow D\qquad\text{and}\qquad
Q:C\twoheadrightarrow D^\bot
\end{equation}
for the two projections,
then every object $c\in C$ decomposes as a direct sum $c \cong P(c)\oplus Q(c)$
of its projections in $D$ and $D^\bot$.

\begin{lem}
Let $C=\boxplus_{i\in I} C_i$ and $D=\boxplus_{j\in J} D_j$ be $\rmW^*$-categories,
where the $D_j$ admit all direct sums.
Then the functor
\[
\Phi\,:\,\,\mathrm{Func}
\big(\boxplus_{i\in I} C_i,\boxplus_{j\in J} D_j\big)
\,\longrightarrow\,
\boxplus_{i\in I, j\in J}
\mathrm{Func}\big(C_i, D_j\big)\vspace{-2mm}
\]
which sends $F\in\mathrm{Func}(C,D)$ to the collection of all composites 
$C_i\hookrightarrow C\stackrel F\to D\twoheadrightarrow D_j$
is an equivalence of categories.
The inverse functor $\Psi:\boxplus_{i,j}
\mathrm{Func}(C_i, D_j)
\to
\mathrm{Func}(C,D)
$ sends $\boxplus_{i,j} F_{ij}$ 
to the functor $C\to D: 
\boxplus_i c_i\mapsto \boxplus_j (\oplus_i F_{ij}c_i)$.
\end{lem}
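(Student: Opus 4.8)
The plan is to check that the two explicitly given functors $\Phi$ and $\Psi$ are mutually inverse up to natural isomorphism. Write $\iota_i\colon C_i\hookrightarrow C$ and $\kappa_j\colon D_j\hookrightarrow D$ for the inclusions, and $q_j\colon D\twoheadrightarrow D_j$ for the projections, so that $F_{ij}=q_j\circ F\circ\iota_i$. The two facts I use repeatedly are: (a) every object of $\boxplus_i C_i$ is the orthogonal direct sum $\boxplus_i c_i=\oplus_i \iota_i(c_i)$ of its components, and likewise every object of $D$ splits as $\oplus_j\kappa_j q_j(-)$; and (b) functors between $\rmW^*$-categories preserve orthogonal direct sums.

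First I would record that both $\Phi$ and $\Psi$ are well defined. For $\Phi$, the only point is that a bounded natural transformation $\alpha$ is sent to a bounded family, which holds since $\|q_j\,\alpha\,\iota_i\|\le\|\alpha\|$ for all $i,j$ (whiskering by the contractive functors $q_j$ and $\iota_i$ does not increase norms). For $\Psi$, the orthogonal direct sums $\oplus_i F_{ij}(c_i)$ exist because each $D_j$ admits all direct sums; and the resulting assignment is a genuine functor because on $\End_C(\boxplus_i c_i)=\prod_i\End_{C_i}(c_i)$ it is the composite of the normal homomorphism $\prod_i F_{ij}$ with the normal block-diagonal inclusion $\prod_i\End_{D_j}(F_{ij}c_i)\hookrightarrow \End_{D_j}(\oplus_i F_{ij}c_i)$, reassembled over $j$.

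Next I would establish $\Phi\circ\Psi\cong\mathrm{id}$. Starting from $\boxplus_{i,j}F_{ij}$, set $G:=\Psi(\boxplus_{i,j}F_{ij})$. The family underlying $\iota_{i'}(c)$ has component $c$ in slot $i'$ and $0$ elsewhere, so $G(\iota_{i'}(c))\cong\boxplus_j F_{i'j}(c)$, using that an orthogonal direct sum of one nonzero object with zero objects is canonically that object. Applying $q_{j'}$ gives $(\Phi G)_{i'j'}(c)\cong F_{i'j'}(c)$, and these canonical isomorphisms are natural in $c$ and compatible with the bounded-family structure, yielding $\Phi\circ\Psi\cong\mathrm{id}$. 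For $\Psi\circ\Phi\cong\mathrm{id}$, start from $F\colon C\to D$, put $H:=\Psi(\Phi(F))$, and compute using (a) and (b):
\[
F(\boxplus_i c_i)=\oplus_i F(\iota_i c_i)=\oplus_i\oplus_j\kappa_j\big(q_j F\iota_i(c_i)\big)=\oplus_j\kappa_j\big(\oplus_i F_{ij}(c_i)\big),
\]
where the last equality reindexes the double orthogonal direct sum (legitimate since $D$, being a $\boxplus$ of categories each admitting all direct sums, admits all direct sums) and pulls $\kappa_j$ out past $\oplus_i$. The right-hand side is exactly $H(\boxplus_i c_i)$, so $H\cong F$ objectwise.

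The main obstacle is bookkeeping rather than conceptual: one must check that the canonical isomorphisms coming from associativity and commutativity of orthogonal direct sums are natural in the objects and respect boundedness of morphisms, so that the objectwise identifications above really assemble into natural isomorphisms of functors, and moreover that these assemble \emph{naturally in $F$ and in $\boxplus_{ij}F_{ij}$} into natural isomorphisms $\Phi\circ\Psi\cong\mathrm{id}$ and $\Psi\circ\Phi\cong\mathrm{id}$ of functors between the functor categories. The genuine inputs --- preservation of direct sums and the canonical component decomposition --- are otherwise routine.
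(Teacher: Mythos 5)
Your proposal is correct and follows essentially the same route as the paper: the paper likewise dismisses $\Phi\circ\Psi\simeq\id$ as immediate and proves $\Psi\circ\Phi\simeq\id$ by the very chain of identifications you write (decompose $F(\boxplus_i c_i)$ over $i$, split each $F(c_i)$ into its $D_j$-components, and interchange the two direct sums), just read in the opposite direction. The extra well-definedness checks you include are fine but not needed beyond what the paper takes as given.
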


\begin{proof}
Clearly, $\Phi\circ\Psi\simeq\id$.
To see that $\Psi\circ\Phi\simeq\id$,
we construct an equivalence $F(\boxplus_i c_i)\cong(\Psi\Phi(F))(\boxplus_i c_i)$
for every $F\in \mathrm{Func}(C,D)$ and $\boxplus_i c_i\in C$, natural in $F$ and the $c_i$. Identifying $c_i$ with its image in $C$ and writing $[d]_j$ for the $j$th component of an object $d\in D$, the equivalence is given by
\(
(\Psi\Phi(F))(\boxplus_i c_i)=\boxplus_j (\oplus_i [F(c_i)]_j)=\oplus_i \boxplus_j [F(c_i)]_j=\oplus_i F(c_i)=F(\oplus_i c_i)=F(\boxplus_i c_i).
\)
\end{proof}

A $\rmW^*$-category which admits all direct sums is canonically tensored over $\Hilb$.
The action
\begin{equation}
\label{eq: action of Hilb}
\otimes:\Hilb \times C \to C
\end{equation}
of $\Hilb$ on such a $\rmW^*$-category is defined by Proposition~\ref{prop: direct sum completion}
as the unique up to contractible choice extension
of the trivial tensor functor $\mathbf{B}\bbC \to \mathrm{Func}(C,C): \star_\bbC \mapsto \id_C$ to a tensor functor $(\mathbf{B}\bbC)^{\bar \oplus} =\Hilb\to \mathrm{Func}(C,C)$.

Given $\rmW^*$-categories $C$ and $D$,
their \emph{tensor product} $C\bar\otimes D$\label{pageref tens prod of WCat} is the category with objects $c\otimes d$, for $c\in C$ and $d\in D$, and morphisms
\[
\Hom(c\otimes d,c'\otimes d') := p_2 \big(\End(c\oplus c') \,\bar \otimes\, 
\End(d\oplus d')\big) p_1,
\]
where $p_1=p_c\otimes p_d$ and $p_2=p_{c'}\otimes p_{d'}$ (and $p_x$ denotes the projection onto $x$, for $x\in \{c,d,c',d'\}$).
$\mathbf{B}\bbC$ is the unit of the above operation.

\begin{defn}\label{def: completed tensor product}
Given Cauchy complete $\rmW^*$-categories $C$ and $D$,
their \emph{completed tensor product} is given by:
\begin{equation}
\label{eq: completed tens of W*cat}
C \hat \otimes D := (C \,\bar\otimes\, D)^{\hat \oplus}.
\end{equation}
$\Hilb=(\mathbf{B}\bbC)^{\hat \oplus}$ is the unit of the above operation.
\end{defn}

The completed tensor product admits a canonical bilinear functor
$C \times D \to C \hat \otimes D$
given by $(c,d)\mapsto c\otimes d$.
And the canonical action \eqref{eq: action of Hilb} of $\Hilb$ on a Cauchy complete $\rmW^*$-category $C$ can be recovered as the unit isomorphism
$\Hilb\,\hat\otimes\,C\to C$
of the monoidal structure \eqref{eq: completed tens of W*cat},
pre-composed by the canonical bilinear functor
$\Hilb \times C\to \Hilb \,\hat\otimes\, C$.

The orthogonal direct sum of $\rmW^*$-categories distributes over the completed tensor product in the sense that there is a canonical equivalence of categories
\[
\big(\boxplus_{i\in I} C_i\big)\hat \otimes \big(\boxplus_{j\in J} D_j\big)
\,\,\,\cong\,\,\,
\boxplus_{\substack{i\in I \\ j\in J}}
(C_i\hat \otimes D_j).
\]

\begin{lem}
\label{lem:TensoratorForVNA->W*Cat}
Given von Neumann algebras $A$ and $B$, the functor 
\begin{align}
\label{eq: Otimes equivalence}
(A\text{-}\Mod)\,\hat\otimes\, (B\text{-}\Mod)
&\to\,\,
(A\bar\otimes B)\text{-}\Mod
\end{align}
that sends $({}_AH)\otimes ({}_BK)$ 
to ${}_{A\bar\otimes B}(H \otimes K)$
is an equivalence of categories.
\end{lem}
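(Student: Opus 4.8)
The plan is to construct the functor explicitly, then verify it is an equivalence by checking the hypotheses of Corollary~\ref{cor: equivalence checkable on generators} on the canonical generators. Write $C := A\text{-}\Mod$, $D := B\text{-}\Mod$, and $E := (A\bar\otimes B)\text{-}\Mod$; all three are Cauchy complete. First I would produce the functor. The assignment $({}_AH,{}_BK)\mapsto {}_{A\bar\otimes B}(H\otimes K)$ on objects, together with $(f,g)\mapsto f\otimes g$ on morphisms, is a bilinear $*$-functor $C\times D\to E$ that is normal in each variable, since the Hilbert-space tensor product of two normal representations is again normal. On the endomorphism algebra $\End(c\otimes d)=\End_A(H)\bar\otimes\End_B(K)$ of an object $c\otimes d$ of $C\bar\otimes D$, it induces the tensor product of the representations $\End_A(H)\to B(H)$ and $\End_B(K)\to B(K)$; this is a normal $*$-homomorphism into $\End_{A\bar\otimes B}(H\otimes K)$ precisely because the two algebras act on separate tensor factors, so no binormality issue arises. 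Passing to corners, this data assembles into a functor $C\bar\otimes D\to E$, and since $E$ is Cauchy complete it extends uniquely to $C\hat\otimes D=(C\bar\otimes D)^{\hat\oplus}$ by Corollary~\ref{cor: Cauchy completion}. By construction this functor $F$ realises the stated formula.

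Next I would check that $F$ is an equivalence using Corollary~\ref{cor: equivalence checkable on generators}, applied to the object $g := ({}_AL^2A)\otimes({}_BL^2B)$. There are three things to verify. (i) $g$ generates $C\hat\otimes D$: since generation may be tested on $C\bar\otimes D$, take a nonzero object $c\otimes d$; faithfulness of $L^2A$ and $L^2B$ yields nonzero maps $\varphi\in\Hom_A(L^2A,H)$ and $\psi\in\Hom_B(L^2B,K)$, and $\varphi\otimes\psi$ is a nonzero element of $\Hom(g,c\otimes d)$ because the spatial tensor product of nonzero operators is nonzero. (ii) $F(g)={}_{A\bar\otimes B}(L^2A\otimes L^2B)$ generates $E$: indeed $L^2A\otimes L^2B\cong L^2(A\bar\otimes B)$ as $A\bar\otimes B$-modules, and the standard form is a faithful module, hence a generator. (iii) The induced map on endomorphism algebras is an isomorphism: $\End(g)=\End_A(L^2A)\bar\otimes\End_B(L^2B)=A^{\op}\bar\otimes B^{\op}=(A\bar\otimes B)^{\op}$, while $\End_E(F(g))=(A\bar\otimes B)^{\op}$ as well, and under $L^2A\otimes L^2B\cong L^2(A\bar\otimes B)$ the functor sends the right action $\rho_A(a)\otimes\rho_B(b)$ to $\rho_{A\bar\otimes B}(a\otimes b)$, i.e.\ the identity of $(A\bar\otimes B)^{\op}$. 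With (i)--(iii) in place, Corollary~\ref{cor: equivalence checkable on generators} immediately yields that $F$ is an equivalence.

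The categorical reduction is entirely handled by Corollaries~\ref{cor: Cauchy completion} and~\ref{cor: equivalence checkable on generators}; the real content, and the main obstacle, is the package of classical von Neumann algebra facts feeding into (ii) and (iii): the identification of standard forms ${}_{A\bar\otimes B}L^2(A\bar\otimes B)_{A\bar\otimes B}\cong{}_{A\bar\otimes B}(L^2A\otimes L^2B)_{A\bar\otimes B}$, compatibly with both actions and with modular conjugation so that right multiplications correspond, together with the commutation theorem $(A\bar\otimes B)'=A'\bar\otimes B'$, which underlies the computation $\End_{A\bar\otimes B}(L^2A\otimes L^2B)=(A\bar\otimes B)^{\op}$. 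I would treat these as standard structural results about the spatial tensor product and the standard form, citing the literature rather than reproving them, and would reserve the remaining care for the generation statement (i), which is the one genuinely category-level check.
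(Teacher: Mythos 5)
Your proposal is correct and follows essentially the same route as the paper: the paper's proof also applies Corollary~\ref{cor: equivalence checkable on generators} to the generator $({}_AL^2A)\otimes({}_BL^2B)\mapsto L^2(A\bar\otimes B)$, merely asserting the generation and endomorphism-algebra facts that you spell out. Your extra detail (explicit construction of the functor via Corollary~\ref{cor: Cauchy completion}, the commutation theorem for $(A\bar\otimes B)'$, and the identification of standard forms) fills in exactly what the paper leaves implicit.
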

\begin{proof}
The functor 
\eqref{eq: Otimes equivalence} maps $({}_AL^2A)\otimes ({}_BL^2B)$ to ${}_{A\bar\otimes B}(L^2(A\bar\otimes B))$. 
Since both objects are generators, and since
$\End(({}_AL^2A)\otimes ({}_BL^2B))=\End({}_{A\bar\otimes B}(L^2(A\bar\otimes B)))$,
\eqref{eq: Otimes equivalence} is an equivalence by Corollary~\ref{cor: equivalence checkable on generators}.
\end{proof}

For $C$ and $D$ Cauchy complete $\rmW^*$-categories, there is a canonical inclusion
\begin{equation}
\label{eq: CxD versus Hom(Cbar,D)}
C \hat \otimes D \hookrightarrow \mathrm{Func}(\overline C, D),
\end{equation}
defined by Proposition~\ref{prop: idempotent completion} as the essentially unique extension of 
$
C \bar \otimes D \to \mathrm{Func}(\overline C, D):
c\otimes d\mapsto( \langle -, c\rangle \otimes d)$
along the functor 
$C \bar \otimes D\to C \hat \otimes D$.
The inclusion \eqref{eq: CxD versus Hom(Cbar,D)} is fully faithful, but rarely essentially surjective.

\begin{ex}
If $A$ and $B$ are von Neumann algebras, then 
\[
(A\text{-}\Mod)\,\hat\otimes\, (B\text{-}\Mod)=(A\bar\otimes B)\text{-}\Mod.
\]
The latter is a full subcategory of
\[
\mathrm{Func}\big(
\overline{A\text{-}\Mod}, B\text{-}\Mod\big)
=
\mathrm{Bim}(B, \overline A).
\]
\end{ex}

Generalising~\eqref{eq: action of Hilb}, if $C$ is a Cauchy complete $\rmW^*$-category, $c\in C$ is an object equipped with an action $A\to \End(c)$ of some von Neumann algebra $A$, and $H_A$ is a right $A$-module, then we can define a new object
\begin{equation}
\label{eq: Connes fusion with an object}
H \boxtimes_A c \,\in\, C
\end{equation}
as follows.
It is the value at $H_A\in A^{\op}\text{-}\Mod$ of the functor $-\boxtimes_A c:A^{\op}\text{-}\Mod\to\Hilb$ uniquely specified, by Corollary~\ref{cor: enough to construct on generator}, by the requirement that $L^2A \mapsto c$.
Similarly, if $f:c_1\to c_2$ is an $A$-equivariant morphism between $A$-objects of $C$, then $H \boxtimes_A f:H \boxtimes_A c_1\to H \boxtimes_A c_2$ is the value at $H_A$ of the natural transformation  $A^{\op}\text{-}\Mod\tworarrow C$ 
uniquely specified by the requirement that $L^2A \mapsto f$.
The special case $C=\Hilb$ of the above construction recovers the classical operation of Connes fusion, while the special case $A=\bbC$ recovers the action \eqref{eq: action of Hilb} of $\Hilb$ on $C$.
\begin{lem}
The operation \eqref{eq: Connes fusion with an object} comes equipped with associator and unitor natural transformations
\[
(H \boxtimes_A K) \boxtimes_B c \stackrel{\cong}\to H \boxtimes_A (K \boxtimes_B c)
\qquad\qquad
L^2A\boxtimes_A c \stackrel{\cong}\to c
\]
that satisfy the usual pentagon and triangle axioms.
\end{lem}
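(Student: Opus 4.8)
The plan is to construct both structure maps through the universal property of Corollary~\ref{cor: enough to construct on generator}, and then to reduce the pentagon and triangle to the already-established coherence~\eqref{eq: associator + unitors} of Connes fusion by evaluating on generators. The unitor is immediate: by the very definition~\eqref{eq: Connes fusion with an object}, the functor $-\boxtimes_A c\colon A^{\op}\text{-}\Mod\to C$ is the extension of $L^2A\mapsto c$ furnished by Corollary~\ref{cor: enough to construct on generator}, so its value on the generator $L^2A$ comes with a canonical identification $L^2A\boxtimes_A c\cong c$, which is the unitor.

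For the associator I would first fix a bimodule ${}_AK_B$ and a $B$-object $c\in C$, and observe that the left $A$-action on $K$ commutes with $B$ and hence, by functoriality of $-\boxtimes_B c$, induces a normal homomorphism $A\to\End(K)\to\End(K\boxtimes_B c)$ making $K\boxtimes_B c$ into an $A$-object. Then both assignments
\[
H\mapsto (H\boxtimes_A K)\boxtimes_B c
\qquad\text{and}\qquad
H\mapsto H\boxtimes_A (K\boxtimes_B c)
\]
are functors $A^{\op}\text{-}\Mod\to C$; the first is the composite of $-\boxtimes_A K$ with $-\boxtimes_B c$, the second is $-\boxtimes_A(K\boxtimes_B c)$. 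Evaluating on the generator $L^2A$ and using the Connes-fusion unitor $L^2A\boxtimes_A K\cong K$ from~\eqref{eq: associator + unitors} on the left, and the $\boxtimes_A$-unitor above on the right, both functors send $L^2A$ to $K\boxtimes_B c$ compatibly with the $A=\End(L^2A)$-action. By part (2) of Corollary~\ref{cor: enough to construct on generator} this $A$-equivariant isomorphism extends to a canonical natural isomorphism of the two functors, which is the associator in the variable $H$. Naturality in $K$ and in $c$ follows from the naturality of the construction~\eqref{eq: Connes fusion with an object} in its two arguments, again checked on the relevant generators via Corollary~\ref{cor: Cauchy completion}.

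It then remains to verify the pentagon and triangle. Each is an equality of two bounded natural transformations between functors out of a product of module categories, and such a transformation is determined, by the uniqueness clause of Corollary~\ref{cor: enough to construct on generator}, by its value on the generators $L^2A$, $L^2B$, and so on. On these generators every occurrence of the associator and unitor reduces, by the computations above, to the corresponding Connes-fusion associator and unitor of~\eqref{eq: associator + unitors}; hence the pentagon and triangle for $\boxtimes$ follow from those already satisfied by the bicategory of von Neumann algebras, bimodules, and Connes fusion.

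The main obstacle is precisely this last reduction: one must check that, on generators, the abstractly defined associator genuinely coincides with the concrete Connes-fusion associator, so that the known coherence transfers intact. An alternative route that makes this transparent is to realise $C\cong R\text{-}\Mod$ for a von Neumann algebra $R$ via Proposition~\ref{prop:freydembedding}, and to prove an identification lemma that, under this equivalence, $-\boxtimes_A c$ \emph{is} honest Connes fusion with the bimodule corresponding to $c$ (again by comparison on the generator $L^2A$); the pentagon and triangle are then inherited verbatim from~\eqref{eq: associator + unitors}. The only delicate point along that route is the bookkeeping of the opposite-algebra and conjugation conventions relating right $A$-modules, left $A^{\op}$-modules, and the residual action on $K\boxtimes_B c$.
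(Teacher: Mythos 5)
Your construction of the unitor and associator (extend from the generator $L^2A$ via Corollary~\ref{cor: enough to construct on generator}, with the associator given at $H=L^2A$ by a composite of unitors) is exactly the paper's. For the pentagon the paper uses a cleaner version of what you call the ``alternative route'': it observes that the entire pentagon diagram for $c$ is the image, under the functor $A_3^{\op}\text{-}\Mod\to C$ induced by the $A_3$-action on $c$, of the classical Connes-fusion pentagon for $L^2A_3$, thereby sidestepping the vaguer ``evaluate every variable on its generator'' reduction (problematic since $K$ and $L$ live in bimodule categories) that you yourself flag as the delicate point.
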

\begin{proof}
By definition, $-\boxtimes_Ac$ maps $L^2A_A$ to $c$, so the unitors are just identity morphisms.
To define the associator,
it suffices by Corollary~\ref{cor: enough to construct on generator} to construct it for $H=L^2A$, in which case it is given by the following composition of unitors:
\[
(L^2A \boxtimes_A K) \boxtimes_B c \to K \boxtimes_B c \to L^2A \boxtimes_A (K \boxtimes_B c).
\]
The triangle axiom is easy, and left as an exercise to the reader.
To check the pentagon axiom for the various parenthesizations of $(-\boxtimes_{A_1}-\boxtimes_{A_2}-\boxtimes_{A_3}c)$, note that the action of $A_3$ on $c\in C$ induces a functor $F:\mathbf BA_3^{\hat\oplus}\cong A_3^{\op}\text{-}\Mod\to C$, and that the pentagon diagram for $c$ is the image under $F$ of the pentagon diagram for $L^2A_3\in A_3^{\op}\text{-}\Mod$. The latter is an instance of the usual pentagon diagram for Connes fusion, which is well known to commute (see e.g. \cite{10.1063/1.1563733}).
Its image under $F$ therefore also commutes.
\end{proof}

\subsection*{A technical von Neumann algebra lemma}

The following lemma is used in the proof of Proposition~\ref{prop: direct sum completion}.
Recall that $J\Subset I$ means that $J\subset I$ and $|J|<\infty$.

\begin{lem}
\label{lem: homomorphism defined on union of pMp}
Let $M$, $N$ be von Neumann algebras, 
and let $\{p_i\}_{i\in I}$ be a family of projections in $M$ that add up to $1$. 
For every 
$J\Subset I$,
let 
$p_J:=\sum_{i\in J}p_i$.
If 
\[
\varphi: 
\bigcup_{J\Subset I} p_JMp_J
\to N
\]
is a $*$-homomorphism whose restriction to each $p_JMp_J$
is normal, and such that $\sum_i \varphi(p_i)= 1$ in $N$, then $\varphi$ extends uniquely to a homomorphism of von Neumann algebras $M\to N$.
\end{lem}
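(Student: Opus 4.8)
The plan is to build the extension $\tilde\varphi:M\to N$ by a compactness-and-compression argument, treating $M$ as the strong closure of the increasing union $\bigcup_{J\Subset I}p_JMp_J$. First I would record the basic structure. Since $\varphi$ is a $*$-homomorphism, the elements $q_i:=\varphi(p_i)$ are mutually orthogonal projections in $N$, with $q_J:=\varphi(p_J)=\sum_{i\in J}q_i$ for $J\Subset I$, and the hypothesis $\sum_i\varphi(p_i)=1$ says exactly that $q_J\uparrow 1$ strongly. Likewise $p_J\uparrow 1$ strongly in $M$, so $p_Jxp_J\to x$ $\sigma$-weakly (indeed strongly) for every $x\in M$, and $\|\varphi(p_Jxp_J)\|\le\|x\|$ because $\varphi$ is contractive on each corner.

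Next I would define the extension. For fixed $x\in M$ the net $\{\varphi(p_Jxp_J)\}_{J\Subset I}$ lies in the ball of radius $\|x\|$ of $N$, which is $\sigma$-weakly compact by Banach--Alaoglu, so it has $\sigma$-weak cluster points. The key compatibility relation is that for $J\subseteq J'$ one has $p_Jxp_J=p_J(p_{J'}xp_{J'})p_J$ with both factors in the corner $p_{J'}Mp_{J'}$, whence $\varphi(p_Jxp_J)=q_J\,\varphi(p_{J'}xp_{J'})\,q_J$. Passing to the $\sigma$-weak limit along a subnet converging to a cluster point $a$ shows $q_Jaq_J=\varphi(p_Jxp_J)$ for every $J$; since $q_J\uparrow 1$ forces $q_Jaq_J\to a$, the cluster point is unique and the net converges. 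I would set $\tilde\varphi(x):=\sigma\text{-}\lim_J\varphi(p_Jxp_J)$, which then satisfies $q_J\tilde\varphi(x)q_J=\varphi(p_Jxp_J)$ and $\|\tilde\varphi(x)\|\le\|x\|$. Linearity, $*$-compatibility (using that the adjoint is $\sigma$-weakly continuous), unitality (here I use $\sum q_i=1$ to get $\tilde\varphi(1)=\lim_J q_J=1$), and the fact that $\tilde\varphi$ restricts to $\varphi$ on $\bigcup_J p_JMp_J$ are then immediate from the defining formula. Normality, i.e.\ $\sigma$-weak continuity of $\tilde\varphi$ on bounded sets, follows by repeating the cluster-point argument: if $x_\alpha\to x$ $\sigma$-weakly and boundedly, then any cluster point $z$ of $\tilde\varphi(x_\alpha)$ satisfies $q_Jzq_J=\lim_\alpha\varphi(p_Jx_\alpha p_J)=\varphi(p_Jxp_J)=q_J\tilde\varphi(x)q_J$ by normality of $\varphi$ on the corner, forcing $z=\tilde\varphi(x)$.

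The main obstacle is multiplicativity, which I would establish by a separate-continuity-plus-density argument rather than by estimating off-diagonal terms $\varphi(p_Jx(1-p_J)yp_J)$ directly. The point is that both $(x,y)\mapsto\tilde\varphi(xy)$ and $(x,y)\mapsto\tilde\varphi(x)\tilde\varphi(y)$ are, by normality of $\tilde\varphi$ together with the $\sigma$-weak continuity of one-sided multiplication, separately $\sigma$-weakly continuous on bounded sets in each variable; and they agree whenever $x,y\in\bigcup_J p_JMp_J$, since there $\tilde\varphi$ coincides with the given $*$-homomorphism $\varphi$ (for $x,y$ in a common corner $p_JMp_J$ the product $xy$ lies in that corner and $\varphi(xy)=\varphi(x)\varphi(y)$). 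Freezing $y$ in the union and letting $x$, approximated boundedly by $p_Jxp_J$, range over all of $M$ gives agreement for all $x\in M$ and $y$ in the union; freezing such an $x$ and letting $y$ range then gives agreement for all $x,y\in M$. Hence $\tilde\varphi$ is a normal unital $*$-homomorphism extending $\varphi$. Finally, uniqueness is immediate: any normal homomorphism $\psi:M\to N$ extending $\varphi$ satisfies $\psi(p_Jxp_J)=\varphi(p_Jxp_J)=q_J\tilde\varphi(x)q_J$, and applying $\sigma$-weak continuity of $\psi$ to $p_Jxp_J\to x$ yields $\psi(x)=\lim_J q_J\tilde\varphi(x)q_J=\tilde\varphi(x)$.
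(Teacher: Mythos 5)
Your proof is correct, but it takes a genuinely different route from the paper's. You construct the extension analytically, as the $\sigma$-weak limit $\tilde\varphi(x)=\lim_J\varphi(p_Jxp_J)$, using Banach--Alaoglu plus the compatibility $\varphi(p_Jxp_J)=q_J\varphi(p_{J'}xp_{J'})q_J$ to pin down a unique cluster point, and then verify normality and multiplicativity by hand via separate $\sigma$-weak continuity on bounded sets and bounded-density of $\bigcup_J p_JMp_J$. The paper instead argues structurally: it realises $N$ on a Hilbert space, forms the von Neumann algebra $A\subset B(L^2M\oplus H)$ generated by the image of $\lambda\oplus\varphi$, shows each compression $p_J M p_J\to q_JAq_J$ is an isomorphism (injective with dense image), and then uses the kernel-splitting Lemma~\ref{lem: kernel of maps of vNalg} together with $\sup q_J=1$ to conclude that the projection $A\to M$ is an isomorphism, so the extension is $A\to N$ precomposed with its inverse. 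The paper's argument gets multiplicativity and normality for free from the von Neumann algebra structure of $A$, at the cost of invoking the standard form and the direct-sum decomposition of a kernel; yours is more elementary and self-contained but must establish those properties explicitly --- which you do correctly, including the two points that are easiest to fumble (that the unique-cluster-point argument needs $q_J\uparrow 1$, and that multiplicativity cannot be checked termwise on the compressions but follows from a two-step density argument, first in $x$ with $y$ in the union, then in $y$). Your uniqueness argument and the observation that $\sum_i\varphi(p_i)=1$ is what forces unitality are also correct.
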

\begin{proof}
Let $M_0:=\bigcup_{J\Subset I}p_JM p_J$,
and note that $p_JM p_J=p_J(M_0) p_J$ for all $J\Subset I$.
By realising $N$ on some Hilbert space $H$, we may assume without loss of generality that $N=B(H)$.
Let $A\subset B(L^2M\oplus H)$ be the von Neumann algebra generated by the image of $\lambda\oplus\varphi$, where $\lambda$ denotes the left action of $M$ on $L^2(M)$.
Let $q_i:=(\lambda\oplus\varphi)(p_i)$,
let $q_J:=\sum_{i\in J} q_i$, and
note that $\sum q_i=1$ in $A$.
Consider the following commutative diagram:
\[
\begin{tikzcd}
& M_0 \arrow[d, "\jmath"'] \arrow[dr, "\varphi"] \arrow[dl, "\iota"'] &\\ M \arrow[<<-, r] & A & N \arrow[<-, l]
\end{tikzcd}
\]
where $\iota:M_0 \to M$ is the inclusion, and $\jmath:M_0 \to A$ is induced by $\lambda\oplus\varphi$.
The map $\jmath$ has ultrawekly dense image.
Its compression
\begin{equation}\label{eq: ksdfbvhjkd}
\jmath\,:\,p_J(M_0)p_J \rightarrow q_JAq_J
\end{equation}
therefore also has dense image.
Since
\begin{equation}\label{eq: ksdfbvhjkd BIS}
p_JMp_J=p_J(M_0)p_J \,\to\, q_JAq_J \,\twoheadrightarrow\, p_JMp_J
\end{equation}
is the identity map, \eqref{eq: ksdfbvhjkd} is also injective.
A homomorphism between von Neuman algebras which is
both injective and with dense image is necessarily an isomorphism.
So \eqref{eq: ksdfbvhjkd} is an isomorphism, and all maps in \eqref{eq: ksdfbvhjkd BIS} are in fact isomorphisms.

Let $K:=\ker(A\to M)$,
so that $A = A_0 \oplus K$ by Lemma~\ref{lem: kernel of maps of vNalg}.
For each $J\Subset I$, the projection $q_J \in A$ does not have a $K$ component, as otherwise the map $q_JAq_J \to p_JMp_J$ would not be an isomorphism.
So $1=\sup q_J$ has no $K$ component, and $K=0$.
So the map $A\to M$ is an isomorphism.

The desired homomorphism $M\to N$ is then the composite of the map $A \to N$ with the inverse of the isomorphism $A\to M$.
\end{proof}

\section{The Hilb-valued inner product}

Cauchy complete $\rmW^*$-categories behave in many ways like Hilbert spaces. 
For example, the action \eqref{eq: action of Hilb} is the analog of the fact that any Hilbert space $H$ admits the structure of a $\bbC$-module.
$\rmW^*$-categories also have an inner product:

\begin{defn}
\label{def: Hilb valued inner product}
Every\, $\rmW^*$-category\, $C$\, admits\, a\, canonical \emph{$\Hilb$-valued\, inner\, product}\\
$\langle-,-\rangle_\Hilb:\, \overline{C}\times C\to \Hilb$
given by:
\begin{equation}
\label{eq: statement: Hilb-valued inner product}
    \langle a,b\rangle_\Hilb \,:=\,
    p_b L^2( \mathrm{End}(a\oplus b))p_a,
\end{equation}
where $p_a, p_b \in\End(a\oplus b)$ are the two projections.

The Hilb-valued inner product is compatible with direct sums in the sense that there are canonical unitary isomorphisms
$\langle a,\oplus_i b_i\rangle
\cong
\oplus_i \langle a,b_i\rangle
$ and $
\langle \oplus_i a_i, b\rangle
\cong
\oplus_i \langle a_i,b\rangle$.

Morphisms $f:b_1\to b_2$ and $g:a_1\to a_2$ in $W$ induce maps
\[
f_*:\langle a,b_1\rangle \to \langle a,b_2\rangle
\qquad\qquad
g_*:\langle a_1,b\rangle \to \langle a_2,b\rangle
\]
defined as the restriction to the appropriate corner of the left action
of $\left(\begin{smallmatrix}
0 & 0 \\ f & 0
\end{smallmatrix}\right)\in\End(b_1\oplus b_2)$ on $\langle a,b_1\oplus b_2\rangle \cong$ ${\langle a,b_1\rangle \oplus \langle a,b_2\rangle}$,
and 
of the right action of $\left(\begin{smallmatrix}
0 & g^* \\ 0 & 0
\end{smallmatrix}\right) \in \End(a_1\oplus a_2)$ on 
$\langle a_1\oplus a_2, b\rangle\cong \langle a_1,b\rangle \oplus \langle a_2,b\rangle$.

The modular conjugation $J$ on 
$L^2\left( \mathrm{End}\left(x\oplus y\right)\right)$
induces a unitary isomorphism
\[
J_{x,y}\colon \langle x,y\rangle_{\Hilb}
\to\overline{\langle y,x\rangle_{\Hilb}}.
\]
It is involutive in the sense that
$\overline{J_{y,x}}\circ J_{x,y} = \id_{\langle x,y\rangle}$,
and $J_{x,x}$ agrees with the modular conjugation on
\begin{equation}
\label{eq: <X,X> is L^2}
\langle x,x\rangle_{\Hilb}=L^2(\mathrm{End}(x)).
\end{equation}
\end{defn}

\begin{lem}
$\langle x,y\rangle_{\Hilb}=0$ if and only if $\Hom(x,y)=0$.
\end{lem}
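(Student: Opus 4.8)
The plan is to reduce the claim to the already-recorded fact that, for projections $p,q$ in a von Neumann algebra $A$, one has $pL^2(A)q = 0$ if and only if $pAq = 0$. We apply this with $A = \End(x\oplus y)$, taking $p = p_y$ and $q = p_x$ to be the two coordinate projections.

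First, by the defining formula~\eqref{eq: statement: Hilb-valued inner product}, we have $\langle x,y\rangle_{\Hilb} = p_y L^2(\End(x\oplus y)) p_x$, so $\langle x,y\rangle_{\Hilb} = 0$ precisely when $p_y L^2(\End(x\oplus y)) p_x = 0$. By the lemma relating $pL^2(A)q$ to $pAq$ (applied to $A = \End(x\oplus y)$), this holds if and only if $p_y \End(x\oplus y) p_x = 0$.

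Second, I would identify this corner with the hom-space. Since an endomorphism of $x\oplus y$ is a $2\times 2$ matrix of morphisms in $C$, compressing on the left by $p_y$ and on the right by $p_x$ extracts exactly the $(y,x)$-entry, giving a canonical identification $p_y \End(x\oplus y) p_x \cong \Hom_C(x,y)$. Hence this corner vanishes if and only if $\Hom(x,y) = 0$, and combining the two equivalences gives the result.

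There is no real obstacle here: the analytic content is already packaged in the cited $L^2$ lemma, and the only thing to check is the purely formal identification of the compressed corner $p_y\End(x\oplus y)p_x$ with $\Hom(x,y)$, which is immediate from the definition of $C^{\oplus}$ and its hom-spaces.
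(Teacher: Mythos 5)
Your proof is correct and follows essentially the same route as the paper: both identify $\langle x,y\rangle_{\Hilb}$ and $\Hom(x,y)$ as the corners $p_yL^2(A)p_x$ and $p_yAp_x$ of $A=\End(x\oplus y)$, and then use the equivalence $pL^2(A)q=0\iff pAq=0$ (which the paper justifies by noting both conditions amount to $p_x$ and $p_y$ having disjoint central supports, while you simply cite the recorded lemma). No gap.
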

\begin{proof}
Let $A:=\mathrm{End}(x\oplus y)$.
Then $\langle x,y\rangle_{\Hilb}=p_y L^2Ap_x$ while
$\Hom(x,y)=p_yAp_x$.
The conditions $p_yAp_x=0$ and $p_yL^2Ap_x=0$ are both equivalent to $p_x$ and $p_y$ having disjoint central support.
\end{proof}

\begin{lem}
Let $f:y_1\to y_2$ be a morphism in a $\rmW^*$-category.
Then the two maps $\langle x,y_2\rangle \to \langle x,y_1\rangle$
provided by $(f_*)^*$ and $(f^*)_*$ agree.

Similarly, for $g:x_1\to x_2$, the two maps $(g_*)^*,(g^*)_*:\langle x_2,y\rangle \to \langle x_1,y\rangle$ agree.
\end{lem}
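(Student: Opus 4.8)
The plan is to recognise both pushforward maps as genuine left- or right-multiplication operators on a single standard form, and then to invoke the fact that the left and right actions of a von Neumann algebra on its standard form are $*$-representations of $A$ and of $A^{\op}$ respectively.

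For the first claim, set $A:=\End(x\oplus y_1\oplus y_2)$. Using the compatibility of the $L^2$-construction with corners, both Hilbert spaces $\langle x,y_1\rangle_\Hilb$ and $\langle x,y_2\rangle_\Hilb$ are canonically the corners $p_{y_1}L^2(A)p_x$ and $p_{y_2}L^2(A)p_x$ inside the single space $L^2A$. Write $\tilde f:=p_{y_2}\,f\,p_{y_1}\in A$ for the element representing $f$, so that $\tilde f^{*}=p_{y_1}\,f^{*}\,p_{y_2}$ represents $f^{*}$. Unwinding Definition~\ref{def: Hilb valued inner product}, the left action of $\left(\begin{smallmatrix}0&0\\ f&0\end{smallmatrix}\right)$ on $\langle x,y_1\oplus y_2\rangle_\Hilb$ is exactly left multiplication by $\tilde f$; hence $f_*$ is the restriction of $\lambda(\tilde f)$ to the corner $p_{y_1}L^2(A)p_x\to p_{y_2}L^2(A)p_x$, and likewise $(f^{*})_*$ is the restriction of $\lambda(\tilde f^{*})$.

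Now I would conclude as follows. Since $\lambda:A\to B(L^2A)$ is a $*$-homomorphism, the Hilbert-space adjoint of $\lambda(\tilde f)$ is $\lambda(\tilde f^{*})$; restricting this equality of operators on $L^2A$ to the relevant corners gives $(f_*)^{*}=(f^{*})_*$. The second claim is proved identically with ``left'' replaced by ``right'': setting $A:=\End(x_1\oplus x_2\oplus y)$ and $\tilde g:=p_{x_2}\,g\,p_{x_1}$, one checks from Definition~\ref{def: Hilb valued inner product} that $g_*$ is the restriction of the right action $\rho(\tilde g^{*})$ and that $(g^{*})_*$ is the restriction of $\rho(\tilde g)$. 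As $\rho:A^{\op}\to B(L^2A)$ is also a $*$-representation, the adjoint of $\rho(\tilde g^{*})$ is $\rho(\tilde g)$, and restricting to corners yields $(g_*)^{*}=(g^{*})_*$.

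The only real content lies in the bookkeeping of the first step: verifying that the block-matrix prescriptions of Definition~\ref{def: Hilb valued inner product} genuinely reduce to plain left/right multiplication on the corners of a common $L^2A$, and that the corner identifications intertwine the two descriptions. Once this is in place, the statement is an immediate consequence of the standard fact that $\lambda$ and $\rho$ are $*$-representations, which converts Hilbert-space adjoints into the algebraic $*$-operation $a\mapsto a^{*}$.
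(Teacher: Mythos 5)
Your proof is correct and follows essentially the same route as the paper: both arguments realise $\langle x,y_1\rangle$ and $\langle x,y_2\rangle$ as corners $p_{y_1}L^2(A)p_x$ and $p_{y_2}L^2(A)p_x$ of the single standard form of $A=\End(x\oplus y_1\oplus y_2)$, identify $f_*$ with left multiplication by the matrix element representing $f$, and conclude from the fact that the left action $\lambda$ (resp.\ the right action $\rho$, for the second claim) is a $*$-homomorphism, so that Hilbert-space adjoints correspond to the algebraic $*$-operation. The paper's proof is exactly your displayed computation $(f_*)^*=\lambda(\tilde f)^*=\lambda(\tilde f^{\,*})=(f^*)_*$, with the same bookkeeping left implicit.
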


\begin{proof}
We prove the first statement.
Let $A:=\End(x\oplus y_1\oplus y_2)$, so that
$\langle x,y_1\rangle = p_{y_1}(L^2A)p_x$ and
$\langle x,y_2\rangle = p_{y_2}(L^2A)p_x$, 
and let $\ell:A\to B(L^2A)$ be the left action of $A$ on its $L^2$ space. 
Using that $\ell$ is a $*$-algebra homomorphism, we compute:
\[
(f_*)^*=\ell(\left(\begin{smallmatrix}
0 & 0 & 0 \\0 & 0 & 0 \\ 0 & f & 0
\end{smallmatrix}\right))^*=\ell(\left(\begin{smallmatrix}
0 & 0 & 0 \\0 & 0 & 0 \\ 0 & f & 0
\end{smallmatrix}\right)^*)=(f^*)_*. \qedhere
\]
\end{proof}

\begin{lem}
Let $x$ and $y$ be objects of some $\rmW^*$-category. Then the images of $\overline{\End(x)}$ and $\End(y)$ in $B(\langle x,y\rangle)$ are each other's commutants.
\end{lem}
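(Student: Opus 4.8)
The plan is to reduce the statement to two standard facts recalled in Section~2: that on the standard form $L^2A$ of a von Neumann algebra $A$ the left action $\lambda(A)$ and the right action $\rho(A)$ are each other's commutants, together with the classical reduction (commutation) theorem for von Neumann algebras, namely that inducing an algebra to the range of a projection in its commutant has the effect of reducing the commutant by that projection, and dually that reducing an algebra by one of its own projections induces the commutant to the corresponding corner.

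First I would set $A:=\End(x\oplus y)$ and $K:=L^2A$, so that by definition $\langle x,y\rangle = p_yKp_x$, where $p_x,p_y\in A$ are the two orthogonal complementary projections ($p_x+p_y=1$). Writing $\lambda,\rho$ for the left and right actions of $A$ on $K$, the next task is to identify the two images in $B(\langle x,y\rangle)$. Since $\End(y)=p_yAp_y$ acts through the left action, its image is $\lambda(p_yAp_y)$ restricted to $p_yKp_x$; and since the action of $\overline{\End(x)}$ is, by its definition via the right action of $g^*$, implemented by $\rho$, its image is $\rho(p_xAp_x)$ restricted to $p_yKp_x$. It therefore suffices to show that the restrictions of $\lambda(p_yAp_y)$ and $\rho(p_xAp_x)$ to the corner $p_yKp_x\subseteq K$ are each other's commutants in $B(p_yKp_x)$.

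I would obtain this by a two-step reduction starting from the equality $\lambda(A)'=\rho(A)$ on $K$. First, inducing $\lambda(A)$ to the range of the projection $\rho(p_x)\in\rho(A)=\lambda(A)'$, i.e.\ to $Kp_x$, the commutation theorem gives that the commutant of $\lambda(A)|_{Kp_x}$ is $\rho(p_x)\rho(A)\rho(p_x)|_{Kp_x}=\rho(p_xAp_x)|_{Kp_x}$. Second, I reduce the von Neumann algebra $\lambda(A)|_{Kp_x}$ by the projection $\lambda(p_y)|_{Kp_x}$ lying inside it, whose range is exactly $p_yKp_x=\langle x,y\rangle$; by the dual form of the commutation theorem the reduced algebra $\lambda(p_yAp_y)|_{p_yKp_x}$ has as commutant the induced algebra $\rho(p_xAp_x)|_{p_yKp_x}$. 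Chaining the two steps identifies the commutant of the image of $\End(y)$ with the image of $\overline{\End(x)}$; since both are von Neumann algebras, taking commutants once more yields the reverse statement, so they are each other's commutants.

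The main point to handle with care is the bookkeeping in the reduction theorem: one must consistently distinguish inducing to the range of a projection in the commutant (which reduces the commutant) from reducing by a projection in the algebra itself (which induces the commutant), and verify that $\rho(p_x)$ and $\lambda(p_y)$ commute, so that the two reductions land on the common corner $p_yKp_x$ regardless of order. A secondary point worth spelling out is that the action of $\overline{\End(x)}$ is genuinely the right action $\rho$ precomposed with the $*$-operation, so that its image as a subalgebra of $B(\langle x,y\rangle)$ is precisely $\rho(p_xAp_x)$; once this identification is fixed, the commutant computation is entirely a matter of the standard reduction theory.
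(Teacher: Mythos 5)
Your proof is correct, but it takes a different route from the paper's. The paper first cuts everything down by the product $z$ of the central supports of $p_x$ and $p_y$ inside $\End(x\oplus y)$, so that $\langle x,y\rangle = qL^2Ap$ with $A = z\End(x\oplus y)$ and $p=zp_x$, $q=zp_y$ both of full central support in $A$; it then invokes in one step the fact that such a corner $qL^2Ap$ is a Morita equivalence between $qAq$ and $pAp$, whose defining property is exactly the commutant statement. You instead chain the two classical commutation theorems directly on $L^2(\End(x\oplus y))$ --- first inducing $\lambda(A)$ to the range of $\rho(p_x)\in\lambda(A)'$, then reducing by $\lambda(p_y)$ --- starting from $\lambda(A)'=\rho(A)$ on the standard form. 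Your identification of the image of $\overline{\End(x)}$ as $\rho(p_xAp_x)$ (via $g\mapsto\rho(g^*)$) matches the paper's definition of $g_*$, and your bookkeeping of induced versus reduced algebras is right; the final double-commutant step is fine since both corners are von Neumann algebras on $p_yL^2Ap_x$. What your approach buys is that no central-support reduction is needed at all: the reduction theorems compute the images and their commutants directly, even when the two homomorphisms into $B(\langle x,y\rangle)$ fail to be injective. What the paper's approach buys is a shorter derivation from a single packaged fact (Morita equivalence of corners with full central support), at the cost of the preliminary central-support bookkeeping.
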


\begin{proof}
Let $z_x,z_y\in Z(\End(x\oplus y))$ be the central supports of $p_x$ and $p_y$ (where $p_x, p_x \in\End(x\oplus y)$ are the two projections), and let $z:=z_pz_q$.
Letting $A=z\End(x\oplus y)$,
$p=zp_x$, and $q=zp_y$, we then have $\langle x,y\rangle=qL^2Ap$.
Since $p$ and $q$ have full central support in $A$, the Hilbert space
$qL^2Ap$ is a Morita equivalence between $qAq$ and $pAp$.

The result follows since
the image of $\End(y)$ in $B(qL^2Ap)$ is $qAq$, and the image of $\overline{\End(x)}=\End(X)^{\op}$ is $pA^{\op}p$.
\end{proof}

\begin{defn}\label{def: orthog gens}
A set $\{c_i\}_{i\in I}$ of generators of some $\rmW^*$-category is said to be orthogonal if $\langle c_i,c_j\rangle =0$ $\forall i\neq j$. Equivalently, if $\Hom(c_i,c_j) =0$ $\forall i\neq j$.
\end{defn}

The classical Gram–Schmidt process admits a variant for Cauchy complete $\rmW^*$-categories:

\begin{prop} [Gram–Schmidt for $\rmW^*$-categories]
\label{prop: Gram–Schmidt}
Let $C$ be a Cauchy complete $\rmW^*$-category, and
let $\{c_i\}_{i\in I}$ be set of generators indexed by some well ordered set $I$.
For each $i\in I$, let $C_{<i}$ be the full subcategory generated by $\{c_j\}_{j<i}$, and let $C_{\le i}$ be the full subcategory generated by $\{c_j\}_{j\le i}$.
Let $C_i$ be the orthogonal complement of $C_{<i}$ inside $C_{\le i}$,
and let
\begin{equation*}
P_i:C_{\le i}\twoheadrightarrow C_{<i}\qquad\text{and}\qquad
Q_i:C\twoheadrightarrow C_i
\end{equation*}
be the two orthogonal projections, as in \eqref{eq: two orthogonal projections}.
Then $C=\boxplus_{i}C_i$, $Q_i(c_i)$ generates $C_i$, and $\{Q_i(c_i)\}_{i\in I}$ is an orthogonal set of generators for $C$.
\end{prop}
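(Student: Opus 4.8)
The plan is to verify the three claims in sequence, reducing everything to the orthogonal-direct-sum decomposition of Lemma~\ref{lem: boxplus decomposition}. First I would record the standing facts that make the statement well-posed: each of $C_{<i}$, $C_{\le i}$, and $C_i$ is a Cauchy complete full subcategory of $C$ (the first two because the full subcategory generated by a \emph{set} of objects is the essential image of a fully faithful functor out of $\mathbf B\End(-)^{\hat\oplus}$, the third as an orthogonal complement), so the projections $P_i$ and $Q_i$ are defined as in~\eqref{eq: two orthogonal projections}, and orthogonality is symmetric since $*$ gives an antilinear isomorphism $\Hom(x,y)\cong\Hom(y,x)$. The $C_i$ are then pairwise orthogonal: for $j<i$ we have $C_j\subseteq C_{\le j}\subseteq C_{<i}$, while $C_i\bot C_{<i}$ by construction. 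Consequently the objects $Q_i(c_i)\in C_i$ satisfy $\Hom(Q_i(c_i),Q_j(c_j))=0$ for $i\ne j$, so Lemma~\ref{lem: boxplus decomposition} gives $C=(\boxplus_i \tilde C_i)\boxplus D$, where $\tilde C_i$ is the subcategory generated by $Q_i(c_i)$ and $D=\{x\,|\,\Hom(Q_i(c_i),x)=0\ \forall i\}$. The whole proposition then follows once I show (A) $Q_i(c_i)$ generates $C_i$ (so $\tilde C_i=C_i$), and (B) $D=0$.

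For (A), I would use that $\{c_j\}_{j\le i}$ generates $C_{\le i}$, which is idempotent complete. Given a nonzero $x\in C_i\subseteq C_{\le i}$, there is a nonzero morphism $c_j\to x$ for some $j\le i$; for $j<i$ this is impossible since $c_j\in C_{<i}$ and $x\in C_i$ with $C_i\bot C_{<i}$, so necessarily $j=i$. Writing $c_i\cong P_i(c_i)\oplus Q_i(c_i)$ with $P_i(c_i)\in C_{<i}$ and $Q_i(c_i)\in C_i$, the component $\Hom(P_i(c_i),x)$ vanishes by orthogonality, so the nonzero map $c_i\to x$ must restrict to a nonzero map $Q_i(c_i)\to x$. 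Hence $Q_i(c_i)$ generates $C_i$, which is claim~(2).

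For (B) I would prove by transfinite induction on $i\in I$ (this is where the well-ordering is used) that $C_{\le i}$ equals the full subcategory $G_i$ generated by $\{Q_j(c_j)\}_{j\le i}$, equivalently the internal orthogonal direct sum $\boxplus_{j\le i}C_j$. The inclusion $G_i\subseteq C_{\le i}$ is immediate. For the reverse, the inductive hypothesis applied at all $j<i$ (in both the successor and the limit case) yields $C_{<i}=G_{<i}$, the subcategory generated by $\{Q_j(c_j)\}_{j<i}$; then $C_{\le i}=C_{<i}\boxplus C_i=G_{<i}\boxplus C_i$ is generated by $G_{<i}$ together with the generator $Q_i(c_i)$ of $C_i$ from~(A), i.e. by $\{Q_j(c_j)\}_{j\le i}$, giving $C_{\le i}\subseteq G_i$. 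Finally, if $x\in D$ then $x\bot C_j$ for all $j$, hence $x\bot C_{\le i}=\boxplus_{j\le i}C_j$ for every $i$ (a morphism out of an orthogonal direct sum vanishes once it vanishes on each summand, using $\sum\iota_j\iota_j^*=\id$); in particular $\Hom(c_i,x)=0$ for all $i$, and since $\{c_i\}$ generates $C$ this forces $x=0$. Thus $D=0$, yielding $C=\boxplus_i C_i$ (claim~(1)) and, together with~(A), that $\{Q_i(c_i)\}$ is an orthogonal generating set (claim~(3)). I expect the transfinite induction — specifically the limit-ordinal bookkeeping establishing $C_{<i}=G_{<i}$ — to be the main obstacle; the remaining steps are routine orthogonality chasing.
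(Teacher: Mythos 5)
Your proof is correct and follows essentially the same route as the paper: the same transfinite induction over the well-ordering to identify $C_{<i}$ (resp.\ $C_{\le i}$) with the orthogonal direct sum of the $C_j$, and the same argument that a nonzero object of $C_i$ must receive a nonzero map from $c_i$ and hence from $Q_i(c_i)$. The only cosmetic difference is that you assemble the final decomposition by invoking Lemma~\ref{lem: boxplus decomposition} on the objects $Q_i(c_i)$ and then killing the residual summand $D$, whereas the paper reads off $C=\boxplus_i C_i$ directly from the induction.
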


\begin{proof}
Identify $I$ with the set of ordinals strictly smaller than some given ordinal $\alpha_0$.
We first prove, by transfinite induction, that for all $i\le \alpha_0$ we have $C_{<i}=\boxplus_{j<i}C_j$:
\begin{itemize}
\item (base case) $C_{<0}=\{0\}$ is the orthogonal direct sum indexed by the empty set $\emptyset$.
\item (successor case) Assume
$C_{<i}=\boxplus_{j<i}C_j$. By construction, $C_{< i+1}=C_{\le i}=C_{< i}\boxplus C_i$. 
So $C_{<i+1}=\boxplus_{j<i+1}C_j$.
\item (limit case) Let $i$ be a limit ordinal,
and let us assume that
$C_{<j}=\boxplus_{k<j}C_k$ for all $j<i$.
By construction, we then have $C_{< i}=(\bigcup_{j<i} C_{< j})^{\hat\oplus}=
(\bigcup_{j<i} (\boxplus_{k<j}C_k))^{\hat\oplus}
=\boxplus_{j<i}C_j$.
\end{itemize}
In particular, we have $C=\boxplus_{i}C_i$.

Let $a_i=P_i(c_i)$ and $b_i=Q_i(c_i)$, so that $a_i\oplus b_i=c_i$ as in \eqref{eq: two orthogonal projections}.
Since $b_i$ is in the orthogonal complement of $C_{<i}$, it is orthogonal to all the $b_j$ with $j<i$. So $\{b_i\}_{i\in I}$ is an orthogonal set.
To see that $b_i$ generates $C_i$,
choose a non-zero object $x\in C_i$.
Since $x\in C_{\le i}$, it receives a non-zero map from $c_j$ for some $j\le i$.
If $j<i$, then $\Hom(c_j,x)=0$ because $c_j\in C_{<i}$, so the only possible source of a non-zero map to $x$ is $c_i$.
So $\Hom(c_i,x)=\Hom(b_i,x)\neq 0$, and
$b_i$ generates $C_i$. It follows that $\{b_i\}_{i\in I}$ generates $C=\boxplus_{i}C_i$.
\end{proof}

\begin{lem}
\label{lem: antiunitaries at level of <,>}
If $F:C\to D$ is an equivalence of $\rmW^*$-categories,
then we have canonical unitaries
$\langle c_1,c_2\rangle \to \langle F(c_1),F(c_2)\rangle$, natural in
$c_1$ and $c_2$.
Similarly, if $F:C\to D$ is an antilinear equivalence (i.e. an equivalence $\overline C\to D$)
then we have canonical antiunitaries
$\langle c_1,c_2\rangle \to \langle F(c_1),F(c_2)\rangle$, again natural in
$c_1$ and $c_2$.

These are functorial in the sense that whenever $F$ and $G$ are composable functors (either linear or antilinear), the following diagram commutes:
\[
\begin{tikzpicture}
\node (A) at (0,1.2)
{$\langle G(c_1),G(c_2)\rangle$};
\node (B) at (-3,0)
{$\langle c_1,c_2\rangle$};
\node (C) at (3,0)
{$\langle FG(c_1),FG(c_2)\rangle.$};
\draw[->] (B) -- (A);
\draw[->] (B) -- (C);
\draw[->] (A) -- (C);
\end{tikzpicture}
\]
\end{lem}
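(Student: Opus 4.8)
The plan is to reduce everything to the behaviour of the standard-form construction $L^2$ under (conjugate-)linear $*$-isomorphisms of von Neumann algebras. Fix objects $a,b\in C$ and set $A:=\End(a\oplus b)$ and $B:=\End(F(a)\oplus F(b))$. Since every functor preserves orthogonal direct sums we have $F(a\oplus b)\cong F(a)\oplus F(b)$, and since an equivalence is fully faithful, $F$ induces an isomorphism $A\to B$ sending the projections $p_a,p_b$ to $p_{F(a)},p_{F(b)}$. When $F$ is linear this is a normal $*$-isomorphism; when $F$ is antilinear it is a normal \emph{conjugate-linear} $*$-isomorphism (conjugate-linear, multiplicative, and $*$-preserving), because by definition an antilinear functor is a linear functor out of $\overline C$.

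In the linear case we use that $L^2$ is functorial for isomorphisms, producing a unitary $L^2(F)\colon L^2(A)\to L^2(B)$ compatible with the left and right actions and with the modular conjugations. Because $F(p_a)=p_{F(a)}$ and $F(p_b)=p_{F(b)}$, the unitary $L^2(F)$ intertwines the relevant corner projections, hence restricts to a unitary $p_b L^2(A) p_a\to p_{F(b)} L^2(B) p_{F(a)}$, which is exactly the desired map $\langle a,b\rangle\to\langle F(a),F(b)\rangle$.

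In the antilinear case, a conjugate-linear $*$-isomorphism $F\colon A\to B$ is the same datum as the genuine normal $*$-isomorphism $\psi\colon A^{\op}\to B$, $\psi(x):=F(x)^*$; note $\psi(p_a)=p_{F(a)}$ and $\psi(p_b)=p_{F(b)}$. Identifying $L^2(A^{\op})$ with $L^2(A)$ (same Hilbert space, left and right actions interchanged), the composite $J_B\circ L^2(\psi)$ is antiunitary, and a short computation with the relation $J_B\lambda_B(c)=\rho_B(c^*)J_B$ shows it carries $p_b L^2(A) p_a$ onto $p_{F(b)} L^2(B) p_{F(a)}$ (rather than onto the transposed corner). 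This is the desired antiunitary $\langle a,b\rangle\to\langle F(a),F(b)\rangle$; on the diagonal it recovers $J_{a,a}$ of \eqref{eq: <X,X> is L^2}.

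Naturality in $a$ and $b$ then follows by running the same argument over $\End(a\oplus b_1\oplus b_2)$ and $\End(a_1\oplus a_2\oplus b)$: the maps $f_*$ and $g_*$ are given by left, respectively right, multiplication by the matrices built from $f$ and $g$, and both $L^2(F)$ and $J_B\circ L^2(\psi)$ intertwine these actions (in the antilinear case the identity $\psi(m_f)^*=F(m_f)$ converts the $J$-twisted right action back into the left action defining $F(f)_*$). Finally, the functoriality triangle is checked on the four combinations of linear/antilinear $F$ and $G$, using three ingredients: functoriality $L^2(FG)=L^2(F)\circ L^2(G)$ for isomorphisms, compatibility of $L^2(\text{iso})$ with the modular conjugations, and involutivity $J^2=\id$; each case reduces to one of the elementary identities $\psi_{FG}=F\circ\psi_G$, $\psi_{FG}=\psi_F\circ G$, or $\psi_F\circ\psi_G=FG$. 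The main obstacle is precisely this antilinear bookkeeping---arranging the antiunitary so that it lands in the correct corner and verifying that the modular conjugations compose so that the mixed linear/antilinear triangles commute; everything else is a routine transcription of the linear case.
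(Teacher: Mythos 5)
Your construction is correct, and it arrives at the same maps as the paper by a more explicit route. The paper's proof is a two-line transport-of-structure argument: it observes that every ingredient of Definition~\ref{def: Hilb valued inner product} is natural with respect to $\bbR$-linear isomorphisms, so the canonical (anti)unitaries and the functoriality triangle come for free, and the only thing left to verify is that the map attached to an antilinear functor is genuinely antilinear --- which is checked once, for the identity functor $\overline C\to C$, via the identification $L^2(\overline{\End(x\oplus y)})=\overline{L^2(\End(x\oplus y))}$. You instead build the maps by hand: $L^2(F)$ restricted to the corner $p_bL^2(A)p_a$ in the linear case, and $J_B\circ L^2(\psi)$ with $\psi:A^{\op}\to B$, $\psi(x)=F(x)^*$, in the antilinear case (your corner computation is right: $L^2(\psi)$ transposes the corner and $J_B$ transposes it back, and $\psi(m_f)^*=F(m_f)$ is exactly what makes naturality go through). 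The difference is that the paper handles antilinearity through the complex-conjugate algebra $\overline A$ and the canonical unitary $L^2(\overline A)=\overline{L^2A}$, whereas you route through the opposite algebra $A^{\op}$ and the modular conjugation; the two are reconciled by the canonical $*$-identification $\overline A\cong A^{\op}$ noted on page~\pageref{pageref: complex conjugate}. Your version buys explicit formulas that make the corner, naturality, and composition checks mechanical (at the cost of the four-case bookkeeping for the triangle, which you correctly reduce to $\psi_{FG}=F\circ\psi_G$, $\psi_{FG}=\psi_F\circ G$, $\psi_F\circ\psi_G=FG$, together with $J^2=\id$ and compatibility of $L^2(\mathrm{iso})$ with $J$); the paper's version is shorter but asks the reader to accept the naturality meta-principle wholesale.
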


\begin{proof}
All the constructions in the Definition~\ref{def: Hilb valued inner product}
are natural with respect to $\bbR$-linear isomorphisms.
So all we need to do is check that the 
isomorphism $\langle c_1,c_2\rangle \to \langle F(c_1),F(c_2)\rangle$ associated to an antilinear functor $F$ is genuinely antilinear.
It is enough to check this for the antilinear identity functor from $\overline C$ to $C$:
\begin{align*}
\langle \overline{x},\overline{y}\rangle =
p_{\overline{y}} L^2( &\mathrm{End}(\overline{x}\oplus \overline{y}))p_{\overline{x}}
=
p_{\overline{y}} L^2( \overline{\mathrm{End}(x\oplus y)})p_{\overline{x}}\\
&=
p_{\overline{y}} \overline{L^2( \mathrm{End}(x\oplus y))}p_{\overline{x}}
=
\overline{p_y L^2( \mathrm{End}(x\oplus y))p_x}
=
\overline{\langle x,y\rangle}.
\qedhere
\end{align*}
\end{proof}

Our next goal is to refine Proposition~\ref{prop:freydembedding} by providing an explicit formula for the equivalence whose existence is guaranteed by the lemma.
We will similarly refine Corollary~\ref{cor: enough to construct on generator} by providing explicit formulas for the extensions whose existence and uniqueness are guaranteed by the corollary.

\begin{prop}
\label{prop:freydembedding'}
If $C$ is a Cauchy complete $\rmW^*$-category,
$c\in C$ is a generator, and $A:=\overline{\End(c)}$, then
\begin{align}
\notag
F:C&\to A\text{-}\Mod
\\
\label{eq: freydembedding'}
x&\mapsto\;\! \langle c,x\rangle
\end{align}
is an equivalence of categories.
\end{prop}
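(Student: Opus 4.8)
The plan is to verify the three hypotheses of Corollary~\ref{cor: equivalence checkable on generators} and then invoke it directly. That corollary reduces the claim that $F$ is an equivalence to three facts: that $c$ generates $C$ (assumed), that $F(c)$ generates $A\text{-}\Mod$, and that $F$ restricts to an isomorphism $\End(c)\to\End(F(c))$. Since $C$ is Cauchy complete and $A\text{-}\Mod$ is the category of modules over a von Neumann algebra (hence Cauchy complete), the corollary will apply once these checks are in place. Note that Proposition~\ref{prop:freydembedding} already guarantees the \emph{existence} of an equivalence $C\cong A\text{-}\Mod$; the only new content here is that the explicit assignment $x\mapsto\langle c,x\rangle$ realises it.

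First I would confirm that $F$ genuinely is a functor landing in $A\text{-}\Mod$. By the lemma exhibiting $\overline{\End(c)}$ and $\End(x)$ as mutual commutants inside $B(\langle c,x\rangle)$, the object $\langle c,x\rangle$ carries a left action of $A=\overline{\End(c)}$ coming from the first slot. A morphism $g\colon x_1\to x_2$ induces $g_\ast\colon\langle c,x_1\rangle\to\langle c,x_2\rangle$ through the second slot, and this commutes with the first-slot $A$-action; hence $g_\ast$ is $A$-linear and $F$ takes values in $A\text{-}\Mod$. That $F$ is a $*$-functor follows from the identity $(g_\ast)^\ast=(g^\ast)_\ast$ established above, and the induced maps $\End(x)\to\End(\langle c,x\rangle)$ are normal because they are restrictions of the normal left action on an $L^2$-space.

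Next I would carry out the computation on the generator $c$. By \eqref{eq: <X,X> is L^2} we have $F(c)=\langle c,c\rangle=L^2(\End(c))$. The key is to track which of the two commuting actions on $L^2(\End(c))$ is the $A$-module structure and which records $\End(c)$: the first-slot action of $A=\overline{\End(c)}$ is the right action $\rho$ of $\End(c)$, so $F(c)\cong {}_AL^2A$, which is faithful and therefore generates $A\text{-}\Mod$; meanwhile $F$ sends $g\in\End(c)$ to the second-slot map $g_\ast$, which is precisely the left action $\lambda(g)$. Since $\lambda$ is injective and normal with image $\lambda(\End(c))=\rho(\End(c))'=\End_{A\text{-}\Mod}(L^2A)$, the map $F\colon\End(c)\to\End(F(c))$ is an isomorphism. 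With all three hypotheses verified, Corollary~\ref{cor: equivalence checkable on generators} yields that $F$ is an equivalence.

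The main obstacle I anticipate is purely bookkeeping: correctly matching the two slots of $\langle\,,\rangle_\Hilb$ to the left and right actions on $L^2(\End(c))$, and keeping straight the canonical identifications $\overline{\End(c)}\cong\End(c)^{\op}$ and $L^2(\End(c))\cong L^2A$, so that the $A$-module structure assigned to $F(c)$ really is the standard one on $L^2A$ and $F$ on endomorphisms really is the left action $\lambda$ (hence injective with the correct image, namely the commutant of the $A$-action). Once these action-side identifications are pinned down, the verification of the corollary's hypotheses is immediate.
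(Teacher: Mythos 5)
Your proposal is correct and follows essentially the same route as the paper: the paper's proof also reduces to Corollary~\ref{cor: equivalence checkable on generators} by observing that $F(c)=\langle c,c\rangle=L^2(\End(c))$ is the standard (hence generating) $A$-module and that $F:\End(c)\to\End_{A\text{-}\Mod}(F(c))$ is the identity map under the canonical identifications. You simply spell out the slot/action bookkeeping that the paper leaves implicit.
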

\begin{proof}
The map\\
$
F:\End(c) \to \End_{A\text{-}\Mod}(F(c))=
\End_{A\text{-}\Mod}(\langle c,c\rangle)=
\End(L^2\End(c)_{\End(c)})=\End(c)
$
is an isomorphism (it's the identity map).
So $F$ is an equivalence by
Corollary~\ref{cor: equivalence checkable on generators}.
\end{proof}

Let $c\in C$, $d\in D$, $\rho:\End(c)\to \End(d)$ be as in the first part of Corollary~\ref{cor: enough to construct on generator},
and let $F:c\mapsto d$ be the functor $\{c\}\to D$ given by $\rho$.
Then the unique up to contractible choice extension of $F$ to the whole of $C$ is given by
\begin{align}
\notag
C\,&\longrightarrow\,\,\,\, D\\
\label{eq: formula for unique extension}
x \,&\,\mapsto\,\, \langle c,x \rangle\boxtimes_{\End(c)} d.
\end{align}
Indeed, setting $x=c$, equation \eqref{eq: formula for unique extension} recovers $\langle c,c \rangle\boxtimes_{\End(c)} F(c)=L^2\End(c)\boxtimes_{\End(c)} F(c)=F(c)$, as desired.
A variant of \eqref{eq: formula for unique extension}
is presented in \eqref{eq: witty} below.
Similarly, if $f:d_1\to d_2\in D$ is as in the second part of Corollary~\ref{cor: enough to construct on generator}, 
then the unique natural transformation $C\tworarrow D$ extending  $\{c\}\tworarrow D:c\mapsto f$ is given by
\begin{equation}
\label{eq: formula for unique extension 2}
x \,\,\mapsto \,\, \big(\id\boxtimes f: \langle c,x \rangle\boxtimes_{\End(c)} d_1 \to \langle c,x \rangle\boxtimes_{\End(c)} d_2\big).
\end{equation}

We pause to emphasise the similarities between Hilbert spaces and $\rmW^*$-categories:
\begin{rem}\label{rem: witty}
If $H$ and $K$ are Hilbert spaces, and $\{e_i\}$ is an orthogonal basis of $H$, then a map $f:H\to K$ is fully determined by the images $f(e_i)$ of the basis vectors, via the well-known formula
\begin{equation}\label{eq: witty 1}
f(\xi) = \sum_i\frac{\langle e_i,\xi\rangle f(e_i)}{\|e_i\|^2}.
\end{equation}
If $H$ and $K$ are $\rmW^*$-categories, and $\{e_i\}$ is a set of generators of $H$ that are orthogonal in the sense of Definition~\ref{def: orthog gens}, 
then a functor $f:H\to K$ is fully determined by the objects $f(e_i)$, via the formula
\begin{equation}
\label{eq: witty}
f(\xi) = \bigoplus_i \underset{\End(e_i)}{\langle e_i,\xi \rangle\,\,\boxtimes\,\,\, f(e_i).}
\end{equation}
\end{rem}

Given the importance of \eqref{eq: formula for unique extension}, we record it in yet another form:

\begin{lem}
Let $F:C\to D$ be a functor between $\rmW^*$-categories.
Let $c\in C$ be a generator, and let $x\in C$ be an arbitrary object. Then there is a canonical unitary isomorphism
\begin{equation}
\label{eq: <.,.> boxtimes F(X) = F(Y)}
\langle c,x\rangle \boxtimes_{\End(c)}F(c) \cong F(x),
\end{equation}
natural in $x$.
\end{lem}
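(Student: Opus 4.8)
The plan is to recognise this statement as a repackaging of the extension formula \eqref{eq: formula for unique extension}. Set $A := \End(c)$ and $d := F(c)$. Because $F$ is a functor, it induces a normal homomorphism $\End(c)\to\End(F(c))$, i.e.\ an action of $A$ on $d$. With respect to this action, the right-hand side of \eqref{eq: formula for unique extension} defines a functor $G:C\to D$, $x\mapsto \langle c,x\rangle\boxtimes_A F(c)$; here $\langle c,x\rangle$ is a right $A$-module via the commutant structure, and forming $\langle c,x\rangle\boxtimes_A F(c)$ uses the construction \eqref{eq: Connes fusion with an object}, which requires $D$ to be Cauchy complete (I assume this throughout, as the construction in the statement is only defined under that hypothesis). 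Both $F$ and $G$ are then functors $C\to D$, and the goal is to produce a natural unitary $G\cong F$.

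First I would invoke the uniqueness clause of Corollary~\ref{cor: enough to construct on generator}. By \eqref{eq: formula for unique extension}, $G$ is precisely the canonical (unique up to contractible choice) extension to $C$ of the functor $\mathbf{B}A\to D$ determined by the $A$-action on $F(c)$. But $F$ itself is also an extension of that same $A$-equivariant object: its restriction to the generator $c$ recovers exactly $F(c)$ with its tautological $A$-action. Since $c$ generates $C$ and $D$ is Cauchy complete, Corollary~\ref{cor: enough to construct on generator} then supplies an essentially unique natural isomorphism $\beta:G\Rightarrow F$, automatically natural in $x$. On the generator, $\beta_c$ is the canonical comparison $G(c)=\langle c,c\rangle\boxtimes_A F(c)\cong F(c)$ assembled from the unitary identification $\langle c,c\rangle\cong L^2A$ of \eqref{eq: <X,X> is L^2} and the unitor $L^2A\boxtimes_A F(c)\xrightarrow{\cong}F(c)$ of \eqref{eq: Connes fusion with an object}; in particular $\beta_c$ is unitary.

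The one point requiring genuine care is \textbf{unitarity} of $\beta$, and I would deduce it from faithfulness of restriction to the generator. The restriction functor $\mathrm{Func}(C,D)\to\mathrm{Func}(\mathbf{B}A,D)$ is an equivalence (Corollary~\ref{cor: full subcat same functors}), hence is $\ast$-preserving and injective on natural transformations. Now $\beta^{\ast}\beta$ and $\id_G$ are both natural endomorphisms of $G$ whose restriction to $c$ equals $\id_{F(c)}$ (using that $\beta_c$ is unitary); by faithfulness they must agree, so $\beta^{\ast}\beta=\id_G$, and symmetrically $\beta\beta^{\ast}=\id_F$. Thus $\beta$ is unitary, giving the asserted natural unitary $\langle c,x\rangle\boxtimes_{\End(c)}F(c)\cong F(x)$. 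The main obstacle is therefore not existence of the isomorphism --- which is immediate from \eqref{eq: formula for unique extension} together with the uniqueness in Corollary~\ref{cor: enough to construct on generator} --- but confirming that the $A$-equivariant data of $F$ and $G$ genuinely coincide on the generator and that the resulting comparison can be taken unitary rather than merely invertible.
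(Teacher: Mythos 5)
Your proposal is correct and follows essentially the same route as the paper: both reduce to the generator via Corollary~\ref{cor: enough to construct on generator}, identify $\langle c,c\rangle$ with $L^2(\End(c))$ via \eqref{eq: <X,X> is L^2}, and take the comparison at $c$ to be the unitor, with naturality coming from $\End(c)$-equivariance of that unitor. Your extra paragraph verifying unitarity of the extended transformation via faithfulness of restriction to the generator is a point the paper leaves implicit (the restriction equivalence is a $*$-functor, so unitary data extends to unitary data), but it is the same argument in substance.
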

\begin{proof}
By Corollary \ref{cor: enough to construct on generator}, it is enough to construct the isomorphism \eqref{eq: <.,.> boxtimes F(X) = F(Y)} for $x=c$. Recall from \eqref{eq: <X,X> is L^2} that $\langle c,c\rangle=L^2(\mathrm{End}(c))$. The desired isomprhism $L^2(\End(c)) \boxtimes_{\End(c)}F(c) \cong F(c)$ is then just a unitor,
and the naturality requirement is the fact that the unitor commutes with the left actions of $\End(c)$.
\end{proof}

\begin{lem}
\label{lem: formula for Hilb valued inner product}
If $C=A\text{-}\Mod$ for some von Neumann algebra $A$, then the $\Hilb$-valued inner product \eqref{eq: statement: Hilb-valued inner product} is given by
\begin{equation} \label{eq: <X,Y> as boxtimes}
\langle X,Y\rangle_\Hilb=\overline X\boxtimes_AY.
\end{equation}
\end{lem}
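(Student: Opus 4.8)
The plan is to recognise both sides of \eqref{eq: <X,Y> as boxtimes} as sesquilinear functors $\overline C\times C\to\Hilb$ that preserve orthogonal direct sums in each variable, and then to pin them down by their common value on a generator. The left-hand side preserves direct sums by the compatibility recorded in Definition~\ref{def: Hilb valued inner product}, and the right-hand side does so because Connes fusion is additive and $\overline{(\oplus_i X_i)}=\oplus_i\overline{X_i}$; moreover both are manifestly functorial and sesquilinear in $(X,Y)$. Since $C=A\text{-}\Mod$ is Cauchy complete and generated by $L^2A$, so that $\overline C$ is generated by $\overline{L^2A}$, the bilinear analog of Corollary~\ref{cor: enough to construct on generator}, obtained from Corollary~\ref{cor: Cauchy completion'} by currying, reduces the problem to producing a single isomorphism at the generating object $(\overline{L^2A},L^2A)$ that is equivariant for the action of $\End(\overline{L^2A})\,\bar\otimes\,\End(L^2A)\cong A\,\bar\otimes\,A^{\op}$, i.e.\ an isomorphism of $A$-$A$-bimodules.

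First I would compute the two values at the generator. On the left, equation~\eqref{eq: <X,X> is L^2} gives $\langle L^2A,L^2A\rangle_\Hilb=L^2(\End_A(L^2A))$; since the commutant of the left action is the right action, $\End_A(L^2A)\cong A^{\op}$, so this is $L^2(A^{\op})$. On the right, the modular conjugation $J$ of \eqref{eq: mod conj} supplies a right-module isomorphism $\overline{{}_AL^2A}\cong(L^2A)_A$, after which the unitor in \eqref{eq: associator + unitors} gives $\overline{L^2A}\boxtimes_A L^2A\cong L^2A$. The canonical identification $L^2(A^{\op})\cong L^2A$ (the standard forms of $A$ and of $A^{\op}$ share the same Hilbert space, with $J$ intertwining the two actions) then matches the two values as Hilbert spaces.

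Next I would check that this identification is equivariant for the two endomorphism algebras. By the prescription for $f_*$ and $g_*$ in Definition~\ref{def: Hilb valued inner product}, the action of $\End(L^2A)=A^{\op}$ coming from the second variable is left multiplication on $L^2(\End_A(L^2A))$, and the action of $\End(\overline{L^2A})=\overline{\End(L^2A)}$ from the first variable is right multiplication; under the identifications above these become exactly the left and right $A$-actions carried by $L^2A\cong\overline{L^2A}\boxtimes_AL^2A$ through the standard form and the unitor. Once the two $A$-$A$-bimodule structures are seen to agree, the reduction furnishes a natural isomorphism \eqref{eq: <X,Y> as boxtimes} for all $X,Y$.

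The main obstacle is precisely this last bookkeeping step: keeping straight the interplay between complex conjugation $\overline{(\,\cdot\,)}$, the opposite algebra $(\,\cdot\,)^{\op}$, and modular conjugation $J$, so as to verify that the left $A$-action produced from $\End(\overline{L^2A})\cong\overline{A^{\op}}\cong A$ on the left-hand side genuinely coincides with the left action appearing on $\overline{L^2A}\boxtimes_AL^2A$, and likewise for the right $A^{\op}$-action. Everything else is either a direct appeal to the additivity and generation results already established or the standard identity $L^2(A^{\op})\cong L^2A$.
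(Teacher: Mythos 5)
Your proposal is correct and follows essentially the same route as the paper: reduce to the generator $X=Y={}_AL^2A$ via Corollary~\ref{cor: Cauchy completion'} (the bilinear version of the extension-from-generators principle), then observe that both sides are canonically isomorphic to $L^2A$ compatibly with the two $A^{\op}$-actions. The paper states this last identification in one line where you spell out the bookkeeping with $J$ and $(\,\cdot\,)^{\op}$, but the argument is the same.
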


\begin{proof}
Since $A\text{-}\Mod$ is the Cauchy completion of its full subcategory $\{{}_AL^2A\}\cong \mathbf{B}A^{\op}$,
to construct a natural isomorphism 
between $(X,Y)\mapsto \langle X,Y\rangle_\Hilb$ and 
$(X,Y)\mapsto \overline X\boxtimes_AY$,
it is enough, by Corollary~\ref{cor: Cauchy completion'}, to specify it for $X=Y={}_AL^2A$.
So we only need to construct an isomorphism
\begin{equation}\label{eq: <L^2,L^2>}\langle {}_AL^2A,{}_AL^2A\rangle_\Hilb=\overline {L^2A}\boxtimes_A L^2A,
\end{equation}
compatible with the two actions of $\End({}_AL^2A)=A^{\op}$.
We now observe that both sides of \eqref{eq: <L^2,L^2>} are canonically isomorphic to $L^2A$, compatibly with the actions of $A$.
\end{proof}

Recall that all our $\rmW^*$-categories are assumed to admit a set of generators.
The first part of the next proposition is an analog of the Riesz representation theorem.
The second part of the proposition is reminiscent of the Yoneda lemma, and is a categorial analog of Lemma~\ref{lem: pre-Hilbert space}:

\begin{prop}
\label{prop:riesz}
(i) Let $C$ be a Cauchy complete $\rmW^*$-category.
Then 
\begin{align}
\label{eq: Riesz map}
C &\to \mathrm{Func}(\overline C,\Hilb)\\\notag
c &\mapsto\,\, \left\langle-,c\right\rangle
\end{align}
is an equivalence of categories.

(ii) If $C$ is an arbitrary $\rmW^*$-category, then $\mathrm{Func}(\overline C,\Hilb)\cong C^{\hat\oplus}$, and \eqref{eq: Riesz map} is the canonical functor
from $C$ to its Cauchy completion.
\end{prop}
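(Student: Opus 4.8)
The plan is to prove (i) by reducing to the case $C=A\text{-}\Mod$ and invoking Proposition~\ref{prop: func to bimod}, and then to deduce (ii) from (i) by a Cauchy-completion argument.

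For (i), I would first use Proposition~\ref{prop:freydembedding} to assume $C=A\text{-}\Mod$ for some von Neumann algebra $A$. By Lemma~\ref{lem: formula for Hilb valued inner product}, the inner product is then $\langle X,Y\rangle_\Hilb=\overline X\boxtimes_A Y$, so the functor \eqref{eq: Riesz map} sends an object $Y$ to the functor $X\mapsto \overline X\boxtimes_A Y$ on $\overline C$. Complex conjugation identifies $\overline{A\text{-}\Mod}$ with $\Mod\text{-}A=A^{\op}\text{-}\Mod$ (sending a left module $X$ to its conjugate right module $\overline X$), and under this identification $\langle-,Y\rangle$ becomes the functor $-\boxtimes_A Y:A^{\op}\text{-}\Mod\to\Hilb$.

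Next I would apply Proposition~\ref{prop: func to bimod} with the roles of $A,B$ there played by $A^{\op},\bbC$. This yields an equivalence $\Bim(\bbC,A^{\op})\to\mathrm{Func}(A^{\op}\text{-}\Mod,\Hilb)$ sending ${}_\bbC Z_{A^{\op}}$ to $Z\boxtimes_{A^{\op}}-$. Since $\Bim(\bbC,A^{\op})=A\text{-}\Mod=C$, to finish I only need to match the functor $Y\mapsto(-\boxtimes_A Y)$ with $Y\mapsto(Y\boxtimes_{A^{\op}}-)$; these agree through the canonical unitary $\overline X\boxtimes_A Y\cong Y\boxtimes_{A^{\op}}\overline X$ expressing the symmetry of Connes fusion over $A$ and $A^{\op}$. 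Hence \eqref{eq: Riesz map} is the asserted equivalence. (Alternatively, one can avoid the explicit matching and apply Corollary~\ref{cor: equivalence checkable on generators} to \eqref{eq: Riesz map} directly: a generator $c$ of $C$ maps to $\langle-,c\rangle$, which Proposition~\ref{prop: func to bimod} shows to be a generator of $\mathrm{Func}(\overline C,\Hilb)$ with $\End(\langle-,c\rangle)\cong\End(c)$.)

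For (ii), note that $\mathrm{Func}(\overline C,\Hilb)$ is Cauchy complete because $\Hilb$ is, and that complex conjugation commutes with Cauchy completion, so $(\overline C)^{\hat\oplus}=\overline{C^{\hat\oplus}}$. By Corollary~\ref{cor: Cauchy completion}, restriction along $\overline C\hookrightarrow(\overline C)^{\hat\oplus}$ gives an equivalence $\mathrm{Func}(\overline{C^{\hat\oplus}},\Hilb)\to\mathrm{Func}(\overline C,\Hilb)$; applying part (i) to the Cauchy complete category $C^{\hat\oplus}$ identifies the source with $C^{\hat\oplus}$, whence $\mathrm{Func}(\overline C,\Hilb)\cong C^{\hat\oplus}$. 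Finally, because the inner product \eqref{eq: statement: Hilb-valued inner product} depends only on the endomorphism algebras of the objects involved, which are unchanged by the fully faithful embedding $C\hookrightarrow C^{\hat\oplus}$, the functor \eqref{eq: Riesz map} for $C$ is the restriction of the one for $C^{\hat\oplus}$, and the resulting commuting square shows it is the canonical functor $C\to C^{\hat\oplus}$.

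I expect the main obstacle to be the variance bookkeeping in part (i): correctly identifying $\overline{A\text{-}\Mod}$ with $A^{\op}\text{-}\Mod$ and exhibiting the natural unitary $\overline X\boxtimes_A Y\cong Y\boxtimes_{A^{\op}}\overline X$, so that the concrete functor \eqref{eq: Riesz map} matches the abstract equivalence of Proposition~\ref{prop: func to bimod}. Everything else is a formal consequence of results already established.
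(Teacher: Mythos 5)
Your proposal is correct, and part (ii) coincides with the paper's own argument (factor \eqref{eq: Riesz map} through $C^{\hat\oplus}\to\mathrm{Func}(\overline{C^{\hat\oplus}},\Hilb)\to\mathrm{Func}(\overline C,\Hilb)$ and invoke part (i) together with Corollary~\ref{cor: Cauchy completion}). For part (i), however, you take a genuinely different, more computational route. The paper never passes through Lemma~\ref{lem: formula for Hilb valued inner product} or Proposition~\ref{prop: func to bimod}: it picks a generator $x$ with $A=\End(x)$, notes that restriction along $\overline{\mathbf BA}\hookrightarrow\overline C$ gives an equivalence $\mathrm{Func}(\overline C,\Hilb)\to\mathrm{Func}(\overline{\mathbf BA},\Hilb)=\overline A\text{-}\Mod$ by Corollary~\ref{cor: Cauchy completion}, observes that the composite $C\to\mathrm{Func}(\overline C,\Hilb)\to\overline A\text{-}\Mod$ is exactly the equivalence $c\mapsto\langle x,c\rangle$ of Proposition~\ref{prop:freydembedding'}, and concludes by two-out-of-three. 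Your route instead identifies $C$ with $A\text{-}\Mod$, rewrites the pairing as Connes fusion, and matches \eqref{eq: Riesz map} against the Eilenberg--Watts-type equivalence $\Bim(\bbC,A^{\op})\cong\mathrm{Func}(A^{\op}\text{-}\Mod,\Hilb)$. This works, but it forces you to carry the variance bookkeeping ($\overline{A\text{-}\Mod}\cong A^{\op}\text{-}\Mod$, the symmetry $\overline X\boxtimes_AY\cong Y\boxtimes_{A^{\op}}\overline X$) and to check that the concrete functor agrees with the abstract equivalence --- precisely the obstacle you flag. The paper's formal argument sidesteps all of that by comparing the two maps only after restriction to the generator; your parenthetical alternative via Corollary~\ref{cor: equivalence checkable on generators} is essentially that cleaner argument and is the one I would keep.
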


\begin{proof}
{\it(i)} Let $x\in C$ be a generator,
let $A:=\End(x)$,
and let $\mathbf{B}A\hookrightarrow C$ be the full subcategory on $x$,
so that $(\mathbf{B}A)^{\hat \oplus}\cong  C$.
The restriction functor
$\mathrm{Func}(\overline C,\Hilb)\to
\mathrm{Func}(\overline{\mathbf{B}A},\Hilb)=
\overline {A}\text{-}\Mod
$
is an equivalence
by Corollary~\ref{cor: Cauchy completion}, 
while the composite
\[
C \stackrel{\eqref{eq: Riesz map}}\longrightarrow
\mathrm{Func}(\overline C,\Hilb)\to
\mathrm{Func}(\overline{\mathbf{B}A},\Hilb)=
\overline {A}\text{-}\Mod
\]
is an equivalence by Proposition~\ref{prop:freydembedding'}.
So \eqref{eq: Riesz map} is an equivalence.

{\it (ii)} The functor \eqref{eq: Riesz map} factors as
$
C\longrightarrow C^{\hat\oplus} \xrightarrow{\simeq} \mathrm{Func}(\overline{C^{\hat\oplus}},\Hilb) \xrightarrow{\simeq} \mathrm{Func}(\overline C,\Hilb)$.
The middle arrow is an equivalence by part {\it (i)} of the Proposition, and the last one is an equivalence by Corollary~\ref{cor: Cauchy completion}.
\end{proof}

\begin{rem}
The above proposition generalises the `unitary Yoneda lemma'
(used in \cite{MR3687214,MR3948170,MR4079745,MR4598730} and made explicit in \cite[\S 2.1]{MR4750417}) from semisimple $\rmC^*$-categories to arbitrary $\rmW^*$-categories.
Specifically, if $C$ is a semisimple $\rmC^*$-category equipped with a unitary trace (\cite[Def.~2.1]{MR4750417}) then there is a canonical isomorphism $\Hom(X,Y)\cong \langle X,Y\rangle$ induced by the trace\footnote{Different traces induce different isomorphisms $\Hom(X,Y)\cong \langle X,Y\rangle$.}, and \eqref{eq: Riesz map} becomes the Yoneda embedding.
\end{rem}

\begin{defn} \label{def: adjoint functors}
Given a functor $F\colon C\to D$ between Cauchy complete $\rmW^*$-categories,\vspace{-1mm} its adjoint\footnote{See footnote~\ref{footnote 2}.} is defined as the composite
$
F^\dagger:D \xrightarrow{\simeq}
\mathrm{Hom}(\overline{D},\Hilb)
\xrightarrow{-\circ\overline F}
\mathrm{Hom}(\overline{C},\Hilb)
\xrightarrow{\simeq}
C
$.
Equivalently, the functor $F^\dagger:D\to C$ is specified by the requirement that
\begin{equation}
\label{eq: defining property of adjoint}
\langle c,F^\dagger(d)\rangle_\Hilb
\,\,\cong\,\,
\langle F(c),d\rangle_\Hilb
,
\end{equation}
naturally in $c$ and $d$.

Given a natural transformation $\alpha:F\Rightarrow G$ between functors $C\to D$, the adjoint natural transformation $\alpha^\dagger:F^\dagger\Rightarrow G^\dagger$ is specified, using Proposition~\ref{prop:riesz}, by the requirement that the diagram
\[
\begin{tikzcd}
\big\langle c,F^\dagger(d)\big\rangle_\Hilb
\arrow[r,"\simeq"]\arrow[d, "\langle\,\,\,\,{,}(\alpha^\dagger)_{d}\rangle"']&
\big\langle F(c),d\big\rangle_\Hilb
\arrow[d, "\langle\alpha_{c}{,}\,\,\,\,\rangle"]
\\
\big\langle c,G^\dagger(d)\big\rangle_\Hilb
\arrow[r,"\simeq"]&
\big\langle G(c),d\big\rangle_\Hilb
\end{tikzcd}
\]
commutes. The operations $F\mapsto F^\dagger$ and $\alpha\mapsto \alpha^\dagger$ assemble to an antilinear equivalence
\begin{equation}
\label{eq: dag}
\dagger:\mathrm{Func}(C,D)\to \mathrm{Func}(D,C),
\end{equation}
and there are natural unitary isomorphisms 
$\varphi_F:F\to F^{\dagger\dagger}$ and
$\nu_{F,G}:F^\dag\circ G^\dag \to (G\circ F)^\dag$ defined as follows.

The first one is defined, using Proposition~\ref{prop:riesz}, by the requirement that $\langle (\varphi_F)_c,d\rangle$ be equal to the composite
$$
\langle F(c),d\rangle
\cong
\langle c,F^\dag(d)\rangle
\xrightarrow{J}
\overline{
\langle F^\dag(d),c\rangle
}
\cong
\overline{
\langle d,F^{\dag\dag}(c)\rangle
}
\xrightarrow{J^{-1}}
\langle F^{\dag\dag}(c), d\rangle.
$$
The second is defined similarly, by asking that $\langle c,(\nu_{F,G})_d\rangle$ be equal to the composite
$$
\langle c, F^\dag G^\dag(d)\rangle
\xrightarrow{\simeq}
\langle F(c), G^\dag(d)\rangle
\xrightarrow{\simeq}
\langle GF(c), d\rangle
\xrightarrow{\simeq}
\langle c, (G F)^\dag(d)\rangle.
$$
These isomorphisms $\varphi$ and $\nu$ satisfy coherences listed below, in Lemma~\ref{lem: coherences!}. 
\end{defn}

\begin{warn*}
The operation $\dagger$ defined above should not be confused with the antilinear involution $*:\mathrm{Func}(C,D)\to \mathrm{Func}(C,D)$ coming from the fact that $\mathrm{Func}(C,D)$ is a $*$-category.
\end{warn*}

\begin{rem}
When $C$ and $D$ are semisimple $\rmC^*$-categories equipped with unitary traces, one can identify the functor $F^\dagger:D\to C$ with the categorical adjoint of $F:C\to D$. In that case, \eqref{eq: defining property of adjoint} becomes a unitary adjunction in the sense of \cite[Def 2.3]{MR4750417}. 
Note however that the identification between $F^\dagger$ and the categorical adjoint of $F$ genuinely depends on the unitary traces of $C$ and $D$.
\end{rem}

\begin{lem}
\label{lem: conjugate bimodule}
If a functor $F:A\text{-}\Mod\to B\text{-}\Mod$
corresponds to the bimodule ${}_BX_A$ under the equivalence \eqref{eq: Func equivalence},
then its adjoint $F^\dagger:B\text{-}\Mod\to A\text{-}\Mod$
corresponds to the complex conjugate bimodule ${}_A\overline{X}_B$.
\end{lem}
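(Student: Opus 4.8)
The plan is to guess the answer and then pin it down via the defining property of the adjoint. By Proposition~\ref{prop: func to bimod} and \eqref{eq: Func equivalence}, write $F = {}_BX\boxtimes_A-$, and take as candidate adjoint the functor $G := {}_A\overline X\boxtimes_B -\colon B\text{-}\Mod\to A\text{-}\Mod$ associated to the conjugate bimodule ${}_A\overline X_B$. I would then check that $G$ satisfies the characterisation \eqref{eq: defining property of adjoint} of $F^\dagger$; by the Riesz-type uniqueness of the representing object in Proposition~\ref{prop:riesz}, this forces $G\simeq F^\dagger$, and hence identifies the bimodule of $F^\dagger$ as ${}_A\overline X_B$, which is exactly the claim.

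For the computation, first apply Lemma~\ref{lem: formula for Hilb valued inner product} (equation~\eqref{eq: <X,Y> as boxtimes}) to rewrite both inner products as Connes fusions: on the one hand $\langle F(c),d\rangle_\Hilb = \overline{X\boxtimes_A c}\boxtimes_B d$, and on the other $\langle c,G(d)\rangle_\Hilb = \overline c\boxtimes_A(\overline X\boxtimes_B d)$. I then chain together the natural unitary isomorphisms
\[
\overline{X\boxtimes_A c}\boxtimes_B d
\;\cong\;
(\overline c\boxtimes_A\overline X)\boxtimes_B d
\;\cong\;
\overline c\boxtimes_A(\overline X\boxtimes_B d),
\]
where the first isomorphism uses the fact that complex conjugation reverses Connes fusion, $\overline{X\boxtimes_A c}\cong \overline c\boxtimes_A\overline X$, and the second is the associator from \eqref{eq: associator + unitors}. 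This yields the required isomorphism $\langle F(c),d\rangle_\Hilb\cong\langle c,G(d)\rangle_\Hilb$.

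The main obstacle is the conjugation identity $\overline{X\boxtimes_A c}\cong \overline c\boxtimes_A\overline X$: I need it not merely as an isomorphism of Hilbert spaces but as one of right $B$-modules, and in fact compatibly with all the actions in sight, so that the candidate $G$ really is the \emph{bimodule} functor attached to ${}_A\overline X_B$ rather than just agreeing on underlying spaces. This is precisely the anti-multiplicativity of the bi-involutive structure on the bicategory of von Neumann algebras and bimodules, interacting with the associators and unitors of \eqref{eq: associator + unitors} through the usual coherences; I would invoke the references for that bicategory for this fact. Granting it, naturality of the composite isomorphism in both $c$ and $d$ is inherited from naturality of the unitors, associators, and of Lemma~\ref{lem: formula for Hilb valued inner product}, and uniqueness in Proposition~\ref{prop:riesz} then completes the identification $F^\dagger\simeq G$.
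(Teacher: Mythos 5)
Your proposal is correct and follows essentially the same route as the paper: both apply the formula $\langle X,Y\rangle_\Hilb=\overline X\boxtimes_A Y$ from Lemma~\ref{lem: formula for Hilb valued inner product} and then use the anti-multiplicativity of conjugation with respect to Connes fusion, $\overline{X\boxtimes_A c}\cong\overline c\boxtimes_A\overline X$, to match the two sides of the defining property \eqref{eq: defining property of adjoint}. The paper's version is just more terse, leaving the coherence of that conjugation unitary (the $\nu$ of \eqref{eq: unitary nu}) implicit, whereas you correctly flag it as the point needing care.
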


\begin{proof}
Using \eqref{eq: <X,Y> as boxtimes}, the following sequence of natural isomorphisms
\[
\langle {}_BX\boxtimes_A H,{}_BK\rangle
=
\big(\overline{{}_BX\boxtimes_A H}\big) \boxtimes_B K
\cong
\overline H \boxtimes_A \overline X \boxtimes_B K
=
\langle {}_AH, {}_A \overline X \boxtimes_B K \rangle
\]
shows that ${}_A\overline{X}_B$ satisfies the defining property of $F^\dagger$.
\end{proof}

\begin{lem}
A functor $F:C\to D$ between Cauchy complete $\rmW^*$-categories is faithful if and only if its adjoint $F^\dagger:D\to C$ is dominant.
\end{lem}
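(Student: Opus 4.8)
The strategy is to reduce each of the two conditions to the \emph{single} intrinsic statement that $F$ annihilates no nonzero object, and then to bridge the two reductions using the defining property~\eqref{eq: defining property of adjoint} of the adjoint together with the facts that $\langle x,y\rangle_\Hilb=0$ iff $\Hom(x,y)=0$ and that $J_{x,y}\colon\langle x,y\rangle_\Hilb\to\overline{\langle y,x\rangle_\Hilb}$ is an isometry. There is also a more computational route through Proposition~\ref{prop: func to bimod} and Lemma~\ref{lem: conjugate bimodule}, realising $F$ as $\,{}_BX\boxtimes_A-\,$ and $F^\dagger$ as $\,{}_A\overline X\boxtimes_B-\,$ and comparing the central support of $X$ as a right $A$-module with that of $\overline X$ as a left $A$-module; I would mention it but carry out the intrinsic argument.

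First I would record two characterisations. For faithfulness: $F$ induces normal $*$-homomorphisms $\End(x)\to\End(F(x))$, whose kernel is a central summand by Lemma~\ref{lem: kernel of maps of vNalg}; hence $\End(x)\to\End(F(x))$ is injective precisely when $F$ kills no nonzero subobject of $x$. Taking $x=c\oplus c'$ and passing to corners, this yields: $F$ is faithful $\iff$ $F(y)\neq 0$ for every nonzero $y\in C$. For dominance I claim: $F^\dagger$ is dominant $\iff$ every nonzero $c\in C$ admits a nonzero morphism $F^\dagger(d)\to c$ for some $d\in D$. The forward direction is immediate, since a summand inclusion $c\hookrightarrow F^\dagger(d)$ provides a nonzero map after adjoining. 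The converse is exactly the Zorn's lemma maximality argument used in Lemma~\ref{lem: canonical  functor an equivalence iff C_0 generates C}: pick a maximal orthogonal family of partial isometries from subobjects of various $F^\dagger(d_i)$ into $c$; maximality together with polar decomposition forces $\sum_i v_iv_i^*=\id_c$, exhibiting $c$ as a summand of $\bigoplus_i F^\dagger(d_i)=F^\dagger\big(\bigoplus_i d_i\big)$.

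The final step is purely formal assembly. Using the dominance characterisation, then the Lemma identifying vanishing of $\langle\,,\rangle_\Hilb$ with vanishing of $\Hom$, then the antiunitary $J$ of Definition~\ref{def: Hilb valued inner product}, then~\eqref{eq: defining property of adjoint}, and finally the faithfulness characterisation, I obtain:
\begin{align*}
F^\dagger \text{ dominant}
&\iff \forall\, c\neq 0\ \exists\, d:\ \Hom(F^\dagger(d),c)\neq 0\\
&\iff \forall\, c\neq 0\ \exists\, d:\ \langle F^\dagger(d),c\rangle_\Hilb\neq 0\\
&\iff \forall\, c\neq 0\ \exists\, d:\ \langle c,F^\dagger(d)\rangle_\Hilb\neq 0\\
&\iff \forall\, c\neq 0\ \exists\, d:\ \langle F(c),d\rangle_\Hilb\neq 0\\
&\iff \forall\, c\neq 0\ \exists\, d:\ \Hom(F(c),d)\neq 0\\
&\iff \forall\, c\neq 0:\ F(c)\neq 0\\
&\iff F \text{ faithful},
\end{align*}
where the penultimate equivalence holds because $\Hom(F(c),d)\neq 0$ for some $d$ exactly when $F(c)\neq 0$ (take $d=F(c)$).

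\textbf{Main obstacle.} The only non-formal ingredient is the dominance characterisation, i.e.\ the equivalence between ``$F^\dagger$ dominant'' and ``every nonzero object receives a nonzero map from the image of $F^\dagger$''. Its proof rests on the Zorn/polar-decomposition maximality argument, and one must be slightly careful that the indexing family of $d_i$ may be taken within a generating \emph{set} of $D$ (so that $\bigoplus_i d_i$ makes sense) rather than the full proper class of objects; everything after that is a mechanical chain of ``zero iff zero'' equivalences driven by the Hilb-valued inner product.
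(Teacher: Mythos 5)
Your proof is correct, but it takes a genuinely different route from the paper's. The paper argues entirely through the bimodule dictionary: it identifies $C\cong A\text{-}\Mod$, $D\cong B\text{-}\Mod$ via Proposition~\ref{prop:freydembedding}, represents $F$ by a bimodule ${}_BX_A$ (Proposition~\ref{prop: func to bimod}) and $F^\dagger$ by ${}_A\overline X_B$ (Lemma~\ref{lem: conjugate bimodule}), and then observes that faithfulness of a functor corresponds to faithfulness of the right action of the source algebra on its bimodule, while dominance corresponds to faithfulness of the left action of the target algebra; since the left $A$-action on $\overline X$ is the conjugate of the right $A$-action on $X$, the two conditions coincide. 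Your argument is instead intrinsic: you reduce both properties to the single condition ``$F(c)\neq 0$ for all $c\neq 0$'' and bridge them with the nondegeneracy of $\langle\,\,,\,\rangle_\Hilb$ (i.e.\ $\langle x,y\rangle_\Hilb=0$ iff $\Hom(x,y)=0$), the conjugate-symmetry $J_{x,y}$, and the defining property~\eqref{eq: defining property of adjoint} of $F^\dagger$. Both reductions check out: the faithfulness one correctly uses Lemma~\ref{lem: kernel of maps of vNalg} to see that the kernel of $\End(x)\to\End(F(x))$ is a central summand, split by idempotent completeness into a killed subobject; and the dominance one correctly reruns the Zorn/polar-decomposition argument of Lemma~\ref{lem: canonical  functor an equivalence iff C_0 generates C} (the indexing family is automatically a set, being a family of orthogonal range projections in the von Neumann algebra $\End(c)$, so the direct sum $\oplus_i d_i$ exists by Cauchy completeness). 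What your approach buys is a proof that makes the analogy with Hilbert spaces transparent --- it is literally the argument that $f$ is injective iff $f^*$ has dense image, run through the $\Hilb$-valued inner product --- at the cost of redoing two structural reductions that the paper's bimodule translation absorbs for free.
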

\begin{proof}
Pick isomorphisms $C\cong A$-$\Mod$ and $D\cong B$-$\Mod$ by Proposition~\ref{prop:freydembedding}, and let ${}_BX_A$ be the bimodule that represents $F$ under the equivalence \eqref{eq: Func equivalence}.
The conjugate bimodule ${}_A\overline{X}_B$ then represents $F^\dagger$ by Lemma~\ref{lem: conjugate bimodule}.

Now note that $F:A$-$\Mod\to B$-$\Mod$ is faithful if and only if the action of $A$ on the corresponding bimodule ${}_BX_A$ is faithful, and is dominant if and only if the action of $B$ on ${}_BX_A$ is faithful.
\end{proof}

We finish this section by listing and proving the coherences satisfied by the isomorphisms $\varphi$ and $\nu$ from Definition~\ref{def: adjoint functors}:

\begin{lem}\label{lem: coherences!}
The isomorphisms $\varphi_F$ and $\nu_{F,G}$ satisfy:
\begin{itemize}
\item\vspace{-2mm}
$\varphi_{F^\dagger}=(\varphi_F)^\dagger$
\item\vspace{-2mm}
$\nu_{F,H\circ G}(\id_{F^\dag} \circ \nu_{G,H}) =
\nu_{G \circ F, H}(\nu_{F,G} \circ \id_{H^\dag})$
\item\vspace{-2mm}
$\varphi_{F \circ G}=(\nu_{G,F})^\dagger \nu_{F^\dagger,G^\dagger} (\varphi_F\circ\varphi_G)$
\end{itemize}
\end{lem}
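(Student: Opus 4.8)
The plan is to verify all three identities by the same device: reduce each to an equality of maps between $\Hilb$-valued inner products. Every arrow occurring in the statement---the isomorphisms $\varphi_F$ and $\nu_{F,G}$, and the adjoint $\alpha^\dagger$ of a natural transformation---is defined through the equivalence $C\to\mathrm{Func}(\overline C,\Hilb)$ of Proposition~\ref{prop:riesz}. Hence two natural transformations between functors landing in a fixed Cauchy complete $\rmW^*$-category agree as soon as they agree after pairing into one slot of the $\Hilb$-valued inner product (the two slots being interchanged by the modular conjugations). Throughout I write $\theta^F_{c,d}\colon\langle c,F^\dagger(d)\rangle\xrightarrow{\simeq}\langle F(c),d\rangle$ for the defining unitary of \eqref{eq: defining property of adjoint}, natural in $c$ and $d$, and $J_{x,y}\colon\langle x,y\rangle\to\overline{\langle y,x\rangle}$ for the modular conjugations of Definition~\ref{def: Hilb valued inner product}, which are natural and involutive ($\overline{J_{y,x}}\circ J_{x,y}=\id$). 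Unwinding Definition~\ref{def: adjoint functors}, $\nu_{F,G}$ is assembled entirely from $\theta$-isomorphisms, while $\varphi_F$ is assembled from $\theta^F$, $\theta^{F^\dagger}$, and two modular conjugations. The only inputs to the chases below are the naturality of $\theta$, the naturality and involutivity of $J$, and the characterisation of $\alpha^\dagger$ as the unique transformation for which the defining square of Definition~\ref{def: adjoint functors} commutes.

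I would treat the middle (pentagon) identity first, since it is the only one that never sees the modular conjugation: it involves $\nu$'s alone. Pairing both sides with $\langle c,-\rangle$ and unwinding $\nu$ turns each of $\nu_{F,H\circ G}(\id_{F^\dagger}\circ\nu_{G,H})$ and $\nu_{G\circ F,H}(\nu_{F,G}\circ\id_{H^\dagger})$ into a composite of $\theta$-isomorphisms reassociating the string of identifications from $\langle c,F^\dagger G^\dagger H^\dagger(d)\rangle$ to $\langle c,(H\circ G\circ F)^\dagger(d)\rangle$, passing through $\langle F(c),G^\dagger H^\dagger(d)\rangle$, $\langle GF(c),H^\dagger(d)\rangle$, and $\langle HGF(c),d\rangle$. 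The two composites differ only in the order in which the three $\theta$'s are inserted, so their equality is a formal consequence of the naturality of $\theta$; it is the exact analogue of the pentagon for Connes fusion and carries no sign or conjugation subtleties.

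The first and third identities carry the real work, because $\varphi$ is the sole ingredient built from $J$, and each occurrence of $\varphi$ contributes a $J$ together with a $J^{-1}$. For $\varphi_{F^\dagger}=(\varphi_F)^\dagger$ I would expand the left-hand side directly from the definition of $\varphi$ applied to the functor $F^\dagger$, and the right-hand side by first using the defining square of $\alpha\mapsto\alpha^\dagger$ to re-express $(\varphi_F)^\dagger$ through $\varphi_F$ and two $\theta$'s, then substituting the definition of $\varphi_F$. Both sides become a composite of $\theta$-isomorphisms and two modular conjugations; comparing them, the equality follows by collapsing a $J$-pair via involutivity of $J$ and reconciling the remaining $\theta$'s through their naturality. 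The third identity, $\varphi_{F\circ G}=(\nu_{G,F})^\dagger\,\nu_{F^\dagger,G^\dagger}\,(\varphi_F\circ\varphi_G)$, is the longest chase: on the right-hand side the two horizontally composed $\varphi$'s contribute two $J/J^{-1}$ pairs, the two $\nu$'s contribute only $\theta$'s, and $(\nu_{G,F})^\dagger$ is unwound through the defining square as before. The strategy is to migrate all modular conjugations to the outside of the composite using naturality of $J$, cancel the resulting $J$-pairs by involutivity, and match the surviving $\theta$-string against the expansion of $\varphi_{F\circ G}$ for the composite functor $F\circ G$.

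The main obstacle throughout is precisely this bookkeeping of modular conjugations and conjugate-space functors $\overline{(-)}$ in the first and third identities: one must keep the antilinearity of each arrow straight and confirm that every $J$ introduced by a $\varphi$ is eventually cancelled against its partner, with nothing left over. As a conceptual cross-check, and an alternative route to the whole lemma, I would pass to the bimodule model: by Proposition~\ref{prop:freydembedding} and Proposition~\ref{prop: func to bimod} we may take the categories to be module categories and the functors to be bimodules, whereupon Lemma~\ref{lem: conjugate bimodule} identifies $\varphi_F$ with the canonical isomorphism $X\cong\overline{\overline X}$ and $\nu_{F,G}$ with the canonical isomorphism $\overline X\boxtimes\overline Y\cong\overline{Y\boxtimes X}$. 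In this guise the three identities become the standard coherences of the bar-involution in the bicategory $\mathrm{vN2}$, which both confirms the computation and explains why it must succeed.
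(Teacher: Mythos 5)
Your proposal is correct and follows essentially the same route as the paper: the paper's proof consists of three commutative diagrams in the $\Hilb$-valued inner products, obtained by pairing with $\langle c,-\rangle$ via Proposition~\ref{prop:riesz} and chasing naturality of the defining unitaries of $\dagger$ together with the naturality and involutivity of the modular conjugations $J$, exactly as you describe. Your closing remark about transporting the identities to the bar-involution coherences in $\mathrm{vN2}$ is a sound cross-check but is not how the paper argues.
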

\begin{proof}
The first equation holds, using Proposition~\ref{prop:riesz}, by the following commutative diagram:\vspace{-5mm}
\[
\begin{tikzpicture}
\node (B) at (3.9,-1.3) {$\overline{\langle F^{\dag}(d),c\rangle}$};
\node (B') at (8.1,-1.3) {$\overline{\langle F^{\dag\dag\dag}(d), c\rangle}$};
\node (A) at (0,0) {$\langle c,F^{\dag}(d)\rangle$};
\node (C) at (12,0) {$\langle c, F^{\dag\dag\dag}(d)\rangle$};
\node (E) at (6,0) {$\overline{\langle d,F^{\dag\dag}(c)\rangle}$};
\node (D) at (3.9,1.3) {$\langle F(c),d\rangle$};
\node (D') at (8.1,1.3) {$\langle F^{\dag\dag}(c), d\rangle$};
\draw[->] (A) -- (D);
\draw[->] (A) -- (B);
\draw[->] (B) --node[below]{$\scriptstyle \overline{\langle (\varphi_{F^\dag})_d,\,\,\rangle}$} (B');
\draw[->] (B') -- (C);
\draw[->] (B) -- (E);
\draw[->] (E) -- (D');
\draw[->] (D) -- node[above]{$\scriptstyle \langle(\varphi_F)_c,\,\,\rangle$} (D');
\draw[->] (D') -- (C);
\draw[->] (A) to[bend left=42] node[above]{$\scriptstyle \langle\,\,,(\varphi_F^\dag)_d\rangle$}
(C);
\draw[->] (A) to[bend right=42]node[below]{$\scriptstyle \langle \,\,,(\varphi_{F^\dag})_d\rangle$} (C);
\end{tikzpicture}\vspace{-4mm}
\]
The second equation holds by:
$$
\begin{tikzcd}
\langle c, F^\dag G^\dag H^\dag(d)\rangle
\arrow[dr]
\arrow[rrr,
"{\langle \,\,,(\nu_{F,G})_{H^\dag(d)}\rangle}"
]
\arrow[ddd,
"{\langle \,\,,F^\dag((\nu_{G,H})_{d})\rangle}"
description
]
&&&
\langle c, (G F)^\dag H^\dag(d)\rangle
\arrow[ddd,
"{\langle \,\,,(\nu_{G F,H})_{d}\rangle}" description
]
\arrow[-, dl]
\\[-3mm]
&
\langle F(c), G^\dag H^\dag(d)\rangle
\arrow[r]
\arrow[d, "{\langle \,\,,(\nu_{G,H})_{d}\rangle}",
swap
]
&
\langle GF(c), H^\dag(d)\rangle
\arrow[d]
\\
&
\arrow[r]
\langle F(c),(H G)^\dag (d)\rangle
&
\langle HGF(c), d\rangle
\\[-3mm]
\langle c, F^\dag (H G)^\dag (d)\rangle
\arrow[ur]
\arrow[rrr, 
"{\langle \,\,,(\nu_{F,H G})_{d}\rangle}"'
]
&&&
\langle c, (H G F)^\dag(d)\rangle
\arrow[<-, ul]
\end{tikzcd}\vspace{1mm}
$$
And the third one holds by:
\[
\begin{tikzpicture}[xscale=1.3]
\node (A) at (0,4) {$\langle FG(c),d \rangle$}; 
\node (B) at (3,4) {$\langle c,(FG)^\dag(d) \rangle$}; 
\node (C) at (5.75,4) {$\overline{\langle (FG)^\dag(d),c \rangle}$}; 
\node (D) at (8.5,4) {$\overline{\langle d,(FG)^{\dag\dag}(c) \rangle}$}; 
\node (E) at (11.25,4) {$\langle (FG)^{\dag\dag}(c),d \rangle$}; 
\node (F) at (1.25,2) {$\langle G(c),F^\dag(d) \rangle$}; 
\node (G) at (3.55,2) {$\langle c,G^\dag F^\dag(d) \rangle$}; 
\node (H) at (5.75,2) {$\overline{\langle G^\dag F^\dag(d),c \rangle}$}; 
\node (I) at (8.5,2) {$\overline{\langle d,(G^\dag F^\dag)^\dag(c) \rangle}$}; 
\node (J) at (11.25,2) {$\langle (G^\dag F^\dag)^\dag(c),d \rangle$}; 
\node (K) at (0,0) {$\langle FG^{\dag\dag}(c),d \rangle$}; 
\node (L) at (2.5,0) {$\langle G^{\dag\dag}(c),F^\dag(d) \rangle$}; 
\node (M) at (5.75,0) {$\overline{\langle F^\dag(d),G^{\dag\dag}(c) \rangle}$}; 
\node (N) at (8.5,0) {$\overline{\langle d,F^{\dag\dag}G^{\dag\dag}(c) \rangle}$}; 
\node (O) at (11.25,0) {$\langle F^{\dag\dag}G^{\dag\dag}(c),d \rangle$}; 
\draw[->] (A) -- (B);
\draw[->] (B) -- (C);
\draw[->] (C) -- (D);
\draw[->] (D) -- (E);
\draw[->] (F) -- (G);
\draw[->] (G) -- (H);
\draw[->] (H) -- (I);
\draw[->] (I) -- (J);
\draw[->] (K) -- (L);
\draw[->] (L) -- (M);
\draw[->] (M) -- (N);
\draw[->] (N) -- (O);
\draw (A) -- (F);
\draw[<-] (B) --node[left]{$\scriptstyle {\langle\,\,, (\nu_{G,F})_d\rangle}$} (G);
\draw[<-] (C) --node[left]{$\scriptstyle \overline{\langle (\nu_{G,F})_d,\,\,\rangle}$} (H);
\draw[<-] (D) --node[left]{$\scriptstyle {\overline{\langle \,\,,(\nu_{G,F}^\dag)_c\rangle}}$} (I);
\draw[<-] (E) --node[left]{$\scriptstyle \langle (\nu_{G,F}^\dag)_c,\,\,\rangle$} (J);
\draw[->] (F) --node[left]{$\scriptstyle \varphi_G$} (L);
\draw[-] (H) -- (M);
\draw[<-] (I) --node[left]{$\scriptstyle \overline{\langle \,\,,(\nu_{F^\dag,G^\dag})_c\rangle}$} (N);
\draw[<-] (J) --node[left]{$\scriptstyle \langle (\nu_{F^\dag,G^\dag})_c,\,\,\rangle$} (O);
\draw[->] (A) --node[left]{$\scriptstyle \varphi_G$} (K);
\draw[->] (A) to[bend left=18]node[above]{$\scriptstyle \varphi_{FG}$} (E);
\draw[->] (K) to[bend left=-18]node[below]{$\scriptstyle \varphi_F$} (O);
\end{tikzpicture}\vspace{-5mm}
\]
\end{proof}

\section{Positive cones}
\label{sec: cones}

Given a von Neumann algebra $A$, both $A$ and its $L^2$ space are equipped with canonical positive cones
$P_A\subset A$, and $P_{L^2A}\subset L^2A$ \cite{MR0407615}.

\begin{defn}
If $F:C\to D$ is a functor between $\rmW^*$-categories, 
the \emph{vertical cone}\label{pageref vertical cone}
\[
P^{\mathsf v}_F\subset \End(F)
\]
is the positive cone $P_{\End(F)}$ of the 
von Neumann algebra $\End(F)$. It is a subset of the set of natural transformations $\alpha:F\Rightarrow F$ that are `vertically self-adjoint' in the sense that $\alpha^*=\alpha$.
\end{defn}


\begin{defn}
\label{def:  horizontal cone}
If $F:C\to D_1$ and $G:C\to D_2$ are functors between Cauchy complete $\rmW^*$-categories,
the \emph{horizontal cone} 
\label{pageref horizontal cone}
\begin{equation}\label{eq: horizontal cone}
P^{\mathsf h}_{F,G}\subset \Hom(F^\dag\circ F,G^\dag\circ G)
\end{equation}
is the set of natural transformations $\alpha:F^\dag F\Rightarrow G^\dag G$ with the property that for every $c\in C$, the map
\begin{equation}\label{eq: long map 1}
\hspace{-1.7mm}
L^2\big(\End(F(c))\big)\cong
\langle F(c),F(c)\rangle \to
\langle c,F^\dag F(c)\rangle
\xrightarrow{\alpha}
\langle c,G^\dag G(c)\rangle
\to
\langle G(c),G(c)\rangle
\cong
L^2\big(\End(G(c))\big)
\end{equation}
sends  
$P_{L^2\End(F(c))}$
to 
$P_{L^2\End(G(c))}$.
It is a subset of the set of natural transformations $\alpha:F^\dag F\Rightarrow G^\dag G$ that are `horizontally self-adjoint' in the sense that $\alpha^\dagger=\alpha$, where the latter is an abbreviation for the condition
\[
\alpha^\dagger=\nu_{G,G^\dag}\,(\id_{G^\dag}\circ\varphi_G)\,\alpha\,
(\id_{F^\dag}\circ\varphi^{-1}_F)\,\nu_{F,F^\dag}^{-1}.
\]
\end{defn}

\begin{rem}
If $F:C\to D$ is a functor between Cauchy complete $\rmW^*$-categories, then 
the vector space
$\End(F^\dag\circ F)$ admits two \emph{distinct} positive cones: $P^{\mathsf v}_{F^\dag \circ F}$, and $P^{\mathsf h}_{F,F}$.
\end{rem}


Let us now specialize to the case $C=A$-$\Mod$, $D_1=B_1$-$\Mod$, and $D_2=B_2$-$\Mod$. 
Then, by Lemmas~\ref{prop: func to bimod} and~\ref{lem: conjugate bimodule}, functors $F:C\to D_1$, $G:C\to D_2$ as in Definition~\ref{def:  horizontal cone} correspond to bimodules ${}_{B_1}X_A$ and ${}_{B_2}Y_A$, and
natural transformations $\alpha:F^\dag \circ F\Rightarrow G^\dag \circ G$ correspond to bimodule maps
\begin{equation}\label{eq: horizontal cone -- bimodule maps}
\theta: 
{}_A \overline{X}\boxtimes_{B_1} X_A
\longrightarrow 
{}_A \overline{Y}\boxtimes_{B_2} Y_A.
\end{equation}

\begin{defn}[{\cite[Def.~5.5]{MR4581741}}]
Given von Neumann algebras $A,B_1,B_2$
and bimodules ${}_{B_1}X_A$ and ${}_{B_2}Y_A$,
the \emph{horizontal cone}
\begin{equation}\label{eq: horizontal cones number 2}
P^{\mathsf h}_{X,Y}\subset \Hom(
{}_A \overline{X}\boxtimes_{B_1} X_A
, 
{}_A \overline{Y}\boxtimes_{B_2} Y_A
)
\end{equation}
is the set of bimodule maps $\theta$ as in \eqref{eq: horizontal cone -- bimodule maps}
with the property that $\forall n\in\bbN$
the map $\id_{\overline{\bbC^n}}\otimes \theta \otimes \id_{\bbC^n}$
sends $P_{X,n}$ to $P_{Y,n}$,
where
\begin{align*}
P_{X,n}&:=P_{L^2(\End({}_{B_1}X^{\oplus n}))}
\\&\subset 
L^2(\End({}_{B_1}X^{\oplus n}))
\underset{\eqref{eq: <X,X> is L^2}}\cong
\langle {}_{B_1}X^{\oplus n},{}_{B_1}X^{\oplus n}\rangle
\underset{\eqref{eq: <X,Y> as boxtimes}}\cong
\overline{X}^{\oplus n}\boxtimes_{B_1} X^{\oplus n}
=
\overline{\bbC^n} \otimes \overline{X}\boxtimes_{B_1} X \otimes \bbC^n,
\end{align*}
and similarly for $P_{Y,n}$.
\end{defn}

\begin{prop}
A natural transformation $\alpha:F^\dag\circ F\Rightarrow G^\dag\circ G$ belongs to 
$P^{\mathsf h}_{F,G}$
(defined in \eqref{eq: horizontal cone})
if and only if
the corresponding bimodule map
$\theta: {}_A \overline{X}\boxtimes_{B_1} X_A
\to
{}_A \overline{Y}\boxtimes_{B_2} Y_A$
belongs to 
$P^{\mathsf h}_{X,Y}$
(defined in \eqref{eq: horizontal cones number 2}).
\end{prop}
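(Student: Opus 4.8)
The plan is to transport the whole statement into the language of bimodules via Proposition~\ref{prop: func to bimod} and Lemma~\ref{lem: conjugate bimodule}, and then to recognise that $P^{\mathsf h}_{F,G}$ arises from the condition defining $P^{\mathsf h}_{X,Y}$ by enlarging the collection of test objects from the finite direct sums $(L^2A)^{\oplus n}$ to all of $A\text{-}\Mod$. Writing $F={}_{B_1}X\boxtimes_A-$ and $G={}_{B_2}Y\boxtimes_A-$, Lemma~\ref{lem: conjugate bimodule} gives $F^\dag={}_A\overline X\boxtimes_{B_1}-$ and $G^\dag={}_A\overline Y\boxtimes_{B_2}-$, so that $\alpha\colon F^\dag F\Rightarrow G^\dag G$ is the natural transformation $\theta\boxtimes_A-$, i.e. $\alpha_H=\theta\boxtimes_A\id_H$. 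First I would unwind the long map~\eqref{eq: long map 1} for an arbitrary object $c={}_AH$: using \eqref{eq: <X,Y> as boxtimes} to identify $\langle F(c),F(c)\rangle$ with $\overline H\boxtimes_A\overline X\boxtimes_{B_1}X\boxtimes_A H$ and $\langle G(c),G(c)\rangle$ with $\overline H\boxtimes_A\overline Y\boxtimes_{B_2}Y\boxtimes_A H$, and checking that the adjunction isomorphisms of Lemma~\ref{lem: conjugate bimodule} are realised by associators together with the canonical $\overline{X\boxtimes_A H}\cong\overline H\boxtimes_A\overline X$, the long map becomes $\Lambda_c:=\id_{\overline H}\boxtimes_A\theta\boxtimes_A\id_H$.

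The key computation is the specialisation to $c=(L^2A)^{\oplus n}=L^2A\otimes\bbC^n$. Here $F(c)=X^{\oplus n}$, so $P_{L^2\End(F(c))}=P_{X,n}$ by the very definition of $P_{X,n}$, and applying the unitors $\overline{L^2A}\boxtimes_A-\cong-$ and $-\boxtimes_AL^2A\cong-$ turns $\Lambda_c$ into precisely $\id_{\overline{\bbC^n}}\otimes\theta\otimes\id_{\bbC^n}$. Thus for these objects the cone condition in \eqref{eq: long map 1} is literally the condition defining $P^{\mathsf h}_{X,Y}$. The forward implication $\alpha\in P^{\mathsf h}_{F,G}\Rightarrow\theta\in P^{\mathsf h}_{X,Y}$ is then immediate, being the restriction of the hypothesis to the test objects $(L^2A)^{\oplus n}$.

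For the converse I would extend the cone condition from the generators $(L^2A)^{\oplus n}$ to all of $A\text{-}\Mod$ in two steps. First, for an arbitrary index set $I$ and $\tilde c=(L^2A)^{\oplus I}$, I would use normality: the increasing net of finite corner projections $p_{I_0}$ (for $I_0\Subset I$) tends strongly to $1$, so for $\xi\in P_{L^2\End(F(\tilde c))}$ the elements $p_{I_0}\xi p_{I_0}$ lie in the finite positive cones $P_{X,|I_0|}$ and converge to $\xi$ in $L^2$-norm. Since $\theta$ occupies the central tensor factor untouched by the outer compressions, $\Lambda_{\tilde c}(p_{I_0}\xi p_{I_0})$ coincides with $\Lambda_{(L^2A)^{\oplus I_0}}(p_{I_0}\xi p_{I_0})\in P_{Y,|I_0|}$ by the finite hypothesis; as $\Lambda_{\tilde c}$ is bounded and $P_{L^2\End(G(\tilde c))}$ is closed, the limit $\Lambda_{\tilde c}(\xi)$ lies in it. Second, by Corollary~\ref{cor: submodules of opplus of faithful A-module} every ${}_AH$ embeds isometrically in some $(L^2A)^{\oplus I}$, and since isometric inclusions of modules split orthogonally, $c$ is an orthogonal direct summand of $\tilde c=(L^2A)^{\oplus I}$ via an isometry $e$ with $P:=F(e)F(e)^*$ a projection. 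Again because $\theta$ sits in the middle factor, $\Lambda_c$ is the compression $P\,\Lambda_{\tilde c}\,P$, and under the corner identification $P\,L^2\End(F(\tilde c))\,P=L^2\big(P\End(F(\tilde c))P\big)$ of \cite[Lem~2.6]{MR0407615}---which respects modular conjugations and hence positive cones---compression carries $P_{L^2\End(F(\tilde c))}$ onto $P_{L^2\End(F(c))}$, and likewise for $G$. Therefore $\Lambda_c$ preserves positive cones for every $c$, that is, $\alpha\in P^{\mathsf h}_{F,G}$.

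The main obstacle is the converse direction, and within it the normality argument: one must check that the finite corner projections converge strongly to the identity inside the large endomorphism algebra $\End(F(\tilde c))$, that compression is $L^2$-continuous, and that the corner identification of \cite[Lem~2.6]{MR0407615} genuinely matches the two positive cones (it does, since the cone is determined by the modular conjugation and the algebra). The bookkeeping of the conjugate-bimodule and unitor identifications in the key computation is routine, but it must be carried out carefully to confirm that $\theta$ really occupies the central tensor factor, so that the outer compressions by $e$ and $p_{I_0}$ leave it untouched.
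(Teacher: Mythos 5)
Your proposal is correct, and the forward direction is exactly the paper's: specialise \eqref{eq: long map 1} to $c=(L^2A)^{\oplus n}$, observe that the unitors turn the long map into $\id_{\overline{\bbC^n}}\otimes\theta\otimes\id_{\bbC^n}$, and read off the definition of $P^{\mathsf h}_{X,Y}$. Where you genuinely diverge is the converse. The paper does not argue it from scratch: it invokes \cite[Lem.~5.8]{MR4581741}, which asserts directly that $\theta\in P^{\mathsf h}_{X,Y}$ implies $\id_{\overline H}\otimes\theta\otimes\id_H\in P^{\mathsf h}_{X\boxtimes_AH,\,Y\boxtimes_AH}$ for every $A$-module $H$, and then sets $n=1$. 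You instead supply a self-contained proof of (the $n=1$ instance of) that external lemma: first pass from finite $n$ to $(L^2A)^{\oplus I}$ by compressing with the net of finite corner projections, using normality to get $L^2$-convergence $p_{I_0}\xi p_{I_0}\to\xi$, boundedness of $\id\boxtimes\theta\boxtimes\id$, and closedness of the self-dual cone; then reduce an arbitrary ${}_AH$ to this case via Corollary~\ref{cor: submodules of opplus of faithful A-module} and the corner identification $pL^2Mp=L^2(pMp)$ of \cite[Lem~2.6]{MR0407615}, which matches the positive cones because these are determined by the algebra and the modular conjugation. Both steps are sound (the crucial facts being that $\theta$ sits in the middle Connes-fusion factor, so it commutes with the outer compressions, and that compression by $pJpJ$ preserves the natural cone), so your argument buys self-containedness at the cost of the limiting and corner bookkeeping that the citation hides; the paper's route is shorter but imports a nontrivial stability result for horizontal cones from elsewhere.
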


\begin{proof}
If $\alpha\in P^{\mathsf h}_{F,G}$, then setting $c:=(L^2A)^{\oplus n}$, the condition that \eqref{eq: long map 1} maps $P_{L^2\End(F(c))}$
to 
$P_{L^2\End(G(c))}$ is exactly the same as the condition that $\id_{\overline{\bbC^n}}\otimes \theta \otimes \id_{\bbC^n}$ maps $P_{X,n}$ to $P_{Y,n}$.

Conversely, 
if $\theta\in P^{\mathsf h}_{X,Y}$,
to show that the corresponding natural transformation $\alpha:F^\dag\circ F\Rightarrow G^\dag\circ G$ lies in $P^{\mathsf h}_{F,G}$,
we must argue that for every $c={}_AH\in A$-$\Mod$,
the map \eqref{eq: long map 1}
sends  
$P_{L^2\End(F(c))} = P_{L^2\End({}_{B_1}X\boxtimes_A H)}$
to 
$P_{L^2\End(G(c))} = P_{L^2\End({}_{B_2}Y\boxtimes_A H)}$.
By \cite[Lem.~5.8]{MR4581741}, 
$\id_{\overline{H}}\otimes \theta \otimes \id_H \in P^{\mathsf h}_{X\boxtimes_A H,Y\boxtimes_A H}$.
In particular (setting $n=1$ in the definition of $P^{\mathsf h}_{X\boxtimes_A H,Y\boxtimes_A H}$), $\id_{\overline{H}}\otimes \theta \otimes \id_H$ maps $P_{L^2(\End({}_{B_1}X\boxtimes_AH))}$
to
$P_{L^2(\End({}_{B_2}Y\boxtimes_AH))}$, which is exactly what we needed to show.
\end{proof}

\section{Small \texorpdfstring{$\rmW^*$}{W*}-categories}
\label{sec: Small W* categories}

The theory of $\rmW^*$-categories admits a largely parallel version involving only small categories.
Throughout this section, $\kappa$ denotes an infinite cardinal.
We call a Hilbert space $\kappa$-\emph{separable}\label{pageref k-separable} if it admits a basis of cardinality $<\kappa$, and
we call a von Neumann algebra $\kappa$-\emph{separable} if it admits a faithful representation on a $\kappa$-separable Hilbert space.
A von Neumann algebra $A$ is $\kappa$-separable if and only if $L^2A$ is $\kappa$-separable (\cite[Lem~1.8]{MR2325696}).
Setting $\kappa=\aleph_1$ recovers the usual notions of separability for Hilbert spaces and von Neumann algebras.
(And for $\kappa=\aleph_0$, the condition of $\kappa$-separability just amounts to finite dimensionality.)

\begin{defn}\label{def: k-small}
An object in a $\rmW^*$-category is called \emph{$\kappa$-small} if its endomorphism algebra is $\kappa$-separable.
\end{defn}

\begin{lem}\label{lem: small <=> separable}
If $A$ is a $\kappa$-separable von Neumann algebra, then an $A$-module is $\kappa$-small if and only if its underlying Hilbert space is $\kappa$-separable.
\end{lem}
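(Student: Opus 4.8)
The plan is to identify the endomorphism algebra $\End_A(H)$ with the commutant $A'\subseteq B(H)$ of the image of $A$ (an $A$-module endomorphism of $H$ is exactly an $A$-linear bounded operator), and then to prove the two implications separately. The reverse implication is essentially a tautology, so the forward implication carries all the content.

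For the direction \emph{$H$ $\kappa$-separable $\Rightarrow$ $\End_A(H)$ $\kappa$-separable}: the algebra $\End_A(H)=A'$ is a von Neumann algebra, and its defining inclusion $A'\hookrightarrow B(H)$ is a faithful normal representation on the Hilbert space $H$. Since $H$ is $\kappa$-separable by hypothesis, this inclusion is by definition a witness that $A'$ is $\kappa$-separable. Note this direction uses nothing about $A$.

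For the direction \emph{$\End_A(H)$ $\kappa$-separable $\Rightarrow$ $H$ $\kappa$-separable}: here I would use that $A$ is $\kappa$-separable, which by the cited equivalence $A\ \kappa\text{-separable}\iff L^2A\ \kappa\text{-separable}$ means the faithful $A$-module $L^2A$ is $\kappa$-separable. Applying Corollary~\ref{cor: submodules of opplus of faithful A-module} with the faithful module ${}_AL^2A$ (and with $H$ playing the role of the arbitrary module), I may write $H\cong\bigoplus_{i\in I}H_i$ with each $H_i$ included isometrically as an $A$-submodule $f_i\colon H_i\hookrightarrow L^2A$; after discarding zero summands I assume $H_i\neq 0$ for all $i$. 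The orthogonal projections $p_i\in\End_A(H)=A'$ onto the summands $H_i$ then form a family of mutually orthogonal nonzero projections in the $\kappa$-separable algebra $A'$. Passing to the standard form $L^2(A')$, which is again $\kappa$-separable, the subspaces $\lambda(p_i)L^2(A')$ (for $\lambda$ the left action of $A'$) are mutually orthogonal and nonzero, since $\lambda$ is faithful and $\lambda(p_i)\lambda(p_j)=\lambda(p_ip_j)=0$ for $i\neq j$; hence $|I|\le\dim L^2(A')<\kappa$.

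To finish I would combine two dimension bounds. Each $H_i$ is a closed subspace of $L^2A$, so $\dim H_i\le\dim L^2A<\kappa$ for \emph{all} $i$ simultaneously, and the isometric inclusion $H\cong\bigoplus_{i\in I}H_i\hookrightarrow(L^2A)^{\oplus I}$ yields $\dim H\le|I|\cdot\dim L^2A=\max(|I|,\dim L^2A)<\kappa$. The main subtlety to flag is exactly this final cardinal computation: a naive estimate $\dim H\le\sum_i\dim H_i$ could conceivably overshoot a \emph{singular} $\kappa$, but because every $H_i$ embeds into the one fixed space $L^2A$ one gets the uniform product bound $|I|\cdot\dim L^2A$, which is just the maximum of two cardinals each $<\kappa$. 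Consequently no regularity hypothesis on $\kappa$ is needed, and the lemma holds for all infinite $\kappa$.
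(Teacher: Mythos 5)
Your proof is correct, but the hard direction is argued quite differently from the paper. The paper proves the contrapositive: assuming $H$ is not $\kappa$-separable, it invokes the multiplicity decomposition $H\cong\bigoplus_{\alpha<\beta}L^2Ap_\alpha$ with a \emph{decreasing} transfinite chain of projections $p_\alpha\in A$, rewrites this as $\bigoplus_{\alpha\in S}L^2Aq_\alpha\otimes K_\alpha$ with $\dim K_\alpha=|\alpha|$, and exhibits a concrete non-$\kappa$-separable subalgebra $\bigoplus_{\alpha\in S}B(K_\alpha)$ inside $\End_A(H)=A'$. You instead argue the implication directly: Corollary~\ref{cor: submodules of opplus of faithful A-module} (applied to the faithful module $L^2A$) gives $H\cong\bigoplus_{i\in I}H_i$ with each $H_i$ embedding isometrically into $L^2A$, the mutually orthogonal nonzero projections onto the summands force $|I|<\kappa$ because $A'$ acts faithfully on the $\kappa$-separable space $L^2(A')$, and then $\dim H\le |I|\cdot\dim L^2A<\kappa$. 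Your route is shorter and more elementary --- it avoids the transfinite construction entirely and reuses a corollary already established in the paper --- and your observation that the uniform bound $\dim H_i\le\dim L^2A$ is what makes the cardinal arithmetic safe for singular $\kappa$ is exactly the right point to flag. What the paper's argument buys in exchange is structural information: it identifies \emph{where} in $A'$ the failure of $\kappa$-separability lives (a large type I direct summand coming from multiplicities), which is invisible in your counting argument. Both proofs are valid for every infinite cardinal $\kappa$.
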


\begin{proof}
Let $H$ be an $A$-module.
If $H$ is $\kappa$-separable, then it is clearly $\kappa$-small (regardless of whether $A$ is $\kappa$-separable).

If $H$ is not $\kappa$-separable then, by transfinite induction, we may find a 
collection of non-zero projections $\{p_\alpha\in A\}_{\alpha<\beta}$, indexed by the set of ordinals smaller than some given ordinal $\beta$, such that there is an isomorphism of $A$-modules
\[
H\,\cong\, \bigoplus_{\alpha<\beta} L^2Ap_\alpha,
\]
and such that 
$p_{\alpha_2} \le p_{\alpha_1}$ whenever $\alpha_2\ge \alpha_1$.
Since $H$ is not $\kappa$-separable while $L^2A$ is, the cardinality of $\beta$ must be $\ge \kappa$ (here, we have used the fact that the cardinality of an infinite set is equal to that of its square).
For each $\alpha\le\beta$, let $q_\alpha:=(\inf_{\alpha'<\alpha}p_{\alpha'})-p_\alpha$,
where we use the convention that $p_\beta=0$.
Let $S:=\{\alpha\le\beta:q_\alpha\neq 0\}$.
Then $\beta=\sup_{\alpha\in S}\alpha$, and
\[
H\,\cong\, \bigoplus_{\alpha \in S} L^2Aq_\alpha\otimes K_\alpha,
\]
where $K_\alpha$ is a Hilbert space of dimension $|\alpha|$.
It follows that $A'$ contains a subalgebra isomorphic to $\bigoplus_{\alpha \in S}B(K_\alpha)$.
Since $\sup_{\alpha\in S}|\alpha|=|\beta|\ge \kappa$, the von Neumann algebra $\bigoplus_{\alpha \in S}B(K_\alpha)$ is not $\kappa$-separable. Hence neither is $A'$,
and $H$ is not $\kappa$-small as an $A$-module.
\end{proof}

\begin{cor}\label{cor: small <=> separable '}
If $A=\prod_{i\in I} A_i$ is a product of $\kappa$-separable von Neumann algebras, then an $A$-module is $\kappa$-small if and only if its underlying Hilbert space is $\kappa$-separable.
\end{cor}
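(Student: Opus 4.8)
The plan is to reduce the statement to the already-proved Lemma~\ref{lem: small <=> separable} by decomposing along the central projections of the product. Write $z_i\in A$ for the central projection with $z_iA=A_i$; the $z_i$ are mutually orthogonal and sum to $1$, so any normal $A$-module $H$ decomposes as an orthogonal direct sum $H=\bigoplus_{i}z_iH$ in which $z_iH$ is an $A_i$-module. Since each $z_i$ lies in $A$, every $A$-linear endomorphism of $H$ commutes with the $z_i$ and is therefore block-diagonal for this decomposition; this identifies the commutant as a product
\[
\End_A(H)=\prod_{i}\End_{A_i}(z_iH)
\]
(bounded families of $A_i$-linear endomorphisms). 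This structural identity is what does all the work.

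The easy direction is unchanged from Lemma~\ref{lem: small <=> separable}: if $H$ is $\kappa$-separable then $\End_A(H)$ acts faithfully on the $\kappa$-separable space $H$, so it is $\kappa$-separable and $H$ is $\kappa$-small, with no hypothesis on $A$ needed. For the converse, suppose $H$ is $\kappa$-small, so $\End_A(H)=\prod_i\End_{A_i}(z_iH)$ is $\kappa$-separable. I would pass to standard forms, using the stated criterion that a von Neumann algebra is $\kappa$-separable iff its $L^2$-space is, together with the standard fact that $L^2$ of a product is the Hilbert-space direct sum of the $L^2$'s, to get
\[
\sum_i\dim L^2\!\big(\End_{A_i}(z_iH)\big)=\dim L^2\!\Big(\prod_i\End_{A_i}(z_iH)\Big)<\kappa.
\]
Two consequences fall out at once: the set $I_0:=\{i:z_iH\neq0\}$ has cardinality $<\kappa$ (each nonzero summand contributes at least $1$ to the sum), and for each $i\in I_0$ the algebra $\End_{A_i}(z_iH)$ is itself $\kappa$-separable, i.e. $z_iH$ is a $\kappa$-small $A_i$-module. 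Now Lemma~\ref{lem: small <=> separable} applies to each $A_i$ (these are $\kappa$-separable by hypothesis) and yields that every $z_iH$ with $i\in I_0$ is $\kappa$-separable.

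It then remains to reassemble: $\dim H=\sum_{i\in I_0}\dim z_iH$ is a sum of fewer than $\kappa$ cardinals, each $<\kappa$. This final step is where I expect the only genuine subtlety to lie, and it is purely set-theoretic: such a sum is again $<\kappa$ exactly when $\kappa$ is regular, so it is here that regularity of $\kappa$ must be used. (For singular $\kappa$ the conclusion can genuinely fail: over $\prod_{n}B(\ell^2(\aleph_n))$ the module $\bigoplus_{n}\ell^2(\aleph_n)$ has commutant $\ell^\infty(\bbN)$, hence is $\kappa$-small, yet has dimension $\aleph_\omega$ and so is not $\aleph_\omega$-separable; thus the statement should be read with $\kappa$ regular, matching the case $\kappa=\aleph_1$ of interest.) Granting regularity, $\dim H<\kappa$, so $H$ is $\kappa$-separable, which completes the argument.
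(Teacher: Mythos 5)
Your proof is correct and follows essentially the same route as the paper's: decompose $H$ along the central projections of the product, apply Lemma~\ref{lem: small <=> separable} factorwise, and reassemble. The one substantive difference is that you run the forward implication and isolate exactly where a set-theoretic hypothesis enters, whereas the paper argues the contrapositive and compresses the reassembly into the phrase ``infinite pigeonhole principle'' --- namely the claim that if $H=\bigoplus_i H_i$ is not $\kappa$-separable then either some $H_i$ is not $\kappa$-separable or at least $\kappa$ of the $H_i$ are nonzero. That claim is exactly the contrapositive of your final step (a sum of fewer than $\kappa$ cardinals each below $\kappa$ stays below $\kappa$), and as you observe it holds precisely when $\kappa$ is regular. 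Your example over $\prod_{n}B(\ell^2(\aleph_n))$ is a genuine counterexample to the corollary as literally stated for $\kappa=\aleph_\omega$: the commutant of the module $\bigoplus_n\ell^2(\aleph_n)$ is $\ell^\infty(\mathbb{N})$, hence $\kappa$-separable, while the module itself has dimension $\aleph_\omega$. So your write-up is, if anything, more careful than the paper's: the statement needs $\kappa$ regular (harmless for the cases $\aleph_0$ and $\aleph_1$ that the paper emphasises), and the paper's own proof has the same hidden dependence at the pigeonhole step. Everything else --- the identification $\End_A(H)=\prod_i\End_{A_i}(z_iH)$, the use of $L^2$ of a product being the direct sum of the $L^2$'s to bound both the support set and the individual commutants, and the easy direction --- is sound.
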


\begin{proof}
Let $H_i$ be $A_i$-modules, and let $H=\bigoplus_{i\in I} H_i$ be the corresponding $A$-module.
If $H$ is $\kappa$-separable, then it is clearly $\kappa$-small.
If $H$ is not $\kappa$-separable then, by the infinite pigeonhole principle, either one of the $A_i$ is not $\kappa$-separable, or $|\{i\in I:H_i\not =0\}|\ge\kappa$.
In either case, $\End_A(H)$ fails to be $\kappa$-separable.
\end{proof}

Recall that all $\rmW^*$-categories are assumed to admit a set of generators.

\begin{defn}\label{def: (locally) kappa-small}
A $\rmW^*$-category $C$ is called \emph{locally $\kappa$-small} if all its objects are $\kappa$-small.
It is called \emph{$\kappa$-small} if all its objects are $\kappa$-small, and $C$ admits a set $\{c_i\}_{i\in I}$ of generators with $|I|<\kappa$.\footnote{For $\kappa=\aleph_1$, these notions are called `separable' and `locally separable' in \cite[Def.~1.9]{MR2325696}.}
\end{defn}

The above terminology is justified by the fact that all
locally $\kappa$-small $\rmW^*$-categories are essentially small:

\begin{lem}
If $C$ is a locally $\kappa$-small $\rmW^*$-category, then its isomorphism classes of objects form a set (as opposed to a proper class).
\end{lem}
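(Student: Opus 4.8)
The plan is to realise $C$ as a full subcategory of $A\text{-}\Mod$ for a von Neumann algebra $A$ which is a \emph{product} of $\kappa$-separable von Neumann algebras, and then to count the $\kappa$-small objects of $A\text{-}\Mod$ directly. First I would fix a set $\{c_i\}_{i\in I}$ of generators of $C$; since a set of objects generates $C$ iff it generates $C^{\hat\oplus}$, the $c_i$ also generate the Cauchy completion $C^{\hat\oplus}$, and $C\hookrightarrow C^{\hat\oplus}$ is fully faithful. Because full faithfulness makes the induced map on isomorphism classes injective, it suffices to bound the isomorphism classes of $\kappa$-small objects of $C^{\hat\oplus}$.

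Next I would well-order $I$ and apply Gram--Schmidt (Proposition~\ref{prop: Gram–Schmidt}) inside $C^{\hat\oplus}$ to replace $\{c_i\}$ by an orthogonal set of generators $\{d_i\}_{i\in I}$, with $d_i=Q_i(c_i)$ a direct summand of $c_i$. Since $\End_{C^{\hat\oplus}}(c_i)=\End_C(c_i)$ is $\kappa$-separable and $\End(d_i)$ is a corner $q\End(c_i)q$ of it (corners of $\kappa$-separable von Neumann algebras are again $\kappa$-separable, as $L^2(q\End(c_i)q)=qL^2\End(c_i)q$ embeds in a $\kappa$-separable Hilbert space), each $d_i$ is again $\kappa$-small. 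By Lemma~\ref{lem: boxplus decomposition} the orthogonality yields a decomposition $C^{\hat\oplus}=\boxplus_{i}C_i$, where $C_i$ is the full subcategory generated by $d_i$; by Proposition~\ref{prop:freydembedding'} we have $C_i\cong A_i\text{-}\Mod$ with $A_i:=\overline{\End(d_i)}$ a $\kappa$-separable von Neumann algebra. Hence $C^{\hat\oplus}\cong\big(\prod_i A_i\big)\text{-}\Mod$, and $A:=\prod_i A_i$ is a product of $\kappa$-separable von Neumann algebras.

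Finally I would count. By Corollary~\ref{cor: small <=> separable '}, an $A$-module is $\kappa$-small precisely when its underlying Hilbert space is $\kappa$-separable. Isomorphism classes of $\kappa$-separable Hilbert spaces form a set, indexed by their dimension (a cardinal $<\kappa$); fixing a representative $H_\lambda$ for each such dimension, every $\kappa$-small $A$-module is, up to isomorphism, of the form $(H_\lambda,\pi)$ for a normal homomorphism $\pi\colon A\to B(H_\lambda)$, and such homomorphisms form a set. Thus the $\kappa$-small $A$-modules have only a set of isomorphism classes. Since all objects of $C$ are $\kappa$-small and $C\hookrightarrow C^{\hat\oplus}\cong A\text{-}\Mod$ is fully faithful, the isomorphism classes of $C$ inject into this set, and so they form a set.

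The main obstacle is the second step. The hypothesis of being locally $\kappa$-small controls each object individually but says nothing about the single generating object $\oplus_i c_i$, which may well fail to be $\kappa$-small when $I$ is large; consequently Lemma~\ref{lem: small <=> separable}, which needs a single $\kappa$-separable algebra, cannot be applied directly to $\End(\oplus_i c_i)$. Passing through Gram--Schmidt to exhibit $A$ as a genuine product $\prod_i A_i$ of $\kappa$-separable algebras --- and verifying that orthogonalisation preserves $\kappa$-smallness of the generators --- is precisely what makes Corollary~\ref{cor: small <=> separable '} applicable; once that is in place the remaining counting of Hilbert spaces and module structures is routine.
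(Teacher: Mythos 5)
Your proof is correct and follows essentially the same route as the paper: realise $C$ inside $A\text{-}\Mod$ for $A$ a product of $\kappa$-separable von Neumann algebras, invoke Corollary~\ref{cor: small <=> separable '} to identify the $\kappa$-small modules with the $\kappa$-separable ones, and observe that actions of $A$ on $\kappa$-separable Hilbert spaces form a set. The only difference is that you insert an explicit Gram--Schmidt orthogonalisation (checking that corners preserve $\kappa$-separability) to justify that $A$ is genuinely a product $\prod_i A_i$, a point the paper's citation of Corollary~\ref{cor: C full subcategory of C_0 plus} and Proposition~\ref{prop:freydembedding'} passes over more quickly; this is a careful filling-in rather than a different argument.
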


\begin{proof}
Let $\{c_i\}_{i\in I}$ be a set of generators,
let $A_i:=\End(c_i)^{\op}$, and let
$A:=\prod A_i$.
By Corollary~\ref{cor: C full subcategory of C_0 plus} and Proposition~\ref{prop:freydembedding'},
$C$ is a full subcateogory of $A$-$\Mod$. So it's enough to show that 
$A$-$\Mod$ only has a set worth of isomorphism classes of $\kappa$-small objects.
By Corollary~\ref{cor: small <=> separable '}, these are the same as $\kappa$-separable modules
and, indeed, actions of $A$ on $\kappa$-separable Hilbert spaces form a set.
\end{proof}

Functor categories between $\kappa$-small $\rmW^*$-categories are typically not $\kappa$-small,
but they are always locally $\kappa$-small 
(see Lemma~\ref{lem: functors categories locally small} below).
If $C$ is a $\rmW^*$-category, then we write $C_{<\kappa}$\label{pageref: C_<k} for its full subcategory of $\kappa$-small objects.
By definition, $C_{<\kappa}\subset C$ is the largest locally $\kappa$-small subcategory of $C$.
The Hilb-valued inner product of $\kappa$-small objects is always $\kappa$-separable:
\[
\langle\,\,,\,\rangle_{\Hilb}:\overline{C_{<\kappa}}\times C_{<\kappa} \to \Hilb_{<\kappa}.
\]

\begin{defn}\label{def: (locally) k-generated}
A $\rmW^*$-category $C$ is called \emph{locally $\kappa$-generated} if it admits a set $\{c_i\}_{i\in I}$ of \emph{$\kappa$-small} generators.
It is called \emph{$\kappa$-generated} if we may furthermore chose $|I|<\kappa$.
\end{defn}

\begin{exs*}$\,$ \vspace{-1mm}
\begin{itemize}
\item
If $A$ is a $\kappa$-separable von Neumann algebra, then the category of $A$-modules is $\kappa$-generated. 
Similarly, $A$-$\Mod_{<\kappa}$ is $\kappa$-generated.
\item
If $A$ is a $\kappa$-separable $\rmC^*$-algebra (one which admits a subset of cardinality $<\kappa$ that linearly spans a dense subspace), then $A$-$\Mod$ and $A$-$\Mod_{<\kappa}$ are locally $\kappa$-generated.
Indeed, any cyclic $A$-module is $\kappa$-separable, hence $\kappa$-small, and those generate $A$-$\Mod$.
The categories
$A$-$\Mod$ and $A$-$\Mod_{<\kappa}$ are typically not $\kappa$-generated.
\item
If $G$ is a group of cardinality $<\kappa$ then $\Rep(G)$ and $\Rep(G)_{<\kappa}$ are locally $\kappa$-generated.
\item
If $A$ and $B$ is a $\kappa$-separable von Neumann algebras, then the categories $\mathrm{Bim}(A,B)$ and $\mathrm{Bim}(A,B)_{<\kappa}$ are locally $\kappa$-generated.
Once again, cyclic $A$-$B$-bimodules are $\kappa$-separable hence $\kappa$-small, and those generate $\mathrm{Bim}(A,B)$.
\end{itemize}
\end{exs*}

\noindent
We say that a $\rmW^*$-category $C$ \emph{admits all $\kappa$-small direct sums}\label{pageref: to admit all k-small direct sums} if every collection of objects $\{c_i\}_{i\in I}$ with $|I|<\kappa$ admits an orthogonal direct sum in $C$.
We say that $C$ is $\kappa$-Cauchy complete\label{pageref: k-Cauchy complete} if it is idempotent complete, and admits all $\kappa$-small direct sums.
Clearly, if $D$ is $\kappa$-Cauchy complete, then so is $\mathrm{Func}(C,D)$.

Following Terminology~\ref{rem: terminology complete}, we will use `$\kappa$-complete' as a synonym of `$\kappa$-Cauchy complete'.

\begin{rem}
For $\kappa=\aleph_0$, a locally $\kappa$-small $\kappa$-complete $\rmW^*$-category is the same thing as a semisimple $\rmC^*$-category \cite[\S2.1]{MR4750417}, and a
$\kappa$-small $\kappa$-complete $\rmW^*$-category is the same thing as a semisimple $\rmC^*$-category with finitely many types of simple objects.
\end{rem}

\begin{defn}\label{def: k-Cauchy completion}
Let $C$ be a locally $\kappa$-small $\rmW^*$-category.
Then the $\kappa$-completion of $C$ is the 
full subcategory
$C^{\hat \oplus_\kappa}\subset C^{\hat \oplus}=
(\hat C)^{\bar\oplus}$
on those objects which can be written as $\kappa$-small orthogonal direct sums of objects of $\hat C$.
\end{defn}

We have the following straightforward analog of Corollary~\ref{cor: Cauchy completion}:
\begin{prop}
\label{prop: Cauchy completion - kappa}
Let $C$ and $D$ be $\rmW^*$-categories, with $D$ $\kappa$-complete.
Then the restriction functor
\[
\mathrm{Func}\big(C^{\hat \oplus_\kappa},D\big) \to \mathrm{Func}\big(C,D\big)
\]
is an equivalence of categories.\hfill $\square$
\end{prop}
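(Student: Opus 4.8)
The plan is to mirror the proof of Corollary~\ref{cor: Cauchy completion}, which factored the Cauchy completion through the idempotent completion followed by the direct sum completion. By Definition~\ref{def: k-Cauchy completion}, $C^{\hat \oplus_\kappa}$ is the full subcategory of $(\hat C)^{\bar\oplus}$ on those objects that are $\kappa$-small orthogonal direct sums of objects of $\hat C$; writing $(\hat C)^{\bar\oplus_\kappa}$ for this subcategory, we have $C^{\hat \oplus_\kappa} = (\hat C)^{\bar\oplus_\kappa}$. I would then factor the restriction functor as
\[
\mathrm{Func}\big((\hat C)^{\bar\oplus_\kappa},D\big)
\xrightarrow{\,r_1\,}
\mathrm{Func}\big(\hat C,D\big)
\xrightarrow{\,r_2\,}
\mathrm{Func}\big(C,D\big),
\]
and prove each factor is an equivalence separately.

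The second factor $r_2$ is an equivalence directly by Proposition~\ref{prop: idempotent completion}: since $D$ is $\kappa$-complete it is in particular idempotent complete, so the hypothesis of that proposition is met.

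For the first factor $r_1$, I would establish the exact $\kappa$-small analog of Proposition~\ref{prop: direct sum completion}: for any $\rmW^*$-category $E$ and any $D$ admitting all $\kappa$-small direct sums, the restriction $\mathrm{Func}(E^{\bar\oplus_\kappa},D)\to\mathrm{Func}(E,D)$ is an equivalence, where $E^{\bar\oplus_\kappa}$ is the full subcategory of $E^{\bar\oplus}$ on $\kappa$-small direct sums; one then applies it with $E=\hat C$. The proof runs as for Proposition~\ref{prop: direct sum completion}, with every index set $I$ now constrained to satisfy $|I|<\kappa$. For essential surjectivity, given $F:E\to D$ one sets $F^+(\oplus_{i\in I}c_i):=\oplus_{i\in I}F(c_i)$, which is legitimate precisely because $D$ admits all $\kappa$-small direct sums; the induced map on endomorphism algebras is produced, exactly as before, by Lemma~\ref{lem: homomorphism defined on union of pMp} applied to the family of projections $\{p_i\}_{i\in I}$, and I note that this lemma carries no cardinality restriction on $I$, so it applies unchanged. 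Fullness (extending a bounded $\alpha:F\Rightarrow G$ by the convergent sum $\sum_{i\in I}\iota^G_i\circ\alpha_{c_i}\circ(\iota^F_i)^*$) and faithfulness (vanishing on $E$ forces vanishing by naturality against the summand inclusions) are identical to the cited proof, since those arguments only ever involve one object or one pair of objects at a time.

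The statement is genuinely straightforward, and I do not anticipate a serious obstacle; the only point that merits care is the bookkeeping identification $C^{\hat \oplus_\kappa}=(\hat C)^{\bar\oplus_\kappa}$ together with the verification that Lemma~\ref{lem: homomorphism defined on union of pMp}, the one place where the original direct-sum argument invokes a potentially infinite family of projections, indeed imposes no bound on $|I|$ and so survives the passage to the $\kappa$-small setting intact.
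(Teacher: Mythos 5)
Your proof is correct and is exactly the argument the paper intends: the paper states this proposition without proof as ``the straightforward analog of Corollary~\ref{cor: Cauchy completion}'', i.e.\ factoring the restriction through $\mathrm{Func}(\hat C,D)$ and running the idempotent-completion and direct-sum-completion steps separately, with the only change being the bound $|I|<\kappa$ on index sets. Your observation that Lemma~\ref{lem: homomorphism defined on union of pMp} imposes no cardinality restriction is precisely the point that makes the transfer work.
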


The following is an analog of Lemma~\ref{lem: canonical  functor an equivalence iff C_0 generates C}:

\begin{lem}
\label{lem: canonical  functor an equivalence iff C_0 generates C - kappa}
Let $C$ be a locally $\kappa$-small $\kappa$-complete $\rmW^*$-category, and let $C_0\subset C$ be a full subcategory.
Then the canonical  functor $F:(C_0)^{\hat \oplus_\kappa}\to C$ provided by Proposition~\ref{prop: Cauchy completion - kappa} is fully faithful. It is an equivalence if and only if $C_0$ generates $C$.
\end{lem}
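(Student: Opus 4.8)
The plan is to mirror the proof of Lemma~\ref{lem: canonical  functor an equivalence iff C_0 generates C}, with one genuinely new ingredient: a cardinality bound supplied by the $\kappa$-smallness hypothesis. As there, I would split the argument into three parts — full faithfulness of $F$, essential surjectivity under the assumption that $C_0$ generates $C$, and the reverse implication.

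For full faithfulness, I would first record that a $\kappa$-complete, locally $\kappa$-small $\rmW^*$-category satisfies $C^{\hat\oplus_\kappa}\cong C$: indeed $\hat C\cong C$ since $C$ is idempotent complete, and the canonical functor $C\to C^{\hat\oplus_\kappa}$ is an equivalence by the same formal consequence of Proposition~\ref{prop: Cauchy completion - kappa} that trivialises the $\kappa$-completion of a $\kappa$-complete category (apply the proposition with $D=C$ to extend $\id_C$). Since $C_0\hookrightarrow C$ is fully faithful, so is the induced functor $(C_0)^{\hat\oplus_\kappa}\to C^{\hat\oplus_\kappa}$: full faithfulness survives passing to idempotent completions and to $\kappa$-small direct sums, by the defining hom-space formulas. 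By the uniqueness clause of Proposition~\ref{prop: Cauchy completion - kappa}, the functor $F$ agrees with the composite $(C_0)^{\hat\oplus_\kappa}\to C^{\hat\oplus_\kappa}\cong C$, both being extensions of the inclusion, so $F$ is fully faithful.

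For essential surjectivity, assume $C_0$ generates $C$ and fix $c\in C$. As in the Cauchy complete case I would use Zorn's lemma to choose a maximal family of non-zero partial isometries $\{v_i:x_i\to c\}$ with $x_i\in C_0$ and pairwise orthogonal ranges, and set $p_i:=v_iv_i^*$, $q_i:=v_i^*v_i$; note $(x_i,q_i)\in\hat C_0$. The essential new observation is that this family is indexed by a set of cardinality $<\kappa$: since $C$ is locally $\kappa$-small, $\End(c)$ is $\kappa$-separable, hence faithfully represented on a $\kappa$-separable Hilbert space, and the $p_i$ are pairwise orthogonal non-zero projections there, so a choice of unit vector in each range yields an orthonormal set, whose cardinality is at most the Hilbert dimension, which is $<\kappa$. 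Consequently $\bigoplus_i(x_i,q_i)$ is a \emph{$\kappa$-small} orthogonal direct sum of objects of $\hat C_0$, hence a genuine object of $(C_0)^{\hat\oplus_\kappa}$ by Definition~\ref{def: k-Cauchy completion}. Maximality then forces $\sum_i p_i=\id_c$ by exactly the argument of Lemma~\ref{lem: canonical  functor an equivalence iff C_0 generates C}: otherwise split the complementary projection $\id_c-\sum_i p_i$ off a subobject $y$, use that $C_0$ generates $C$ to find a non-zero map from some $x\in C_0$ to $y$, and polar-decompose it into a partial isometry with range orthogonal to all the $p_i$, contradicting maximality. With $\sum_i p_i=\id_c$, the $v_i$ assemble into a unitary $F\big(\bigoplus_i(x_i,q_i)\big)\xrightarrow{\cong}c$, so $F$ is essentially surjective and hence an equivalence.

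Finally, for the converse, I would note that $C_0$ generates $(C_0)^{\hat\oplus_\kappa}$ — every object of the $\kappa$-completion is built from objects of $\hat C_0$ by splitting idempotents and forming $\kappa$-small direct sums, each of which receives a non-zero map from $C_0$ — so if $F$ is an equivalence then $C_0$ generates $C$. The main obstacle, and the only place the hypotheses on $\kappa$ are used in an essential way, is the cardinality count in the essential-surjectivity step: it is precisely $\kappa$-smallness of $c$, via $\kappa$-separability of $\End(c)$, that keeps the resulting direct sum inside the $\kappa$-completion rather than the full Cauchy completion.
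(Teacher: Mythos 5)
Your proposal is correct and takes essentially the same route as the paper: the paper's proof simply says it is identical to that of Lemma~\ref{lem: canonical  functor an equivalence iff C_0 generates C}, using the fact that a family of non-zero partial isometries $\{v_i:x_i\to c\}$ with orthogonal ranges cannot have cardinality $\ge\kappa$. Your cardinality count via $\kappa$-separability of $\End(c)$ and an orthonormal set of range vectors is exactly the justification of that parenthetical fact, and the rest of your argument mirrors the original lemma as intended.
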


\begin{proof}
Identical to that of Lemma~\ref{lem: canonical  functor an equivalence iff C_0 generates C}
(using the fact that if $c\in C$ is an object, then a family of non-zero partial isometries $\{v_i: x_i\to c\}$ with domains $x_i\in C_0$ and orthogonal ranges cannot have cardinality $\ge \kappa$).
\end{proof}

Given a collection $\{C_i\}_{i\in I}$ of $\rmW^*$-categories, let us write $\boxplus_{i\in I}^{<\kappa} C_i\subset \boxplus_{i\in I} C_i$\label{pageref boxplus^<k} for the full subcategory on whose elements $\boxplus_{i\in I} c_i$ satisfying $|\{i\in I:c_i\neq 0\}|<\kappa$.
The following is an analog of Propositions~\ref{prop:freydembedding}
and~\ref{prop:freydembedding'}:

\begin{prop}
\label{prop:freydembedding''}
{\it (i)}
If $C$ is a $\kappa$-small $\kappa$-complete $\rmW^*$-category, then $C$ is equivalent to the category of $\kappa$-separable modules over some $\kappa$-separable von Neumann algebra.
Moreover, if $c\in C$ is a generator and $A:=\overline{\End(c)}$, then
\begin{align}
\notag
F:C&\to A\text{-}\Mod_{<\kappa}
\\
\label{eq: freydembedding' - kappa}
x&\mapsto\;\! \langle c,x\rangle
\end{align}
provides an equivalence.

{\it (ii)}
If $C$ is only locally $\kappa$-small, then it is equivalent to the category of $\kappa$-separable modules over a von Neumann algebra which is a product of $\kappa$-separable algebras.
Moreover, if $c_i\in C$ are orthogonal generators, $A_i=\overline{\End(c_i)}$, and $A=\prod A_i$, then 
\begin{align}
\notag
F:C&\to A\text{-}\Mod_{<\kappa}
\\
\label{eq: freydembedding - locally kappa}
x&\mapsto\;\! 
\textstyle\bigoplus_i \langle c_i,x\rangle
\end{align}
is an equivalence of categories.

\end{prop}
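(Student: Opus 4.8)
The plan is to derive both parts by restricting, to $\kappa$-small objects, the equivalence furnished by the fully Cauchy complete case, Proposition~\ref{prop:freydembedding'}. Throughout I would set $\tilde C:=C^{\hat\oplus}$; since a set of objects generates $C$ if and only if it generates $\tilde C$, the given generators still generate the Cauchy complete category $\tilde C$. In case (i) I take the single generator $c$, so that $A=\overline{\End(c)}$ is $\kappa$-separable because $c$ is $\kappa$-small. In case (ii), since $\tilde C$ admits all direct sums, the orthogonal family has an orthogonal direct sum $g:=\oplus_i c_i\in\tilde C$, which generates $\tilde C$; because $\Hom(c_i,c_j)=0$ for $i\neq j$ one computes $\End_{\tilde C}(g)=\prod_i\End(c_i)$, whence $\overline{\End(g)}=\prod_i A_i=A$. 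In either case Proposition~\ref{prop:freydembedding'} supplies an equivalence $\tilde F:\tilde C\xrightarrow{\simeq}A\text{-}\Mod$ sending $x\mapsto\langle c,x\rangle$ (resp.\ $x\mapsto\langle g,x\rangle\cong\bigoplus_i\langle c_i,x\rangle$, using the direct-sum compatibility of the inner product in Definition~\ref{def: Hilb valued inner product}), and under the fully faithful inclusion $C\hookrightarrow\tilde C$ this restricts exactly to the functor $F$ of the statement.

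First I would check that $F$ is well defined and fully faithful. Full faithfulness is automatic, as $F$ is the restriction of the fully faithful $\tilde F$ along the fully faithful $C\hookrightarrow\tilde C$. For well-definedness, these two full faithfulnesses give, for $x\in C$, an identification $\End_A(Fx)\cong\End_{\tilde C}(x)=\End_C(x)$, which is $\kappa$-separable because $x$ is $\kappa$-small; hence $Fx$ is a $\kappa$-small $A$-module and lands in $A\text{-}\Mod_{<\kappa}$. That $A\text{-}\Mod_{<\kappa}$ is precisely the category of $\kappa$-separable $A$-modules is Lemma~\ref{lem: small <=> separable} in case (i), and Corollary~\ref{cor: small <=> separable '} in case (ii), where $A=\prod_i A_i$ is a product of the $\kappa$-separable algebras $A_i$; this identifies the targets as claimed.

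It remains to prove essential surjectivity onto $A\text{-}\Mod_{<\kappa}$, which I expect to be the main obstacle. For (i), a $\kappa$-separable $A$-module $M$ embeds, by Corollary~\ref{cor: submodules of opplus of faithful A-module}, as $M\cong\bigoplus_{i\in I}M_i$ with each $M_i$ a nonzero summand of $L^2A\cong\tilde F(c)$; here $|I|<\kappa$, since otherwise $\dim M\geq|I|\geq\kappa$ contradicts $\kappa$-separability. Each $M_i$ is then $\tilde F(x_i)$ for a summand $x_i$ of $c$, which lies in $C$ by idempotent completeness, and the $<\kappa$-indexed sum $\oplus_i x_i$ exists in the $\kappa$-complete $C$ with $F(\oplus_i x_i)\cong M$. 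For (ii), I decompose $M=\bigoplus_i M_i$ under the identification $A\text{-}\Mod=\boxplus_i(A_i\text{-}\Mod)$; by Corollary~\ref{cor: small <=> separable '}, $\kappa$-separability forces $|\{i:M_i\neq0\}|<\kappa$ with each nonzero $M_i$ a $\kappa$-separable $A_i$-module, hence $M_i\cong F(x_i)$ with $x_i$ in the full subcategory $C_i\subset C$ generated by $c_i$ by part (i), and again $F(\oplus_i x_i)\cong M$. The delicate point throughout is exactly this control of the support and rank of a $\kappa$-small module, which is what Corollary~\ref{cor: small <=> separable '} (and, for (i), the cardinality of an orthogonal basis) delivers. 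A fully internal alternative would avoid $\tilde C$ altogether, deriving the $\kappa$-analogue of Corollary~\ref{cor: equivalence checkable on generators} from Lemma~\ref{lem: canonical  functor an equivalence iff C_0 generates C - kappa} and then gluing, but the route above is shorter because Proposition~\ref{prop:freydembedding'} already provides the explicit formula.
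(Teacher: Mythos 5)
Your argument is correct, but it takes a genuinely different route from the paper's. For part (i) the paper simply observes that the proof of Proposition~\ref{prop:freydembedding'} ``applies verbatim'': one checks that $F$ induces the identity on $\End(c)$ and invokes the $\kappa$-analogue of Corollary~\ref{cor: equivalence checkable on generators} (resting on Lemma~\ref{lem: canonical  functor an equivalence iff C_0 generates C - kappa}), so everything stays internal to the $\kappa$-small, $\kappa$-complete world. For part (ii) the paper does \emph{not} pass to $C^{\hat\oplus}$ at all --- precisely because, as it notes, the direct sum of all the generators need not exist in $C$ --- but instead runs Gram--Schmidt (Proposition~\ref{prop: Gram–Schmidt}) to split $C=\boxplus^{<\kappa}_i C_i$ with $C_i$ generated by $c_i$, applies part (i) componentwise, and concludes via $(\prod A_i)\text{-}\Mod_{<\kappa}\cong\boxplus^{<\kappa}_i(A_i\text{-}\Mod_{<\kappa})$. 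You instead sidestep the non-existence of $\oplus_i c_i$ in $C$ by forming it in the Cauchy completion $\tilde C$, importing the equivalence $\tilde C\simeq A\text{-}\Mod$ from Proposition~\ref{prop:freydembedding'}, and restricting; the price is that essential surjectivity onto $A\text{-}\Mod_{<\kappa}$ must be proved by hand, which you do correctly via Corollary~\ref{cor: submodules of opplus of faithful A-module} together with the cardinality bound $|\{i:M_i\neq 0\}|<\kappa$ forced by $\kappa$-separability (and Corollary~\ref{cor: small <=> separable '} for the product case). What your route buys is uniformity --- both parts reduce to one already-proved statement and you never need the $\kappa$-analogues of the structural lemmas or Gram--Schmidt; what the paper's route buys is brevity and an argument that never leaves the small setting. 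Two small points worth tightening: in (ii) you should say explicitly that ``the full subcategory generated by $c_i$'' means the essential image of $(\mathbf{B}\End(c_i))^{\hat\oplus_\kappa}\to C$ (Definition~\ref{def: full subcategory generated by} is stated only for Cauchy complete ambient categories), and for the first sentence of (i) you should note that a single generator exists because a $<\kappa$-indexed direct sum of the given generating set exists in the $\kappa$-complete $C$.
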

\begin{proof}
The proof of Proposition~\ref{prop:freydembedding'} applies verbatim to prove part {\it (i)}.
We must however provide a different argument for part {\it (ii)} as $C$ might only admit a set of generators, but not a single generator (the direct sum of all the generators might not exist in $C$).

By Proposition \ref{prop: Gram–Schmidt} (the Gram–Schmidt process for $\rmW^*$-categories) adapted to the context of $\kappa$-small categories,
there exists an orthogonal set of generators $\{c_i\}_{i\in I}$ and an orthogonal direct sum decomposition $C=\boxplus^{<\kappa}_iC_i$ such that $C_i$ is the subcategory of $C$ generated by $c_i$.
The functor $C_i\to A_i\text{-}\Mod_{<\kappa}:
x\mapsto \langle c_i,x\rangle$ is an equivalence by part {\it (i)} and the result follows since
$
A\text{-}\Mod_{<\kappa}
=
(\prod A_i)\text{-}\Mod_{<\kappa}
\cong
\boxplus^{<\kappa}_i(A_i\text{-}\Mod_{<\kappa})
$.
\end{proof}

\begin{cor}
Let $C$ be a locally $\kappa$-small $\kappa$-complete $\rmW^*$-category. Then $C\cong \boxplus_{i\in I}^{<\kappa} C_i$ for some set $I$, and some collection $C_i$ of $\kappa$-small $\kappa$-complete $\rmW^*$-categories.
\end{cor}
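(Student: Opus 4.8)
The plan is to read the decomposition straight off the proof of Proposition~\ref{prop:freydembedding''}(ii) and then check that each factor carries the two required adjectives. First I would apply the Gram–Schmidt process for $\rmW^*$-categories (Proposition~\ref{prop: Gram–Schmidt}, in the $\kappa$-small form used in the proof of Proposition~\ref{prop:freydembedding''}(ii)) to an arbitrary set of generators of $C$. This produces an orthogonal set of generators $\{c_i\}_{i\in I}$ together with an orthogonal direct sum decomposition $C\cong\boxplus^{<\kappa}_{i\in I} C_i$, where $C_i$ is the full subcategory of $C$ generated by the single object $c_i$. This is the candidate decomposition, and it remains only to verify that each $C_i$ is both $\kappa$-small and $\kappa$-complete.

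For $\kappa$-smallness, I would note that $C_i$ is generated by the single object $c_i$, hence admits a generating set of cardinality $1<\kappa$; moreover every object of $C_i$ is an object of $C$, and so is $\kappa$-small because $C$ is locally $\kappa$-small. Thus $C_i$ is locally $\kappa$-small with a generating set of size $<\kappa$, i.e. $\kappa$-small. If one wishes to exhibit the factors concretely, Proposition~\ref{prop:freydembedding''}(i) further identifies $C_i\cong\overline{\End(c_i)}\text{-}\Mod_{<\kappa}$ for the $\kappa$-separable von Neumann algebra $\overline{\End(c_i)}$, but this identification is not needed for the statement.

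For $\kappa$-completeness, I would exploit that $C_i$ is an orthogonal $\boxplus$-summand of the $\kappa$-complete category $C$. The endomorphism algebra of an object of $\boxplus^{<\kappa}_i C_i$ factors as a product indexed by $i$, so any projection on an object of $C_i$ is already internal to $C_i$; its splitting in $C$ (which exists since $C$ is idempotent complete) is a subobject supported at the index $i$, hence again lies in $C_i$. Likewise, given a family of fewer than $\kappa$ objects of $C_i$, their orthogonal direct sum exists in $C$ (as $C$ admits all $\kappa$-small direct sums) and is again supported at the index $i$, hence lies in $C_i$. Therefore $C_i$ is idempotent complete and admits all $\kappa$-small direct sums, i.e. it is $\kappa$-complete.

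The step I expect to require the most care is the summand argument of the last paragraph: one must confirm that the factor $C_i$ of the decomposition is genuinely closed both under subobjects and under $<\kappa$-indexed orthogonal direct sums formed in the ambient $C$. The crucial simplification here is that local $\kappa$-smallness of $C$ forces $C_{<\kappa}=C$, so any $<\kappa$-indexed direct sum that exists in $C$ is automatically $\kappa$-small; this is precisely what lets me conclude $\kappa$-completeness of $C_i$ from that of $C$ while bypassing the cardinal-arithmetic subtleties (e.g.\ for singular $\kappa$) that would otherwise appear if one tried to verify $\kappa$-completeness of a module category $\overline{\End(c_i)}\text{-}\Mod_{<\kappa}$ from scratch.
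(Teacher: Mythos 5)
Your proposal is correct and follows essentially the same route as the paper: the corollary is stated without a separate proof precisely because the decomposition $C\cong\boxplus^{<\kappa}_{i}C_i$, with each $C_i$ generated by a single orthogonalised generator, is already produced in the proof of Proposition~\ref{prop:freydembedding''}\textit{(ii)} via the $\kappa$-adapted Gram--Schmidt process, and each factor is identified (via part \textit{(i)}) with $\overline{\End(c_i)}\text{-}\Mod_{<\kappa}$, hence $\kappa$-small and $\kappa$-complete. Your direct verification that each summand inherits idempotent completeness and $\kappa$-small direct sums from the ambient category is a sound way of filling in the details the paper leaves implicit.
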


So far, $\kappa$ could be taken to be an arbitrary infinite cardinal. The next two results require $\kappa$ to be a successor cardinal, and are false for limit cardinals.
The following is a refinement of \cite[Cor~7.16]{MR808930}:

\begin{lem}\label{lem: GLR Cor 7.16}
Let $\kappa=\aleph_{\alpha+1}$ be a successor cardinal, and let $C$ be a $\kappa$-small $\kappa$-complete $\rmW^*$-category.

Then there exists a generator $c\in C$ 
such that every object of $C$ is isomorphic to a subobject of $c$.
Such an object is unique up to isomorphism.
\end{lem}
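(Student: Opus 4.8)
The plan is to realise $C$ as a category of modules and then write down the desired object explicitly as a large multiple of the standard form $L^2A$. Since $C$ is $\kappa$-small, it admits a set $\{c_i\}_{i\in I}$ of generators with $|I|<\kappa$, and since $C$ is $\kappa$-complete the orthogonal direct sum $\oplus_i c_i$ exists and is a single generator. Setting $A:=\overline{\End(\oplus_i c_i)}$, Proposition~\ref{prop:freydembedding''}(i) identifies $C$ with $A\text{-}\Mod_{<\kappa}$ for a $\kappa$-separable von Neumann algebra $A$; under this identification the objects are the $\kappa$-separable $A$-modules and subobjects are $A$-submodules. The one place the successor hypothesis enters is that $\kappa=\aleph_{\alpha+1}$ guarantees a largest cardinal $\aleph_\alpha$ below $\kappa$, together with $\aleph_\alpha\cdot\aleph_\alpha=\aleph_\alpha$.

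\textbf{Existence.} I would take $c:=(L^2A)^{\oplus \aleph_\alpha}$. Since $A$ is $\kappa$-separable, $L^2A$ has Hilbert-space dimension $\le\aleph_\alpha$, so the underlying Hilbert space of $c$ has dimension at most $\aleph_\alpha\cdot\aleph_\alpha=\aleph_\alpha<\kappa$; hence $c$ is $\kappa$-separable, i.e.\ $\kappa$-small by Lemma~\ref{lem: small <=> separable}, and so $c\in C$. As $c$ contains $L^2A$ as a summand it is a faithful $A$-module, hence a generator of $A\text{-}\Mod_{<\kappa}$. To see that every object is a subobject of $c$, let $K$ be a $\kappa$-separable $A$-module. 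By Corollary~\ref{cor: submodules of opplus of faithful A-module} there is an isometric inclusion of the form $K=\bigoplus_{i\in J}K_i\xrightarrow{\oplus f_i}\bigoplus_{i\in J}L^2A$ with each $f_i:K_i\hookrightarrow L^2A$ isometric. Discarding zero summands, the nonzero $K_i$ span mutually orthogonal nonzero subspaces of $K$, so their number is bounded by the Hilbert-space dimension of $K$, which is $\le\aleph_\alpha$. Thus we may take $|J|\le\aleph_\alpha$, whence $\bigoplus_{i\in J}L^2A$ is itself a subobject of $(L^2A)^{\oplus\aleph_\alpha}=c$; composing the inclusions exhibits $K$ as a subobject of $c$.

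\textbf{Uniqueness.} This is immediate from the Cantor--Schr\"oder--Bernstein lemma (Lemma~\ref{lem:Cantor-Schroeder-Bernstein}): if $c$ and $c'$ both have the stated property, then $c'$ is a subobject of $c$ and $c$ is a subobject of $c'$, giving isometries in both directions, and therefore $c\cong c'$.

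\textbf{Main obstacle.} The crux is the cardinal bookkeeping in the existence step: one must check simultaneously that $c$ remains $\kappa$-small and that every $\kappa$-separable module is dominated by the \emph{single} object $(L^2A)^{\oplus\aleph_\alpha}$. This is exactly where the successor hypothesis is indispensable --- for a limit $\kappa$ there is no largest cardinal below $\kappa$, so no $\kappa$-small multiple of $L^2A$ can contain all $\kappa$-small direct sums of copies of $L^2A$, and the statement fails.
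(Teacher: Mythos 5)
Your proof is correct and follows essentially the same route as the paper's: identify $C$ with $A\text{-}\Mod_{<\kappa}$ via Proposition~\ref{prop:freydembedding''}(i), use Corollary~\ref{cor: submodules of opplus of faithful A-module} to embed an arbitrary object into a direct sum of copies of the generator, bound the number of nonzero summands by $\aleph_\alpha$ using the successor hypothesis, and conclude uniqueness from Lemma~\ref{lem:Cantor-Schroeder-Bernstein}. The only cosmetic difference is that you bound the indexing set by counting orthogonal subspaces against the Hilbert-space dimension of $K$, where the paper appeals to $\kappa$-smallness of $y$; both are fine.
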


\begin{proof}
By combining part {\it (i)} of Proposition~\ref{prop:freydembedding''}
with Corollary~\ref{cor: submodules of opplus of faithful A-module}, there exists a generator $x\in C$ such that
every object $y\in C$
admits a direct sum decomposition $y=\bigoplus_{i\in I} y_i$ where each $y_i$ embeds into $x$.
Assuming without loss of generality that none of the $y_i$ is zero,
the indexing set $I$ must have cardinality $<\kappa$, as otherwise this would contradict the fact that $y$ is $\kappa$-small.
So we get an embedding $y \hookrightarrow x^{\oplus I}$, for some set $I$ of cardinality $|I|<\aleph_{\alpha+1}$.
In particular $y \hookrightarrow x^{\oplus \aleph_{\alpha}}$. So $c:=x^{\oplus \aleph_{\alpha}}$ is the desired generator.

The uniqueness claim follows from Lemma~\ref{lem:Cantor-Schroeder-Bernstein}.
\end{proof}

We call an object satisfing the condition in Lemma~\ref{lem: GLR Cor 7.16} a \emph{maximal} object\label{pageref: maximal}  \cite[\S6]{MR808930}.

\begin{cor}
Let $\kappa=\aleph_{\alpha+1}$ be a successor cardinal, and let $C$ be a $\kappa$-small $\kappa$-complete $\rmW^*$-category.

Then there exists a $\kappa$-separable von Neumann algebra $A$ such that
$C$ is equivalent to $(\bfB A)\hspace{-.5mm}\hat{\phantom t}$
(idempotent completion, but no direct sum completion).
\end{cor}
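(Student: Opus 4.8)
The plan is to take $A=\End(c)$ for the maximal object $c$ furnished by Lemma~\ref{lem: GLR Cor 7.16}, and to identify $(\mathbf{B}A)\hspace{-.5mm}\hat{\phantom t}$ with the idempotent completion of the full subcategory of $C$ on $c$. First I would apply Lemma~\ref{lem: GLR Cor 7.16} to obtain a generator $c\in C$ with the property that every object of $C$ is isomorphic to a subobject of $c$, and set $A:=\End(c)$. Since $C$ is $\kappa$-small, all of its objects---in particular $c$---are $\kappa$-small, so $A$ is a $\kappa$-separable von Neumann algebra by Definition~\ref{def: k-small}.

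Next I would note that the full subcategory of $C$ on the single object $c$ is isomorphic to $\mathbf{B}A$ via $\star_A\mapsto c$ (using $\End(\star_A)=A=\End(c)$), giving a fully faithful embedding $\mathbf{B}A\hookrightarrow C$. Passing to idempotent completions preserves fully faithfulness---immediate from the definition of the idempotent completion, whose hom-spaces are the evident corners---so I obtain a fully faithful functor $(\mathbf{B}A)\hspace{-.5mm}\hat{\phantom t}\hookrightarrow \hat C$. Because $C$ is idempotent complete we have $\hat C\simeq C$, and composing yields a fully faithful functor $(\mathbf{B}A)\hspace{-.5mm}\hat{\phantom t}\to C$.

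It remains to check that this functor is essentially surjective. By construction it sends an object $(\star_A,p)$ to the object of $C$ that splits the projection $p\in A=\End(c)$, i.e.\ to a subobject of $c$; conversely every subobject of $c$ splits some projection in $\End(c)$ and hence lies in the essential image. Thus the essential image is exactly the class of objects isomorphic to subobjects of $c$, which by the maximality of $c$ is all of $C$. Hence the functor is an equivalence. The substantive input is entirely the existence of the maximal object from Lemma~\ref{lem: GLR Cor 7.16}; given that, the only thing to verify is the bookkeeping matching $(\mathbf{B}A)\hspace{-.5mm}\hat{\phantom t}$ with the subobjects of $c$ inside the idempotent complete category $C$, and I anticipate no genuine obstacle here.
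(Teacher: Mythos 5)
Your proposal is correct and follows essentially the same route as the paper, which simply sets $A:=\End(c)$ for $c$ the maximal object from Lemma~\ref{lem: GLR Cor 7.16} and then defers to \cite[Prop~6.4]{MR808930} for exactly the bookkeeping you spell out (full faithfulness of $(\mathbf{B}A)\hspace{-.5mm}\hat{\phantom t}\to C$ and essential surjectivity via the identification of the essential image with the subobjects of $c$). No gaps.
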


\begin{proof}
Let $A:=\End(c)$ for $c\in C$ maximal. Then argue as in \cite[Prop~6.4]{MR808930}.
\end{proof}

\begin{lem}
\label{lem: functors categories locally small}
If $C$ is $\kappa$-small,
and $D$ is locally $\kappa$-small,
then $\mathrm{Func}(C,D)$ is locally $\kappa$-small.
\end{lem}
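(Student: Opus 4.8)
The plan is to reduce everything to the endomorphism algebra of a functor and to show that this algebra embeds as a von Neumann subalgebra of a small product of $\kappa$-separable algebras. First I would use the hypothesis that $C$ is $\kappa$-small (Definition~\ref{def: (locally) kappa-small}) to fix a set of generators $\{c_i\}_{i\in I}$ with $|I|<\kappa$, and let $C_0\subset C$ be the full subcategory on these objects. By Lemma~\ref{lem: Func(C,D) is a full subcategory}, restriction identifies $\mathrm{Func}(C,D)$ with a \emph{full} subcategory of $\mathrm{Func}(C_0,D)$, and since this identification is an equivalence of $\rmW^*$-categories it preserves endomorphism algebras. Hence $\End_{\mathrm{Func}(C,D)}(F)\cong \End_{\mathrm{Func}(C_0,D)}(F|_{C_0})$, and it suffices to prove that $\End(G)$ is $\kappa$-separable for every functor $G:C_0\to D$.

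Next I would analyze $\End(G)$ directly. A bounded natural transformation $\alpha:G\Rightarrow G$ is the same as a bounded family $(\alpha_{c_i})_{i\in I}$ with $\alpha_{c_i}\in\End_D(G(c_i))$ satisfying the naturality relations $\alpha_{c_j}\circ G(f)=G(f)\circ\alpha_{c_i}$ for every morphism $f:c_i\to c_j$ in $C_0$. Since the objects of $C_0$ are precisely the $c_i$, the assignment $\alpha\mapsto(\alpha_{c_i})_{i}$ is an injective, unital, normal $*$-homomorphism
\[
\End(G)\hookrightarrow \prod_{i\in I}\End_D\big(G(c_i)\big),
\]
where the target is taken with its sup norm (so that its elements are exactly the bounded families). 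Each naturality relation is an ultraweakly closed condition on $(\alpha_{c_i})_i$ (left and right multiplication by the fixed morphisms $G(f)$ are ultraweakly continuous), so the image is an ultraweakly closed $*$-subalgebra; that is, $\End(G)$ is realized as a von Neumann subalgebra of the product.

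Finally I would bound the size of this product. Because $D$ is locally $\kappa$-small, each object $G(c_i)$ is $\kappa$-small, so $\End_D(G(c_i))$ is $\kappa$-separable and acts faithfully on some Hilbert space $H_i$ of dimension $<\kappa$. The product then acts faithfully on $\bigoplus_{i\in I}H_i$, and restricting this representation to the von Neumann subalgebra $\End(G)$ exhibits a faithful action of $\End(G)$ on $\bigoplus_{i\in I}H_i$; thus $\End(G)$ is $\kappa$-separable as soon as $\bigoplus_{i\in I}H_i$ is. The main obstacle is exactly this last cardinal-arithmetic point: one must check that a direct sum of $|I|<\kappa$ Hilbert spaces, each of dimension $<\kappa$, again has dimension $<\kappa$, i.e.\ that $\sum_{i\in I}\dim H_i=\max\big(|I|,\sup_i\dim H_i\big)<\kappa$. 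This holds for the cardinals relevant here (in particular whenever $\kappa$ is regular, which covers $\aleph_0$ and all successor cardinals) since the $\dim H_i$ cannot be cofinal in $\kappa$ when $|I|<\mathrm{cf}(\kappa)$; this is essentially the same bookkeeping used in Corollary~\ref{cor: small <=> separable '}. Granting it, $\End(G)$ is $\kappa$-separable, every object of $\mathrm{Func}(C,D)$ is $\kappa$-small, and $\mathrm{Func}(C,D)$ is locally $\kappa$-small.
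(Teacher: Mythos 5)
Your argument reaches the right conclusion for regular $\kappa$, but by a genuinely different route from the paper's. The paper first reduces to the case where $C$ and $D$ are $\kappa$-complete, invokes Proposition~\ref{prop:freydembedding''} to write $C=A\text{-}\Mod_{<\kappa}$ and $D=B\text{-}\Mod_{<\kappa}$ with $A$ $\kappa$-separable and $B$ a product of $\kappa$-separable algebras, identifies a functor $F$ with a bimodule ${}_BH_A$ where $H=F(L^2A)$ via Proposition~\ref{prop: func to bimod}, and observes that $H$, being an object of $D$, is $\kappa$-small, hence by Corollary~\ref{cor: small <=> separable '} has $\kappa$-separable underlying Hilbert space; since $\End(F)=\End_{\Bim(B,A)}(H)\subset B(H)$ acts faithfully on $H$, it is $\kappa$-separable. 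Your proof instead restricts to the full subcategory $C_0$ on the $<\kappa$ generators and realises $\End(G)$ as a von Neumann subalgebra of $\prod_{i}\End_D(G(c_i))$ cut out by the (ultraweakly closed) naturality relations. This is more elementary, stays entirely inside the functor category, and avoids the bimodule dictionary; the steps up to and including the embedding into the product are all sound.

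The divergence that matters is the final cardinality step, which you correctly identify as the crux. A sum of $|I|<\kappa$ Hilbert spaces each of dimension $<\kappa$ has dimension $<\kappa$ only when $|I|<\mathrm{cf}(\kappa)$, so your argument as written covers exactly the regular cardinals; but the lemma is stated for an arbitrary infinite cardinal, and the paper's explicit successor-cardinal caveat applies only to the two results \emph{preceding} this one, so ``the cardinals relevant here'' include singular limit cardinals such as $\aleph_\omega$, where your estimate $\sum_i\dim H_i<\kappa$ genuinely fails (take $|I|=\aleph_0$ and $\dim H_i=\aleph_i$). The structural reason the paper's proof does not hit this wall is worth noting: it never sums the separabilities of the individual images $F(c_i)$. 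Instead it applies the local $\kappa$-smallness of $D$ once, to the single object $F(L^2A)\cong\oplus_i F(c_i)$, which after the reduction to the $\kappa$-complete case lies in $D$ and is therefore $\kappa$-small as one object. To upgrade your argument to that generality you would likewise need to replace $\prod_i\End_D(G(c_i))$ by the endomorphism algebra of the single object $\oplus_i G(c_i)$ and argue that it is $\kappa$-small using the hypothesis on $D$ directly, rather than by cardinal arithmetic on the summands.
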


\begin{proof}
$D$ is a full subcategory of $D^{\hat \oplus_\kappa}$, so $\mathrm{Func}(C,D)$
is a full subcategory of $\mathrm{Func}(C,D^{\hat \oplus_\kappa})$.
The latter is equivalent to
$\mathrm{Func}(C^{\hat \oplus_\kappa},D^{\hat \oplus_\kappa})$ by Proposition~\ref{prop: Cauchy completion - kappa}.
So we may assume without loss of generality that both $C$ and $D$ are $\kappa$-complete.

By Proposition~\ref{prop:freydembedding''}, 
we may then write $C=A\text{-}\Mod_{<\kappa}$
and $D=B\text{-}\Mod_{<\kappa}$,
for $A$ a $\kappa$-separable von Neumann algebra,
and $B=\prod B_i$ a product of $\kappa$-separable von Neumann algebras.
By the same argument as above (using Corollary~\ref{cor: Cauchy completion} instead of Proposition~\ref{prop: Cauchy completion - kappa}), along with Proposition~\ref{prop: func to bimod}, $\mathrm{Func}(C,D)$
is equivalent to a full subcategory of $\mathrm{Func}(C^{\hat \oplus},D^{\hat \oplus})\cong\Bim(B,A)$.
Specifically, it is the full subcategory on those functors $F:C^{\hat \oplus}\to D^{\hat \oplus}$ that map 
$(C^{\hat \oplus})_{<\kappa}=C$ to
$(D^{\hat \oplus})_{<\kappa}=D$.

For any such functor $F$, 
since $L^2A\in C$, we must have $F(L^2A)\in D$.
By Corollary~\ref{cor: small <=> separable '},  the bimodule ${}_BH_A$ that corresponds to $F$
must therefore have its underlying $B$-module ${}_BH={}_BH\boxtimes_a L^2A=F(L^2A)$ be
$\kappa$-separable.
We finish the proof by noting that any $\kappa$-separable bimodule is $\kappa$-small.
\end{proof}

The next result is immediate from the definitions, and from Lemma~\ref{lem: canonical  functor an equivalence iff C_0 generates C}:

\begin{lem}
Let $C$ be a Cauchy-complete $\rmW^*$-category.
If $C$ is locally $\kappa$-generated, then $C=(C_{<\kappa})^{\hat \oplus}$.
If $C$ is furthermore $\kappa$-generated, then $C_{<\kappa}$ is $\kappa$-small. 
\hfill $\square$
\end{lem}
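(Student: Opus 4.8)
The plan is to deduce both statements directly from the definitions together with Lemma~\ref{lem: canonical  functor an equivalence iff C_0 generates C}. The one preliminary point I would settle first is set-theoretic: $C_{<\kappa}$ is locally $\kappa$-small by construction, so by the earlier lemma asserting that locally $\kappa$-small $\rmW^*$-categories are essentially small, its isomorphism classes of objects form a set. I may therefore replace $C_{<\kappa}$ by an equivalent full subcategory $C_0\subset C$ on an honest \emph{set} of representatives, which is exactly the form in which Lemma~\ref{lem: canonical  functor an equivalence iff C_0 generates C} is stated, and since Cauchy completion is functorial for equivalences we have $(C_0)^{\hat\oplus}\simeq (C_{<\kappa})^{\hat\oplus}$.

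For the first claim, I would use that $C$ being locally $\kappa$-generated supplies a set $\{c_i\}_{i\in I}$ of $\kappa$-small generators. Each $c_i$ has $\kappa$-separable endomorphism algebra, hence $c_i\in C_{<\kappa}$, so up to isomorphism each $c_i$ lies in $C_0$; thus $C_0$ generates $C$. Applying Lemma~\ref{lem: canonical  functor an equivalence iff C_0 generates C} to the Cauchy complete category $C$ and its full subcategory $C_0$, the canonical functor $(C_0)^{\hat\oplus}\to C$ is an equivalence. Rephrasing through $(C_0)^{\hat\oplus}\simeq (C_{<\kappa})^{\hat\oplus}$ then yields $C=(C_{<\kappa})^{\hat\oplus}$.

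For the second claim, when $C$ is moreover $\kappa$-generated I may choose the generating set $\{c_i\}_{i\in I}$ with $|I|<\kappa$. As before these objects are $\kappa$-small, so they lie in $C_{<\kappa}$, and it remains to check they generate $C_{<\kappa}$, i.e.\ that every non-zero object of the idempotent completion $\widehat{C_{<\kappa}}$ receives a non-zero map from some $c_i$. Here I would observe that, since $C_{<\kappa}\subset C$ is full, $\widehat{C_{<\kappa}}$ is a full subcategory of $\hat C$; hence any non-zero $x\in\widehat{C_{<\kappa}}$ is in particular a non-zero object of $\hat C$ and so receives a non-zero map from one of the $c_i$ by the hypothesis that $\{c_i\}$ generates $C$. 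Thus $\{c_i\}_{i\in I}$ with $|I|<\kappa$ is a generating set for $C_{<\kappa}$, and since all objects of $C_{<\kappa}$ are $\kappa$-small by definition, $C_{<\kappa}$ is $\kappa$-small.

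The only genuinely delicate point is the set-theoretic one flagged at the outset: one must know that $C_{<\kappa}$ is essentially small before invoking Lemma~\ref{lem: canonical  functor an equivalence iff C_0 generates C}, which requires the full subcategory on a \emph{set} of objects. This is precisely the earlier essential-smallness lemma applied to $C_{<\kappa}$. Everything else is a direct unwinding of the definitions of ``locally $\kappa$-generated'', ``$\kappa$-generated'', ``$\kappa$-small'', and ``generates'', so I expect no further obstacle.
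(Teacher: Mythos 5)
Your proof is correct and follows exactly the route the paper indicates: the paper gives no written proof, declaring the result ``immediate from the definitions, and from Lemma~\ref{lem: canonical  functor an equivalence iff C_0 generates C}'', and your argument is precisely that unwinding (with the sensible extra care of passing to a set of representatives of $C_{<\kappa}$ before invoking that lemma).
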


If $C$ and $D$ are $\kappa$-generated Cauchy-complete $\rmW^*$-categories, then $\mathrm{Func}(C,D)$
is typically not $\kappa$-generated.
Instead, we have:

\begin{prop}
\label{prop: functors between small categories}
Let $C$ and $D$ be Cauchy-complete $\rmW^*$-categories
such that $C$ is $\kappa$-generated and $D$ is locally $\kappa$-generated.
Then $\mathrm{Func}(C,D)$ is locally $\kappa$-generated, and
\begin{equation}
\label{eq: EQUIV 1}
\mathrm{Func}\big(C_{<\kappa},D_{<\kappa}\big) \cong \mathrm{Func}\big(C,D\big)_{<\kappa}.
\end{equation}
\end{prop}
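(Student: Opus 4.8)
The plan is to exhibit the desired equivalence \eqref{eq: EQUIV 1} as a restriction of the extension equivalence of Corollary~\ref{cor: full subcat same functors}. From the lemma preceding the statement, $C=(C_{<\kappa})^{\hat\oplus}$ with $C_{<\kappa}$ being $\kappa$-small, and $D=(D_{<\kappa})^{\hat\oplus}$ with $D_{<\kappa}$ locally $\kappa$-small. As $C_{<\kappa}$ is a full subcategory on a set of generators of $C$ and $D$ is Cauchy complete, Corollary~\ref{cor: full subcat same functors} gives an equivalence $\mathrm{Func}(C,D)\simeq\mathrm{Func}(C_{<\kappa},D)$; composing with the fully faithful inclusion $\mathrm{Func}(C_{<\kappa},D_{<\kappa})\hookrightarrow\mathrm{Func}(C_{<\kappa},D)$ yields a fully faithful functor $\Theta\colon\mathrm{Func}(C_{<\kappa},D_{<\kappa})\to\mathrm{Func}(C,D)$ whose essential image consists precisely of the functors $F\colon C\to D$ sending $C_{<\kappa}$ into $D_{<\kappa}$. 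It then remains to identify this image with $\mathrm{Func}(C,D)_{<\kappa}$.

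One inclusion is formal: by Lemma~\ref{lem: functors categories locally small} the category $\mathrm{Func}(C_{<\kappa},D_{<\kappa})$ is locally $\kappa$-small, so every object $G$ there is $\kappa$-small, and since $\Theta$ is fully faithful $\End(\Theta G)\cong\End(G)$ is $\kappa$-separable, whence $\Theta G\in\mathrm{Func}(C,D)_{<\kappa}$. The reverse inclusion --- that every $\kappa$-small $F\colon C\to D$ maps $C_{<\kappa}$ into $D_{<\kappa}$ --- is the heart of the matter and the step I expect to be the main obstacle. I would prove it in von Neumann algebra coordinates. Using Proposition~\ref{prop:freydembedding'}, the Gram--Schmidt process (Proposition~\ref{prop: Gram–Schmidt}), and Lemma~\ref{lem: boxplus decomposition}, realize $C\simeq A\text{-}\Mod$ and $D\simeq B\text{-}\Mod$ with $A=\prod_j A_j$ (with $<\kappa$ factors, as $C$ is $\kappa$-generated) and $B=\prod_i B_i$ products of $\kappa$-separable algebras; by Corollary~\ref{cor: small <=> separable '}, $C_{<\kappa}$ and $D_{<\kappa}$ are then exactly the $\kappa$-separable modules. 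Under Proposition~\ref{prop: func to bimod}, $F$ corresponds to a bimodule ${}_BX_A$ with $F=X\boxtimes_A-$ and $\End(F)=\End_{\Bim}(X)$. Viewing $X$ as a module over the von Neumann algebra $M$ generated by its commuting left $B$- and right $A$-actions --- again a product of $\kappa$-separable algebras --- we have $\End_{\Bim}(X)=M'$, so Corollary~\ref{cor: small <=> separable '} applied to $M$ shows that $F$ is $\kappa$-small if and only if $X$ is $\kappa$-separable as a Hilbert space. Finally, $X$ being $\kappa$-separable is equivalent to $F$ preserving $\kappa$-smallness: forwards because $X\boxtimes_A H$ is $\kappa$-separable whenever $H$ is (Connes fusion of $\kappa$-separable modules over $\kappa$-separable algebras stays $\kappa$-separable), and backwards because each $F({}_{A_j}L^2A_j)=Xz_j$ is then $\kappa$-small and these $<\kappa$ pieces reassemble $X$. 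This gives the reverse inclusion, and hence the equivalence \eqref{eq: EQUIV 1}.

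For the local $\kappa$-generation of $\mathrm{Func}(C,D)$, I would work in the same coordinates $\mathrm{Func}(C,D)\simeq\Bim(B,A)$ and decompose an arbitrary bimodule $X=\bigoplus_{i,j}z_i X z_j$ into its $B_i$-$A_j$-blocks, and each block into cyclic sub-bimodules. A cyclic $B_i$-$A_j$-bimodule is $\kappa$-separable, hence $\kappa$-small, so every object of $\mathrm{Func}(C,D)$ is an orthogonal direct sum of $\kappa$-small objects. Since the $\kappa$-separable cyclic bimodules form a set (equivalently, $\mathrm{Func}(C,D)_{<\kappa}\simeq\mathrm{Func}(C_{<\kappa},D_{<\kappa})$ is essentially small by Lemma~\ref{lem: functors categories locally small}), they provide a set of $\kappa$-small generators, and $\mathrm{Func}(C,D)$ is locally $\kappa$-generated.

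The delicate point throughout is the separability bookkeeping: one repeatedly needs that the relevant $<\kappa$-indexed orthogonal sums of $\kappa$-separable Hilbert spaces remain $\kappa$-separable, which is precisely what Corollary~\ref{cor: small <=> separable '} (together with Lemma~\ref{lem: small <=> separable}) encapsulates, and which is exactly what powers the reverse inclusion above.
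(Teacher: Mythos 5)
Your proof is correct, and it reaches the same endpoint as the paper's --- both arguments ultimately run through von Neumann algebra coordinates, identify $\mathrm{Func}(C,D)$ with $\Bim(B,A)$ for $B=\prod_iB_i$ via Proposition~\ref{prop: func to bimod}, show that a functor is $\kappa$-small exactly when its bimodule is $\kappa$-separable, and use that Connes fusion with a $\kappa$-separable bimodule preserves $\kappa$-separability --- but two of your steps are genuinely different in route. First, you package the equivalence \eqref{eq: EQUIV 1} as a restriction of the extension equivalence of Corollary~\ref{cor: full subcat same functors} and then match essential images, with the containment ``image of $\Theta\subseteq\mathrm{Func}(C,D)_{<\kappa}$'' falling out formally from Lemma~\ref{lem: functors categories locally small} and full faithfulness; the paper instead writes down the two mutually inverse functors explicitly (restriction of $X\boxtimes_A-$ in one direction, evaluation at $L^2A$ in the other). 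Second, and more substantively, you establish ``$F$ is $\kappa$-small iff $X$ is $\kappa$-separable'' by viewing $X$ as a module over the algebra $M$ generated by the two commuting actions, noting $\End_{\Bim}(X)=M'$, and applying Corollary~\ref{cor: small <=> separable '} to $M$; the paper instead runs a Gram--Schmidt argument on a generating set of $\Bim(B,A)$ and invokes Proposition~\ref{prop:freydembedding'}. Your commutant route is arguably more direct, but it does lean on the unproved assertion that $M$ is a product of $\kappa$-separable algebras --- this is true (the images of the central projections $z_iw_j$ of $B\bar\otimes A^{\op}$ cut $M$ into corners of the $\kappa$-separable algebras $B_i\bar\otimes A_j^{\op}$), and you should record that one line. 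Note also that the ``backwards'' reassembly of $X$ from the pieces $Xz_j$ is not actually needed, since the containment it would re-prove was already handled by your formal argument; everything else, including the final paragraph on local $\kappa$-generation via cyclic $B_i$-$A_j$-blocks, matches the paper's reasoning.
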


\begin{proof}
By Proposition~\ref{prop:freydembedding'}, 
we may assume without loss of generality that $C=A\text{-}\Mod$
and $D=B\text{-}\Mod$,
for $A$ a $\kappa$-separable von Neumann algebra,
and $B=\prod B_i$ a product of $\kappa$-separable von Neumann algebras.
By Proposition~\ref{prop: func to bimod}, we then have
$\mathrm{Func}(C,D)= \Bim(B,A)=\boxplus_i \Bim(B_i,A)$.
Since every $B_i$-$A$-bimodule is a sum of cyclic bimodules, and every cyclic $B_i$-$A$-bimodule is $\kappa$-separable, $\Bim(B_i,A)$ is locally $\kappa$-generated (generated by the set of $\kappa$-separable bimodules).
$\Bim(B,A)$ is therefore also locally $\kappa$-generated.

Let $\{X_i\}_{i\in I}$ be a set of $\kappa$-small generators for $\Bim(B,A)$.
By Proposition~\ref{prop: Gram–Schmidt} (Gram– Schmidt for $\rmW^*$-categories), we may arrange 
for them to form an orthogonal set of generators.
Then $X:=\bigoplus_{i\in I} X_i$ is a generator of $\Bim(B,A)$ whose endomorphism algebra is a product of $\kappa$-separable von Neumann algebras.
By Proposition~\ref{prop:freydembedding'} and Corollary~\ref{cor: small <=> separable '}, it follows that a $B$-$A$-bimodule is $\kappa$-small if and only if it is $\kappa$-separable.

If ${}_BH_A$ is $\kappa$-separable bimodule, then
${}_BH\boxtimes_A-:A\text{-}\Mod\to B\text{-}\Mod$ sends $\kappa$-separable $A$-modules to $\kappa$-separable $B$-modules. This
establishes one direction of the desired equivalence:
\[
\mathrm{Func}\big(A\text{-}\Mod,B\text{-}\Mod\big)_{<\kappa}=\Bim(B,A)_{<\kappa} \to
\mathrm{Func}\big(A\text{-}\Mod_{<\kappa},B\text{-}\Mod_{<\kappa}\big).
\]
The inverse functor 
$\mathrm{Func}\big(A\text{-}\Mod_{<\kappa},B\text{-}\Mod_{<\kappa}\big) \to \Bim(B,A)_{<\kappa}$
is given by evaluating on $L^2A\in A\text{-}\Mod_{<\kappa}$, as in the statement of Proposition~\ref{prop: func to bimod}.
\end{proof}

\begin{cor}
If $C$ and $D$ are $\kappa$-generated Cauchy-complete $\rmW^*$-categories, then\linebreak 
$\dagger|_{\mathrm{Func}(C,D)_{<\kappa}}:\mathrm{Func}(C,D)_{<\kappa}\to \mathrm{Func}(D,C)$
lands in $\mathrm{Func}(D,C)_{<\kappa}$, and induces an antilinear equivalence
\[
\dagger\,:\,\mathrm{Func}\big(C_{<\kappa},D_{<\kappa}\big)\to \mathrm{Func}\big(D_{<\kappa},C_{<\kappa}\big).
\]
This operation satisfies the same defining property
\eqref{eq: defining property of adjoint} as \eqref{eq: dag}.
$\,$ \hfill $\square$
\end{cor}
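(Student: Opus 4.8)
The plan is to deduce the statement formally from Proposition~\ref{prop: functors between small categories} together with the bimodule description of $\dagger$ from Lemma~\ref{lem: conjugate bimodule}. First I would apply Proposition~\ref{prop: functors between small categories} in both variable orders: once to the pair $(C,D)$ and once to $(D,C)$, which is legitimate since a $\kappa$-generated category is in particular locally $\kappa$-generated. This produces equivalences
\[
\mathrm{Func}(C_{<\kappa},D_{<\kappa})\cong \mathrm{Func}(C,D)_{<\kappa}
\qquad\text{and}\qquad
\mathrm{Func}(D_{<\kappa},C_{<\kappa})\cong \mathrm{Func}(D,C)_{<\kappa},
\]
so that it suffices to work inside the large functor categories $\mathrm{Func}(C,D)$ and $\mathrm{Func}(D,C)$, where $\dagger$ is already defined and already known to be an antilinear equivalence by Definition~\ref{def: adjoint functors}, and to check that it restricts to the $\kappa$-small full subcategories.

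The key point is therefore that $\dagger$ preserves $\kappa$-smallness. Reusing the reduction in the proof of Proposition~\ref{prop: functors between small categories}, I would write $C\cong A\text{-}\Mod$ and $D\cong B\text{-}\Mod$ with $A$ and $B$ $\kappa$-separable von Neumann algebras, so that $\mathrm{Func}(C,D)\cong\Bim(B,A)$ by Proposition~\ref{prop: func to bimod}. Under this identification a functor $F$ corresponds to a bimodule ${}_BX_A$, and $F$ is $\kappa$-small precisely when $X$ is $\kappa$-separable (again as established inside the proof of Proposition~\ref{prop: functors between small categories}). By Lemma~\ref{lem: conjugate bimodule} the adjoint $F^\dagger$ corresponds to the complex conjugate bimodule ${}_A\overline X_B$, and since complex conjugation does not alter the underlying real Hilbert space, $\overline X$ is $\kappa$-separable if and only if $X$ is. Hence $F$ is $\kappa$-small if and only if $F^\dagger$ is, which shows that $\dagger$ carries $\mathrm{Func}(C,D)_{<\kappa}$ into $\mathrm{Func}(D,C)_{<\kappa}$.

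With preservation of $\kappa$-smallness in hand, I would conclude as follows. Being an antilinear equivalence of the full functor categories, $\dagger$ is antilinear fully faithful, and restricting a fully faithful functor to a full subcategory keeps it fully faithful; essential surjectivity onto $\mathrm{Func}(D,C)_{<\kappa}$ follows from the natural isomorphism $\varphi_G:G\to G^{\dagger\dagger}$ of Definition~\ref{def: adjoint functors} together with the fact that $G^\dagger$ is $\kappa$-small whenever $G$ is. Thus $\dagger$ restricts to an antilinear equivalence $\mathrm{Func}(C,D)_{<\kappa}\to\mathrm{Func}(D,C)_{<\kappa}$, which transports along the two equivalences from the first paragraph to the desired antilinear equivalence $\mathrm{Func}(C_{<\kappa},D_{<\kappa})\to\mathrm{Func}(D_{<\kappa},C_{<\kappa})$. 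The defining property \eqref{eq: defining property of adjoint} is inherited verbatim: for $\kappa$-small $c$ and $d$ the natural unitary $\langle c,F^\dagger(d)\rangle\cong\langle F(c),d\rangle$ is simply the restriction of the one valid on all of $C$ and $D$, both sides now landing in $\Hilb_{<\kappa}$ because the Hilb-valued inner product of $\kappa$-small objects is $\kappa$-separable.

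I expect no serious obstacle here, as the corollary is essentially bookkeeping built on Proposition~\ref{prop: functors between small categories} and Lemma~\ref{lem: conjugate bimodule}. The one point demanding genuine (if mild) care is the stability of $\kappa$-separability under complex conjugation of bimodules, which underlies the claim that $\dagger$ preserves $\kappa$-smallness; everything else is a matter of restricting an already-established antilinear equivalence to full subcategories and transporting along known equivalences.
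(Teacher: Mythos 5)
Your proposal is correct and follows exactly the route the paper intends: the corollary is left as an immediate consequence of Proposition~\ref{prop: functors between small categories} (which identifies $\kappa$-small functors with $\kappa$-separable bimodules) together with Lemma~\ref{lem: conjugate bimodule} (which identifies $\dagger$ with bimodule conjugation, manifestly preserving $\kappa$-separability). Your spelled-out version, including the essential-surjectivity argument via $\varphi_G:G\to G^{\dagger\dagger}$ and the restriction of the defining unitaries \eqref{eq: defining property of adjoint}, fills in precisely the bookkeeping the authors suppress.
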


\begin{prop}
If $C$ and $D$ are $\kappa$-complete $\rmW^*$-categories where $C$ is $\kappa$-small, and $D$ is locally $\kappa$-small. Then
\begin{equation}
\label{eq: EQUIV 2}
\mathrm{Func}\big(C^{\hat\oplus},D^{\hat\oplus}\big) \cong \mathrm{Func}\big(C,D\big)^{\hat\oplus}.
\end{equation}
\end{prop}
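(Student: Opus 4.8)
The plan is to pass to the full Cauchy completions $\tilde C := C^{\hat\oplus}$ and $\tilde D := D^{\hat\oplus}$ and then invoke Proposition~\ref{prop: functors between small categories} together with the (unlabelled) structural Lemma preceding it, which states that a Cauchy-complete locally $\kappa$-generated $\rmW^*$-category $E$ satisfies $E \cong (E_{<\kappa})^{\hat\oplus}$. First I would check that the pair $(\tilde C,\tilde D)$ meets the hypotheses of Proposition~\ref{prop: functors between small categories}: both are Cauchy complete by construction, $\tilde C$ is $\kappa$-generated because the $<\kappa$ many $\kappa$-small generators of $C$ still generate $\tilde C$, and $\tilde D$ is locally $\kappa$-generated because its generators are $\kappa$-small.

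The key point is to identify the $\kappa$-small objects of the completions: $(\tilde C)_{<\kappa} \cong C$ and $(\tilde D)_{<\kappa} \cong D$. I would obtain this from the Freyd-type embedding of Proposition~\ref{prop:freydembedding''}: writing $C \cong A\text{-}\Mod_{<\kappa}$ for a $\kappa$-separable von Neumann algebra $A$, Lemma~\ref{lem: canonical  functor an equivalence iff C_0 generates C} identifies $\tilde C \cong A\text{-}\Mod$, and Lemma~\ref{lem: small <=> separable} gives $(A\text{-}\Mod)_{<\kappa} = A\text{-}\Mod_{<\kappa}$, so that $(\tilde C)_{<\kappa}\cong C$. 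The same argument, using part {\it (ii)} of Proposition~\ref{prop:freydembedding''} and Corollary~\ref{cor: small <=> separable '}, gives $(\tilde D)_{<\kappa}\cong D$ for $D\cong B\text{-}\Mod_{<\kappa}$ with $B$ a product of $\kappa$-separable algebras. (Alternatively, one argues directly: since $C$ is idempotent complete, $\tilde C \cong C^{\bar\oplus}$, so a $\kappa$-small object of $\tilde C$ is an orthogonal sum $\oplus_i c_i$ of objects of $C$; there can be only $<\kappa$ nonzero summands, as more would yield $\geq\kappa$ mutually orthogonal projections in its endomorphism algebra and contradict $\kappa$-separability, and such a $\kappa$-small sum of objects of $C$ already lies in $C$ since $C$ admits all $\kappa$-small direct sums.)

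With these identifications, Proposition~\ref{prop: functors between small categories} applied to $(\tilde C,\tilde D)$ tells me that $\mathrm{Func}(\tilde C,\tilde D)$ is locally $\kappa$-generated and that
\[
\mathrm{Func}(C,D) \,\cong\, \mathrm{Func}\big((\tilde C)_{<\kappa},(\tilde D)_{<\kappa}\big) \,\cong\, \mathrm{Func}(\tilde C,\tilde D)_{<\kappa}.
\]
Since $\tilde D$ is Cauchy complete, so is $E := \mathrm{Func}(\tilde C,\tilde D) = \mathrm{Func}(C^{\hat\oplus},D^{\hat\oplus})$; being moreover locally $\kappa$-generated, the structural Lemma above yields $E \cong (E_{<\kappa})^{\hat\oplus}$. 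Combining the two displays,
\[
\mathrm{Func}\big(C^{\hat\oplus},D^{\hat\oplus}\big) \,=\, E \,\cong\, (E_{<\kappa})^{\hat\oplus} \,\cong\, \mathrm{Func}(C,D)^{\hat\oplus},
\]
which is the asserted equivalence.

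I expect the only genuine work to be the identification $(\tilde C)_{<\kappa}\cong C$ (and its analogue for $D$), i.e.\ verifying that the $\kappa$-small objects of the Cauchy completion do not overshoot the original category; this is the step requiring the separability bookkeeping above. Once it is in place, the statement is a formal consequence of the previously established Proposition~\ref{prop: functors between small categories} and the structural Lemma on locally $\kappa$-generated categories.
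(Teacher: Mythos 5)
Your proposal is correct and follows essentially the same route as the paper: identify $\mathrm{Func}(C,D)$ with $\mathrm{Func}\big((C^{\hat\oplus})_{<\kappa},(D^{\hat\oplus})_{<\kappa}\big)$, apply Proposition~\ref{prop: functors between small categories} to get $\mathrm{Func}(C^{\hat\oplus},D^{\hat\oplus})_{<\kappa}$, and then recomplete using that $\mathrm{Func}(C^{\hat\oplus},D^{\hat\oplus})$ is Cauchy complete and locally $\kappa$-generated. The only difference is that you spell out the identification $(C^{\hat\oplus})_{<\kappa}\cong C$ and $(D^{\hat\oplus})_{<\kappa}\cong D$, which the paper leaves implicit in its first displayed equivalence; your verification of that step is sound.
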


\begin{proof}
\begin{align*}
\mathrm{Func}\big(C,D\big)^{\hat\oplus}
&\cong
\mathrm{Func}\big((C^{\hat\oplus})_{<\kappa},(D^{\hat\oplus})_{<\kappa}\big)^{\hat\oplus}\\
&\cong
\big(
\mathrm{Func}\big((C^{\hat\oplus}),(D^{\hat\oplus})\big)_{<\kappa}
\big)^{\hat\oplus}
\cong
\mathrm{Func}\big(C^{\hat\oplus},D^{\hat\oplus}\big),
\end{align*}
where the middle equivalence holds by Proposition~\ref{prop: functors between small categories},
and the last one holds because $\mathrm{Func}((C^{\hat\oplus}),(D^{\hat\oplus}))$
is locally $\kappa$-generated (again by Proposition~\ref{prop: functors between small categories}).
\end{proof}

Taken together, \eqref{eq: EQUIV 1} and \eqref{eq: EQUIV 2} establish an equivalence of $2$-categories between the $2$-category of 
$\kappa$-generated complete $\rmW^*$-categories\footnote{Note the use of Terminology~\ref{rem: terminology complete}.}
and $\kappa$-small functors between them,
and the $2$-category of
$\kappa$-small $\kappa$-complete $\rmW^*$-categories and all functors:
\begin{equation}
\label{eq: two 2-categoies}
\begin{tikzpicture}[baseline = -2]
\node[left] at (-1.2,0)
{$\left\{
\parbox{4.3cm}{\centerline{$\kappa$-generated complete}\centerline{$\rmW^*$-categories}}
\right\}$};
\node[right] at (1.2,0)
{$\left\{
\parbox{4cm}{\centerline{$\kappa$-small $\kappa$-complete} \centerline{$\rmW^*$-categories}}
\right\}$};
\draw[->] (-1.1,.2) --node[above]{$(\;\;\,)_{<\kappa}$} (1.1,.2);
\draw[<-] (-1.1,-.2) --node[below]{$(\;\;\,)^{\hat\oplus}$} (1.1,-.2);
\end{tikzpicture}
\end{equation}
Moreover, any functor between $\kappa$-generated complete $\rmW^*$-categories is a direct sum of $\kappa$-small functors.
Both sides of \eqref{eq: two 2-categoies} carry an involution $\dagger$ at the level of $1$-morphisms, and an involution $*$ at the level of $2$-morphisms, and the $2$-functors in \eqref{eq: two 2-categoies} are compatible with both $\dagger$ and $*$.


\section{\texorpdfstring{$\rmW^*$}{W*}-tensor categories}

A \emph{$\rmW^*$-tensor category}\label{pageref: W^*-tensor category} $(T,\otimes,1,\alpha,l,r)$ is a $\rmW^*$-category with a monoidal structure which is compatible with the $\rmW^*$-category structure in the sense that
\[
\otimes: T \times T \to T
\]
is a bilinear functor of $\rmW^*$-categories (Definition~\ref{def: functors linear and bilinear}),
and the associator $
\alpha$ and left and right unitors $l,r$ are unitary.

A \emph{tensor functor} between two $\rmW^*$-tensor categories $S$ and $T$, is a triple $(F,\mu,i)$, where $F:S\to T$ is a functor of $\rmW^*$-categories, and
\begin{equation}
\label{eq: mu and i coherences}
\mu_{x,y}:F(x) \otimes_{T} F(y) \to F(x \otimes_{S} y)\qquad\text{and}\qquad
i:1_{T}\to F(1_{S})
\end{equation}
are unitary natural transformations satisfying     $\mu_{x,y\otimes z}\circ(\id_{F(x)}\otimes\mu_{y,z}) = \mu_{x\otimes y, z}\circ (\mu_{x,y}\otimes \id_{F(z)})$ and  $\mu_{1,x}\circ (i\otimes \id_{F(x)}) = \id_{F(x)} = \mu_{x,1}\circ(\id_{F(x)}\otimes i)$ (suppressing associators). 
Finally, a \emph{monoidal natural transformation} between two tensor functors $F$ and $G$ is natural transformation $\alpha:F\Rightarrow G$ that intertwines the coherences \eqref{eq: mu and i coherences} of the two tensor functors.

If a $\rmW^*$-tensor category $T$ comes equipped with an action of $\bbZ/2$ in which the non-trivial element of $\bbZ/2$ acts antilinearly and anti-monoidally, then $T$ is called a bi-involutive $\rmW^*$-tensor category:

\begin{defn}
\label{def: bi-involutive tensor category} 
A \emph{bi-involutive $\rmW^*$-tensor category} is a $\rmW^*$-tensor category $T$ equipped with a covariant anti-linear, anti-tensor functor
\begin{equation*}
\overline{\,\cdot\,}:T\to T
\end{equation*}
called the conjugate.
The functor $\overline{\,\cdot\,}$ is involutive, meaning that for every $x\in T$, we are given unitary natural unitary isomorphisms $\varphi_x:x\to \overline{\overline{x}}$ satisfying $\varphi_{\overline{x}}=\overline{\varphi_{x}}$. 
The structure data of this anti-tensor functor are denoted
\begin{equation*}
\nu_{x,y}:\overline{x} \otimes \overline{y} \stackrel\simeq\longrightarrow \overline{y \otimes x}\qquad\text{and}\qquad j:1\to\overline 1
\end{equation*}
and make the following diagrams commute:

 \begin{equation*}
        \begin{tikzcd}
            \overline x\otimes\left(\overline y\otimes\overline  z\right)\arrow[rr,"\mathrm{id}_{\overline x}\otimes\nu_{y,z}"]\arrow[d,"\alpha^{-1}_{\overline x,\overline y,\overline z}"']&&
            \overline x\otimes\overline{z\otimes y}\arrow[rr,"\nu_{x,z\otimes y}"]&&
            \overline{\left(z\otimes y\right)\otimes x}
            \\
            \left(\overline x\otimes\overline y\right)\otimes\overline z\arrow[rr,"\nu_{x,y}\otimes\mathrm{id}_{\overline z}"]&&
            \overline{y\otimes x}\otimes\overline z\arrow[rr,"\nu_{y\otimes x,z}"]&&\overline{z\otimes\left( y\otimes x\right)}\arrow[u,<-,"\overline{\alpha_{z,y,x}}"']
        \end{tikzcd}
    \end{equation*}
    \begin{equation*}
        \begin{tikzcd}
            1\otimes\overline x\arrow[rr,"j\otimes\mathrm{id}_{\overline x}"]\arrow[d,"l_{\overline x}"']&&\overline1\otimes\overline x\arrow[d,"\nu_{1,x}"]&\overline x\otimes1\arrow[rr,"\mathrm{id}_{\overline x}\otimes j"]\arrow[d,"r_{\overline x}"']&&\overline x\otimes\overline1\arrow[d,"\nu_{x,1}"]\\
            \overline x&&\overline{x\otimes1}\arrow[ll,"\overline{r_x}"]&\overline x&&\overline{1\otimes x}\arrow[ll,"\overline{l_x}"]
        \end{tikzcd}
    \end{equation*}
Finally, we require the compatibility conditions $\varphi_1=\overline j\circ j$ and $\varphi_{x \otimes y}=\overline{\nu_{y,x}}\circ\nu_{\overline x,\overline y}\circ(\varphi_x\otimes\varphi_y)$.
\end{defn}

Note that, by Lemma~\ref{lem: antiunitaries at level of <,>}, the antilinear involution $\overline{\,\cdot\,}$ induces canonical unitary isomorphisms $\langle\overline x,\overline y\rangle\to\overline{\langle x,y\rangle}$ for all $x,y\in T$, such that the composite $\langle x,y\rangle\simeq\big\langle \overline{\overline x},\overline{\overline y}\big\rangle\to\overline{\langle\overline x,\overline y\rangle}\to\overline{\overline{\langle x,y\rangle}}\simeq\langle x,y\rangle$ is the identity.
    
\begin{defn}\label{def: bi-involutive tensor functor}
A tensor functor $F\colon S\to T$ between bi-involutive $\rmW^*$-tensor categories is called a \emph{bi-involutive tensor functor} if it comes equipped with a unitary natural transformation
\begin{equation*}
\label{eq: coherence gamma}
\gamma_x:F(\overline x)\to \overline{F(x)}
\end{equation*}
making the following diagrams commute:\footnote{The coherence $\gamma_{1}=\overline i\circ j^{T}\circ i^{-1}\circ F(j^{S})^{-1}$ for the unit follows from the coherence for $\gamma_{x\otimes y}$.}
\begin{equation*}
    \begin{tikzcd}
{F(\overline x)\otimes F(\overline y)}&& {F(\overline x\otimes \overline y)} &&
  {F(\overline{y\otimes x})}
	\\[1mm]
{\overline{F(x)}\otimes \overline{F(y)}} && {\overline{F(y)\otimes F(x)}} && {\overline{F(y\otimes x)}} 
	\arrow["{\gamma_{y\otimes x}}", from=1-5, to=2-5]
	\arrow["{F(\nu^{S}_{x,y})}", from=1-3, to=1-5]
	\arrow["{\mu_{\overline x,\overline y}}", from=1-1, to=1-3]
	\arrow["{\gamma_x \otimes \gamma_y}"', from=1-1, to=2-1]
	\arrow["{\overline{\mu_{y\otimes x}}}", from=2-3, to=2-5]
	\arrow["{\nu^{T}_{F(x)\otimes F(y)}}", from=2-1, to=2-3] 
\end{tikzcd}
\end{equation*} 

\begin{equation*}
   \begin{tikzcd}
	{F(x)} && {F(\overline{\overline x})}  \\
	{\overline{\overline{F(x)}}} &&{\overline{F(\overline x)}}
	\arrow["{F(\varphi^{S}_x)}", from=1-1, to=1-3]
	\arrow["{\gamma_{\overline x}}", from=1-3, to=2-3]
	\arrow["{\varphi^{T}_{F(x)}}"', from=1-1, to=2-1]
	\arrow["{\overline{\gamma_x}}", from=2-3, to=2-1]
\end{tikzcd}
\end{equation*}

Finally, a bi-involutive natural transformation is a monoidal natural transformation that intertwines the coherences \eqref{eq: coherence gamma} of the two tensor functors.
\end{defn}
    
If $C$ is a Cauchy complete $\rmW^*$-category, then the category $\End(C)$ of endo\-functors of $C$ is a bi-involutive $\rmW^*$-tensor category, with involution provided by the operation $F\to F^\dagger$.
Upon identifying $C$ with $A\text{-}\Mod$ for some von Neumann algebra (using Proposition~\ref{prop:freydembedding}), we have an equivalence of tensor categories between $(\End(C), \circ, \id_C)$ and $(\Bim(A), \boxtimes_A, {}_AL^2A_A)$ (by Proposition~\ref{prop: func to bimod}).
The modular conjugation $J$, and the unitary
\begin{equation}\label{eq: unitary nu}
\nu:\overline H\boxtimes_A \overline K\to  \overline{K\boxtimes_A H}
\end{equation}
given by $\nu(\varphi\otimes \xi\otimes \psi):= 
(\psi\circ J)\otimes J(\xi)\otimes(\varphi\circ J)
$ endow $\Bim(A)$ with the structure of a bi-involutive $\rmW^*$-tensor category,
and the equivalence between $\End(C)$ and $\Bim(A)$ intertwines the involution $F\mapsto F^\dagger$ on $\End(C)$ and the involution ${}_AH_A\mapsto {}_A\overline H_A$ on $\Bim(A)$ (by Lemma~\ref{lem: conjugate bimodule}).

\begin{defn}
The `many object' versions of Definitions~\ref{def: bi-involutive tensor category} and~\ref{def: bi-involutive tensor functor} are called \emph{bi-involutive $\rmW^*$-bicate\-gories}, and \emph{bi-involutive $2$-functors}. 
A bi-involutive $2$-functor whose underlying $2$-functor is an equivalence is called an equivalence of bi-involutive $\rmW^*$-bicategories.
\end{defn}

The collection of all von Neumann algebras, bimodules, and bounded linear maps between those forms the objects, $1$-morphisms, and $2$-morphisms of a $\rmW^*$-bicategory $\mathrm{vN2}$\label{pageref: vN2} 
(\cite{10.1063/1.1563733,MR2325696}, see also \cite[\S2.1]{MR4419534}).
Composition of $1$-morphisms is given by Connes fusion\footnote{For composition to be a map $\Hom_{\mathrm{vN2}}(B,C)\times \Hom_{\mathrm{vN2}}(A,B) \to \Hom_{\mathrm{vN2}}(A,C)$, we adopt the convention $\Hom_{\mathrm{vN2}}(A,B):=\Bim(B,A)$.}, and the unit $1$-morphism on an object $A$ is the bimodule ${}_AL^2A_A$. The involution ${}_BX_A\mapsto {}_A\overline{X}_B$ along with the coherence \eqref{eq: unitary nu} at the level of $1$-morphisms, and the $*$-operation at the level of $2$-morphisms equip $\mathrm{vN2}$ with the structure of a bi-involutive $\rmW^*$-bicategory.

The collection of all Cauchy complete $\rmW^*$-categories, functors, and natural transformations (Definition~\ref{def: functors linear and bilinear}) also assemble to $\rmW^*$-bicategory $\mathrm{W^*Cat}$\label{pageref: WstarCat}.
The $\dagger$-operation at the level of $1$-morphisms (Definition~\ref{def: adjoint functors}),
and the $*$-operation at the level of $2$-morphisms (Definition~\ref{def: functor categories}) endow it with the structure of a bi-involutive bicategory.

\begin{thm}\label{thm : vN2=W*Cat}
The assignment $A\mapsto A\text{-}\Mod$ induces an
equivalence of bi-involutive bicategories $\mathrm{vN2}\stackrel{\simeq}\to\mathrm{W^*Cat}$.
\end{thm}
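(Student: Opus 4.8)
The plan is to construct the bi-involutive $2$-functor $\Phi\colon \mathrm{vN2}\to\mathrm{W^*Cat}$ explicitly, verify that its underlying $2$-functor is a biequivalence, and then check that it respects all the involutive data. On objects, $\Phi(A):=A\text{-}\Mod$ (which is Cauchy complete); on a $1$-morphism ${}_BX_A\in\Hom_{\mathrm{vN2}}(A,B)=\Bim(B,A)$, $\Phi$ returns the functor ${}_BX\boxtimes_A-\colon A\text{-}\Mod\to B\text{-}\Mod$ of Proposition~\ref{prop: func to bimod}; and on a $2$-morphism (a bimodule intertwiner) it returns the natural transformation obtained by fusing. The compositor $\Phi(Y)\circ\Phi(X)\Rightarrow\Phi(Y\boxtimes_B X)$ and the unitor $\id_{A\text{-}\Mod}\Rightarrow\Phi({}_AL^2A_A)$ are supplied by the associator and unitors of Connes fusion recorded in \eqref{eq: associator + unitors}, and the pseudofunctor coherence axioms then reduce to the pentagon and triangle identities for Connes fusion.

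I would first verify that $\Phi$ is an equivalence of the underlying (plain) bicategories, recovering \cite[Thm~2.3]{MR2325696}. Essential surjectivity on objects is exactly Proposition~\ref{prop:freydembedding}\,(i): every Cauchy complete $\rmW^*$-category is equivalent to some $A\text{-}\Mod$. Local fullness and faithfulness --- that each functor $\Bim(B,A)\to\mathrm{Func}(A\text{-}\Mod,B\text{-}\Mod)$ is an equivalence of $\rmW^*$-categories --- is Proposition~\ref{prop: func to bimod}. Together these say $\Phi$ is a biequivalence.

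The substantive new content is the bi-involutive upgrade. The required coherence $\gamma_X\colon\Phi(\overline X)\xrightarrow{\cong}\overline{\Phi(X)}$ --- where $\overline{\,\cdot\,}$ on the left is the conjugate bimodule ${}_BX_A\mapsto {}_A\overline X_B$ and on the right is the adjoint $F\mapsto F^\dagger$ --- is precisely the content of Lemma~\ref{lem: conjugate bimodule}, whose proof identifies $(\overline X\boxtimes_B-)$ with $(X\boxtimes_A-)^\dagger$ using the $\boxtimes$-description \eqref{eq: <X,Y> as boxtimes} of the $\Hilb$-valued inner product and the defining property \eqref{eq: defining property of adjoint} of $\dagger$. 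Compatibility of $\Phi$ with the $*$-operation on $2$-morphisms is immediate, since the equivalence of Proposition~\ref{prop: func to bimod} is implemented by Connes fusion and hence respects adjoints of intertwiners. It then remains to check the bi-involutive $2$-functor coherences, i.e.\ the many-object analogues of the two diagrams of Definition~\ref{def: bi-involutive tensor functor}: that $\gamma$ intertwines the compositor with the anti-comultiplication $\nu$ (given on the $\mathrm{vN2}$ side by \eqref{eq: unitary nu} and on the $\mathrm{W^*Cat}$ side by the isomorphisms $\nu_{F,G}$ of Definition~\ref{def: adjoint functors}), and that $\gamma$ intertwines the two involutivity isomorphisms $\varphi$ (double-conjugation of bimodules versus the $\varphi_F\colon F\to F^{\dagger\dagger}$ of Definition~\ref{def: adjoint functors}).

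I expect this last step to be the main obstacle: it amounts to a pair of large diagram chases comparing two a priori different packages of coherence data. The chases are not conceptually deep but must be executed carefully, unwinding the explicit formula \eqref{eq: unitary nu} for $\nu$ together with the inner-product definitions of $\varphi_F$ and $\nu_{F,G}$ and the coherences already established for them in Lemma~\ref{lem: coherences!}. The guiding principle throughout is that every comparison isomorphism in sight is characterised by its effect on the $\Hilb$-valued inner product via \eqref{eq: defining property of adjoint}; so each coherence diagram can be reduced to an equality of maps between inner-product Hilbert spaces, where \eqref{eq: <X,Y> as boxtimes} makes both routes around the diagram computable and manifestly equal.
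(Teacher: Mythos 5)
Your proposal is correct and follows essentially the same route as the paper: the paper's proof consists precisely of citing Proposition~\ref{prop:freydembedding}\,(i) for essential surjectivity, Proposition~\ref{prop: func to bimod} for local full faithfulness, and Lemma~\ref{lem: conjugate bimodule} for compatibility of the involutions. The only difference is that you explicitly flag the remaining bi-involutive coherence diagrams (the analogues of Definition~\ref{def: bi-involutive tensor functor}) as work still to be done, whereas the paper leaves them unverified; your proposed reduction of those chases to identities of maps between $\Hilb$-valued inner products via \eqref{eq: defining property of adjoint} and \eqref{eq: <X,Y> as boxtimes} is a sound way to close that gap.
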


\begin{proof}
The $2$-functor $\mathrm{vN2}\to\mathrm{W^*Cat}$ given by $A\mapsto A\text{-}\Mod$ at the level of objects, and by ${}_BH_A\mapsto ({}_BH\boxtimes_A-)$ at the level of $1$-morphisms
is essentially surjective by part {\it (i)} of Proposition~\ref{prop:freydembedding}, and fully faithful by Proposition~\ref{prop: func to bimod}.
By Lemma~\ref{lem: conjugate bimodule}, it intertwines the involution ${}_BH_A\mapsto {}_A\overline{H}_B$ on $\mathrm{vN2}$ with the involution $F\mapsto F^\dagger$ on $\mathrm{W^*Cat}$.
\end{proof}

\begin{rem}
Both $\mathrm{vN2}$ and $\mathrm{W^*Cat}$ are symmetric monoidal, and the equivalence between them 
upgrades to an equivalence of symmetric monoidal bi-involutive $\rmW^*$-bicategories (see Lemma~\ref{lem:TensoratorForVNA->W*Cat}). The symmetric monoidal structures are given by the spatial tensor product for $\mathrm{vN2}$, and the operation~\eqref{eq: completed tens of W*cat} for $\mathrm{W^*Cat}$.

By the results of Section~\ref{sec: cones}, the equivalence $\mathrm{vN2}\cong \mathrm{W^*Cat}$ is furthermore compatible with the vertical and horizontal cones that exist of both sides.
\end{rem}

Theorem~\ref{thm : vN2=W*Cat} admits a variant in the context of small $\rmW^*$-categories.
Let $\mathrm{vN2}_{<\kappa}$ be the bicategory of $\kappa$-separable von Neumann algebras and $\kappa$-separable bimodules between them, and let 
$\mathrm{W^*Cat}_{<\kappa}$ be the $2$-category of
$\kappa$-small $\kappa$-complete $\rmW^*$-categories.

\begin{thm}\label{thm : vN2=W*Cat kappa}
The assignment $A\mapsto A\text{-}\Mod_{<\kappa}$ induces an equivalence of bi-involutive bicategories $\mathrm{vN2}_{<\kappa}\stackrel{\simeq}\to\mathrm{W^*Cat}_{<\kappa}$. \hfill $\square$
\end{thm}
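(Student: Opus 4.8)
The plan is to mimic the proof of Theorem~\ref{thm : vN2=W*Cat} essentially verbatim, replacing each structural input by its $\kappa$-small counterpart and checking that $\kappa$-separability is preserved throughout. First I would define the candidate $2$-functor $\Phi\colon\mathrm{vN2}_{<\kappa}\to\mathrm{W^*Cat}_{<\kappa}$ on objects by $A\mapsto A\text{-}\Mod_{<\kappa}$ and on $1$-morphisms by ${}_BH_A\mapsto({}_BH\boxtimes_A-)$, exactly as the functor of Theorem~\ref{thm : vN2=W*Cat} but restricted. The only point to verify for well-definedness is that Connes fusion with a $\kappa$-separable bimodule ${}_BH_A$ carries $\kappa$-separable $A$-modules to $\kappa$-separable $B$-modules; this is immediate from Corollary~\ref{cor: small <=> separable '} together with the fact that $H\boxtimes_A K$ is $\kappa$-separable whenever $H$ and $K$ are. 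The associators, unitors, and composition coherences are inherited unchanged from $\mathrm{vN2}$.

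For the local equivalence, I would combine the two earlier results. Specializing \eqref{eq: EQUIV 1} from Proposition~\ref{prop: functors between small categories} to $C=A\text{-}\Mod$ and $D=B\text{-}\Mod$ (both $\kappa$-generated and Cauchy complete when $A,B$ are $\kappa$-separable) gives
\[
\mathrm{Func}\big(A\text{-}\Mod_{<\kappa},B\text{-}\Mod_{<\kappa}\big)
\cong
\mathrm{Func}\big(A\text{-}\Mod,B\text{-}\Mod\big)_{<\kappa}
=
\Bim(B,A)_{<\kappa},
\]
where the last identification is Proposition~\ref{prop: func to bimod}. The proof of Proposition~\ref{prop: functors between small categories} moreover shows that a $B$-$A$-bimodule is $\kappa$-small precisely when it is $\kappa$-separable, so $\Bim(B,A)_{<\kappa}$ is exactly the hom-category of $\mathrm{vN2}_{<\kappa}$, and the displayed chain is the assertion that $\Phi$ restricts to an equivalence on each hom-category. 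Essential surjectivity on objects is then part~{\it(i)} of Proposition~\ref{prop:freydembedding''}, which realises every $\kappa$-small $\kappa$-complete $\rmW^*$-category as $A\text{-}\Mod_{<\kappa}$ for a $\kappa$-separable $A$. A $2$-functor that is essentially surjective on objects and a local equivalence is automatically a biequivalence.

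Finally, compatibility with the bi-involutive structure is inherited from Theorem~\ref{thm : vN2=W*Cat}: Lemma~\ref{lem: conjugate bimodule} identifies $F\mapsto F^\dagger$ with ${}_BH_A\mapsto{}_A\overline{H}_B$, and conjugation manifestly preserves $\kappa$-separability, while the coherence $\nu$ of \eqref{eq: unitary nu} and the $*$-operation on $2$-morphisms restrict without change. The main obstacle is not any individual step but the systematic bookkeeping that every construction entering the unrestricted equivalence---Connes fusion, conjugation, and the Riesz equivalence of Proposition~\ref{prop:riesz} underlying $\dagger$---stays inside the $\kappa$-small world; all of these reduce to Corollary~\ref{cor: small <=> separable '} and the $\kappa$-separability characterisation extracted from Proposition~\ref{prop: functors between small categories}. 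Once that is recorded, the theorem follows formally by restricting Theorem~\ref{thm : vN2=W*Cat}.
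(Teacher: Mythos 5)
Your proposal is correct and follows exactly the route the paper intends: the authors state this theorem with no proof (the $\square$ signals it is immediate from what precedes), and the ingredients you assemble --- the hom-category equivalence \eqref{eq: EQUIV 1} from Proposition~\ref{prop: functors between small categories} together with Proposition~\ref{prop: func to bimod}, essential surjectivity from part~{\it(i)} of Proposition~\ref{prop:freydembedding''}, the identification of $\kappa$-small with $\kappa$-separable bimodules, and Lemma~\ref{lem: conjugate bimodule} plus the corollary following Proposition~\ref{prop: functors between small categories} for the involutions --- are precisely the results the paper sets up for this purpose. Your write-up is in effect the omitted proof, correctly mirroring that of Theorem~\ref{thm : vN2=W*Cat} with each input replaced by its $\kappa$-small counterpart.
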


\begin{rem}
In view of \eqref{eq: two 2-categoies}, $\mathrm{vN2}_{<\kappa}$ is
also equivalent to the $2$-category of
$\kappa$-generated Cauchy-complete $\rmW^*$-categories
and $\kappa$-small functors between them, via the assignment $A\mapsto A\text{-}\Mod$.
\end{rem}

In addition to the two invlutions (at the level of 1-morphisms, and 2-morphimsms) that both $\mathrm{vN2}$ and $\mathrm{W^*Cat}$ possess, there is also an involution at the level of objects given by complex conjugation. The equivalence from Theorem \ref{thm : vN2=W*Cat} is compatible with all three involutions.
Let us write $\dagger_0$ for the involution at the level of object, $\dagger_1$ for the involution at the level of $1$-morphisms, and $\dagger_2$ for the involution at the level of $2$-morphisms. The involutions are given by:\medskip

\centerline{\begin{tabular}{c|c|c|}
& $\mathrm{vN2}$ & $\mathrm{W^*Cat}$ \\[1mm]\hline
$\quad\dagger_0\quad\phantom{\Big|}$ & complex conjugation & complex conjugation \\[1.8mm]\hline
$\dagger_1\phantom{\Big|}$ & complex conjugation & adjoint \\[1.8mm]\hline
$\dagger_2\phantom{\Big|}$ & adjoint & adjoint \\[1mm]\hline
\end{tabular}}\bigskip

\noindent
(The three involutions on $\mathrm{vN2}$ are discussed \cite[\S4.3]{MR2079378}.)
Taken together, these three involutions make $\mathrm{vN2}$ and $\mathrm{W^*Cat}$ `fully dagger' in the sense of \cite{2403.01651}.

\bibliographystyle{alpha}
{\footnotesize{
\bibliography{bibliography}
}}

\newpage
\section*{Appendix}

\centerline{\sc Higher (and lower) categorical analogs of Hilbert spaces} 
\centerline{\sc and von Neumann algebras}\vspace{1cm}
\pagestyle{empty}

\def\h{.17}
\centerline{\begin{tikzpicture}
\node[scale=.9]{
\tikz[thick, yscale=1.1]{
\node[scale=1.1, draw, ellipse, inner ysep=5] (a0) at (0,10) {complex number};
\node[scale=1.1, draw, ellipse, inner ysep=5, inner xsep=7] (b0) at (10,10) {positive number};
\node[scale=1.1, draw, ellipse, inner ysep=5, inner xsep=10] (a1) at (0,5) {Hilbert space};
\node[scale=1.1, draw, ellipse, inner ysep=5, inner xsep=-2] (b1) at (10,5) {von Neumann algebra};
\node[scale=1.1, draw, ellipse, inner ysep=5, inner xsep=12] (a1) at (0,0) {W*-category};
\node[scale=1.1, draw, ellipse, inner ysep=5, inner xsep=-2] (b1) at (10,0) {bicommutant category};
\node[scale=1.4] at (0,-5) {??};
\node[scale=1.4] at (10,-5) {??};
\draw[-stealth'] (3,10+\h) --node[above]{$z\mapsto|z|^2$} (7,10+\h);
\draw[right hook-stealth'] (7,10-\h) -- (3,10-\h);
\draw[-stealth'] (-\h*1.1,6) --	node[xshift=-2, pos=.3,left]{$\xi,\eta\in H$}
					node[xshift=-32, above, rotate=90]{$\mapsto$}
					node[pos=.7,left]{$\langle\xi,\eta\rangle\in \bbC$} (-\h*1.1,9);
\draw[-stealth'] (\h*1.1,9) --	node[xshift=12, pos=.38,right]{$\bbC$ is a}
					node[xshift=1, pos=.56,right]{Hilbert sp.} (\h*1.1,6);
\draw[-stealth'] (10+\h*1.1,6) --	node[xshift=6, pos=.32,right]{$a\in A$}
					node[xshift=22, above, rotate=90]{$\mapsto$}
					node[pos=.7,right]{$\|a\|^2$} (10+\h*1.1,9);
%
\draw[-stealth'] (2.3-\h*.6,6+\h*.8) --node[scale=.9, above, sloped, rotate=3, pos=.48]{\,$H\ni\xi\mapsto \|\xi\|^2$} (7.9-\h*.6,9+\h*.8);
\draw[-stealth'] (7.9+\h*.6,9-\h*.8) --	node[scale=.9, below, sloped, rotate=3, pos=.46, yshift=.5]{$\big(\bbC,\sqrt x\in \bbC\big)\,\mapsfrom\, x$}
							node[yshift=-15, scale=.6, below, sloped, rotate=3, pos=.56]{or $0$ if $x=0$} 
							(2.3+\h*.6,6-\h*.8);
\draw[-stealth'] (3,5+\h) --node[scale=.9, above]{\,$H\mapsto B(H)$} (6.7,5+\h);
\draw[-stealth'] (6.7,5-\h) --node[scale=.9, below]{$L^2A \mapsfrom A$\,} (3,5-\h);
\draw[-stealth'] (-\h*1.1,1) --	node[xshift=-12, pos=.25,left]{$a,b\in C$}
					node[xshift=-42, pos=.45, above, rotate=90]{$\mapsto$}
					node[xshift=-8, pos=.85,left]{$\langle a,b\rangle_{\mathrm{Hilb}}:=$}
					node[xshift=2, pos=.65,left]{$p_bL^2(\mathrm{End}(a\,{\oplus}\, b))p_a$} (-\h*1.1,4);
\draw[-stealth'] (\h*1.1,4) --	node[xshift=0, pos=.38,right]{$\mathrm{Hilb}$ is a}
					node[xshift=1, pos=.56,right]{W*-cat.} (\h*1.1,1);
\draw[-stealth'] (10+\h*1.1,1) --	node[xshift=6, pos=.32,right]{$w\in T$}
					node[xshift=24, above, rotate=90]{$\mapsto$}
					node[xshift=0, pos=.68,right]{$\mathrm{End}(w)$} (10+\h*1.1,4);
\draw[-stealth'] (10-\h*1.1,4) --	node[xshift=-12, pos=.32,left]{$A$}
					node[xshift=-28, above, rotate=-90]{$\mapsto$}
					node[xshift=0, pos=.68,left]{$\mathrm{Bim}(A)$} (10-\h*1.1,1);
\draw[-stealth'] (2.3-\h*.6,1+\h*.8) --node[scale=.9, above, sloped, rotate=3, pos=.48]{\,$C\ni c\mapsto \mathrm{End}(c)$} (7.9-\h*.6,4+\h*.8);
\draw[-stealth'] (7.9+\h*.6,4-\h*.8) --node[scale=.9, below, sloped, rotate=3, pos=.49, yshift=0]{$\big(A\text{-}\mathrm{Mod},L^2A\big) \mapsfrom A$} (2.3+\h*.6,1-\h*.8);
\draw[-stealth'] (3,0+\h) --node[scale=.9, above]{\,$C\mapsto \mathrm{End}(C)$} (6.7,0+\h);
\draw[-stealth'] (6.7,0-\h) --	node[scale=1, below]{\footnotesize ideal of}
					node[scale=1, below, yshift=-10]{\footnotesize absorbing objects} (3,0-\h);
\draw[-stealth'] (0,-1) --node[scale=1.1, xshift=0, pos=.3,right]{\footnotesize the collection}
					node[xshift=0, pos=.42,right]{\footnotesize of all}
					node[xshift=1, pos=.57,right]{\footnotesize W*-categories}
					node[xshift=1, pos=.66,right]{\footnotesize is a ...}
					(0,-4);
\draw[-stealth'] (10,-1) --node[scale=1.1, xshift=0, pos=.35,right]{\footnotesize bimodules}
					node[xshift=1, pos=.49,right]{\footnotesize over a}
					node[scale=1.1, xshift=1, pos=.61,right]{\footnotesize bicom. cat. ...}
					(10,-4);
\draw[-stealth'] (7.9,-1) --	node[scale=.9, right, pos=.48, xshift=20]{Reps of a}
					node[scale=.9, right, pos=.61, xshift=20]{bicom. category ...} (2.3,-4);
}
};
\end{tikzpicture}}

\end{document}